\newcommand{\N}{\mathbb{N}}                     
\newcommand{\Z}{\mathbb{Z}}                     
\newcommand{\Q}{\mathbb{Q}}                     
\newcommand{\R}{\mathbb{R}}                     
\newcommand{\C}{\mathbb{C}}                     
\newcommand{\T}{\mathbb{T}}                     
\newcommand{\D}{\mathbb{D}}                     
\newcommand{\Ker}{\mathrm{Ker\,}}               
\newcommand{\coker}{\mathrm{coker\,}}           
\newcommand{\ind}{\mathop\mathrm{ind}}          
\newcommand{\Det}{\mathrm{Det}}                 
\newcommand{\MA}{\mathcal{A}}
\newcommand{\MB}{\mathcal{B}}
\newcommand{\MD}{\mathcal{D}}
\newcommand{\ME}{\mathcal{E}}
\newcommand{\MM}{\mathcal{M}}
\newcommand{\MP}{\mathcal{P}}
\newcommand{\MS}{\mathcal{S}}
\newcommand{\MT}{\mathcal{T}}
\newcommand{\MU}{\mathcal{U}}
\newtheorem{mainthm}{\sc Theorem}           
\newtheorem{maincor}{\sc Corollary}         
\newtheorem{thm}{\sc Theorem}[section]      
\newtheorem*{thm*}{\sc Theorem}             
\newtheorem{cor}[thm]{\sc Corollary}        
\newtheorem*{cor*}{\sc Corollary}           
\newtheorem{lem}[thm]{\sc Lemma}            
\newtheorem{prop}[thm]{\sc Proposition}     
\newtheorem{defn}[thm]{\sc Definition}      
\newtheorem{rem}[thm]{\sc Remark}           
\newtheorem{claim}[thm]{\sc Claim}
\title{On closed characteristics of minimal action on a convex three-sphere}
\author{A.\ Abbondandolo, O.\ Edtmair, J.\ Kang}
\date{}
\begin{document}

\maketitle

\begin{abstract}
We prove that every closed characteristic of minimal action on the boundary of a uniformly convex domain in $\R^4$ bounds a disk-like global surface of section. A corollary is that the cylindrical symplectic capacity of a convex body in $\R^4$ coincides with the minimal action of a closed generalized characteristic on its boundary.
\end{abstract}

\section*{Introduction}

\paragraph{The main theorem.} Consider the vector space $\R^{2n}$ with the standard symplectic form
\[
\omega_0 \coloneqq \sum_{j=1}^n dx_j \wedge dy_j.
\]
Let $X\subset \R^{2n}$ be a smooth bounded domain\footnote{In this paper, a \textit{domain} is the closure of an open set.}. We call $X$ \textit{uniformly starshaped} if it is starshaped with respect to a point and the boundary $\partial X$ is transverse to all lines through this point. If in addition the sectional curvatures of $\partial X$ are strictly positive everywhere, we say that $X$ is \textit{uniformly convex}.

In \cite{rab78}, Rabinowitz proved that the boundary of a uniformly starshaped domain $X$ always admits a closed characteristic, i.e.\ a closed integral curve of the characteristic line bundle $\mathcal{L}_{\partial X}$, which is defined as the kernel of the restriction of $\omega_0$ to the tangent spaces of $\partial X$. For uniformly convex domains, the same result was independently obtained by Weinstein \cite{wei78} and Clarke \cite{cla79}.

The  action of a closed characteristic $\gamma$ is the quantity
\[
\mathcal{A}(\gamma) \coloneqq \int_{\gamma} \lambda_0,
\]
where $\lambda_0$ is any primitive of $\omega_0$ and $\gamma$ is oriented according to the natural orientation of $\mathcal{L}_{\partial X}$. This orientation is determined by any nowhere vanishing section $w$ of $\mathcal{L}_{\partial X}$ such that $\omega_0(\nu,w)>0$, where the vector $\nu$ is pointing out of $X$. If $X$ is uniformly starshaped, then the action of any closed characteristic on $\partial X$ is positive and the infimum over all actions of closed characteristics is achieved by some closed characteristic. We call any such closed characteristic of minimal action a \textit{systole}.

As observed by Weinstein in \cite{wei79}, closed characteristics on the boundary of a uniformly starshaped domain $X$ can also be seen as periodic orbits of a Reeb flow on $\partial X$. Indeed, up to a translation we can assume that the center of starshapedness of $X$ is the origin, so that the restriction $\lambda:= \lambda_0|_{\partial X}$ of the radial primitive 
\[
\lambda_0 := \frac{1}{2} \sum_{j=1}^n ( x_j \, dy_j - y_j\, dx_j)
\]
of $\omega_0$ is a contact form on $\partial X$. Its Reeb vector field $R_{\lambda}$ spans the characteristic line bundle and is positively oriented. The action $\mathcal{A}(\gamma)$ coincides with the period of $\gamma$ as periodic orbit of $R_{\lambda}$.

In their seminal paper \cite{hwz98}, Hofer, Wysocki, and Zehnder proved that the Reeb flow on the boundary of a uniformly convex domain $X\subset \R^4$ admits disk-like global surface of section, i.e.\ an embedded disk $\Sigma \subset \partial X$ whose boundary is a closed characteristic $\gamma$, whose interior is transverse to the Reeb flow, and which has the property that the Reeb orbit $\phi^t(p)$ of any point $p$ in the complement of $\gamma$ meets $\Sigma$ for $t>0$ and $t<0$. In the same paper, they asked whether there exists a disk-like global surface of section whose boundary is a systole. Our main result gives a positive answer to this question.

\begin{mainthm}
\label{main}
Let $X\subset \R^4$ be a uniformly convex domain. Then every systole of $\partial X$ bounds a disk-like global surface of section.
\end{mainthm}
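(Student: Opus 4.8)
The plan is to reduce the statement to a topological property of the systole and then feed it into the pseudoholomorphic‑curve theory of dynamically convex tight three‑spheres. Since $X$ is uniformly convex, a theorem of Hofer, Wysocki and Zehnder gives that the contact form $\lambda$ on $\partial X$ is \emph{dynamically convex}: every closed Reeb orbit has Conley--Zehnder index $\ge 3$ in the global trivialization of $\xi=\ker\lambda$ coming from a spanning disk. In this setting a prime closed Reeb orbit $\gamma$ bounds a disk‑like global surface of section if and only if it is unknotted with self‑linking number $\mathrm{sl}(\gamma)=-1$ (Hofer--Wysocki--Zehnder; Hryniewicz; Hryniewicz--Salom\~ao), in which case $\gamma$ is the asymptotic orbit of a \emph{fast} finite‑energy plane whose projection is a page of the surface of section. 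A systole is prime, because $\mathcal{A}(\gamma^k)=k\,\mathcal{A}(\gamma)>\mathcal{A}(\gamma)$ for $k\ge 2$. So the theorem reduces to the claim that \emph{every systole of $\partial X$ is unknotted with self‑linking number $-1$}.

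For the nondegenerate case I would use that the action‑minimizing closed characteristic on the boundary of a convex body in $\R^{2n}$ has Conley--Zehnder index exactly $n+1$ (Ekeland; Hofer--Zehnder): it is $\ge n+1$ by dynamical convexity and $\le n+1$ by its variational characterization as the minimum of the dual action. For $n=2$ this gives $\mu_{CZ}(\gamma)=3$. A finite‑energy plane asymptotic to an index‑$3$ orbit is automatically fast, since the winding invariant $\mathrm{wind}_\pi$ detecting fastness is then forced to vanish; hence it is enough to produce \emph{one} finite‑energy plane asymptotic to the systole. This I would obtain by exploiting minimality of the action: starting from the Hofer--Wysocki--Zehnder disk‑like global surface of section (whose binding necessarily has index $3$) and deforming inside the moduli space of fast planes, any degeneration in the SFT compactification would produce a nodal curve with a non‑planar component asymptotic to a closed orbit of action $<\mathcal{A}(\gamma)=\mathcal{A}_{\min}$, which is impossible; so the limit is a fast plane, and a continuity argument identifies its asymptotic orbit with the index‑$3$ orbit of least action, namely the systole. (A cleaner formulation of the same point is: in a nondegenerate dynamically convex $S^3$, every orbit of index $3$ is the binding of a disk‑like global surface of section.)

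For a general, possibly degenerate, $X$, approximate by nondegenerate uniformly convex $X_j\to X$ in $C^\infty$. Then $\mathcal{A}_{\min}(X_j)\to\mathcal{A}_{\min}(X)$, and by compactness of the space of closed characteristics the systoles $\gamma_j$ together with their disk‑like global surfaces of section $\Sigma_j$ — whose $d\lambda$‑areas equal $\mathcal{A}_{\min}(X_j)$ by Stokes — subconverge to a systole $\gamma_\infty$ of $\partial X$ bounding a disk‑like global surface of section $\Sigma_\infty$ of area $\mathcal{A}_{\min}(X)$. It remains to promote this from \emph{one} systole to \emph{every} systole. An arbitrary systole $\gamma$ either equals $\gamma_\infty$ or appears as a periodic orbit of the return map $\psi_\infty$ of $\Sigma_\infty$, an area‑preserving diffeomorphism of an open disk of area $\mathcal{A}_{\min}$; one then has to show it is a fixed point, i.e.\ that the return‑time function of $\Sigma_\infty$ is $\ge\mathcal{A}_{\min}$, so that a period‑$q$ orbit with $q\ge 2$ would have action $>\mathcal{A}_{\min}$. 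A fixed point of the return map of a disk‑like global surface of section is an orbit meeting a single page exactly once, hence isotopic to the core of the complementary solid torus — so unknotted — and, using dynamical convexity, of self‑linking $-1$; by the criterion above it therefore bounds its own disk‑like global surface of section.

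The hardest parts, I expect, are: establishing the return‑time lower bound $\ge\mathcal{A}_{\min}$ for a minimal‑area disk‑like global surface of section (or whatever else rules out higher‑period systoles); the compactness needed to pass surfaces of section to a limit with a degenerate binding, where return times may behave badly near the binding; and running the finite‑energy‑plane input without a nondegeneracy hypothesis.
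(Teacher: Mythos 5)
Your reduction to showing that every systole is unknotted with self-linking number $-1$ is the same first step the paper takes (via Hryniewicz's theorem), and the observation that a systole must have Conley-Zehnder index $3$ is correct and coincides with the paper's use of Ekeland's result. However, the heart of your argument contains a gap that is precisely the difficulty the paper was written to overcome, and the closing perturbation step relies on two hard substeps that the paper avoids entirely.

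The critical gap is the sentence you offer as a ``cleaner formulation'': \emph{``in a nondegenerate dynamically convex $S^3$, every orbit of index $3$ is the binding of a disk-like global surface of section.''} This is not a theorem; it is, essentially, the open question. The paper states explicitly that its proof ``uses uniform convexity in an essential way'' and that the authors ``do not know whether the conclusion of Theorem~\ref{main} continues to hold in the dynamically convex case.'' The Hainz--Hamenst\"adt result (under the curvature pinching $k_3\le k_1+k_2$) is the only known statement in this direction, and it requires hypotheses beyond dynamical convexity. Your proposed mechanism --- ``starting from the Hofer--Wysocki--Zehnder disk-like global surface of section \ldots and deforming inside the moduli space of fast planes'' --- does not work as described: the asymptotic Reeb orbit of a fast plane is a locally constant invariant of the moduli space, so there is no deformation connecting planes asymptotic to the HWZ binding $\gamma_0$ to planes asymptotic to a distinct systole $\gamma$, and the HWZ binding is just \emph{some} index-$3$ orbit, not known to be the systole. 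The whole reason the paper introduces Clarke duality is to single out the systole: one builds a convex Hamiltonian whose only nonconstant periodic orbits project to the systole, uses that the Clarke dual action functional of a convex Hamiltonian is unbounded from below (this is where uniform convexity enters in a way that has no dynamical-convexity analogue) to show the mountain-pass point $\gamma_-$ has a gradient trajectory down to the minimum $z_0$ with algebraic count $1$, and transfers this count to a count of pseudoholomorphic planes asymptotic to $\gamma$ by neck-stretching (Theorem~\ref{convex-count} through Theorem~\ref{count-planes}). Theorem~\ref{thm:degree_characterizes_hopf} then converts ``degree $1$'' into ``Hopf orbit.'' Without some mechanism that distinguishes the systole, your argument cannot close.

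On the degenerate case: you correctly flag the difficulties (SFT compactness of global surfaces of section with degenerating binding; the return-time lower bound needed to rule out higher-period systoles), but neither is resolved in your sketch, and both are genuinely hard. The paper sidesteps both. Its perturbation argument in Section~\ref{sec:proof_main_theorem} is purely contact-topological: fix a systole $\gamma$ with knot type $K$ and self-linking number $k$; build a conformal factor $e^{\epsilon\chi}$ which is $\equiv1$ near the compact set $\Gamma_0$ of low-action orbits with data $(K,k)$ and $>1$ near the complementary low-action orbits, so that after a further $C^\infty$-small generic perturbation the unique nondegenerate systole of the new convex body has the same $(K,k)$; the nondegenerate case then forces $(K,k)=(\text{unknot},-1)$. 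This argument never needs to pass a global surface of section to a limit, nor any return-time estimate. If you want to complete your sketch along your own lines, the return-map step would also need more care: a period-$q$ periodic point of the return map gives a closed orbit whose action is a \emph{sum} of $q$ chord lengths, and there is no a priori reason every chord is $\ge \mathcal{A}_{\min}/q$, so some additional input (e.g.\ a monotonicity/linking argument, or again Clarke duality) would be needed to exclude $q\ge2$ systoles.
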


\paragraph{Consequences concerning symplectic capacities.} Let $\mathrm{Symp}(\R^{2n},\omega_0)$ denote the group of symplectomorphisms of $(\R^{2n},\omega_0)$. A \textit{normalized symplectic capacity} on $(\R^{2n},\omega_0)$ is a function $c$ which associates an element of $[0,+\infty]$ to every subset of $\R^{2n}$ and satisfies the following properties:

\medskip

\noindent (\textit{monotonicity}) If there is a $\phi\in \mathrm{Symp}(\R^{2n},\omega_0)$ such that $\phi(X)\subset Y$, then $c(X)\leq c(Y)$.

\medskip

\noindent (\textit{conformality}) $c(rX)=r^2 c(X)$ for every $r>0$.

\medskip

\noindent (\textit{normalization}) Identify $\R^{2n} \cong \C^n$ and define the ball and cylinder of symplectic width $a>0$ by
\begin{equation*}
B(a) \coloneqq \left\{ z\in \C^n \mid \pi|z|^2 \leq a \right\} \quad \text{and} \quad Z(a) \coloneqq \left\{ (z_1,\dots,z_n)\in \C^n \mid \pi|z_1|^2\leq a \right\}.
\end{equation*}
Then we have $c(B(a)) = c(Z(a)) = a$ for all $a>0$.

\medskip

Actually, since we are requiring the monotonicity property only for globally defined symplectomorphisms, the above axioms define a so-called \textit{extrinsic capacity}, see the discussion in \cite[Chapter 12]{ms17}. Being equivalent to Gromov's non-squeezing theorem \cite{gro85},  the existence of a normalized symplectic capacity is a nontrivial fact. Two normalized capacities which are easy to define but hard to compute are the following ball capacity $c_B$ (or Gromov width) and cylindrical capacity $c_Z$:
\[
\begin{split}
c_B(X) &:= \sup \{ a \mid \exists \phi\in \mathrm{Symp}(\R^{2n},\omega_0) \mbox{ such that } \phi(B(a)) \subset X \}, \\
c_Z(X) &:= \inf \{ a \mid \exists \phi\in \mathrm{Symp}(\R^{2n},\omega_0) \mbox{ such that } \phi(X) \subset Z(a) \}.
\end{split}
\]
It is easy to show that the above functions are the extremal capacities: any normalized symplectic capacity $c$ satisfies
\begin{equation}
\label{cBccZ}
c_B \leq c \leq c_Z.
\end{equation}
Given a bounded convex domain $X\subset \R^{2n}$ with smooth boundary, we let $\mathcal{A}_{\min}(X)$ denote the minimal action among all closed characteristics on $\partial X$. This function turns out to be continuous on the space of bounded convex domains endowed with the Hausdorff metric and continuously extends to the space of \textit{convex bodies} in $\R^{2n}$, i.e.\ compact convex sets with non-empty interior. Actually, there is a good notion of generalized characteristic on the boundary of a convex body $X$, without any smoothness assumption, and $\mathcal{A}_{\min}(X)$ is the minimal action among all closed generalized characteristics on $\partial X$, see \cite{cla81}. 

For many normalized symplectic capacities $c$, it is known that
\begin{equation}
\label{c=sys}
c(X) = \mathcal{A}_{\min}(X) \qquad \mbox{for all convex bodies $X$}.
\end{equation}
Indeed, this is the case for the Ekeland--Hofer capacity \cite{vit89}, for the Hofer--Zehnder capacity \cite{hz90}, for the capacity from symplectic homology \cite{ak22,iri22}, which by Hermann's work \cite{her04} coincides also with Viterbo's generating functions capacity from \cite{vit92}, and for Hutchings' first ``alternative'' capacity from \cite{hut22a}, see \cite{hhr24}. Note that $c_B$ and $c_Z$ do not belong to this list.

A long-standing question in symplectic topology was whether all normalized symplectic capacities agree on convex bodies, see e.g.\ \cite[Conjecture 1.9]{her98} and \cite[Section 14.9, Problem 53]{ms17}. This question has recently received a negative answer: in \cite{ho24}, Haim-Kislev and Ostrover constructed convex bodies in $\R^{2n}$, $n\geq 2$, for which $c_B < \mathcal{A}_{\min}$. Therefore, the ball capacity of a convex body can be strictly smaller than any of the normalized symplectic capacities listed above.

In \cite[Theorem 1.3]{edt24}, the second author proved that if $X\subset \R^4$ is a uniformly starshaped domain in $\R^4$ and $\gamma$ is a closed characteristic on $\partial X$ which bounds a $\partial$-strong\footnote{We refer to \cite[Definition 2.3]{edt24} for the notion of a $\partial$-strong disk-like global surface of section. We point out that by work of Florio-Hryniewicz \cite{fh21}, every closed characteristic $\gamma$ on the boundary of a uniformly convex domain which bounds a disk-like global surface of section also bounds a $\partial$-strong disk-like global surface of section.} disk-like global surface of section, then there exists a symplectomorphism $\phi$ of $(\R^4,\omega_0)$ such that $\phi(X)$ is contained in the cylinder $Z(\mathcal{A}(\gamma))$ of symplectic width $\mathcal{A}(\gamma)$ and $\phi(\partial X)$ touches the boundary of $Z(\mathcal{A}(\gamma))$ precisely at $\phi(\gamma)$. Thanks to this result, Theorem \ref{main} has the following immediate consequence.

\begin{maincor}
If $X$ is a uniformly convex domain in $\R^4$ and $\gamma$ is a systole of $\partial X$, then there exists a symplectomorphism $\phi$ of $(\R^4,\omega_0)$ such that $\phi(X)$ is contained in the cylinder $Z(\mathcal{A}_{\min}(X))$ and $\phi(\partial X)$ touches the boundary of this cylinder precisely at $\phi(\gamma)$. Therefore, $c_Z(X)=\mathcal{A}_{\min}(X)$ for every convex body $X\subset \R^4$.
\end{maincor}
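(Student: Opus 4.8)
The plan is to deduce the two assertions in turn: the geometric statement about $\phi$ follows by feeding the conclusion of Theorem \ref{main} into the cylinder-squeezing theorem of \cite{edt24}, and the identity $c_Z=\mathcal{A}_{\min}$ then follows by combining this with the extremality of $c_Z$ among normalized capacities and a routine Hausdorff approximation.

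First I would fix a uniformly convex domain $X\subset\R^4$ and a systole $\gamma$ of $\partial X$, which exists because $X$ is in particular uniformly starshaped. By Theorem \ref{main}, $\gamma$ bounds a disk-like global surface of section, hence by the Florio--Hryniewicz result \cite{fh21} recalled in the footnote above it bounds a $\partial$-strong disk-like global surface of section. Applying \cite[Theorem 1.3]{edt24} to the uniformly starshaped domain $X$ and this surface of section yields a symplectomorphism $\phi$ of $(\R^4,\omega_0)$ with $\phi(X)\subset Z(\mathcal{A}(\gamma))$ and $\phi(\partial X)$ touching the boundary of $Z(\mathcal{A}(\gamma))$ precisely at $\phi(\gamma)$. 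Since $\gamma$ is a systole we have $\mathcal{A}(\gamma)=\mathcal{A}_{\min}(X)$, which is the first assertion; in particular the inclusion $\phi(X)\subset Z(\mathcal{A}_{\min}(X))$ witnesses $c_Z(X)\leq\mathcal{A}_{\min}(X)$ for every uniformly convex domain $X\subset\R^4$.

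For the opposite inequality, I would invoke \eqref{c=sys} for one of the capacities listed there, say the Hofer--Zehnder capacity $c_{HZ}$ of \cite{hz90}: one has $c_{HZ}(X)=\mathcal{A}_{\min}(X)$ for every convex body $X$, and since $c_{HZ}$ is a normalized symplectic capacity, \eqref{cBccZ} gives $\mathcal{A}_{\min}(X)=c_{HZ}(X)\leq c_Z(X)$. Hence $c_Z(X)=\mathcal{A}_{\min}(X)$ for every uniformly convex domain. To remove the convexity and smoothness hypotheses, I would approximate an arbitrary convex body $X\subset\R^4$ in the Hausdorff metric by uniformly convex domains $X_k$ with $X\subset X_k$ and $X_k\to X$ (a standard smoothing argument, e.g.\ slightly smoothing the Minkowski sums $X+\tfrac1k B$); then $c_Z(X)\leq c_Z(X_k)=\mathcal{A}_{\min}(X_k)$ by monotonicity of $c_Z$ under inclusion, and letting $k\to\infty$ and using the continuity of $\mathcal{A}_{\min}$ on convex bodies gives $c_Z(X)\leq\mathcal{A}_{\min}(X)$, hence equality, for all convex bodies $X\subset\R^4$.

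The whole content of the corollary is concentrated in Theorem \ref{main}; once that is granted, nothing here is genuinely hard, and the argument is essentially the concatenation of \cite[Theorem 1.3]{edt24}, \cite{fh21}, the extremality bound \eqref{cBccZ} and \eqref{c=sys}. The only step requiring an argument of its own is the outer Hausdorff approximation by uniformly convex domains, and the only point one must check there is that an approximating sequence of uniformly convex domains containing a prescribed convex body always exists, which is classical; so I expect that to be the single mildly technical point rather than a real obstacle.
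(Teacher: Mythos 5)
Your proof is correct and follows exactly the route the paper intends: Theorem~\ref{main} plus the Florio--Hryniewicz upgrade to a $\partial$-strong surface of section feeds into \cite[Theorem 1.3]{edt24} to give the cylinder embedding, and the identity $c_Z=\mathcal{A}_{\min}$ on all convex bodies then follows from \eqref{cBccZ}, \eqref{c=sys}, and an outer Hausdorff approximation by uniformly convex smoothings. The paper presents the corollary as an immediate consequence without spelling these steps out; you have filled them in correctly.
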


Together with \eqref{cBccZ}, we deduce that, for every normalized symplectic capacity $c$ on $(\R^4,\omega_0)$ and every convex body $X\subset \R^4$, we have $c(X)\leq \mathcal{A}_{\min}(X)$. In dimension 4, we can therefore add the following to the list of normalized symplectic capacities for which \eqref{c=sys} holds:
\begin{enumerate}[(i)]
\item Hofer's displacement energy from \cite{hof90};
\item Hutchings's first ECH capacity from \cite{hut11};
\item the Hopf capacity from \cite{hhr24}.
\end{enumerate}
Indeed, if $c$ is one of the above three capacities and $X$ is a  convex body, then the inequality $c(X)\geq \mathcal{A}_{\min}(X)$ holds true. For (i), this follows from the fact that the displacement energy is not smaller than the Ekeland-Hofer capacity, see \cite[Theorem 1.6]{hof90a}, which as recalled above satisfies \eqref{c=sys}. For (ii), this follows from the fact that on uniformly starshaped domains, the first ECH capacity is a linear combination with positive integer coefficients of the actions of some closed characteristics. For (iii), this follows from the fact that, on uniformly convex domains, the Hopf capacity equals the action of some closed characteristic.

\paragraph{Discussion of previous results.}

The boundary of a uniformly starshaped domain $X\subset \R^4$ is diffeomorphic to the three-sphere and we can regard any closed characteristic $\gamma$ as a knot. It is actually a transverse knot, meaning that it is transverse to the contact structure $\ker \lambda$. As such, a closed characteristic $\gamma$ has a self linking number $\mathrm{sl}(\gamma)\in \Z$, see \cite[Section 3.5.2]{gei08}. Adopting the terminology of \cite{hhr24}, we call a closed characteristic which is unknotted and has self linking number $-1$ a \textit{Hopf orbit}. This term is motivated by the fact that a transverse knot in the standard contact three-sphere has the transverse knot type of a fiber of the Hopf fibration $S^3\rightarrow \C\mathrm{P}^1$ if and only if it is unknotted and has self linking number $-1$ \cite{eli93}. It is not difficult to show that the boundary of a disk-like global surface of section must be a Hopf orbit, see \cite[Proposition 2.1]{hry12}.

Now assume that $X\subset \R^4$ is uniformly convex. As proved by Hryniewicz \cite[Theorem 1.7]{hry14}, every Hopf orbit on $\partial X$ bounds a disk-like global surface of section. See \cite{hs11} and \cite{hls15} for other assumptions guaranteeing that a Hopf orbit bounds a disk-like global surface of section. In view of Hryniewicz's result, our main theorem is equivalent to the assertion that every systole of $\partial X$ is a Hopf orbit. In his PhD thesis \cite{hai07}, Hainz showed that if the principal curvatures $0<k_1\leq k_2 \leq k_3$ of $\partial X$ satisfy the pointwise condition $k_3\leq k_1+k_2$, then every closed characteristic of Conley-Zehnder index 3 is a Hopf orbit (see also \cite{hh11} for related results). It follows from Ekeland's work \cite{eke90} that every systole on a uniformly convex domain has Conley-Zehnder index 3. Therefore, combining \cite{hry14} and \cite{hai07} gives a positive answer to Hofer, Wysocki and Zehnder's question under the above curvature condition.

\paragraph{The uniform convexity assumption.}

Theorem \ref{main} fails if one replaces the assumption of uniform convexity by the slightly weaker assumption of convexity. Indeed, let $0<a<b$ be positive real numbers and consider the polydisk
\begin{equation*}
P(a,b) \coloneqq \left\{ (z_1,z_2)\in \C^2 \mid \pi|z_1|^2 \leq a , \pi|z_2|^2 \leq b \right\}.
\end{equation*}
Note that the systoles of $\partial P(a,b)$ foliate the solid torus $\left\{ \pi|z_1|^2 = a, \pi|z_2|^2 \leq b \right\}$ and are mutually unlinked. After an appropriate smoothing of $P(a,b)$ near the singular set 
\[
\left\{ (z_1,z_2)\in \C^2 \mid \pi|z_1|^2 = a, \pi|z_2|^2 = b \right\},
\]
 we obtain a smooth convex domain $X$ which continues to have the property that its systoles foliate a solid torus and are mutually unlinked. Since the boundary of a disk-like global surface of section must link every other periodic orbit positively, we see that no systole of $\partial X$ is the boundary of a disk-like global surface of section. While Theorem \ref{main} fails for convex domains, we still obtain the following weaker assertion, which we prove in Section \ref{sec:proof_main_theorem}.

\begin{maincor}
\label{cor:hopf_systole_on_convex_domain}
Let $X\subset \R^4$ be a smooth convex domain. Then there exists a systole of $\partial X$ which is a Hopf orbit.
\end{maincor}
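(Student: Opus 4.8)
The plan is to approximate $X$ by uniformly convex domains and to extract a limit of their systoles, which are Hopf orbits by Theorem~\ref{main}. After a translation we may assume $0\in\mathrm{int}\,X$; since a supporting hyperplane at any boundary point does not pass through $0$, the domain $X$ is uniformly starshaped, so $T:=\mathcal{A}_{\min}(X)>0$. Let $g$ be the Minkowski gauge of $X$, so that $g$ is convex, positively $1$-homogeneous, smooth on $\R^4\setminus\{0\}$, and $X=\{g\le 1\}$. For $k\in\N$ put $g_k:=(g^2+\tfrac{1}{k}|\cdot|^2)^{1/2}$ and $X_k:=\{g_k\le 1\}$. Then $g_k^2$ is positively $2$-homogeneous, smooth on $\R^4\setminus\{0\}$, and has Hessian bounded below by $\tfrac{2}{k}\,\mathrm{Id}$; hence $X_k$ is a smooth, uniformly convex domain, $X_k\subset X$, and $\partial X_k\to\partial X$ in $C^\infty$, so in particular $X_k\to X$ in the Hausdorff metric. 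By the continuity of $\mathcal{A}_{\min}$ we get $T_k:=\mathcal{A}_{\min}(X_k)\to T$, and by Theorem~\ref{main} together with \cite[Proposition 2.1]{hry12} every systole $\gamma_k$ of $\partial X_k$ bounds a disk-like global surface of section and is therefore a Hopf orbit, i.e.\ an embedded transverse unknot of self linking number $-1$.

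Next I would carry out a compactness argument on a fixed model of the three-sphere. Identify all the $\partial X_k$ and $\partial X$ with $S^3$ by radial projection; under this identification the contact forms induced by $\lambda_0$ form a sequence $\lambda_k$ converging in $C^\infty$ to the contact form $\lambda$ of $\partial X$, and the Reeb vector fields satisfy $R_k\to R$ in $C^\infty$, with $|R_k|$ bounded and bounded away from $0$. Parametrize $\gamma_k\colon\R/T_k\Z\to S^3$ as a periodic orbit of $R_k$ — note that a systole is necessarily a prime orbit, since the action of an $m$-fold cover is $m$ times the action of the underlying simple orbit. Reparametrizing on the fixed circle $\R/\Z$ and using $T_k\to T>0$, the curves $\gamma_k$ are uniformly bounded and equicontinuous, so along a subsequence they converge uniformly to a limit $\gamma_\infty$; since $R_k\to R$ uniformly, this convergence is in fact $C^1$ and $\gamma_\infty\colon\R/T\Z\to S^3$ is a non-constant $T$-periodic orbit of $R$, i.e.\ a closed characteristic on $\partial X$ with $\mathcal{A}(\gamma_\infty)=T=\mathcal{A}_{\min}(X)$.

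It then remains to identify $\gamma_\infty$ as a systole of $\partial X$ that is a Hopf orbit. If $\gamma_\infty$ had prime period $T/m$ with $m\ge 2$, its image would be a simple closed characteristic of action $T/m<\mathcal{A}_{\min}(X)$, which is impossible; hence $\gamma_\infty$ has prime period $T$, is embedded, and is a systole of $\partial X$. Moreover, for large $k$ the knot $\gamma_k$ is $C^1$-close to the embedded curve $\gamma_\infty$, hence ambient isotopic to it, so $\gamma_\infty$ is unknotted; and since $\gamma_k$ is uniformly transverse to $\ker\lambda_k$ and $\lambda_k\to\lambda$ in $C^0$, for large $k$ the curve $\gamma_k$ is also a transverse knot in $(S^3,\ker\lambda)$, transversely isotopic to $\gamma_\infty$, and the self linking number is invariant both under this isotopy and under the deformation from $\ker\lambda_k$ to $\ker\lambda$, so $\mathrm{sl}(\gamma_\infty)=\mathrm{sl}(\gamma_k)=-1$ for large $k$. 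Thus $\gamma_\infty$ is a Hopf orbit, and the corollary follows.

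The main obstacle is the compactness step, and within it ruling out that the limiting systole degenerates: one must ensure that $\gamma_\infty$ is non-constant — which uses the lower bound on $|R_k|$ — and of prime period exactly $T$ — which uses the minimality of $T$ — and one must control both the knot type and the self linking number of $\gamma_\infty$, which is why $C^\infty$ (rather than merely Hausdorff) convergence of the approximating hypersurfaces is used. A secondary technical point is to produce uniformly convex approximants $X_k$ with Reeb dynamics converging in $C^\infty$; the Minkowski-gauge construction above does this, but one could instead invoke standard smoothing results for convex bodies.
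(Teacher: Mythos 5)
Your proof is correct and takes essentially the same approach as the paper: approximate $X$ in $C^\infty$ by uniformly convex domains $X_k$, apply Theorem~\ref{main} to their systoles $\gamma_k$, and pass to the limit using continuity of $\mathcal{A}_{\min}$ together with $C^1$-compactness to see that the limit systole inherits the Hopf property. You supply an explicit smoothing (the regularized Minkowski gauge $g_k=(g^2+\tfrac{1}{k}|\cdot|^2)^{1/2}$) and spell out the preservation of simplicity, knot type, and self-linking number under $C^1$-small perturbation, steps the paper leaves implicit.
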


For uniformly starshaped domains, even this weaker statement fails. In fact, it is possible to realize any transverse knot type as the unique closed characteristic of minimal action on the boundary of some uniformly starshaped domain. Let us also mention the paper \cite{vko20}, which constructs uniformly starshaped domains in $\R^4$ which do not admit any disk-like global surfaces of section at all.

Hofer, Wysocki and Zehnder's result \cite[Theorem 1.3]{hwz98} asserting the existence of a disk-like global surface of section on the boundary of a uniformly convex domain $X$ actually only uses the following consequence of the uniform convexity of $X$, which they named \textit{dynamical convexity}: all the closed characteristics on $\partial X$ have Conley-Zehnder index at least 3. As proven in \cite{ce22}, dynamical convexity is a strictly weaker condition than uniform convexity, see also \cite{ce,dgz24,dgrz23} for related results. Unlike \cite{hwz98}, our proof of Theorem \ref{main} uses uniform convexity in an essential way, and we do not know whether the conclusion of Theorem \ref{main} continues to hold in the dynamically convex case.

\paragraph{Sketch of the proof of Theorem \ref{main}.} The proof of Theorem \ref{main} combines three different theories from different decades: Clarke's duality \cite{cla79}, Hamiltonian Floer homology \cite{flo88a,flo88d}, and the theory of pseudoholomorphic curves in symplectizations, as initiated by Hofer in \cite{hof93b} and further developed together with Wysocki and Zehnder in a series of papers starting with \cite{hwz95,hwz96}. In this last subsection of the introduction, we assume familiarity with these theories, but in the main body of the paper we shall give the necessary prerequisites and references to the literature.

Most of the argument works for a uniformly convex domain $X$ in $\R^{2n}$ with $n$ arbitrary, and only in the last step we require that $n=2$. Assume that there is just one closed characteristic $\gamma$ of minimal action on $\partial X$ and that $\gamma$ is nondegenerate. By the uniform convexity of $X$, we can construct a uniformly convex autonomous Hamiltonian $\tilde{K}$ on $\R^{2n}$ such that the corresponding Hamiltonian flow has just the following 1-periodic orbits: the constant one at the minimizer $z_0$ of $\tilde{K}$ and an $S^1$-family of non-constant orbits, all of which are reparametrizations of $\gamma$. The 1-periodic orbits in this $S^1$-family are clearly degenerate, but by applying a small time-periodic perturbation near $\gamma$ we can replace the $S^1$-family by two nondegenerate 1-periodic orbits $\gamma_-$ and $\gamma_+$. The Hamiltonian system which is associated to this time-dependent Hamiltonian $K$ has exactly three 1-periodic orbits: $z_0$, $\gamma_-$ and $\gamma_+$, of Conley-Zehnder index $n$, $n+1$ and $n+2$, respectively.

Since $K$ is still uniformly convex, these three 1-periodic orbits are, up to suitable translations which we here neglect, the critical points of the Clarke dual action functional $\Psi_{K^*}$ associated to the Fenchel conjugate of $K$. They have Morse index $0$, $1$ and $2$, respectively. Due to the low regularity of $\Psi_{K^*}$, the Morse theory for the Clarke dual action functional requires considering a suitable finite dimensional reduction, but we shall ignore this issue here. The functional $\Psi_{K^*}$ has a local minimizer at $z_0$, is unbounded from below and $\gamma_-$ behaves as a mountain pass critical point: one branch of the one-dimensional unstable manifold of $\gamma_-$ converges to $z_0$, while on the other one $\Psi_{K^*}$ tends to $-\infty$. 

In \cite{ak22}, the first and third author constructed an isomorphism between the Morse complex of the Clarke dual action functional and the Hamiltonian Floer complex. This isomorphism preserves the action filtrations and shifts the grading by $n$. In that paper, $\Z/2$ coefficients were considered. Here, it is important to have an isomorphism over $\Z$ and in Appendices \ref{appA} and \ref{appB} of this paper we show how to deal with orientations and extend the main theorem of \cite{ak22} to integer coefficients. When we apply this isomorphism to the Hamiltonian $K$, we obtain that the integer valued algebraic count of Floer cylinders from $\gamma_-$ to $z_0$ is 1.

We now identify $\R^{2n}$ with the completion $\hat{X}$ of the Liouville domain $(X,\lambda_0)$. The next step is to prove that, for a generic almost complex structure $J$ on $\hat{X}$ which on the positive cylindrical end $\hat{X}\setminus X$ is symplectization admissible, the algebraic count of the finite energy $J$-holomorphic planes in $\hat{X}$ which are positively asymptotic to $\gamma$ and pass through a given generic point in $\hat{X}$ is also equal to 1. This is deduced from the above count of Floer cylinders by a neck stretching argument which is similar to the arguments used by Bourgeois and Oancea in \cite{bo09a} in order to relate symplectic homology to contact homology.

We finally focus on the case $n=2$ and consider the space $\mathcal{M}_1=\mathcal{M}_1(\hat{X},\gamma,J)$ of all unparametrized finite energy $J$-holomorphic planes in $\hat{X}$ with one marked point which are positively asymptotic to $\gamma$ and the associated evaluation map $\mathrm{ev}: \mathcal{M}_1 \rightarrow \R^4$ at the marked point, which turns out to be a proper map. For $J$ generic, the space $\mathcal{M}_1$ is a smooth 4-dimensional manifold, has a canonical orientation, and by the above result the degree of the map $\mathrm{ev}$ is 1. We shall show that the latter fact implies that $\gamma$ is a Hopf orbit. Indeed, using ideas from \cite{hss22,hhr24} we shall prove that in general, for a dynamically convex nondegenerate domain $X\subset \R^4$ and a periodic Reeb orbit $\gamma$ on $\partial X$ with Conley-Zehnder index 3, the degree of the evaluation map $\mathrm{ev}: \mathcal{M}_1(\hat{X},\gamma,J) \rightarrow \R^4$ is non-negative, strictly positive if and only if $\gamma$ bounds a positively immersed symplectic disk in $X$, and that the following statements are equivalent:
\begin{enumerate}[(i)]
\item $\deg (\mathrm{ev} ) =1$;
\item $\gamma$ bounds an embedded symplectic disk in $X$;
\item $\gamma$ is a Hopf orbit;
\item $\gamma$ bounds a disk-like global surface of section in $\partial X$.
\end{enumerate}
See Theorem \ref{thm:degree_characterizes_hopf} below for a more general statement, in which we allow $\gamma$ to have an arbitrary Conley-Zehnder index by working with fast pseudoholomorphic planes, as introduced by Hryniewicz in \cite{hry12}. Note that some implications in the above equivalence of statements are not new. For example, as already mentioned earlier, the equivalence of (iii) and (iv) was proved by Hryniewicz in \cite{hry14}. The main novelty of our work is that statement (i) implies the other three.

This proves Theorem \ref{main} under the additional assumption that $\partial X$ has just one closed characteristic $\gamma$ of minimal action and that $\gamma$ is nondegenerate. The general case now follows from a perturbation argument.

\paragraph{Structure of the paper.}

In Sections \ref{CRsec} and \ref{phcce}, we review preliminary material concerning linear Cauchy-Riemann type operators on punctured Riemann surfaces and punctured pseudoholomorphic curves in symplectizations and symplectic cobordisms. In Section \ref{sec:fast_planes}, we establish some results concerning the compactness and regularity of moduli spaces of fast pseudoholomorphic planes. Moreover, we introduce the degree of the evaluation map on the moduli space of fast planes with marked point. In Section \ref{sec:detecting_hopf}, we prove Theorem \ref{thm:degree_characterizes_hopf}, which says that this degree being equal to $1$ characterizes Hopf orbits. In Section \ref{floclasec}, we review the Floer complex of the Hamiltonian action functional and the Morse complex of the Clarke dual action functional. Moreover, we state Theorem \ref{isom}, which says that there exists a chain complex isomorphism between these two complexes over the integers. In Section \ref{conv-count-sec}, we use this isomorphism to prove that the count of Floer cylinders from $\gamma_-$ to $z_0$ mentioned above is equal to $1$. This is the content of Theorem \ref{convex-count}. In Section \ref{ns-stat}, we use a series of neck stretching arguments to show that the count of pseudoholomorphic planes asymptotic to the systole of a uniformly convex domain and passing through a prescribed point is equal to $1$. This is Theorem \ref{count-planes}. In Section \ref{sec:proof_main_theorem}, we prove Theorem \ref{main}, the main result of our paper, and also Corollary \ref{cor:hopf_systole_on_convex_domain}. Appendices \ref{appA} and \ref{appB} are devoted to the proof of Theorem \ref{isom}.

\paragraph{Acknowledgements.} A.A.~is partially supported by the DFG under the Collaborative Research Center SFB/TRR 191 - 281071066 (Symplectic Structures in Geometry, Algebra and Dynamics). O.E.~is supported by Dr.\ Max R\"{o}ssler, the Walter Haefner Foundation, and the ETH Z\"{u}rich Foundation. J.K.~is partially supported by Samsung Science and Technology Foundation SSTF-BA1801-01 and National Research Foundation of Korea grant NRF-2020R1A5A1016126 and RS-2023-00211186.

\tableofcontents

\numberwithin{equation}{section}

\section{Cauchy-Riemann operators on punctured Riemann surfaces}
\label{CRsec}

In this section, we review some material concerning linear Cauchy-Riemann type operators on vector bundles over punctured Riemann surfaces. Some parts of this review closely follow the exposition in \cite{wen10, wen16}.

\subsection{Asymptotic operators}

Let $\T\coloneqq \R/\Z$ denote the circle. An {\it asymptotic operator} is an unbounded self-adjoint operator of the form
\begin{equation}
\label{eq:asymptotic_operator_trivialized}
A: H^1(\T,\R^{2n})\subset L^2(\T,\R^{2n}) \rightarrow L^2(\T,\R^{2n}), \qquad Af = -J_0 f' - Sf.
\end{equation}
Here $J_0$ denotes the standard complex structure on $\R^{2n} \cong \C^n$ and $S: \T \rightarrow \mathrm{Sym}(2n)$ is a continuous loop of real symmetric $2n\times 2n$ matrices. 
The spectrum $\sigma(A)\subset \R$ of an asymptotic operator is discrete and accumulates at $\pm \infty$. Each eigenvalue has multiplicity at most $2n$ because eigenfunctions satisfy a linear ODE. An asymptotic operator $A$ is called {\it nondegenerate} if $0$ is not in its spectrum. Associated to an asymptotic operator $A = -J_0\partial_t-S$, we have an arc $\Phi:[0,1]\rightarrow\operatorname{Sp}(2n)$ in the group of symplectic matrices characterized by the ODE
\begin{equation*}
\Phi(0) = \operatorname{id}, \qquad \Phi'(t)=J_0S(t)\Phi(t).
\end{equation*}
The operator $A$ is nondegenerate if and only if $1$ is not an eigenvalue of $\Phi(1)$.

Let $\operatorname{Sp}_*(2n)\subset \operatorname{Sp}(2n)$ be the open and dense subset consiting of all symplectic matrices which do not have $1$ as an eigenvalue. Moreover, let $\widetilde{\operatorname{Sp}}_*(2n)$ denote the preimage of $\operatorname{Sp}_*(2n)$ under the universal covering map $\widetilde{\operatorname{Sp}}(2n)\rightarrow \operatorname{Sp}(2n)$. The {\it Conley-Zehnder index} \cite{cz84} is a locally constant map
\begin{equation*}
\operatorname{CZ} : \widetilde{\operatorname{Sp}}_*(2n) \rightarrow \Z
\end{equation*}
which descends to a bijection between the path components of $\widetilde{\operatorname{Sp}}_*(2n)$ and the integers $\Z$. If $A$ is a nondegenerate asymptotic operator with associated arc of symplectic matrices $\Phi$, we define the Conley-Zehnder index of $A$ to be
\begin{equation*}
\operatorname{CZ}(A) \coloneqq \operatorname{CZ}([\Phi]).
\end{equation*}
There exists a unique maximal lower semicontinuous extension of the Conley-Zehnder index to all of $\widetilde{\operatorname{Sp}}(2n)$. In this paper, $\operatorname{CZ}([\Phi])$ will refer to the value of this lower semicontinuous function whenever $[\Phi]\notin \widetilde{\operatorname{Sp}}_*(2n)$. This also yields an extension of the Conley-Zehnder index to degenerate asymptotic operators. For $\delta \in \R$, it will be useful to introduce the {\it constrained Conley-Zehnder index}
\begin{equation*}
\operatorname{CZ}^\delta(A) \coloneqq \operatorname{CZ}(A-\delta).
\end{equation*}
Clearly, $\operatorname{CZ}^0$ recovers the usual Conley-Zehnder index.

In the case $n=1$, the Conley-Zehnder index admits a description in terms of winding numbers of eigenfunctions \cite{hwz95}. There is a function 
\begin{equation*}
\operatorname{wind}:\sigma(A)\rightarrow \Z
\end{equation*}
called the {\it winding number} defined as follows. Let $\nu\in \sigma(A)$ be an eigenvalue with eigenfunction $f\in C^{\infty}(\T,\R^{2})$. Since $f$ solves a first order linear ODE, it must be nowhere vanishing and we define $\operatorname{wind}(\nu)$ to be the winding number of $f:\T\rightarrow \R^{2}\setminus \left\{ 0 \right\}$. This does not depend on the choice of the eigenfunction $f$ for $\nu$. Moreover, the function $\operatorname{wind}$ is non-decreasing and attains each integer value twice if eigenvalues are counted with multiplicity. For $\delta \in \R$, we define the quantities
\begin{equation}
\label{alpha}
\begin{split}
\alpha^{<\delta}(A) &\coloneqq \max \left\{ \operatorname{wind}(\nu) \mid \nu\in \sigma(A), \nu<\delta \right\}, \\
\alpha^{\geq \delta}(A) & \coloneqq \min \left\{ \operatorname{wind}(\nu) \mid \nu\in \sigma(A), \nu\geq \delta \right\}, \\
p^\delta(A) &\coloneqq \alpha^{\geq \delta}(A) - \alpha^{<\delta}(A) \in \left\{ 0,1 \right\}.
\end{split}
\end{equation}
For $\delta\in \R$, the constrained Conley-Zehnder index is given by
\begin{equation}
\label{eq:conley_zehnder_via_winding}
\operatorname{CZ}^\delta(A) = \alpha^{<\delta}(A) +  \alpha^{\geq \delta}(A)  = 2\alpha^{<\delta}(A) + p^\delta(A).
\end{equation}

Let $E\rightarrow \T$ be a Hermitian vector bundle. An operator
\begin{equation*}
A:H^1(E)\subset L^2(E)\rightarrow L^2(E)
\end{equation*}
is called an asymptotic operator if it is of the form \eqref{eq:asymptotic_operator_trivialized} in some (and hence any) unitary trivialization of $E$. Equivalently, asymptotic operators on $E$ are precisely the operators of the form $-J\nabla_t$ where $\nabla$ is a symplectic connection on $E$. For every choice of unitary trivialization $\tau$ of $E$ and every $\delta\in \R$, we have a Conley-Zehnder index $\operatorname{CZ}_\tau(A)$ and a constrained Conley-Zehnder index $\operatorname{CZ}_\tau^\delta(A)$. Clearly, this only depends on the homotopy class of $\tau$. When $E$ has complex rank 1, the Conley-Zehnder index $\operatorname{CZ}_\tau^\delta(A)$ is determined by the winding number $\operatorname{wind}_{\tau}: \sigma(A) \rightarrow \Z$ through the integers $\alpha_{\tau}^{<\delta}(A)$ and $\alpha_{\tau}^{\geq \delta}(A)$ as in \eqref{eq:conley_zehnder_via_winding}. These functions are obtained from the ones defined above using the trivialization $\tau$, but they actually depend only on the homotopy class of $\tau$. The number $p^{\delta}(A):= \alpha_{\tau}^{\geq \delta}(A) - \alpha_{\tau}^{<\delta}(A)$ is instead readily seen to be independent of $\tau$.

\subsection{Real linear Cauchy-Riemann type operators}

Let $(\Sigma,j)$ be a closed Riemann surface and let $\Gamma \subset \Sigma$ be a finite set of punctures. Assume that $\Gamma=\Gamma^+ \cup \Gamma^-$ is partitioned into positive and negative punctures. Set $\dot{\Sigma}\coloneqq \Sigma\setminus \Gamma$ and let $E\rightarrow \dot{\Sigma}$ be a smooth complex vector bundle of complex rank $n$. A {\it real-linear Cauchy-Riemann type operator} on $E$ is a first-order differential operator
\begin{equation*}
D:\Gamma(E)\rightarrow \Gamma(\Lambda^{0,1}T^*\dot{\Sigma}\otimes E)
\end{equation*}
satisfying
\begin{equation*}
D(fX) = fDX + (\overline{\partial}f) X \qquad \text{for all $f\in C^\infty(\dot{\Sigma},\R)$ and $X\in \Gamma(E)$,}
\end{equation*}
where $\Gamma(E)$ denotes the space of smooth sections of $E$. For each puncture $z\in \Gamma^\pm$, let us choose a biholomorphic identification of a punctured neighbourhood $\dot{\MU}_z$ of $z$ with the half cylinder $Z^{\pm}$ where $Z^+ = [0,\infty)\times \T$ and $Z^- = (-\infty,0]\times \T$. Moreover, let us fix a Hermitian vector bundle $E_z\rightarrow \T$ and a complex vector bundle isomorphism $E|_{\dot{\MU}_z} \cong \operatorname{pr}^*E_z$ where $\operatorname{pr}:Z^{\pm} \rightarrow \T$ is the natural projection. We call such a choice of identifications an {\it asymptotically Hermitian structure} on $E$. With an asymptotically Hermitian structure fixed, any unitary trivialization $\tau: E_z\cong \R^{2n}$ induces a trivialization $\tau:E|_{\dot{\MU}_z}\cong Z^{\pm}\times \R^{2n}$.

Let $D$ be a Cauchy-Riemann type operator on $E$, let $z\in \Gamma^\pm$ be a puncture, and let $A_z$ be an asymptotic operator on $E_z$. Using a trivialization $\tau$ of $E_z$ and indicating the coordinates on $Z^\pm$ by $(s,t)$, we can write
\begin{equation*}
D = \partial_s + J_0\partial_t + S(s,t) \qquad \text{and} \qquad A_z = -J_0\partial_t - S(t).
\end{equation*}
Here $S(s,t)$ takes values in the space of $2n\times 2n$ real matrices and $S(t)$ takes values in the subspace of symmetric matrices. We say that $D$ is {\it asymptotic} to $A_z$ at the puncture $z$ if $S(s,t)$ converges to $S(t)$ uniformly in $t$ as $s$ tends to $\pm \infty$.

Now suppose that $D$ is asymptotic to an asymptotic operator $A_z$ at each puncture $z\in \Gamma$. If each $A_z$ is nondegenerate, then for every $p\in (1,+\infty)$ the operator
\begin{equation*}
D : W^{1,p}(E) \rightarrow L^p(\Lambda^{0,1}T^*\dot{\Sigma}\otimes E)
\end{equation*}
is Fredholm with Fredholm index given by the formula
\begin{equation}
\label{eq:index_formula_CR_operator}
\operatorname{ind}(D) = n \chi(\dot{\Sigma}) + 2 c_1^\tau(E) + \sum\limits_{z\in \Gamma^+} \operatorname{CZ}_\tau(A_z) - \sum\limits_{z\in \Gamma^-} \operatorname{CZ}_\tau(A_z).
\end{equation}
In this formula, $\chi(\dot\Sigma)$ is the Euler characteristic of $\dot{\Sigma}$ and $c_1^\tau(E)$ denotes the {\it relative first Chern number} of $E$ with respect to the trivialization $\tau$. If $E$ is a line bundle, then $c_1^\tau(E)$ is the algebraic count of zeros of a generic section of $E$ which, near the punctures $\Gamma$, is non-vanishing and constant with respect to the trivialization $\tau$. For vector bundles $E$ of higher rank, $c_1^\tau(E)$ can be defined using the direct sum property.

\subsection{Orientation lines}

Given a real vector space $V$ of finite dimension $n\geq 0$, let us define the {\it orientation line} of $V$ to be the free $\Z$-module of rank $1$
\begin{equation*}
\mathfrak{o}_V \coloneqq H_n(V,V\setminus\left\{ 0 \right\};\Z).
\end{equation*}
A choice of generator of $\mathfrak{o}_V$ corresponds to a choice of orientation of $V$. If $T:X\rightarrow Y$ is a Fredholm operator between the real Banach spaces $X$ and $Y$, we define the orientation line of $T$ to be
\begin{equation*}
\mathfrak{o}_T \coloneqq \mathfrak{o}_{\operatorname{ker} T} \otimes \mathfrak{o}_{\operatorname{coker}T }^*.
\end{equation*}
The orientation lines form a local system over the space $\Phi(X,Y)$ of Fredholm operators from $X$ to $Y$, endowed with the operator norm topology. In fact, the orientation line of the Fredholm operator $T$ is canonically isomorphic to the orientation line of the determinant line of $T$, and the latter lines are the fibers of a vector bundle over $\Phi(X,Y)$, see Appendix \ref{appA} below.

If the vector space $V$ is complex linear, then the complex orientation of $V$ induces a preferred isomorphism $\mathfrak{o}_V\cong \Z$. Similarly, if the Banach spaces $X$ and $Y$ have complex structures and the operator $T\in \Phi(X,Y)$ is complex linear, we obtain a preferred isomorphism $\mathfrak{o}_T\cong \Z$.

Consider a complex vector bundle $E\rightarrow \dot{\Sigma}$ over a punctured Riemann surface $\dot{\Sigma}=\Sigma\setminus\Gamma$. Fix an asymptotically Hermitian structure at the punctures $\Gamma$. Let $\MD$ denote the space of all real linear Cauchy-Riemann type operators on $E$ with nondegenerate asymptotic operators at the punctures. The restriction  to $\MD$ of the local system given by the orientation lines on the space of Fredholm operators from $W^{1,p}(E)$ to $L^p(\Lambda^{0,1}T^* \dot\Sigma\otimes E)$ defines a local system $\mathfrak{o}$ on $\MD$. For each puncture $z\in \Gamma$, let $\MA_z$ denote the space of all nondegenerate asymptotic operators $A_z$ on the asymptotic bundle $E_z$. Moreover, set $\MA_\Gamma \coloneqq \prod_{z\in\Gamma}\MA_z$. There is a natural map $\MD\rightarrow \MA_\Gamma$ assigning to a Cauchy-Riemann type operator its asymptotic operators. The fibers of this map are contractible. The local system $\mathfrak{o}$ therefore descends from $\MD$ to $\MA_\Gamma$.

For $j\in \left\{ 0,1 \right\}$, consider complex vector bundles $E_j\rightarrow \dot{\Sigma}_j$ of the same rank over punctured Riemann surfaces $\dot{\Sigma}_j = \Sigma_j\setminus \Gamma_j$. Fix asymptotically Hermitian structures. Let $\Gamma_j^{\pm}$ be partitions into positive and negative punctures. Set $\Gamma^+\coloneqq \Gamma_0^+$ and $\Gamma^-\coloneqq \Gamma_1^-$. Suppose that there is an identification $\Gamma_0^- \cong \Gamma_1^+\eqqcolon \Gamma^0$. For $z\in \Gamma^0$, fix a unitary bundle isomorphism $(E_0)_z \cong (E_1)_z$. Using these data, we can form a vector bundle $E$ over a glued surface $\dot{\Sigma}$ with punctures $\Gamma \coloneqq \Gamma^+ \cup \Gamma^-$. Let $D_j$ be Cauchy-Riemann type operators on $E_j$ with nondegenerate asymptotic operators. Assume that the asymptotic operators of $D_0$ at the punctures $\Gamma^0$ match those of $D_1$. Let $D$ be a Cauchy-Riemann type operator obtained by gluing $D_0$ and $D_1$. Then the Floer-Hofer kernel gluing operation \cite{fh93} induces an isomorphism
\begin{equation}
\label{eq:floer_hofer_kernel_gluing}
\mathfrak{o}_{D_0} \otimes \mathfrak{o}_{D_1} \cong \mathfrak{o}_D.
\end{equation}
This isomorphism is natural in the following sense. Let $\mathfrak{o}_j$ and $\mathfrak{o}$ denote the orientation local systems over $\MA_{\Gamma_j}$ and $\MA_\Gamma$, respectively. The product $\MA_{\Gamma^+}\times \MA_{\Gamma^0} \times \MA_{\Gamma^-}$ admits natural projections $\operatorname{pr}_j$ to $\MA_{\Gamma_j}$ and $\operatorname{pr}$ to $\MA_{\Gamma}$. Isomorphism \eqref{eq:floer_hofer_kernel_gluing} induces an isomorphism of local systems over $\MA_{\Gamma^+} \times \MA_{\Gamma^0} \times \MA_{\Gamma^-}$
\begin{equation}
\label{eq:floer_hofer_kernel_gluing_naturality}
\operatorname{pr}_0^*\mathfrak{o}_0 \otimes \operatorname{pr}_1^*\mathfrak{o}_1 \cong \operatorname{pr}^*\mathfrak{o}.
\end{equation}

The following lemma describes the orientation local system $\mathfrak{o}$ in more detail in the case of a complex line bundle.

\begin{lem}
\label{lem:orientation_local_system}
Let $E\rightarrow \dot{\Sigma}$ be a complex line bundle.
\begin{enumerate}[(i)]
\item Consider a puncture $z\in \Gamma$. Path components of $\MA_z$ are classified by the Conley-Zehnder index. Path components of odd Conley-Zehnder index are contractible and path components of even Conley-Zehnder index are homotopy equivalent to the circle $\T$.
\item Consider a loop in $\MA_\Gamma$ which is non-constant only in one single factor $\MA_z$, where it represents a generator of the fundamental group of a path component of even Conley-Zehnder index. Then the monodromy of $\mathfrak{o}$ around this loop is $-1$.
\end{enumerate}
\end{lem}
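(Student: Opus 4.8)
The plan is to deduce (i) from the topology of $\operatorname{Sp}(2)=SL(2,\R)$, and then obtain (ii) from (i) together with a single application of the index formula \eqref{eq:index_formula_CR_operator}.

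\medskip

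\emph{Proof of (i).} Fixing a unitary trivialization of $E_z$ identifies $\MA_z$ with the open subset $\mathcal S^*\subset C^0(\T,\operatorname{Sym}(2,\R))$ of loops $S$ for which $A_S:=-J_0\partial_t-S$ is nondegenerate. Sending $S$ to the homotopy class of its fundamental solution $\Phi_S$ (solving $\Phi_S'=J_0S\Phi_S$, $\Phi_S(0)=\operatorname{id}$) defines a map $\Psi\colon\mathcal S^*\to\widetilde{\operatorname{Sp}}_*(2)$, and I would invoke the standard fact (following \cite{wen10,wen16}) that $\Psi$ induces a bijection on $\pi_0$ and an isomorphism on all fundamental groups — informally, both spaces arise by deleting from a contractible space the preimage of the degeneracy wall $\{\det(M-\operatorname{id})=0\}$. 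It then suffices to analyse $\widetilde{\operatorname{Sp}}_*(2)$. On $SL(2,\R)$ one has $\det(M-\operatorname{id})=2-\operatorname{tr}M$, so $\operatorname{Sp}_*(2)$ has exactly the two components $\{\operatorname{tr}M<2\}$, which is contractible, and $\{\operatorname{tr}M>2\}$, the positive-hyperbolic matrices; the latter is homeomorphic to $\{(L_+,L_-)\in\RP^1\times\RP^1:L_+\neq L_-\}\times(1,\infty)$, which retracts onto $\RP^1\cong\T$ with $\pi_1$ generated by the loop rotating both eigenlines once. Since $SL(2,\R)\simeq S^1$ has contractible universal cover, the preimage of $\{\operatorname{tr}M<2\}$ is a disjoint union (over $\pi_1SL(2,\R)=\Z$) of contractible sheets, while the preimage of $\{\operatorname{tr}M>2\}$ is a disjoint union of sheets each $\simeq\T$, because the inclusion-induced map $\pi_1(\{\operatorname{tr}M>2\})\to\pi_1(SL(2,\R))$ vanishes: its generating loop $\phi\mapsto R_\phi M_0R_\phi^{-1}$ ($\phi\in[0,\pi]$, $R_\phi\in SO(2)$) is contracted by $\phi\mapsto R_\phi M_sR_\phi^{-1}$ for any path $M_s$ from $M_0$ to $\operatorname{id}$ in $SL(2,\R)$, each stage being a loop since $R_\pi=-\operatorname{id}$ is central. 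Finally, the model path $\operatorname{diag}(e^{at},e^{-at})$ with $a>0$ has $\operatorname{CZ}=0$ and endpoint in $\{\operatorname{tr}M>2\}$, so even Conley–Zehnder index corresponds to the $\T$-sheets and odd index to the contractible ones; changing the trivialization alters $\operatorname{CZ}_\tau$ only by an even number, so this parity, hence the homotopy type, is intrinsic. This proves (i).

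\medskip

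\emph{Proof of (ii).} By the naturality of the Floer–Hofer kernel gluing \eqref{eq:floer_hofer_kernel_gluing_naturality}, capping off every puncture of $\dot\Sigma$ other than $z_0$ with a fixed Cauchy–Riemann operator over a disk tensors the orientation local system with a constant line, hence leaves the monodromy around a loop supported in the $\MA_{z_0}$-factor unchanged; so I may assume $\dot\Sigma=\Sigma\setminus\{z_0\}$ with $\Sigma$ closed and $z_0$ the only puncture. The line bundle $E$ carries the global gauge transformations $e^{i\theta}$ (fibrewise multiplication by $e^{i\theta}$), and I would realize a generator of $\pi_1$ of the even-index component of $\MA_{z_0}$ by the loop $\theta\mapsto e^{i\theta}A_{z_0}e^{-i\theta}$, $\theta\in[0,\pi]$: since $e^{i\theta}$ commutes with $J_0$ and $\partial_t$, one has $\Phi_{e^{i\theta}A_{z_0}e^{-i\theta}}(\cdot)=R_\theta\Phi_{A_{z_0}}(\cdot)R_{-\theta}$, so $\Psi$ carries this loop to one projecting in $\operatorname{Sp}_*(2)$ to the eigenline-rotation loop of part (i), which is a generator of $\pi_1$ of the corresponding $\T$-sheet because $\Phi_{A_{z_0}}(1)$ is positive hyperbolic in the even case. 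Now lift to the loop $\theta\mapsto D_\theta:=e^{i\theta}De^{-i\theta}$ in $\MD$, for any $D$ asymptotic to $A_{z_0}$. As $e^{i\theta}$ intertwines $D$ with $D_\theta$, the dimensions of $\ker$ and $\operatorname{coker}$ remain constant and the conjugation isomorphisms are the unique continuous family of isomorphisms of $\mathfrak o$ starting at the identity, i.e.\ the parallel transport; at $\theta=\pi$ this is $-\operatorname{id}$, which acts on $\mathfrak{o}_D=\mathfrak{o}_{\ker D}\otimes\mathfrak{o}_{\operatorname{coker}D}^*$ by $(-1)^{\dim\ker D+\dim\operatorname{coker}D}=(-1)^{\operatorname{ind}(D)}$, so the monodromy equals $(-1)^{\operatorname{ind}(D)}$. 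By the index formula with $n=1$, the term $2c_1^\tau(E)$ even, and $\operatorname{CZ}_\tau(A_{z_0})$ even,
\[
\operatorname{ind}(D)\equiv\chi(\dot\Sigma)=\chi(\Sigma)-1\equiv1\pmod2,
\]
so the monodromy is $-1$. (Even without knowing a priori that the chosen loop is exactly a generator, the computation shows its monodromy is $-1\neq1$, so it is an odd multiple of the generator, whence the generator too has monodromy $-1$.)

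\medskip

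\emph{Main obstacle.} The technical crux is the input to (i): that $\Psi$ is a weak homotopy equivalence — equivalently, the precise classification of $\pi_0(\MA_z)$ by the Conley–Zehnder index and the identification of the homotopy types of the components. Since $\Psi$ is not literally a fibration, one must either cite the standard treatments or argue directly with explicit homotopies of loops of symmetric matrices, taking care that these remain off the degeneracy wall; one also needs the (standard) contractibility of $\{\operatorname{tr}M<2\}\subset SL(2,\R)$. Once this structural input is granted, part (ii) is short, the only delicate point being to match the explicit loop $\theta\mapsto e^{i\theta}A_{z_0}e^{-i\theta}$ with a genuine generator of $\pi_1$, which is exactly what the $\RP^1$-description in (i) supplies.
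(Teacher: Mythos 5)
Your proof is correct and follows essentially the same route as the paper: decomposing $\operatorname{Sp}_*(2)$ by the sign of $\det(\operatorname{id}-M)=2-\operatorname{tr}M$, observing that the positive-hyperbolic component retracts onto $\T$ and lifts homeomorphically to the universal cover, and in (ii) computing the monodromy via the conjugation loop $\theta\mapsto e^{i\theta}De^{-i\theta}$ together with the parity of the Fredholm index. For the ``main obstacle'' you flag, the paper observes that the natural map from the arc space $\{\Phi:[0,1]\to\operatorname{Sp}(2)\mid\Phi(0)=\operatorname{id},\ \Phi(1)\in\operatorname{Sp}_*(2)\}$ to $\widetilde{\operatorname{Sp}}_*(2)$ is a fibration whose fibres are path components of $\Omega\operatorname{Sp}(2)\simeq\Omega S^1$, hence contractible, so it is a homotopy equivalence.
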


\begin{rem}
\label{rem:canonical_orientation_plane}
{\rm Let us describe the special case of Lemma \ref{lem:orientation_local_system} for $\dot{\Sigma} = \C$. For each integer $k$, let $\MD_k$ denote the space of Cauchy-Riemann type operators on $E\rightarrow \C$ which are positively asymptotic to some nondegenerate asymptotic operator of Conley-Zehnder index $k$. For even $k$, the space $\MD_k$ is homotopy equivalent to $\T$ and the restriction $\mathfrak{o}|_{\MD_k}$ is the unique (up to isomorphism) non-trivial local system of free Abelian groups of rank $1$. If $k$ is odd, then $\MD_k$ is contractible and $\mathfrak{o}|_{\MD_k}$ is necessarily trivial. In this case, the set of complex linear Cauchy-Riemann type operators in $\MD_k$ is non-empty and connected. Thus there exists a preferred trivialization $\mathfrak{o}|_{\MD_k}\cong \MD_k \times \Z$ which agrees with the preferred isomorphism $\mathfrak{o}_D\cong \Z$ whenever $D\in \MD_k$ is complex linear. The same is true in case of negative asymptotics.}
\end{rem}

\begin{proof}
The space of asymptotic operators $\MA_z$ is homotopy equivalent to the space of arcs
\begin{equation*}
\MS\MP_*(2) \coloneqq \left\{ \Phi:[0,1]\rightarrow \operatorname{Sp}(2) \mid \Phi(0)=\operatorname{id}, \enspace \Phi(1)\in \operatorname{Sp}_*(2) \right\}.
\end{equation*}
We claim that the natural projection map $p:\MS\MP_*(2) \rightarrow \widetilde{\operatorname{Sp}}_*(2)$ is a homotopy equivalence. Indeed, $p$ is a fibration whose fiber is homotopy equivalent to a connected component of the based loop space $\Omega\operatorname{Sp}(2)$. Since $\operatorname{Sp}(2)$ is homotopy equivalent to $\operatorname{U}(1)$, it is not hard to see that the fibers of $p$ are contractible, implying that $p$ is a homotopy equivalence. Let us abbreviate
\begin{align*}
\operatorname{Sp}_+(2) & \coloneqq \left\{ B \in \operatorname{Sp}(2) \mid \det(\operatorname{id} - B) > 0 \right\} \\
\operatorname{Sp}_-(2) & \coloneqq \left\{ B \in \operatorname{Sp}(2) \mid \det(\operatorname{id} - B) < 0 \right\}.
\end{align*}
Clearly, $\operatorname{Sp}_*(2)$ is the disjoint union of $\operatorname{Sp}_+(2)$ and $\operatorname{Sp}_-(2)$. Using that the inclusions $\operatorname{Sp}_\pm(2)\subset \operatorname{Sp}(2)$ induce trivial homomorphisms of fundamental groups, we see that the projection $\widetilde{\operatorname{Sp}}_*(2)\rightarrow \operatorname{Sp}_*(2)$ maps path components of $\widetilde{\operatorname{Sp}}_*(2)$ homeomorphically onto $\operatorname{Sp}_\pm(2)$. Path components of odd Conley-Zehnder index are mapped to $\operatorname{Sp}_+(2)$ and path components of even Conley-Zehnder index are mapped to $\operatorname{Sp}_-(2)$. The first assertion of the lemma follows from the fact that $\operatorname{Sp}_+(2)$ is contractible and that $\operatorname{Sp}_-(2)$ is homotopy equivalent to $\T$.

In view of the naturality of the Floer-Hofer kernel gluing operation \eqref{eq:floer_hofer_kernel_gluing_naturality}, it suffices to prove the second assertion of the lemma in the special case of a trivial line bundle $E$ over $\C$. Let $D$ denote an arbitrary Cauchy-Riemann type operator with nondegenerate asymptotic operator with even Conley-Zehnder index $k$. By \eqref{eq:index_formula_CR_operator}, $D$ has odd Fredholm index $1 \pm k$, the alternative for the sign depending on whether the puncture at $\infty$ is positive or negative. Consider the loop of Cauchy-Riemann type operators $(D_\lambda \coloneqq e^{i\pi \lambda}De^{-i\pi \lambda})_{\lambda\in \T}$. The image of $(D_\lambda)_{\lambda\in\T}$ in the space of nondegenerate asymptotic operators $\mathcal{A}_\infty$ represents a generator of the fundamental group of the path component it is contained in. The kernel and cokernel of $D_\lambda$ are given by
\begin{equation*}
\operatorname{ker}D_\lambda = e^{i\pi\lambda}\operatorname{ker}D\qquad \text{and} \qquad \operatorname{coker}D_\lambda = e^{i\pi\lambda}\operatorname{coker}D.
\end{equation*}
Since $e^{i\pi}=-1$, we see that the vector bundle $(\operatorname{ker}D_\lambda)_{\lambda\in\T}$ over $\T$ is trivial if and only if the dimension of $\operatorname{ker}D$ is even. The analogous statement holds for $(\operatorname{coker}D_\lambda)_{\lambda\in\T}$. Since the index of $D$ is odd, exactly one of the vector spaces $\operatorname{ker}D$ and $\operatorname{coker}D$ has odd dimension. This implies that the bundle of orientation lines $(\mathfrak{o}_{D_\lambda})_{\lambda\in \T}$ is non-trivial.
\end{proof}

\subsection{Exponential weights}

We also need to consider Sobolev spaces of sections of $E$ with {\it exponential weights} at the punctures. For each puncture $z\in \Gamma$, let us fix an exponential weight $\delta_z\in \R$ and let $\delta = (\delta_z)_{z\in \Gamma}$ denote the collection of all weights. We define $W^{1,p,\delta}(E)$ to be the space of all sections $X$ of $E$ of class $W^{1,p}_{\operatorname{loc}}$ such that, on the punctured neighbourhood $\dot{\MU}_z\cong Z^{\pm}$ of $z\in \Gamma^\pm$, the section $e^{\pm \delta_z s}X(s,t)$ is of class $W^{1,p}$. The space $L^{p,\delta}(\Lambda^{0,1}T^*\dot{\Sigma}\otimes E)$ is defined similarly. Now consider again a Cauchy-Riemann type operator $D$ on $E$ with asymptotic operators $A_z$. Assume that $\mp\delta_z\notin \sigma(A_z)$ at every puncture $z\in \Gamma^{\pm}$. Then the operator
\begin{equation*}
D : W^{1,p,\delta}(E) \rightarrow L^{p,\delta}(\Lambda^{0,1}T^*\dot{\Sigma}\otimes E)
\end{equation*}
is Fredholm with Fredholm index given by
\begin{equation}
\label{eq:index_formula_CR_operator_exponential_weight}
\operatorname{ind}^\delta(D) = n \chi(\dot{\Sigma}) + 2 c_1^\tau(E) + \sum\limits_{z\in \Gamma^+} \operatorname{CZ}_\tau^{-\delta_z}(A_z) - \sum\limits_{z\in \Gamma^-} \operatorname{CZ}_\tau^{\delta_z}(A_z).
\end{equation}
This can be seen as follows. Let $\Phi:E\rightarrow E$ be a bundle automorphism which, near every puncture $z\in \Gamma^\pm$, is given by multiplication by $e^{\pm\delta_z s}$. Then the composition $\Phi_* D \Phi_*^{-1}$ is a Cauchy-Riemann type operator with asymptotic operators $A_z \pm \delta_z$. Since $\Phi$ induces Banach space isomorphisms $\Phi_*:W^{1,p,\delta} \rightarrow W^{1,p}$ and $\Phi_*:L^{p,\delta}\rightarrow L^p$ between Sobolev and Lebesgue spaces with and without exponential weight, the index formula with exponential weights \eqref{eq:index_formula_CR_operator_exponential_weight} is a direct consequence of the index formula without exponential weights \eqref{eq:index_formula_CR_operator}.

\subsection{Automatic regularity}

Now consider the case that $E$ is a complex line bundle. Let $D$ be a Cauchy-Riemann type operator on $E$ and let $\delta$ be a choice of exponential weights such that $\mp \delta_z \notin \sigma(A_z)$ for every $z\in \Gamma^\pm$. Suppose that $X\in \operatorname{ker}D$ is not identically equal to zero. It follows from the analysis in \cite{hwz96} and \cite{sie08} that, for every puncture $z\in \Gamma^{\pm}$, there exists an eigenfunction $e$ of $A_z$ with eigenvalue $\nu$ satisfying $\pm \nu < -\delta_z$ such that, for all $\pm s \gg 0$,
\begin{equation}
\label{eq:asymptotic_formula_linear_CR_operator}
X(s,t) = e^{\nu s}(e(t) + r(s,t))
\end{equation}
where the error term $r(s,t)$ tends to zero as $s\rightarrow \pm \infty$ uniformly in $t$. This asymptotic formula implies that $X$ has a well-defined winding number $\operatorname{wind}^\tau(X,z)$ at every puncture $z\in \Gamma^\pm$. Since $\pm \nu < -\delta_z$ and the winding number is non-decreasing on the spectrum $\sigma(A_z)$, we have
\begin{equation}
\label{eq:winding_inequality_linear_CR_operator}
\operatorname{wind}^\tau(X,z) \leq \alpha_\tau^{<-\delta_z}(A_z) \quad \text{if $z\in \Gamma^+$} \qquad \text{and} \qquad \operatorname{wind}^\tau(X,z) \geq \alpha_\tau^{\geq \delta_z}(A_z)\quad \text{if $z\in \Gamma^-$.}
\end{equation}
Another consequence of the asymptotic formula \eqref{eq:asymptotic_formula_linear_CR_operator} is that the zeros of $X$ cannot accumulate at the punctures $\Gamma$. Since the zeros of $X$ are isolated in $\dot{\Sigma}$ and have positive multiplicities, this implies that the algebraic count of zeros of $X$ is non-negative and finite. This fact in combination with estimate \eqref{eq:winding_inequality_linear_CR_operator} can be used to deduce Proposition \ref{prop:automatic_regularity_linear_CR_operator} below. The first assertion of Proposition \ref{prop:automatic_regularity_linear_CR_operator} is a special case of the more general result \cite[Proposition 2.2]{wen10} and the second assertion follows easily from its proof. For convenience of the reader, we repeat the argument.

\begin{prop}
\label{prop:automatic_regularity_linear_CR_operator}
Let $E\rightarrow \C$ be a complex line bundle. Let $D$ be a Cauchy-Riemann type operator on $E$ with asymptotic operator $A$ at the positive puncture $\infty$. Let $\delta \in \R$ be an exponential weight such that $-\delta\notin \sigma(A)$. Then the following statements hold:
\begin{enumerate}[(i)]
\item \label{item:automatic_regularity_linear_CR_operator} The operator $D:W^{1,p,\delta}(E)\rightarrow L^{p,\delta}(\Lambda^{0,1}T^*\C \otimes E)$ is injective if and only if $\operatorname{ind}^\delta(D)\leq 0$ and surjective if and only if $\operatorname{ind}^\delta(D)\geq 0$.
\item \label{item:automatic_regularity_linear_CR_operator_index_2} Assume that $\operatorname{ind}^\delta(D) = 2$. Let $X_1,X_2 \in \operatorname{ker}(D)$ be a basis inducing the preferred complex orientation of the orientation line $\mathfrak{o}_D$ (see Remark \ref{rem:canonical_orientation_plane}). Then, for all $p\in \C$, the vectors $X_1(p), X_2(p)$ form a basis of the fiber $E_p$ inducing the complex orientation of $E_p$.
\end{enumerate}
\end{prop}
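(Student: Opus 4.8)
The plan is to derive both assertions from three ingredients already assembled above: the asymptotic formula \eqref{eq:asymptotic_formula_linear_CR_operator} together with the winding estimate \eqref{eq:winding_inequality_linear_CR_operator}; the nonnegativity and finiteness of the algebraic count $Z(X)$ of zeros of a nonzero $X\in\ker D$; and the index formula \eqref{eq:index_formula_CR_operator_exponential_weight}. The bookkeeping rests on the standard identity, valid for any section $X\in\ker D$ (which by \eqref{eq:asymptotic_formula_linear_CR_operator} has finitely many zeros and a well-defined winding at the puncture),
\[
Z(X) = c_1^\tau(E) + \operatorname{wind}^\tau(X,\infty),
\]
obtained from the definition of $c_1^\tau$ by comparing $X$ with a reference section that is non-vanishing and constant near $\infty$ with respect to $\tau$. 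Since $\chi(\dot\Sigma)=1$ for $\dot\Sigma=\C$, and using $\operatorname{CZ}^{-\delta}_\tau(A)=2\alpha_\tau^{<-\delta}(A)+p^{-\delta}(A)$ from \eqref{eq:conley_zehnder_via_winding}, the index formula reads
\[
\operatorname{ind}^\delta(D)=1+2\bigl(c_1^\tau(E)+\alpha_\tau^{<-\delta}(A)\bigr)+p^{-\delta}(A).
\]
As any nonzero $X\in\ker D$ satisfies $0\le Z(X)=c_1^\tau(E)+\operatorname{wind}^\tau(X,\infty)\le c_1^\tau(E)+\alpha_\tau^{<-\delta}(A)$ by \eqref{eq:winding_inequality_linear_CR_operator}, it follows that $\ker D\ne 0$ forces $\operatorname{ind}^\delta(D)\ge 1$.

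Next I would handle the cokernel via the formal adjoint. By elliptic regularity the cokernel of $D\colon W^{1,p,\delta}(E)\to L^{p,\delta}(\Lambda^{0,1}T^*\C\otimes E)$ is identified, through the $L^2$-pairing, with the kernel of the formal adjoint $D^\ast$; after using a Hermitian metric and the complex structure to identify the bundles involved, $D^\ast$ is again a real-linear Cauchy-Riemann type operator on a complex line bundle over $\C$, now with a negative puncture at $\infty$, exponential weight $-\delta$, asymptotic operator built from $A$, and Fredholm index $-\operatorname{ind}^\delta(D)$. The same argument, now using the negative-puncture inequality of \eqref{eq:winding_inequality_linear_CR_operator} and the sign-reversed version of the zero-count identity, shows that $\operatorname{coker}D\ne 0$ forces $\operatorname{ind}^\delta(D)\le -1$. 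Since $\operatorname{ind}^\delta(D)$ cannot be both $\ge 1$ and $\le -1$, at most one of $\ker D$, $\operatorname{coker}D$ is nonzero; together with $\operatorname{ind}^\delta(D)=\dim\ker D-\dim\operatorname{coker}D$ this gives \eqref{item:automatic_regularity_linear_CR_operator}: if $\operatorname{ind}^\delta(D)\le 0$ then $\ker D=0$ (otherwise $\operatorname{coker}D=0$ and $\operatorname{ind}^\delta(D)>0$), i.e.\ $D$ is injective, and symmetrically $\operatorname{ind}^\delta(D)\ge 0$ implies $D$ surjective.

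For \eqref{item:automatic_regularity_linear_CR_operator_index_2}, suppose $\operatorname{ind}^\delta(D)=2$. As $c_1^\tau(E)+\alpha_\tau^{<-\delta}(A)$ is an integer and $p^{-\delta}(A)\in\{0,1\}$, the index identity forces $p^{-\delta}(A)=1$ and $c_1^\tau(E)+\alpha_\tau^{<-\delta}(A)=0$, so by the estimate above every nonzero $X\in\ker D$ has $Z(X)=0$, i.e.\ is nowhere vanishing. By \eqref{item:automatic_regularity_linear_CR_operator}, $D$ is surjective, hence $\dim_\R\ker D=2$. For any basis $X_1,X_2$ of $\ker D$ and any $p\in\C$, the vectors $X_1(p),X_2(p)$ must be $\R$-linearly independent in $E_p$ --- otherwise a nontrivial real combination $aX_1+bX_2\in\ker D\setminus\{0\}$ would vanish at $p$ --- so they form a basis of $E_p$, and it remains only to identify the orientation it induces with the complex one.

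To compute that orientation I would reduce to a complex-linear model. Conjugating $D$ by the bundle automorphism equal to $e^{\delta s}$ near $\infty$ (multiplication by a positive function, hence complex-linear and orientation-preserving on each fibre) and trivializing $E$ by a complex-linear trivialization (every line bundle over $\C$ is trivial), one may assume $\delta=0$, $E=\C\times\C$ with $c_1^\tau(E)=0$, and $A$ nondegenerate with $\operatorname{CZ}_\tau(A)=\operatorname{ind}(D)-1=1$, so that $D\in\MD_1$ in the notation of Remark \ref{rem:canonical_orientation_plane}. Every operator in $\MD_1$ has index $2$, hence by \eqref{item:automatic_regularity_linear_CR_operator} has vanishing cokernel and a $2$-dimensional kernel whose nonzero elements are nowhere vanishing (each asymptotic operator $A'$ has $\alpha_\tau^{<0}(A')=0$ because $\operatorname{CZ}_\tau(A')=1$). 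Thus the kernels form a real rank-$2$ vector bundle $\mathcal{K}$ over the contractible space $\MD_1$ with $\mathfrak{o}|_{\MD_1}$ its orientation local system, and for each fixed $p\in\C$ the evaluation $\operatorname{ev}_p\colon\mathcal{K}\to E_p=\C$ is a fibrewise isomorphism; pulling back the complex orientation of $\C$ along $\operatorname{ev}_p$ therefore trivializes the (trivial, rank-$1$) local system $\mathfrak{o}|_{\MD_1}$. At a complex-linear $D_0\in\MD_1$ this trivialization agrees with the preferred one of Remark \ref{rem:canonical_orientation_plane}, since there $\ker D_0$ is a complex line and $\operatorname{ev}_p\colon\ker D_0\to\C$ is complex-linear, hence carries the complex orientation of $\ker D_0$ to that of $\C$; two trivializations of a trivial rank-$1$ local system over a connected space that agree at one point agree everywhere, so they coincide on $\MD_1$, and evaluating at our $D$ shows that a basis $X_1,X_2$ of $\ker D$ inducing the preferred complex orientation of $\mathfrak{o}_D$ yields, at every $p$, a complex-oriented basis $X_1(p),X_2(p)$ of $E_p$. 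The main obstacle is exactly this orientation bookkeeping --- confirming that $\operatorname{ev}_p$ transports the \emph{preferred} orientation and not its opposite, which forces one to track the identification $\mathfrak{o}|_{\MD_1}\cong\mathfrak{o}_{\mathcal{K}}$ through the complex-linear locus --- while the only other delicate point is setting up the formal adjoint in \eqref{item:automatic_regularity_linear_CR_operator} so that the winding estimates transfer verbatim.
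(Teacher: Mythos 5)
Your argument is correct, and on the surjectivity half of part (i) it takes a genuinely different route from the paper. The injectivity direction --- a nonzero $X\in\ker D$ would, via the winding estimate, have a negative algebraic count of zeros --- is the same in both. For surjectivity when $\operatorname{ind}^\delta(D)\ge 0$, the paper argues directly on $\ker D$: it constructs an evaluation map at interior points (augmented by the leading asymptotic coefficient in the odd-index case), shows it is injective, and concludes $\dim\ker D=\operatorname{ind}^\delta(D)$, hence $\operatorname{coker}D=0$. You instead pass to the formal adjoint $D^*$ and run the zero-count argument on its kernel using the negative-puncture winding inequality. That is closer to Wendl's original proof of \cite[Proposition~2.2]{wen10}, which the paper already cites as the general result specializing to this Proposition. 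Your route is symmetric in $\ker/\operatorname{coker}$, but --- as you flag yourself --- requires some care that is not quite automatic: $D^*$ is a priori anti-Cauchy--Riemann, and turning it into a Cauchy--Riemann type operator with a negative puncture involves conjugating the complex structure on $E$, which flips $c_1^\tau$ and reinterprets the asymptotic operator and weight, so the winding estimate does not literally transfer ``verbatim'' without tracking those signs. The paper's route avoids the adjoint entirely at the cost of an even/odd case split and produces as a by-product exactly the evaluation isomorphism $\operatorname{ev}:\ker D\to E_p$ needed for part (ii). For (ii), both you and the paper argue that this evaluation isomorphism is complex-linear when $D$ is complex-linear and then compare orientations; your version makes explicit the contractibility of $\MD_1$ and the continuity argument transporting the identification from the complex-linear locus to all of $\MD_1$, which the paper compresses to a single sentence.
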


\begin{proof}
Let $\tau: E\cong \C$ be a global trivialization of $E$. Assume first that $\operatorname{ind}^\delta(D)\leq 0$. It follows from the index formula \eqref{eq:index_formula_CR_operator_exponential_weight} that $\operatorname{CZ}_\tau^{-\delta}(A) < 0$, which in turn implies that $\alpha_\tau^{<-\delta}(A) < 0$. Assume by contradiction that $\operatorname{ker}(D)$ contains a non-zero element. Using \eqref{eq:winding_inequality_linear_CR_operator}, we obtain $\operatorname{wind}^\tau(X,\infty) < 0$. This yields a contradiction because $\operatorname{wind}^\tau(X,\infty)$ is also equal to the algebraic count of zeros of $X$, which is always non-negative.

Next, assume that $\operatorname{ind}^\delta(D)\geq 0$. Let us first consider the case that $\operatorname{ind}^\delta(D) = 2m$ is even. By \eqref{eq:index_formula_CR_operator_exponential_weight}, $\operatorname{CZ}_{\tau}^{-\delta}(A)$ is odd and hence by \eqref{eq:conley_zehnder_via_winding} $p^{-\delta}(A)=1$. Pick distinct points $p_1,\dots,p_m\in \dot{\Sigma}$ and consider the evaluation map
\begin{equation*}
\operatorname{ev}:\operatorname{ker}(D) \rightarrow \bigoplus\limits_{k=1}^m E_{p_k} \qquad X \mapsto (X(p_k))_k.
\end{equation*}
Assume by contradiction that $X$ is a non-zero element of the kernel of $\operatorname{ev}$. This implies that the algebraic count of zeros of $X$ is at least $m$. Since this count of zeros agrees with $\operatorname{wind}^\tau(X,\infty)$, using \eqref{eq:winding_inequality_linear_CR_operator} we obtain $m\leq \alpha_\tau^{<-\delta}(A)$. We can therefore estimate 
\begin{equation*}
2m = \operatorname{ind}^\delta(D) = 1 + \operatorname{CZ}_\tau^{-\delta}(A) = 1 + 2 \alpha_\tau^{<-\delta}(A) + p^{-\delta}(A) \geq 2m + 2.
\end{equation*}
This is a contradiction, which means that the evaluation map $\operatorname{ev}$ is injective. Thus we have
\begin{equation*}
2m = \operatorname{ind}^\delta(D) \leq \operatorname{dim}\operatorname{ker}(D) \leq 2m,
\end{equation*}
which implies that $\operatorname{ev}$ is an isomorphism and that $D$ is surjective. In the case that $\operatorname{ind}^\delta(D)=2m+1$ is odd, we modify the above argument as follows. In this case, $\operatorname{CZ}_{\tau}^{-\delta}(A)$ is even and hence by \eqref{eq:conley_zehnder_via_winding}
\begin{equation}
\label{p=0}
\alpha^{\geq -\delta}(A) - \alpha^{<-\delta}(A)=0.
\end{equation} 
Let $\nu$ denote the largest eigenvalue of $A$ which is strictly less than $-\delta$. By \eqref{p=0} and the fact that the function $\rm{wind}$ attains each integral value twice, the corresponding eigenspace $E(A,\nu)$ has real dimension $1$. Any $X\in \operatorname{ker}(D)$ admits an asymptotic expansion for $s \gg 0$
\begin{equation*}
X(s,t) = e^{\nu s}(e(t) + r(s,t))\qquad \text{with $r(s,\cdot)\rightarrow 0$}
\end{equation*}
for a unique $e\in E(A,\nu)$. We define a modified evaluation map by
\begin{equation*}
\operatorname{ev} : \operatorname{ker}(D) \rightarrow E(A,\nu) \oplus \bigoplus\limits_{k=1}^m E_{p_k} \qquad X \mapsto (e,(X(p_k))_k).
\end{equation*}
Arguing similarly as above, one can verify that this modified evaluation map is an isomorphism, which again implies that $D$ is surjective. This concludes the proof of assertion \eqref{item:automatic_regularity_linear_CR_operator}. In the case $\operatorname{ind}^\delta(D)=2$, the evaluation map $\operatorname{ev}:\operatorname{ker}(D)\rightarrow E_p$ is an isomorphism for all points $p\in \C$. Clearly, $\operatorname{ev}$ is complex linear if $D$ is complex linear. This shows assertion \eqref{item:automatic_regularity_linear_CR_operator_index_2}.
\end{proof}

\section{Pseudoholomorphic curves with cylindrical ends}
\label{phcce}

Punctured pseudoholomorphic curves in symplectic manifolds with cylindrical ends were first introduced by Hofer in \cite{hof93b}. In the following, we review some fundamental properties of punctured pseudoholomorphic curves. This review is based on material from numerous foundational works, such as \cite{hwz95, hwz96, hwz99, hwz02,BEHWZ03,sie08,wen10,Wen10b}.

\subsection{Preliminaries}
\label{subsec:setup_pseudoholomorphic_curves}

Let $(Y,\xi)$ be a contact manifold and let $\lambda$ be a contact form defining the contact structure $\xi = \operatorname{ker}\lambda$. The contact form induces a Reeb vector field $R_\lambda$ characterized by
\begin{equation*}
\iota_{R_\lambda} d\lambda = 0 \quad \text{and} \quad \iota_{R_\lambda}\lambda = 1.
\end{equation*}
A periodic orbit of $R_\lambda$ of period $T>0$ is a loop $\gamma : \R/T\Z\rightarrow Y$ everywhere tangent to $R_\lambda$. The orbit is called simple if $\gamma$ is injective and multiply covered otherwise. We consider two periodic orbits to be equivalent if they are related by reparametrization. For every periodic orbit $\gamma$ of $R_\lambda$, let us fix a base point $b_\gamma \in \operatorname{im}(\gamma)$. Requiring that $\gamma(0)=b_\gamma$ induces a preferred parametrization of every periodic orbit. We will always use these preferred parametrizations throughout this paper. A periodic orbit is called nondegenerate if the linearized return map of the Reeb flow around the orbit does not have $1$ as an eigenvalue. If every periodic Reeb orbit is nondegenerate, then the contact form $\lambda$ is called nondegenerate. This is a $C^\infty$ generic condition.

An almost complex structure $J$ on the symplectization $\R\times Y$ is called {\it symplectization admissible} if it satisfies the following properties:
\begin{enumerate}
\item $J$ is invariant under translations along the $\R$-factor of $\R\times Y$.
\item The contact structure $\xi$ is invariant under $J$ and the restriction $J|_\xi$ is $d\lambda$-compatible.
\item Let $a$ denote the coordinate of the $\R$-factor of $\R\times Y$. Then $J\partial_a = R_\lambda$. 
\end{enumerate}

Consider a strong symplectic cobordism $(X,\omega)$ with convex end $(Y^+,\lambda^+)$ and concave end $(Y^-,\lambda^-)$. This means that $X$ is a compact manifold, has boundary $Y^+ \cup Y^-$ and that there exists a Liouville vector field $Z$ near the boundary which is outward pointing at $Y^+$, inward pointing at $Y^-$, and satisfies $\iota_{Z}\omega|_{Y^\pm}= \lambda^{\pm}$. Via the flow of $Z$, we identify a small collar neighbourhood of $Y^+$ with $(-\varepsilon,0]\times Y^+$. Using this identification, we attach a positive cylindrical end $X^+=[0,\infty)\times Y^+$ along the boundary component $Y^+$. Similarly, we identify a collar neighbourhood of $Y^-$ with $[0,\varepsilon)\times Y^-$ and attach a negative cylindrical end $X^-=(-\infty,0]\times Y^-$ along $Y^-$. The resulting manifold $\hat{X}$ is called the {\it symplectic completion} of $X$. An almost complex structure $J$ on $\hat{X}$ is called {\it admissible} if its restriction to $X$ is $\omega$-compatible and if its restrictions to the cylindrical ends $X^\pm$ agree with some admissible almost complex structures on the symplectizations of $(Y^\pm,\lambda^\pm)$.

Since a symplectization can be regarded as the symplectic completion of a trivial cobordism, we will phrase most statements in our review of puncutred pseudoholomorphic curves in the more general setting of symplectic completions and only restrict to symplectizations when necessary.

\subsection{Finite energy curves}
\label{subsec:finite_energy_curves}

Let $(X,\omega)$ be a strong symplectic cobordism with convex/concave ends $(Y^\pm,\lambda^\pm)$ and set $\xi^{\pm} := \ker \lambda^{\pm}$. Equip the symplectic completion $\hat{X}$ with an admissible almost complex structure $J$. Let $(\Sigma,j)$ be a closed Riemann surface. Let $\Gamma \subset \Sigma$ be a finite set of punctures and set $\dot{\Sigma} \coloneqq \Sigma\setminus \Gamma$. The {\it Hofer energy} of a pseudoholomorphic curve $u:(\dot{\Sigma},j)\rightarrow (\hat{X},J)$ is defined by
\begin{equation*}
E(u)\coloneqq \int_{u^{-1}(X)}u^*\omega \enspace +\enspace \sup_{\varphi^+} \int_{u^{-1}(X^{+})} u^* d(\varphi^+ \lambda^+) \enspace +\enspace \sup_{\varphi^-} \int_{u^{-1}(X^-)} u^* d(\varphi^- \lambda^-).
\end{equation*}
Here, the first supremum is taken over all smooth functions $\varphi^+:[0,\infty)\rightarrow [0,1]$ satisfying $(\varphi^+)'\geq 0$. Similarly, the second supremum is taken over all smooth functions $\varphi^-:(-\infty,0]\rightarrow [0,1]$ satisfying $(\varphi^-)'\geq 0$. The curve $u$ is said to have {\it finite energy} if $E(u) < \infty$.

Consider a puncture $z\in \Gamma$ of $u$. The puncture is called {\it removable} if $u$ maps some neighbourhood of $z$ into a compact subset of $\hat{X}$. In this case, the map $u$ smoothly extends over $z$. The puncture is called {\it positive}/{\it negative} if the image of some neighbourhood of $z$ is contained in $X^{\pm}$ and $a\circ u(w)$ tends to $\pm \infty$ as $w$ tends to $z$. Here $a$ denotes the first coordinate of the cylindrical ends $X^\pm$. It follows from the analysis in \cite{hof93b} and \cite{hwz96} that if $u$ is a finite energy curve, then every puncture is removable, positive or negative. After filling in all removable punctures, we obtain a partition $\Gamma = \Gamma^+ \cup \Gamma^-$ into positive and negative punctures.

\subsection{Asymptotics at the punctures}
\label{subsec:asymptotics_at_punctures}

The asymptotic behaviour of pseudoholomorphic curves at punctures was first described in detail by Hofer-Wysocki-Zehnder in \cite{hwz96}. This description was later refined by Siefring in \cite{sie08}.

Assume that the contact forms ${\lambda^{\pm}}$ are nondegenerate. Suppose that $u:(\dot{\Sigma},j)\rightarrow (\hat{X},J)$ is a finite energy curve and let $z$ be a puncture of sign $\varepsilon = \pm 1$. Equip a neighbourhood of $z$ with a cylindrical holomorphic coordinate $(s,t)\in Z^{\pm}$. Then there exist a possibly multiply covered periodic orbit $\gamma$ of $R_{\lambda^{\pm}}$ of some period $T>0$ and a not necessarily holomorphic reparametrization $\varphi$ of $Z^\pm$ such that, for all $\pm s\gg 0$, we have
\begin{equation*}
u\circ \varphi(s,t) = \operatorname{exp}_{u_\gamma(s,t)} f(s,t)
\end{equation*}
where $u_\gamma(s,t)\coloneqq (Ts,\gamma(Tt))\in X^\pm$ and $f\in \Gamma(u_\gamma^*\xi^\pm)$ is a section with exponential decay. Here exponential decay means that there exists $r>0$ such that
\begin{equation*}
\lim_{s\rightarrow \pm \infty} \sup_{t\in \T} e^{\pm r s} |\nabla^\alpha f(s,t)| = 0
\end{equation*}
for all multi-indices $\alpha$. The section $f$ is called the {\it asymptotic representative} of $u$.

The asymptotic representative admits a more detailed description in terms of eigenvalues and eigenfunctions of the {\it asymptotic operator} $A_\gamma$ of the periodic Reeb orbit $\gamma$. Let us abbreviate $\tilde{\gamma}(t)\coloneqq \gamma(Tt)$ for $t\in \T$. The asymptotic operator is defined to be the unbounded self-adjoint operator
\begin{equation*}
A_\gamma : H^1(\tilde{\gamma}^*\xi^\pm) \subset L^2(\tilde{\gamma}^*\xi^\pm) \rightarrow L^2(\tilde{\gamma}^*\xi^\pm) \qquad A_\gamma \coloneqq -J\nabla_t^{R_{\lambda^\pm}}
\end{equation*}
where the Hermitian structure on the vector bundle $\tilde{\gamma}^* \xi^{\pm}$ is given by $d\lambda^{\pm}(\cdot,J \cdot)$ and $\nabla^{R_{\lambda^\pm}}$ denotes the symplectic connection on $\tilde{\gamma}^*\xi^\pm$ induced by the linearized Reeb flow. By the non-degeneracy assumption on $\lambda^{\pm}$, $A_{\gamma}$ is nondegenerate. The following alternative holds for the asymptotic representative $f$: either $f$ vanishes identically for all $\pm s\gg 0$, or there exists an eigenfunction $e\in \Gamma(\tilde{\gamma}^*\xi^\pm)$ of $A_\gamma$ with eigenvalue $\nu$ satisfying $\pm \nu < 0$ such that, for all $\pm s \gg 0$,
\begin{equation*}
f(s,t) = e^{\nu s}(e(t) + r(s,t))
\end{equation*}
where the error term $r\in \Gamma(u_\gamma^*\xi^\pm)$ has exponential decay. In the latter case, we say that $u$ has {\it non-trival asymptotic behaviour} at the puncture $z$. We call $\nu$ and $e$ the {\it asymptotic eigenvalue} and \textit{eigenfunction} of $u$ at $z$. Moreover, we call $|\nu|$ the {\it transversal convergence rate}. In the former case, we say that $u$ has {\it trivial asymptotic behaviour} at $z$ and that the transversal convergence rate is $+\infty$.

There is also a description of the relative asymptotic behaviour of two cylindrical ends of pseudoholomorphic curves asymptotic to the same orbit due to Siefring \cite{sie08}. Let $u$ and $v$ be two punctured pseudoholomorphic curves with a puncture of the same sign $\varepsilon = \pm 1$ asymptotic to the same periodic orbit $\gamma$. Let $f_u$ and $f_v$ denote the asymptotic representatives of $u$ and $v$. Either, $f_u$ and $f_v$ agree identically for all $\pm s\gg 0$, or there exists an eigenfunction $e$ of $A_\gamma$ with eigenvalue $\nu$ satisfying $\pm \nu <0$ such that, for all $\pm s\gg 0$,
\begin{equation*}
f_u(s,t) - f_v(s,t) = e^{\nu s}(e(t) + r(s,t))
\end{equation*}
with an error term $r(s,t)$ of exponential decay. We call $e$ the {\it relative asymptotic eigenfunction} and $\nu$ the {\it relative asymptotic eigenvalue}. Moreover, we call $|\nu|$ the {\it relative transversal convergence rate}. Suppose now that $f_u$ and $f_v$ do not agree identically. Let $e_u$ and $e_v$ be the asymptotic eigenfunctions and $\nu_u$ and $\nu_v$ the asymptotic eigenvalues of $u$ and $v$, respectively. If $e_u \neq e_v$, then the relative transversal convergence rate $|\nu|$ is given by $\min\left\{ |\nu_u|,|\nu_v| \right\}$. If $e_u = e_v$, then the relative transversal convergence rate $|\nu|$ is strictly bigger than $|\nu_u| = |\nu_v|$.

\subsection{Algebraic invariants}

In this subsection, we review some algebraic invariants associated to punctured pseudoholomorphic curves in dimension $4$ introduced by Hofer-Wysocki-Zehnder in \cite{hwz95}. Let $X^4$ be a $4$-dimensional strong symplectic cobordism. Let $z$ be a positive/negative puncture of a finite energy curve $u:(\dot{\Sigma},j)\rightarrow (\hat{X},J)$ and let $\gamma$ be the asymptotic Reeb orbit of $u$ at $z$. Let $\tau$ be a symplectic trivialization of the bundle $(\gamma^*\xi^{\pm},d\lambda^\pm)$. If $u$ has non-trivial asymptotic behaviour at $z$, we define the asymptotic winding number $\operatorname{wind}_\infty^\tau(u,z)$ to be the winding number of the asymptotic eigenvector of $u$ at $z$ with respect to the trivialization $\tau$. If $u$ has trivial asymptotic behaviour at $z$, we define $\operatorname{wind}_\infty^\tau(u,z)$ to be $-\infty$ if $z$ is a positive puncture and $+\infty$ if $z$ is a negative puncture.

Let us choose a trivialization $\tau$ for every puncture of $u$. Since we have a splitting $TX^\pm = \langle \partial_a,R_{\lambda^\pm}\rangle \oplus \xi^\pm$, such a choice of trivializations induces a symplectic trivialization of the restriction of $u^*T\hat{X}$ to a neighbourhood of the punctures. It is possible to choose the trivializations $\tau$ in such a way that the trivialization of $u^*T\hat{X}$ near the punctures extends to a global trivialization on all of $\dot{\Sigma}$. For such a choice of $\tau$, we define
\begin{equation*}
\operatorname{wind}_\infty(u) \coloneqq \sum\limits_{z\in \Gamma^+} \operatorname{wind}_\infty^\tau(u,z) - \sum\limits_{z\in \Gamma^-} \operatorname{wind}_\infty^\tau(u,z) \; \in \Z \cup \{-\infty\}.
\end{equation*}
This quantity does not depend on the choice of $\tau$ subject to the condition explained above.

Let us now further specialize the discussion to the case of a symplectization of a $3$-dimensional contact manifold $(Y^3,\xi =\ker \lambda)$. Let $\pi_\xi : T(\R\times Y)\rightarrow \xi$ denote the projection induced by the splitting $T(\R\times Y) = \langle \partial_a, R_\lambda \rangle\oplus \xi$. The section $\pi_\xi\circ du$ of $\Lambda^{1,0}T^*\dot{\Sigma}\otimes \xi$ either vanishes identically or has isolated zeros with positive multiplicities. In the former case, $u$ is a cover of a trivial cylinder over a periodic Reeb orbit. In the latter case, the quantity $\operatorname{wind}_\pi(u)\geq 0$ is defined to be the count of zeros of $\pi_\xi\circ du$ with multiplicities. The quantities $\operatorname{wind}_\pi(u)$ and $\operatorname{wind}_\infty(u)$ are related by the formula
\begin{equation*}
\operatorname{wind}_\pi(u) = \operatorname{wind}_\infty(u) - \chi(\dot{\Sigma}).
\end{equation*}

\subsection{Fredholm theory}
\label{subsec:fredholm_theory}

In this section, we review some aspects of the Fredholm theory for moduli spaces of punctured pseudoholomorphic curves first developed by Hofer-Wysocki-Zehnder in \cite{hwz99}. Our exposition borrows heavily from Wendl's works \cite{wen10,Wen10b}.

Let us return to the setting of a strong symplectic cobordism $X^{2n}$ of arbitrary dimension $2n$ with convex/concave ends $(Y^\pm,\lambda^\pm)$ and equipped with an admissible almost complex structure $J$. Fix a closed connected oriented surface $\Sigma$. Let $\Gamma = \Gamma^+ \cup \Gamma^-\subset \Sigma$ be a finite set of punctures. For every $z\in \Gamma^\pm$, fix a periodic orbit $\gamma_z$ in $Y^\pm$ and an asymptotic constraint $c_z\geq 0$ such that $\mp c_z \notin \sigma(A_{\gamma_z})$. We consider triples $(j,u,\ell)$ where $j$ is a complex structure on $\Sigma$ compatible with the orientation, $u$ is a finite energy pseudoholomorphic curve $u:(\dot{\Sigma},j)\rightarrow (\hat{X},J)$ which is asymptotic to the orbit $\gamma_z$ with transversal convergence rate strictly bigger than $c_z$ at each puncture $z$, and $\ell$ is a collection of asymptotic markers $\ell_z \in (T_z\Sigma\setminus\left\{ 0 \right\})/\R_{>0}$, one for each puncture $z$, mapping to the marked point $b_{\gamma_z}$ under the asymptotic map $(T_z\Sigma\setminus\left\{ 0 \right\})/\R_{>0}\rightarrow \operatorname{im}(\gamma_z)$ induced by $u$. Note that if $\gamma_z$ is a $k$-fold cover of a simple periodic orbit, then, given $u$, there are $k$ possible choices for the asymptotic marker $\ell_z$. In particular, if $\gamma_z$ is simple, then $\ell_z$ is already determined by $u$. Two triples $(j,u,\ell)$ and $(j',u',\ell')$ are considered equivalent if there exists an orientation and puncture preserving diffeomorphism $\varphi\in \operatorname{Diff}_+(\Sigma,\Gamma)$ such that $\varphi^*j' = j$, $u'\circ \varphi = u$ and $\varphi^*\ell' = \ell$. Let $\MM$ denote the moduli space of all such equivalence classes.

A neighbourhood of a point $[j,u,\ell]$ in $\MM$ can be described as the zero set of a Fredholm section of a Banach bundle. More precisely, the non-linear Cauchy-Riemann operator
\begin{equation*}
\overline{\partial}_J(j,u,\ell) \coloneqq du + J \circ du \circ j
\end{equation*}
can be regarded as a Fredholm section $\overline{\partial}_J:\MT\times \MB \rightarrow \ME$ of an appropriate Banach bundle $\ME$. Here, $\MT$ is a Teichm\"{u}ller slice at $j$ invariant under the action of the group $\operatorname{Aut}(\dot{\Sigma},j)$ of biholomorphic automorphisms of $(\Sigma,j)$ fixing $\Gamma$. The space $\MB$ is a Banach manifold of tuples $(u,\ell)$ where $u:\dot{\Sigma}\rightarrow \hat{X}$ is a Sobolev map asymptotic to the periodic orbits $\gamma_z$ at the punctures with suitable exponential decay and $\ell$ is a collection of asymptotic markers. A neighbourhood of $[j,u,\ell]$ in $\MM$ is in bijection with $\overline{\partial}_J^{-1}(0)/\operatorname{Aut}(\dot{\Sigma},j)$. Note that because of the presence of asymptotic markers, the action of $\operatorname{Aut}(\dot{\Sigma},j)$ on $\overline{\partial}_J^{-1}(0)$ is free even if $u$ is a multiply covered curve.

By slight abuse of notation, let us in the following abbreviate an entire triple $(j,u,\ell)\in \overline{\partial}_J^{-1}(0)$ simply by $u$ whenever convenient. The linearization $D\overline{\partial}_J(u)$ is a Fredholm operator of index
\begin{equation*}
\operatorname{ind}(D\overline{\partial}_J(u)) =  \operatorname{dim}\operatorname{Aut}(\dot{\Sigma},j) + (n-3)\chi(\dot{\Sigma}) + 2c_1^\tau(u^*T\hat{X}) + \sum\limits_{z\in \Gamma^+} \operatorname{CZ}_\tau^{-c_z}(\gamma_z) - \sum\limits_{z\in \Gamma^-}\operatorname{CZ}_\tau^{c_z}(\gamma_z)
\end{equation*}
where $\tau$ is a choice of symplectic trivialization of $\gamma_z^*\xi^{\pm}$ for every asymptotic Reeb orbit. The virtual dimension of $\mathcal{M}$ at $[u]$ is then the number
\[
\operatorname{ind}^c(u) := \operatorname{ind}(D\overline{\partial}_J(u)) - \operatorname{dim}\operatorname{Aut}(\dot{\Sigma},j).
\]
Let us split the linearized operator $D\overline{\partial}_J(u)$ into two components
\begin{equation}
\label{eq:D_partial_J}
D\overline{\partial}_J(u): T_j\MT \oplus T_u\MB \rightarrow \ME_u \quad (y,v) \mapsto G_uy + D_uv.
\end{equation}
The operator $D_u$ is a linear Cauchy-Riemann type operator
\begin{equation*}
D_u : T_u\MB = V_\Gamma \oplus W^{1,p,(\delta,c)}(u^*T\hat{X}) \rightarrow \ME_{u} = L^{p,(\delta,c)}(\Lambda^{0,1}T^*\dot{\Sigma}\otimes u^*T\hat{X}). \end{equation*}
Here, $\delta>0$ is a sufficiently small positive number and $W^{1,p,(\delta,c)}(u^*T\hat{X})$ is a Sobolev space of sections of $u^*T\hat{X}$ with anisotropic exponential weight $(\delta,c)$. More precisely, $W^{1,p,(\delta,c)}(u^*T\hat{X})$ consists of sections $f$ of $u^*T\hat{X}$ of class $W^{1,p}_{\operatorname{loc}}$ such that, near a puncture $z\in \Gamma^\pm$, the projection $\pi_{\xi^\pm}(f)$ is of class $W^{1,p,c_z}$ and the projection $\pi_{\langle \partial_a,R_{\lambda^\pm}\rangle}(f)$ is of class $W^{1,p,\delta}$. Here $W^{1,p,c_z}$ and $W^{1,p,\delta}$ are the usual Sobolev spaces with exponential  weight. This means that if $(s,t)\in Z^{\pm}$ is a cylindrical holomorphic coordinate near $z$, then a section $f(s,t)$ is in $W^{1,p,\varepsilon}$ for some $\varepsilon\in \R$ if and only if $e^{\pm \varepsilon s}f(s,t)$ is in $W^{1,p}$. The space $L^{p,(\delta,c)}(\Lambda^{0,1}T^*\dot{\Sigma}\otimes u^*T\hat{X})$ admits an analogous description. The vector space $V_\Gamma$ has dimension $2\#\Gamma$. For each puncture $z\in \Gamma^\pm$, it contains two linearly independent sections of $u^*T\hat{X}$ supported near $z$ which take values in first summand of the splitting $TX^{\pm} = \langle \partial_a, R_{\lambda^\pm}\rangle \oplus \xi^\pm$ and are constant with respect to the natural trivialization $\langle \partial_a,R_{\lambda^\pm}\rangle\cong\C$ in some neighbourhood of $z$.

If the pseudoholomorphic curve $u$ is immersed, there clearly exists a splitting $u^*T\hat{X} = T_u \oplus N_u$ into a tangent subbundle $T_u$ satisfying $(T_u)_z = \operatorname{im}du(z)$ and a complementary normal subbundle $N_u$. Even if $u$ is not immersed, there still exists such a splitting which satisfies $(T_u)_z = \operatorname{im}du(z)$ at all immersed points $z$ of $u$ by the work of Ivashkovich-Shevchishin \cite{IS99} and Wendl \cite{wen10}. The complex linear part of $D_u$ is a complex linear Cauchy-Riemann operator. We equip the complex vector bundle $u^*T\hat{X}$ with the holomorphic structure induced by this operator. Then the section $du$ of the bundle $T^*\dot{\Sigma}\otimes u^*T\hat{X}$ turns out to be holomorphic \cite[Lemma 1.3.1]{IS99}. If $u$ is not constant, the zeros of $du$ are isolated. In a local holomorphic chart near a zero $z_0$, the section $du$ is given by $(z-z_0)^kF(z)$ where $k > 0$ is the order of the zero and $F(z)$ is holomorphic and satisfies $F(0)\neq 0$. This shows that there exists a unique holomorphic subbundle $T_u$ of $u^*T\hat{X}$ such that $(T_u)_z = \operatorname{im}du(z)$ whenever $du(z)\neq 0$. We call $T_u$ and $N_u$ the {\it generalized tangent} and {\it normal} bundles of $u$. Moreover, we define the quantity
\begin{equation*}
Z(du) \coloneqq \sum\limits_{z\in (du)^{-1}(0)} \operatorname{ord}(du,z) \geq 0.
\end{equation*}
Here $\operatorname{ord}(du,z)$ denotes the order of $z$ as a zero of $du$ as discussed above. Clearly, the curve $u$ is immersed if and only if $Z(du) = 0$.

Using the splitting $u^*T\hat{X} = T_u \oplus N_u$, we can split the domain and codomain of $D_u$ as follows:
\begin{align*}
V_\Gamma \oplus W^{1,p,(\delta,c)}(u^*T\hat{X}) &= (V_\Gamma^T \oplus W^{1,p,\delta}(T_u)) \oplus W^{1,p,c}(N_u) \\
L^{p,(\delta,c)}(\Lambda^{0,1}T^*\dot{\Sigma}\otimes u^*T\hat{X}) &= L^{p,\delta}(\Lambda^{0,1}T^*\dot{\Sigma}\otimes T_u) \oplus L^{p,c}(\Lambda^{0,1}T^*\dot{\Sigma}\otimes N_u).
\end{align*}
Here $V_\Gamma^T$ consists of sections of $T_u$ which are supported near the punctures and asymptotically constant. The operator $D_u$ can be written in block matrix form with respect to these splittings. This block matrix turns out to be upper triangular and takes the following form:
\begin{equation}
\label{eq:block_matrix_form_linearization}
D_u = \left( \begin{matrix}
\overline{\partial}_{T_u} & * \\
0 & D_u^N
\end{matrix} \right).
\end{equation}
Here $\overline{\partial}_{T_u}$ is the Cauchy-Riemann operator on $T_u$ with respect to the holomorphic structure discussed above. The operator
\begin{equation*}
D_u^N : W^{1,p,c}(N_u) \rightarrow L^{p,c}(\Lambda^{0,1}T^*\dot{\Sigma}\otimes N_u)
\end{equation*}
is called the {\it normal operator} of $u$ and has Fredholm index
\begin{equation*}
\operatorname{ind}^c(D_u^N) = \operatorname{ind}^c(u) - 2Z(du).
\end{equation*}
It turns out that the operator
\begin{equation*}
G_u+\overline{\partial}_{T_u} : T_j\MT \oplus V_\Gamma^T \oplus W^{1,p,\delta}(T_u) \rightarrow L^{p,\delta}(\Lambda^{0,1}T^*\dot{\Sigma}\otimes T_u)
\end{equation*}
has index $\operatorname{dim}\operatorname{Aut}(\dot{\Sigma},j) + 2Z(du)$, is complex linear and always surjective \cite{wen10}. In particular, we see that the full deformation operator $D\overline{\partial}_J(u)$ is surjective if and only if the normal operator $D_u^N$ is surjective. Recall that if the almost complex structure $J$ is chosen generically, then $D\overline{\partial}_J(u)$ is surjective for all curves $u$ which are somewhere injective.

We conclude this section with a review of orientations of moduli spaces following \cite[\S 2.13]{par19} (see also \cite{bm04}). The orientation lines $\mathfrak{o}_{D\overline{\partial}_J(u)}$ form a local system on $\overline{\partial}_J^{-1}(0)$, which we call the orientation local system of $\overline{\partial}_J^{-1}(0)$. Whenever $\overline{\partial}_J^{-1}(0)$ is cut out transversely, this orientation local system agrees with the usual orientation local system $\mathfrak{o}_{T_u\overline{\partial}_J^{-1}(0)}$. The action of $\operatorname{Aut}(\dot{\Sigma},j)$ on $\overline{\partial}_J^{-1}(0)$ has a natural lift to an action on the orientation local system. Since the action on $\overline{\partial}_J^{-1}(0)$ is free, we obtain a local system on the quotient $\MM$ whose stalk at $[u]\in \MM$ is canonically isomorphic to $\mathfrak{o}_{D\overline{\partial}_J(u)}$. We can therefore regard $\mathfrak{o}_{D\overline{\partial}_J(u)}\otimes \mathfrak{o}_{T_{\operatorname{id}}\operatorname{Aut}(\dot{\Sigma},j)}^*$ as a local system on $\MM$ and call it the orientation local system of $\MM$. If $\MM$ is cut out transversely, then this is canonically isomorphic to $\mathfrak{o}_{T_{[u]}\MM}$. Since $\operatorname{Aut}(\dot{\Sigma},j)$ is equipped with a natural complex orientation, the orientation local system of $\MM$ is canonically isomorphic to $\mathfrak{o}_{D\overline{\partial}_J(u)}$.

There is a canonical isomorphism of orientation lines
\begin{equation}
\label{eq:orientation_lines_full_and_normal_deformation_operator}
\mathfrak{o}_{D\overline{\partial}_J(u)} \cong \mathfrak{o}_{D_u^N}
\end{equation}
constructed as follows. Because of the block matrix form \eqref{eq:block_matrix_form_linearization} and surjectivity of $G_u + \overline{\partial}_{T_u}$, we have a short exact sequence of kernels
\begin{equation}
\label{eq:short_exact_sequence_of_kernels}
0 \rightarrow \operatorname{ker}(G_u+\overline{\partial}_{T_u}) \rightarrow \operatorname{ker}D\overline{\partial}_J(u) \rightarrow \operatorname{ker}D_u^N \rightarrow 0
\end{equation}
and an isomorphism of cokernels
\begin{equation*}
\operatorname{coker}D\overline{\partial}_J(u)\cong \operatorname{coker}D_u^N.
\end{equation*}
This yields isomorphisms
\begin{equation*}
\mathfrak{o}_{\operatorname{ker}D\overline{\partial}_J(u)} \cong \mathfrak{o}_{\operatorname{ker}(G_u+\overline{\partial}_{T_u})} \otimes \mathfrak{o}_{\operatorname{ker}D_u^N} \quad \text{and}\quad
\mathfrak{o}_{\operatorname{coker}D\overline{\partial}_J(u)} \cong \mathfrak{o}_{\operatorname{coker}D_u^N}.
\end{equation*}
The isomorphism \eqref{eq:orientation_lines_full_and_normal_deformation_operator} is obtained by combining the above isomorphisms and using that $\mathfrak{o}_{\operatorname{ker}(G_u+\overline{\partial}_{T_u})}$ has a natural orientation because $G_u+\overline{\partial}_{T_u}$ is complex linear.

Note that
\begin{equation*}
T_{\operatorname{id}}\operatorname{Aut}(\dot{\Sigma},j) \rightarrow \operatorname{ker}(G_u + \overline{\partial}_{T_u}) \quad X \mapsto du(X)
\end{equation*}
is a complex linear injection. Whenever $u$ is an immersed curve, this injection is an isomorphism. In view of the short exact sequence \eqref{eq:short_exact_sequence_of_kernels}, we obtain an isomorphism
\begin{equation}
\label{eq:kernel_normal_operator}
T_{[u]}\MM \cong \operatorname{ker}D_u^N
\end{equation}
if $u$ is immersed and regular. In this situation, an orientation of $\mathfrak{o}_{D_u^N}$ induces an orientation of $T_{[u]}\MM$ in two different ways: via isomorphism \eqref{eq:orientation_lines_full_and_normal_deformation_operator} and via isomorphism \eqref{eq:kernel_normal_operator}. These two orientations clearly agree.

The orientation local system $\mathfrak{o}_{D\overline{\partial}_J(u)}$ on $\MM$ is canonically isomorphic to a suitable tensor product of the orientation lines of the asymptotic Reeb orbits of $u$ as we now explain. For simplicity, let us assume that all asymptotic weights $c_z$ are equal to zero. The orientation line $\mathfrak{o}_\gamma$ of a nondegenerate periodic Reeb orbit $\gamma$ of a strict contact manifold $(Y,\lambda)$ is defined as follows (cf. \cite[Definition 2.46]{par19}). Consider the vector bundle $V\coloneqq \C \oplus \tilde{\gamma}^*\xi$ on $[0,\infty)\times \T$ where $\tilde{\gamma}(t) = \gamma(T t)$ and $T$ is the period of $\gamma$. Define a real linear Cauchy-Riemann type operator $D$ on $V$ by
\begin{equation*}
D \coloneqq (\partial_s + i\partial_t)\oplus (\partial_s - A_{\gamma}).
\end{equation*}
We regard $[0,\infty)\times \T$ as a subset of $\C$ via $(s,t)\mapsto e^{2\pi(s+it)}$. We arbitrarily extend $V$ to a Hermitian vector bundle over $\C$ and $D$ to a real linear Cauchy-Riemann type operator on $V$. Then $D$ defines a Fredholm operator
\begin{equation*}
D : W^{1,p,\delta}(V) \rightarrow L^{p,\delta}(V)
\end{equation*}
for $\delta>0$ sufficiently small and we define the orientation line $\mathfrak{o}_{\gamma}$ of $\gamma$ to be
\begin{equation*}
\mathfrak{o}_{\gamma} \coloneqq \mathfrak{o}_{D}.
\end{equation*}
It is verified in \cite[Lemma 2.47]{par19} that this is well-defined up to canonical isomorphism. Note that in contrast to \cite{par19}, our basepoint $b_\gamma$ is fixed and suppressed from the notation. Similarly to \cite[Lemma 2.51]{par19}, the Floer-Hofer kernel gluing operation yields a canonical isomorphism
\begin{equation}
\label{eq:isomorphism_orientation_lines_moduli_space_and_orbits}
\mathfrak{o}_{D\overline{\partial}_J(u)}\cong \bigotimes\limits_{z\in \Gamma^+} \mathfrak{o}_{\gamma_z} \otimes \bigotimes\limits_{z\in \Gamma^-} \mathfrak{o}_{\gamma_z}^*.
\end{equation}
Consider the special case that $\MM$ is regular, $0$-dimensional and compact. In this case, a choice of orientation of the right hand side of \eqref{eq:isomorphism_orientation_lines_moduli_space_and_orbits} gives rise to an orientation of $\MM$ and therefore to an integer valued signed count of $\MM$. In other words, we can canonically regard the algebraic count of $\MM$ as an element of the dual of the right hand side of \eqref{eq:isomorphism_orientation_lines_moduli_space_and_orbits}, i.e.
\begin{equation*}
\# \MM \in \bigotimes\limits_{z\in \Gamma^+} \mathfrak{o}_{\gamma_z}^* \otimes \bigotimes\limits_{z\in \Gamma^-} \mathfrak{o}_{\gamma_z}.
\end{equation*}
Note that we have a well-defined absolute value $|\#\MM|\in \Z_{\geq 0}$ independent of choices.

\section{Fast pseudoholomorphic planes}
\label{sec:fast_planes}

The goal of this section is to establish some fundamental properties of moduli spaces of {\it fast pseudoholomorphic planes}. The notion of a fast plane was first introduced by Hryniewicz in \cite{hry12} in order to construct disk-like global surfaces of section bounded by periodic orbits of arbitrarily high Conley-Zehnder index of dynamically convex Reeb flows on $S^3$. Fast planes have played an important role in many further works on $3$-dimensional Reeb dynamics (cf. \cite{hs11, hry14, fk16, hsw23, hss22, hhr24}). The following definition of fast planes in symplectic cobordisms is taken from \cite{hhr24}.

\begin{defn}
\label{def:fast_plane}
Let $X^4$ be a $4$-dimensional strong symplectic cobordism with convex/concave ends $(Y^\pm,\lambda^\pm)$. Let $\hat{J}$ be an admissible almost complex structure on the completion $\hat{X}$. Let $u:(\C,i) \rightarrow (\hat{X},\hat{J})$  be a finite energy plane which is positively asymptotic to a periodic Reeb orbit of $Y^+$ at the puncture at $\infty$. We call $u$ a {\it fast plane} if $\operatorname{wind}_{\infty}(u) \leq 1$.
\end{defn}

\begin{rem}
\label{rem:fast_plane}
{\rm Suppose that $\hat{X}=\R\times Y$ is actually the symplectization of a $3$-dimensional contact manifold $(Y,\lambda)$ and that $\hat{J}$ is symplectization admissible. If $u$ is a fast plane, then we have
\begin{equation*}
0 \leq \operatorname{wind}_\pi(u) = \operatorname{wind}_\infty(u) - \chi(\C) \leq 0.
\end{equation*}
This implies that $\operatorname{wind}_\infty(u) = 1$ and $\operatorname{wind}_\pi(u) = 0$. In particular, we see that $u$ must be immersed (and therefore somewhere injective) and that the projection of $u$ to $Y$ is transverse to the Reeb flow on $Y$. If $\hat{X}$ is not a symplectization, then $\operatorname{wind}_\pi(u)$ is not defined. The plane $u$ might be non-immersed or multiply covered.}
\end{rem}

Throughout the remainder of this section, let $X\subset \R^4$ be a uniformly starshaped domain containing the origin in its interior. We endow the boundary $Y\coloneqq \partial X$ with the contact form $\lambda$ given by the restriction of the one-form
\[
\lambda_0 := \frac{1}{2} \sum_{j=1}^2 (x_j \, dy_j - y_j \, dx_j).
\]
The corresponding contact structure $\xi:= \ker \lambda$ admits a global trivialization, and Conley-Zehnder indices of periodic orbits of the Reeb flow of $\lambda$ refer to such a trivialization.
Let $\gamma$ be a simple periodic orbit of this Reeb flow on $Y$. Assume that the Reeb flow on $Y$ is nondegenerate and dynamically convex up to action $\MA(\gamma)$. This means that all the periodic orbits of action not larger than $\MA(\gamma)$ are nondegenerate and have Conley-Zehnder index at least 3.

Let $J$ be a symplectization admissible almost complex structure on $\R\times Y$. Moreover, let $\hat{J}$ be an admissible almost complex structure on $\hat{X}$ whose restriction to the positive cylindrical end $X^+=[0,\infty)\times Y$ agrees with the restriction of $J$. We let
\begin{equation*}
\MM^{\operatorname{fast}}(\R\times Y,\gamma,J) \qquad \text{and} \qquad \MM^{\operatorname{fast}}(\hat{X},\gamma,\hat{J})
\end{equation*}
denote the moduli spaces of unparametrized fast pseudoholomorphic planes positively asymptotic to $\gamma$ in $\R\times Y$ and $\hat{X}$, respectively. Elements of $\MM^{\operatorname{fast}}(\R\times Y,\gamma,J)$ are represented by parametrized fast pseudoholomorphic planes $u:(\C,i)\rightarrow (\R\times Y,J)$ asymptotic to $\gamma$ and two such maps $u_1$ and $u_2$ are declared equivalent if there exists a biholomorphism $\varphi$ of $\C$ such that $u_2 = u_1\circ \varphi$. Elements of $\MM^{\operatorname{fast}}(\hat{X},\gamma,\hat{J})$ admit an analogous description. Note that we have a natural $\R$ action on $\MM^{\operatorname{fast}}(\R\times Y,\gamma,J)$ given by translation along the $\R$ factor of $\R\times Y$.

We remark that because $\gamma$ is assumed to be a simple orbit, all planes asymptotic to $\gamma$ are necessarily somewhere injective. For fast planes in the symplectization $\R\times Y$, this would also be true without the assumption that $\gamma$ is simple (see Remark \ref{rem:fast_plane}). However, for fast planes in $\hat{X}$ we need the assumption that $\gamma$ is simple to deduce somewhere injectivity.

In the following, we establish some important properties of the moduli spaces of fast planes. We refer to \cite{BEHWZ03} for the notion of SFT convergence and the corresponding compactness theorem.

\begin{prop}[Compactness {\cite[Cor. 4.7]{hss22}, \cite[\S 2.5]{hhr24}}]
\label{prop:fast_planes_compactness}
The following statements are true.
\begin{enumerate}[(i)]
\item The quotient $\MM^{\operatorname{fast}}(\R\times Y,\gamma,J)/\R$ by the natural $\R$-action is compact.
\item Let $(u_k)_k$ be a sequence in $\MM^{\operatorname{fast}}(\hat{X},\gamma,\hat{J})$. Then $(u_k)_k$ has a subsequence which converges to an element of $\MM^{\operatorname{fast}}(\hat{X},\gamma,\hat{J})$ or an element of $\MM^{\operatorname{fast}}(\R\times Y,\gamma,J)$ in the sense of SFT.
\end{enumerate}
\end{prop}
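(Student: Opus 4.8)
The plan is to combine the SFT compactness theorem of \cite{BEHWZ03} with dynamical convexity --- exploited through the winding number inequalities recalled in Sections \ref{CRsec} and \ref{phcce} --- to show that the only holomorphic building that can arise as a limit of fast planes is a single fast plane. As a preliminary remark, a finite energy plane positively asymptotic to $\gamma$ has Hofer energy controlled by $\MA(\gamma)$ alone: in the symplectization this energy equals $\MA(\gamma)$, and in $\hat X$ it is no larger because $(X,\lambda_0)$ is exact, so that $\int u^* d\lambda_0 \ge 0$ over the part of $u$ lying in $X$. Hence any sequence in either moduli space has uniformly bounded energy and, after passing to a subsequence, converges in the SFT sense to a holomorphic building $\mathbf u$. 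This building has arithmetic genus $0$, exactly one positive puncture, asymptotic to $\gamma$, and no negative punctures, since $\hat X$ has no concave end; consequently every component of $\mathbf u$ has genus $0$, the components form a tree under the nodes, and there are no self-nodes. Finally, by positivity and additivity of the action, every Reeb orbit occurring in $\mathbf u$ --- as an asymptotic orbit of a component, a breaking orbit, or a nodal orbit --- has action at most $\MA(\gamma)$, hence by hypothesis is nondegenerate and has Conley--Zehnder index at least $3$.

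The heart of the argument is the degeneration analysis. Fix the global trivialization $\tau$ of $\xi$ and let $v$ be the component of $\mathbf u$ carrying the positive puncture. Since the transversal convergence rate at $\gamma$ cannot decrease in an SFT limit and the winding number is non-decreasing on $\sigma(A_\gamma)$, we get $\operatorname{wind}_\infty^\tau(v,\infty) \le \operatorname{wind}_\infty^\tau(u_k,\infty) \le 1$ for $k$ large. A component with no puncture would be a closed holomorphic sphere, hence constant by exactness, and constant components are excluded by stability (a ghost component would meet at least three nodes, and tracing through the tree produces a branch consisting of symplectization-level curves with only negative punctures, which the next step forbids). Next, a genus $0$ curve in a symplectization level with $m' \ge 1$ negative punctures and no positive puncture cannot occur: writing $\chi = 2-m'$ for its punctured Euler characteristic and using $\operatorname{wind}_\infty^\tau(\cdot,z) \ge \alpha_\tau^{\ge 0}(A_{\gamma_z}) \ge 2$ at each negative puncture (valid since $\operatorname{CZ}_\tau(A_{\gamma_z}) \ge 3$, by \eqref{eq:conley_zehnder_via_winding}),
\[
0 \le \operatorname{wind}_\pi = \operatorname{wind}_\infty - \chi \le -2m' - (2-m') = -m'-2 < 0,
\]
a contradiction. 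The same computation applied to $v$, whose positive end now contributes $\operatorname{wind}_\infty^\tau(v,\infty) \le 1$ and which has $m$ negative ends, gives $0 \le \operatorname{wind}_\pi(v) \le -m$, so $v$ has no negative puncture. Thus $v$ is a plane; having no negative end it is joined to nothing below it, so by connectedness $\mathbf u = v$ --- unless $\mathbf u$ has a nontrivial cobordism level, in which case that level has no negative punctures, each of its components has only positive ones, and the same reasoning forces it to consist of the single component carrying $\gamma$, again a plane.

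It then remains to read off the two assertions. In the symplectization case $\mathbf u$ is a single plane $v$ in $\R\times Y$; it is fast, in fact $\operatorname{wind}_\infty(v)=1$ since $\operatorname{wind}_\pi(v) = \operatorname{wind}_\infty(v)-1 \ge 0$, and it is somewhere injective because $\gamma$ is simple. SFT convergence to one symplectization level is exactly $C^\infty_{\mathrm{loc}}$ convergence up to $\R$-translation, which yields (i). In the cobordism case, either $\mathbf u$ has a nontrivial cobordism level and is a single plane $v$ in $\hat X$ --- fast because $\operatorname{wind}_\infty^\tau(v,\infty) \le 1$ by the semicontinuity above, somewhere injective because $\gamma$ is simple --- and the SFT convergence is $C^\infty_{\mathrm{loc}}$ convergence on $\hat X$, i.e.\ convergence in $\MM^{\operatorname{fast}}(\hat X,\gamma,\hat J)$; or $\mathbf u$ has no nontrivial cobordism level, equivalently the sequence escapes to $+\infty$ along the cylindrical end $X^+$, and $\mathbf u$ is a fast plane in $\R\times Y$. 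This proves (ii).

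The main obstacle is this degeneration analysis, and within it the input that the transversal convergence rate at $\gamma$ does not drop in the SFT limit: one must follow the asymptotic eigenvalues through the compactness theorem and then play the resulting bound on $\operatorname{wind}_\infty$ against the inequality $\operatorname{wind}_\pi \ge 0$ and the Conley--Zehnder bound supplied by dynamical convexity. The careful treatment of nodal and ghost components --- showing that the tree structure and the winding constraints leave no room for them --- is the technical nuisance.
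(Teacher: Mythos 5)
Your overall strategy — SFT compactness paired with winding-number bounds forced by dynamical convexity, and the inequality $0 \le \operatorname{wind}_\pi(v) \le -\#\Gamma$ for the top component — is the same as the paper's. However, the argument's key step has a genuine gap, one you yourself flag as its ``heart'': you deduce $\operatorname{wind}_\infty^\tau(v,\infty)\le 1$ from the assertion that ``the transversal convergence rate at $\gamma$ cannot decrease in an SFT limit''. This claim is not a consequence of the SFT compactness theorem, is not established in any of the material you cite, and in fact fails as a general principle. Since $\operatorname{CZ}(\gamma)=3$ forces $\alpha^{<0}(A_\gamma)=1$ and $\alpha^{\ge 0}(A_\gamma)=2$, both eigenvalues of $A_\gamma$ with winding $1$ are strictly negative; call them $\mu_1<\mu_2<0$. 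A sequence of fast planes with asymptotic eigenvalue $\mu_1$ may well converge in the SFT sense to a plane whose asymptotic eigenvalue is $\mu_2$, so the transversal convergence rate $|\nu|$ decreases while the winding stays put. Nothing in $C^\infty_{\mathrm{loc}}$ convergence on compacta (which is all SFT convergence gives you away from the puncture) controls the sharp exponential decay rate at $\infty$. What is true — and what carries the argument — is the weaker statement $\operatorname{wind}_\infty^\tau(v,\infty)\le \operatorname{wind}_\infty^\tau(u_k,\infty)$, but you must prove it by a different mechanism. The paper does this by fixing a circle $\{s_0\}\times\T$ with $s_0\gg 0$ on which $\pi_\xi(\partial_s v(s_0,\cdot))$ is nowhere vanishing (possible because $v$ has nontrivial asymptotics, so $\pi_\xi\circ dv$ has finitely many zeros), and on which the winding of $\pi_\xi(\partial_s v(s_0,\cdot))$ therefore equals $\operatorname{wind}_\infty^\tau(v,\infty)$; identifying this with the winding of $\pi_\xi(\partial_s u_k(s_0,\cdot))$ for $k$ large via $C^\infty_{\mathrm{loc}}$ convergence; and then observing that for fixed $k$ the winding of $\pi_\xi(\partial_s u_k(s,\cdot))$ is non-decreasing in $s\ge s_0$, because the zeros of $\pi_\xi\circ du_k$ have positive multiplicity, so the winding at $s_0$ is bounded above by the asymptotic winding of $u_k$, which is $\le 1$ by the fast hypothesis. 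You need to replace the black-box semicontinuity appeal with this (or an equivalent) positivity-of-zeros argument.

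A secondary, less serious issue: your remark that a ghost component would produce ``a branch consisting of symplectization-level curves with only negative punctures, which the next step forbids'' is asserted rather than argued, and tracing trees through levels is more delicate than the parenthetical suggests. This is avoidable: the winding analysis already forces the top component $v$ to have $\Gamma=\emptyset$, i.e.\ no negative punctures at all, which means there is nothing below $v$ in the building and so no room for ghosts or lower levels. Running the winding argument first and concluding the tree structure afterward is cleaner than trying to prune ghosts up front.
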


\begin{proof} 
The first assertion is immediate from \cite[Cor. 4.7]{hss22}. The second assertion can be proved using similar arguments (cf. \cite[\S 2.5]{hhr24}). For convenience of the reader, we explain the proof.

Let $(u_k)_k$ be a sequence of fast planes in $\MM^{\operatorname{fast}}(\hat{X},\gamma,\hat{J})$. By the SFT compactness theorem, up to a subsequence $(u_k)_k$ converges to a pseudoholomorphic building. The top level $v$ of this building could be contained either in $\hat{X}$, in which case the building has only one layer, or in $\R\times Y$. Let us first assume that $v$ is contained in $\R\times Y$. In this case, $v$ is a punctured finite energy sphere $v:(\C\setminus \Gamma,i)\rightarrow (\R\times Y,J)$ with exactly one positive puncture $\infty$, at which it is asymptotic to the orbit $\gamma$, and some set of negative punctures $\Gamma$. Our goal is to show that $v\in \MM^{\operatorname{fast}}(\R\times Y,\gamma,J)$, i.e. that  $\Gamma = \emptyset$ and $\operatorname{wind}_{\infty}(v) = 1$.

First observe that since $\gamma$ is a simple orbit, $v$ cannot be a cover of the trivial cylinder $\R\times \gamma$. In particular it must have nontrivial asymptotic behaviour at its positive puncture. Let us parametrize $\C\setminus \left\{ 0 \right\}$ by $\R\times \T \ni (s,t) \mapsto e^{2\pi(s+it)}$. Let $\tau$ be a global trivialization of the contact structure $\xi$ on $Y$. Via $\tau$, we can regard $\pi_\xi(\partial_sv(s,\cdot))$ as a function in $C^\infty(\T,\R^2)$ for all $s$ such that $\left\{ s \right\}\times \T$ does not contain a puncture. For all $s\gg 0$ sufficiently large, this function is nowhere vanishing, i.e. takes values in $\R^2 \setminus \{0\}$, and we have $\operatorname{wind}_\infty^\tau(v,\infty) = \operatorname{wind}(\pi_\xi(\partial_sv(s,\cdot)))$. Let us fix such a value $s_0\gg 0$. After a suitable reparametrization of the planes $u_k$, we can assume that $u_k$ converges to $v$ in $C^\infty_{\operatorname{loc}}(\C\setminus \Gamma)$, up to translation along the $\R$-factor of the symplectization $\R\times Y$. In particular, we have convergence $\pi_\xi(\partial_su_k(s_0,\cdot))\rightarrow \pi_\xi(\partial_sv(s_0,\cdot))$ in $C^\infty(\T)$. For all sufficiently large $k\gg 0$, the function $\pi_\xi(\partial_su_k(s_0,\cdot))$ must therefore be nowhere vanishing and $\operatorname{wind}(\pi_\xi(\partial_su_k(s_0,\cdot))) = \operatorname{wind}(\pi_\xi(\partial_sv(s_0,\cdot)))$. Since the section $\pi_\xi(\partial_su_k)$ is in the kernel of some Cauchy-Riemann type operator, its zeros are isolated and have positive multiplicities. Moreover, $\pi_\xi(\partial_su_k(s,\cdot))$ is nowhere vanishing for all $s\gg 0$ and since the planes $u_k$ are fast, we have $\operatorname{wind}(\pi_\xi(\partial_su_k(s,\cdot)))\leq 1$. For $s>s_0$, the difference $\operatorname{wind}(\pi_\xi(\partial_su_k(s,\cdot))) - \operatorname{wind}(\pi_\xi(\partial_su_k(s_0,\cdot)))$ agrees with the count of zeros of $\pi_\xi(\partial_su_k)$ in the region $[s_0,s]\times \T$, which is non-negative. We conclude that
\begin{equation*}
\operatorname{wind}_\infty^\tau(v,\infty) = \operatorname{wind}(\pi_\xi(\partial_sv(s_0,\cdot))) = \operatorname{wind}(\pi_\xi(\partial_su_k(s_0,\cdot)))\leq 1.
\end{equation*}
We recall that
\begin{equation*}
0\leq \operatorname{wind}_\pi(v) = \operatorname{wind}_\infty^\tau(v,\infty) - \sum\limits_{z\in \Gamma}\operatorname{wind}_\infty^\tau(v,z) - \chi(\C\setminus \Gamma).
\end{equation*}
Dynamical convexity up to action $\MA(\gamma)$ implies that $\operatorname{wind}_\infty^\tau(v,z)\geq 2$ for all $z\in \Gamma$. Therefore, the right hand side of the above equality is bounded from above by $1-2\#\Gamma - (1-\#\Gamma) = -\#\Gamma$. We conclude that $v$ does not have any negative punctures and that $\operatorname{wind}_\infty(v) = 1$. Thus $v\in \MM^{\operatorname{fast}}(\R\times Y,\gamma,J)$.

It remains to consider the case that the top layer $v$ is contained in $\hat{X}$. Clearly, $v$ must be a pseudoholomorphic plane positively asymptotic to $\gamma$. We need to verify that $v$ is fast. This is the case if $v$ has trivial asymptotic behaviour by definition. So assume that $v$ has non-trival asymptotic behaviour. Arguing exactly as before, one can show that $\operatorname{wind}_\infty(v)\leq 1$. This shows that $v$ is fast and concludes the proof of the second assertion.
\end{proof}

In addition, it will be useful to introduce moduli spaces of unparametrized fast pseudoholomorphic planes with one marked point
\begin{equation*}
\MM^{\operatorname{fast}}_1(\R\times Y,\gamma,J) \qquad \text{and} \qquad \MM^{\operatorname{fast}}_1(\hat{X},\gamma,\hat{J}).
\end{equation*}
Elements of $\MM^{\operatorname{fast}}_1(\R\times Y,\gamma,J)$ are represented by pairs $(u,z)$ consisting of a parametrized fast pseudoholomorphic plane $u:(\C,i)\rightarrow (\R\times Y,J)$ positively asymptotic to $\gamma$ and a point $z\in \C$. Two such pairs $(u_1,z_1)$ and $(u_2,z_2)$ are considered to be equivalent if there exists a biholomorphism $\varphi$ of $\C$ such that $u_2 = u_1\circ \varphi$ and $\varphi(z_2)=z_1$. Elements of $\MM^{\operatorname{fast}}_1(\hat{X},\gamma,J)$ have an analogous description. There is a natural forgetful map $\MM^{\operatorname{fast}}_1(\R\times Y,\gamma,J)\rightarrow \MM^{\operatorname{fast}}(\R\times Y,\gamma,J)$ forgetting the marked point. As observed above, all pseudoholomorphic planes asymptotic to $\gamma$ are somewhere injective. Thus the fibres of this forgetful map are simply given by $\C$. The same is true for the forgetful map on $\MM^{\operatorname{fast}}_1(\hat{X},\gamma,\hat{J})$. We have well-defined evaluation maps
\begin{align*}
\operatorname{ev}_{\R\times Y}: \MM^{\operatorname{fast}}_1(\R\times Y,\gamma,J) \rightarrow \R\times Y \qquad (u,z) &\mapsto u(z) \\
\operatorname{ev}_{\hat{X}}: \MM^{\operatorname{fast}}_1(\hat{X},\gamma,\hat{J}) \rightarrow \hat{X} \qquad (u,z) &\mapsto u(z).
\end{align*}
Note that $\operatorname{ev}_{\R\times Y}$ is equivariant with respect to the natural $\R$-actions on $\MM^{\operatorname{fast}}_1(\R\times Y,\gamma,J)$ and $\R\times Y$ by translation.

\begin{prop}[Properness {\cite[Lemma 4.9]{hss22}}]
\label{prop:fast_planes_properness}
The following statements are true.
\begin{enumerate}[(i)]
\item Let $K\subset \R\times (Y\setminus \gamma)$ be a compact subset. Then the preimage of $K$ under the evaluation map $\operatorname{ev}_{\R\times Y}$ is compact.
\item The evaluation map $\operatorname{ev}_{\hat{X}}$ is proper.
\end{enumerate}
\end{prop}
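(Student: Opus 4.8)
The plan is to deduce both properness statements from the SFT compactness theorem together with control on the behaviour of fast planes near the asymptotic orbit $\gamma$. For part (i), suppose $K\subset \R\times(Y\setminus\gamma)$ is compact and $(u_k,z_k)$ is a sequence in $\operatorname{ev}_{\R\times Y}^{-1}(K)$, so that $u_k(z_k)\in K$ for all $k$. By Proposition \ref{prop:fast_planes_compactness}(i), after passing to a subsequence and applying the $\R$-action appropriately, the planes $u_k$ converge in the SFT sense to a fast plane $u\in\MM^{\operatorname{fast}}(\R\times Y,\gamma,J)$; in fact, because the limit building must consist of a single fast plane (again by part (i) of the compactness proposition, which identifies the compactification with $\MM^{\operatorname{fast}}(\R\times Y,\gamma,J)/\R$ itself, i.e.\ no breaking occurs in the symplectization modulo translation), the convergence is $C^\infty_{\operatorname{loc}}$ on $\C$ up to translation $c_k\in\R$. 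The key point is that the translations $c_k$ cannot escape to $\pm\infty$: since $u_k(z_k)$ lies in the fixed compact set $K$, which is disjoint from the trivial cylinder $\R\times\gamma$, the marked points $z_k$ must remain in a compact subset of $\C$. Indeed, near the puncture at $\infty$ the fast plane $u_k$ is $C^0$-close to the cylinder over $\gamma$ (uniformly in $k$ by the compactness, using that fast planes have $\operatorname{wind}_\pi=0$ and hence their projections to $Y$ converge uniformly to $\gamma$ on a neighbourhood of $\infty$), so if $z_k\to\infty$ then $u_k(z_k)$ would converge to $\R\times\gamma$, contradicting $u_k(z_k)\in K$. Hence $z_k$ stays in a compact set and converges to some $z_\infty\in\C$ after a subsequence; combined with the $C^\infty_{\operatorname{loc}}$ convergence $u_k\to u$ this also pins down the translations $c_k$ (they converge), and $(u_k,z_k)\to(u,z_\infty)$ in $\MM^{\operatorname{fast}}_1(\R\times Y,\gamma,J)$, with $u(z_\infty)\in K$. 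This proves compactness of the preimage.

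For part (ii), the argument is parallel but uses Proposition \ref{prop:fast_planes_compactness}(ii). Let $K\subset\hat X$ be compact and let $(u_k,z_k)$ be a sequence in $\operatorname{ev}_{\hat X}^{-1}(K)$. By the compactness proposition, a subsequence of $(u_k)$ converges in the SFT sense either to a fast plane in $\hat X$ or to a fast plane in $\R\times Y$ (with the rest of the building living in the cylindrical end). In the first case, the limit is a genuine fast plane $u$ in $\hat X$ with $C^\infty_{\operatorname{loc}}$ convergence $u_k\to u$ on $\C$ (no reparametrization ambiguity beyond the finite-dimensional automorphism group, which is handled by fixing three points or equivalently by passing to the marked-point moduli space), so $z_k$ stays bounded by the same argument as before — if $z_k\to\infty$, then $u_k(z_k)$ escapes into the positive cylindrical end $[0,\infty)\times Y$, eventually leaving the compact set $K$ — and we obtain a limit $(u,z_\infty)\in\MM^{\operatorname{fast}}_1(\hat X,\gamma,\hat J)$ with $\operatorname{ev}_{\hat X}(u,z_\infty)\in K$. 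In the second case, the top level $v$ is a fast plane in $\R\times Y$ and the remaining components of the building map into $\hat X$, with the image of $u_k$ developing a long neck in the positive cylindrical end; but then for large $k$ the region of $\C$ where $u_k$ takes values in $K$ is a fixed compact subset of $\C$ (the part of the domain mapping into the piece of the building that lies in $\hat X$, away from the neck), so again $z_k$ cannot escape to infinity without $u_k(z_k)$ leaving $K$. This contradicts nothing directly, but it forces $z_k$ to lie in a compact set; passing to a further subsequence $z_k\to z_\infty\in\C$, and the $C^\infty_{\operatorname{loc}}$ convergence on the relevant component shows $u_k(z_k)$ converges to a point of $\hat X$ on one of the lower levels of the building — which is itself a limit in $\MM^{\operatorname{fast}}_1(\hat X,\gamma,\hat J)$ once we observe that the lower components glue to a fast plane in $\hat X$ (the neck-stretched piece being exact). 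Either way $\operatorname{ev}_{\hat X}^{-1}(K)$ is sequentially compact, hence compact.

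The main obstacle I anticipate is the bookkeeping in the second case of part (ii): ruling out that the marked point runs off to a puncture requires knowing precisely where in the limit building the points $u_k(z_k)$ can accumulate, and translating "the image of $u_k$ near $z_k$ stays in $K\subset\hat X$" into a statement that $z_k$ lies in a domain region converging to a compact subset of the domain of one of the building's levels that actually maps into $\hat X$ (not into a symplectization level). This is exactly the kind of domain-rescaling argument that appears in \cite[Lemma 4.9]{hss22}; the cited reference and \cite[\S 2.5]{hhr24} carry out the analogous analysis, and the point is that fastness forces the building to have a very constrained shape (a fast plane on top, trivial cylinders or nothing below in $\R\times Y$, and a single fast plane in $\hat X$ at the bottom), which is what makes the marked-point bound work. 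The uniform $C^0$-control of fast planes near $\gamma$ — a consequence of $\operatorname{wind}_\pi=0$ and Remark \ref{rem:fast_plane} combined with the asymptotic formula of Section \ref{subsec:asymptotics_at_punctures} — is the other technical input, and I would state it as a preliminary lemma before starting the properness argument proper.
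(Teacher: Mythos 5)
Your part (i) is fine: the argument that $z_k$ cannot escape to $\infty$ because the $C^0$ proximity of fast planes to $\R\times\gamma$ near the puncture is uniform over the compact family $\MM^{\operatorname{fast}}(\R\times Y,\gamma,J)/\R$ is essentially the content of the cited \cite[Lemma 4.9]{hss22}, to which the paper defers for this part.

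For part (ii) there is a genuine gap. After invoking Proposition \ref{prop:fast_planes_compactness}\,(ii), the case where the limit of the unmarked planes is a fast plane $v$ in $\R\times Y$ cannot yield a building with ``remaining components that map into $\hat X$,'' because $v$ is a fast plane in a symplectization and therefore has no negative punctures at all (this is precisely what is shown in the proof of Proposition \ref{prop:fast_planes_compactness}\,(i)); the building in that case \emph{is} the single level $v$. Your argument in the second case — locating the accumulation set of $u_k(z_k)$ on a lower level that ``glues to a fast plane in $\hat X$'' — therefore refers to a scenario that does not exist, and the conclusion you draw from it is not justified. What you are actually missing is a one-line observation that short-circuits the whole second case: SFT convergence of the unmarked planes $u_k$ to $v\in\MM^{\operatorname{fast}}(\R\times Y,\gamma,J)$ means the entire image of $u_k$ drifts off to $+\infty$ along the cylindrical end, so for large $k$ it is disjoint from the fixed compact $K\subset\hat X$, contradicting $u_k(z_k)\in K$. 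Equivalently, since each $u_k$ passes through $K$, the unmarked $C^\infty_{\mathrm{loc}}$ limit must also pass through $K$ and hence lies in $\hat X$, not in $\R\times Y$. Once this is in hand, the only residual possibility when marked points are re-introduced is a two-level building consisting of the fast plane in $\hat X$ and a trivial cylinder over $\gamma$ carrying the marked point, and that is ruled out because the marked image would then leave $K$ (being bounded in the cylindrical coordinate). This is the paper's proof; your ``second case'' should simply be replaced by that immediate contradiction rather than an analysis of a nonexistent multi-level configuration.
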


\begin{proof}
Both assertions are a straightforward application of Proposition \ref{prop:fast_planes_compactness}. The first assertion also follows from \cite[Lemma 4.9]{hss22}. We present a proof of the second assertion.

Let $K\subset \hat{X}$ be a compact subset. Let $(u_k,z_k)\in \operatorname{ev}_{\hat{X}}^{-1}(K)$ be a sequence of fast planes with marked point. Let us temporarily ignore the marked points $z_k$ and apply Proposition \ref{prop:fast_planes_compactness} to the underlying sequence of fast planes $u_k$. This allows us to pass to a subsequence which converges to a fast plane $u$ positively asymptotic to $\gamma$, either in the symplectization $\R\times Y$ or in the completion $\hat{X}$. Since each $u_k$ passes through the compact set $K$, the same is true for the limit $u$, which therefore is contained in $\hat{X}$. Next, we apply the SFT compactness theorem to the sequence $(u_k,z_k)$ with the marked points. After passing to a further subsequence, this sequence converges to a pseudoholomorphic building with a marked point. Since we already know that the sequence $u_k$ converges to a single layer building consisting of a fast plane $u$ positively asymptotic to $\gamma$ in $\hat{X}$, there are only two possibilities for the limiting building of $(u_k,z_k)$: a single layer building consisting of a fast plane with marked point $(u,z)$ in $\hat{X}$; or a two layer building consisting of a fast plane $u$ in $\hat{X}$ and the trivial cylinder over $\gamma$ with marked point $(u_\gamma,z)$ in $\R\times Y$. We can immediately rule out the second case because all $u_k(z_k)$ are contained in the compact set $K$. In the first case, we must clearly have $u(z)\in K$. Therefore, $(u,z) \in \operatorname{ev}_{\hat{X}}^{-1}(K)$ is the desired limit of a subsequence of $(u_k,z_k)$. This shows compactness of $\operatorname{ev}_{\hat{X}}^{-1}(K)$ and therefore properness of $\operatorname{ev}_{\hat{X}}$.
\end{proof}

Let $A_\gamma$ denote the asymptotic operator of the orbit $\gamma$. Since $Y$ is dynamically convex up to action $\MA(\gamma)$, we have $\operatorname{CZ}_\tau(\gamma) \geq 3$ where $\tau$ is a global trivialization of the contact structure $\xi$. Equivalently, we have $\alpha_\tau^{<0}(A_\gamma) \geq 1$ and $\alpha_\tau^{\geq 0}(A_\gamma)\geq 2$. This means that we can choose a non-negative constant $c\geq 0$ such that $-c\notin \sigma(A_\gamma)$ and $\alpha_\tau^{<-c}(A_\gamma) = 1$ and $\alpha_\tau^{\geq -c}(A_\gamma) = 2$. A pseudoholomorphic plane in $\R\times Y$ or $\hat{X}$ positively asymptotic to $\gamma$ is fast if and only if its transversal asymptotic convergence rate is strictly bigger than $c$. Thus the Fredholm theory for pseudoholomorphic curves with asymptotic constraints which we summarized in Section \ref{phcce} applies to the moduli spaces of fast pseudoholomorphic planes. Note that since $\gamma$ is a simple periodic orbit, there is a unique choice of asymptotic marker for every plane asymptotic to $\gamma$ and we can therefore omit asymptotic markers from our discussion.

The constrained Fredholm index of a fast plane $u$ is given by
\begin{equation}
\label{eq:fast_planes_constrained_Fredholm_index}
\operatorname{ind}^{c}(u) = -\chi(\C) + \operatorname{CZ}_{\tau}^{-c}(\gamma) = 2.
\end{equation}
Recall that there is a normal deformation operator
\begin{equation*}
D_u^N : W^{1,p,c}(N_u) \rightarrow L^{p,c}(\Lambda^{0,1}T^*\C\otimes N_u)
\end{equation*}
of Fredholm index
\begin{equation}
\label{eq:fast_planes_Fredholm_index_normal_operator}
\operatorname{ind}^c(D_u^N) = \operatorname{ind}^c(u) - 2 Z(du)
\end{equation}
with the property that $u$ is regular if and only if $D_u^N$ is surjective. Note that the index $\operatorname{ind}^c(D_u^N)$ is always even. Therefore, the orientation line $\mathfrak{o}_{D_u^N}$ admits a canonical complex orientation by Remark \ref{rem:canonical_orientation_plane}. This induces a canonical orientation of our moduli spaces of fast planes, as explained at the end of Section \ref{subsec:fredholm_theory}. In the following, we equip the moduli spaces with this orientation.

\begin{prop}[Automatic regularity]
\label{prop:fast_planes_automatic_regularity}
The following statements are true.
\begin{enumerate}[(i)]
\item \label{item:fast_planes_automatic_regularity_symplectization} Every fast plane in $\R\times Y$ is automatically regular. In particular, the moduli spaces $\MM^{\operatorname{fast}}(\R\times Y,\gamma,J)$ and $\MM^{\operatorname{fast}}_1(\R\times Y,\gamma,J)$ are smooth manifolds of dimensions $2$ and $4$, respectively.
\item \label{item:fast_planes_automatic_regularity_completion} Every fast plane $u$ in $\hat{X}$ satisfying $Z(du)\in \left\{ 0,1 \right\}$ is automatically regular. In particular, open neighbourhoods of the subsets of $\MM^{\operatorname{fast}}(\hat{X},\gamma,\hat{J})$ and $\MM^{\operatorname{fast}}_1(\hat{X},\gamma,\hat{J})$ consisting of fast planes satisfying $Z(du)\in \left\{ 0,1 \right\}$ are smooth manifolds of dimensions $2$ and $4$, respectively.
\item \label{item:fast_planes_automatic_regularity_evaluation_map_symplectization} The evaluation map $\operatorname{ev}_{\R\times Y}$ is an orientation preserving local diffeomorphisms.
\item \label{item:fast_planes_automatic_regularity_evaluation_map_completion} The restriction of the evaluation map $\operatorname{ev}_{\hat{X}}$ to the open subset consisting of all immersed fast planes with marked point is an orientation preserving local diffeomorphism.
\item \label{item:fast_planes_automatic_regularity_evaluation_map_non_immersed} Let $(u,z)\in \MM^{\operatorname{fast}}_1(\hat{X},\gamma,\hat{J})$ and assume that $Z(du) = 1$. Then 
\begin{equation*}
\operatorname{im}d\operatorname{ev}_{\hat{X}}(u,z) = (T_u)_z \subset T_{u(z)}\hat{X}.
\end{equation*}
\end{enumerate}
\end{prop}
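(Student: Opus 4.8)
The plan is to derive all five items from the automatic regularity of Cauchy--Riemann operators on complex line bundles over $\C$, namely Proposition~\ref{prop:automatic_regularity_linear_CR_operator}, applied to the normal operator $D_u^N$, together with the upper triangular block description \eqref{eq:block_matrix_form_linearization} of the linearization $D\overline\partial_J(u)$. For items \eqref{item:fast_planes_automatic_regularity_symplectization} and \eqref{item:fast_planes_automatic_regularity_completion} I would observe that, by \eqref{eq:fast_planes_constrained_Fredholm_index} and \eqref{eq:fast_planes_Fredholm_index_normal_operator}, $\operatorname{ind}^c(D_u^N)=2-2Z(du)$, and that $D_u^N$ is a Cauchy--Riemann type operator on the line bundle $N_u\to\C$ with a single positive puncture asymptotic to $A_\gamma$ and exponential weight $c$. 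In the symplectization Remark~\ref{rem:fast_plane} forces $Z(du)=0$, hence $\operatorname{ind}^c(D_u^N)=2>0$; in $\hat X$ the assumption $Z(du)\in\{0,1\}$ gives $\operatorname{ind}^c(D_u^N)\in\{2,0\}\ge0$. Either way Proposition~\ref{prop:automatic_regularity_linear_CR_operator}\eqref{item:automatic_regularity_linear_CR_operator} makes $D_u^N$ surjective, which by \eqref{eq:block_matrix_form_linearization} and the (always surjective) complex linear operator $G_u+\overline\partial_{T_u}$ is equivalent to transversality of $\overline\partial_J$ at $u$. Thus the relevant open subsets of the moduli spaces are cut out transversally and hence smooth of dimension $\operatorname{ind}^c(u)=2$; adding a marked point, whose forgetful fibre over the (somewhere injective) curves is $\C$, raises the dimension to $4$.

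For items \eqref{item:fast_planes_automatic_regularity_evaluation_map_symplectization} and \eqref{item:fast_planes_automatic_regularity_evaluation_map_completion}, at an immersed regular point $(u,z)$ I would use the forgetful fibration and the identification \eqref{eq:kernel_normal_operator} to write $T_{(u,z)}\MM^{\operatorname{fast}}_1\cong T_z\C\oplus\ker D_u^N$, together with $T_{u(z)}\hat X=(T_u)_z\oplus(N_u)_z$. With respect to these splittings $d\operatorname{ev}$ is upper triangular: it restricts on $T_z\C$ to the complex linear isomorphism $v\mapsto du(z)v$ onto $(T_u)_z$, and its $(N_u)_z$-component on $\ker D_u^N$ is the evaluation $\zeta\mapsto\zeta(z)$. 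Since $\operatorname{ind}^c(D_u^N)=2$, Proposition~\ref{prop:automatic_regularity_linear_CR_operator}\eqref{item:automatic_regularity_linear_CR_operator_index_2} tells us that a basis of $\ker D_u^N$ inducing the preferred complex orientation of $\mathfrak{o}_{D_u^N}$ evaluates at $z$ to a basis of $(N_u)_z$ inducing its complex orientation; hence both diagonal blocks, and therefore $d\operatorname{ev}$ itself, are orientation preserving isomorphisms for the orientations fixed just before the proposition. Item \eqref{item:fast_planes_automatic_regularity_evaluation_map_symplectization} is then the special case $\hat X=\R\times Y$, where every fast plane is immersed by Remark~\ref{rem:fast_plane}.

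For item \eqref{item:fast_planes_automatic_regularity_evaluation_map_non_immersed} I would argue as follows. When $Z(du)=1$ we get $\operatorname{ind}^c(D_u^N)=0$, so by the regularity established above $D_u^N$ is an isomorphism and $\ker D_u^N=0$; the short exact sequence \eqref{eq:short_exact_sequence_of_kernels} then shows that every element of $\ker D\overline\partial_J(u)$ has vanishing normal component, i.e.\ is a section of $T_u$. Hence $\operatorname{im}d\operatorname{ev}(u,z)$, which is spanned by $du(z)(T_z\C)$ and by the values at $z$ of such sections, lies in $(T_u)_z$. For the reverse inclusion, if $z$ is an immersed point of $u$ then already $du(z)(T_z\C)=(T_u)_z$. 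If $z=z_0$ is the unique, simple critical point of $u$, then $du(z_0)=0$ and it remains to show that evaluation at $z_0$ maps $T_{[u]}\MM=K/\{du(X):X\in T_{\operatorname{id}}\operatorname{Aut}(\C)\}$ onto $(T_u)_{z_0}$, where $K:=\ker D\overline\partial_J(u)=\ker(G_u+\overline\partial_{T_u})$ is the space of holomorphic, asymptotically constant sections of the line bundle $T_u$. Choosing a holomorphic trivialization of $T_u$ over $\C$ identifies $K$ with polynomials of degree at most $2$, since $\dim_\C K=\tfrac12(\dim_\R\operatorname{Aut}(\C)+2Z(du))=3$. The reparametrization subspace $\{du(X)\}$ is complex $2$-dimensional, because $X\mapsto du(X)$ is injective ($u$ being immersed somewhere), and it is contained in the subspace of sections in $K$ vanishing at $z_0$, because $du$ vanishes there; since that subspace is also complex $2$-dimensional, the two coincide. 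Therefore $K/\{du(X)\}$ is carried isomorphically onto $(T_u)_{z_0}$ by evaluation at $z_0$, which is the assertion.

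The only genuinely nontrivial input is Proposition~\ref{prop:automatic_regularity_linear_CR_operator}; everything else is bookkeeping. I expect the main obstacle to be the orientation verifications in \eqref{item:fast_planes_automatic_regularity_evaluation_map_symplectization} and \eqref{item:fast_planes_automatic_regularity_evaluation_map_completion}, where one must trace the canonical complex orientation through \eqref{eq:orientation_lines_full_and_normal_deformation_operator} and \eqref{eq:kernel_normal_operator}, and, in \eqref{item:fast_planes_automatic_regularity_evaluation_map_non_immersed}, the identification of $K$ and of its reparametrization subspace, which hinges on the exact index of $G_u+\overline\partial_{T_u}$ and on $du$ vanishing at the critical point $z_0$.
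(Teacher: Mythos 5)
Your proposal is correct and follows essentially the same route as the paper: items (i)--(iv) use Proposition~\ref{prop:automatic_regularity_linear_CR_operator} together with the block upper-triangular form \eqref{eq:block_matrix_form_linearization}, and item (v) reduces to identifying $\ker(G_u+\overline\partial_{T_u})$ as a $3$-complex-dimensional space of holomorphic sections of $T_u$ and analyzing the evaluation at the critical point $z_0$.

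The only place your argument diverges slightly from the paper's is in (v): the paper writes out the kernel explicitly as the space of sections $z\mapsto du(z)\bigl(a_1(z-z_0)+a_0+a_{-1}(z-z_0)^{-1}\bigr)$ and reads off that the evaluation map $\C^3\to(T_u)_z$ is onto, whereas you pass to a ``degree $\le 2$ polynomials'' model and argue by matching the dimension of the reparametrization subspace $\{du(X)\}$ against that of the subspace of $K$ vanishing at $z_0$. These are the same computation in different clothing, but your version leans on the fact that evaluation at $z_0$ is nonzero on $K$ in order to assert that the vanishing locus is $2$-dimensional rather than all of $K$; that fact is exactly what the paper's explicit parametrization (with the free constant term $a_{-1}$) makes manifest, and it is worth noting that the ``polynomial'' identification must be taken with the same asymptotic normalization that produces that parametrization. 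With that caveat acknowledged, your proof is a fair reproduction of the paper's.
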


\begin{proof}
Let $u$ be a fast plane in $W$ where $W\in \{\R\times Y, \hat{X}\}$. By Proposition \ref{prop:automatic_regularity_linear_CR_operator}, the operator $D_u^N$ is surjective if and only if $\operatorname{ind}^c(D_u^N)\geq 0$. It is immediate from equations \eqref{eq:fast_planes_constrained_Fredholm_index} and \eqref{eq:fast_planes_Fredholm_index_normal_operator} that this is the case if and only if $Z(du) \in \left\{ 0,1 \right\}$, i.e. if and only if $u$ is immersed or has exactly one non-immersed point of order $1$. This shows statement \eqref{item:fast_planes_automatic_regularity_completion} and also statement \eqref{item:fast_planes_automatic_regularity_symplectization} because fast planes in the symplectization are automatically immersed.

Next, consider an immersed fast plane with marked point $(u,z)\in \MM^{\operatorname{fast}}_1(W,\gamma)$. By equations \eqref{eq:fast_planes_constrained_Fredholm_index} and \eqref{eq:fast_planes_Fredholm_index_normal_operator}, we have $\operatorname{ind}^c(D_u^N) = 2$. We may split the tangent space
\begin{equation}
\label{eq:fast_planes_automatic_regularity_proof_splitting_tangent_spaces}
T_{(u,z)}\MM_1^{\operatorname{fast}}(W,\gamma) = T_z\C \oplus \operatorname{ker}(D_u^N)
\end{equation}
into a component $T_z\C$ tangent to the fiber of the forgetful map $\MM_1^{\operatorname{fast}}(W,\gamma)\rightarrow \MM^{\operatorname{fast}}(W,\gamma)$ and a component $\operatorname{ker}(D_u^N)$ transverse to the fiber. The linearization of the evaluation map $\operatorname{ev}_{W}$ at $(u,z)$ is given by
\begin{equation}
\label{eq:fast_planes_automatic_regularity_proof_splitting_linearization}
d\operatorname{ev}_{W}(u,z) : T_z\C \oplus \operatorname{ker}(D_u^N) \rightarrow (T_u)_z \oplus (N_u)_z = T_{u(z)}W \qquad (w,X) \mapsto (du(z)w, X(z)).
\end{equation}
The first component of this linear map is complex linear and bijective because $u$ is immersed by assumption. The second component is bijective and orientation preserving by Proposition \ref{prop:automatic_regularity_linear_CR_operator}. Therefore, $d\operatorname{ev}_{W}(u,z)$ is invertible and orientation preserving. This implies statements \eqref{item:fast_planes_automatic_regularity_evaluation_map_symplectization} and \eqref{item:fast_planes_automatic_regularity_evaluation_map_completion}.

Assume now that $(u,z) \in \MM^{\operatorname{fast}}_1(\hat{X},\gamma,\hat{J})$ satisfies $Z(du)=1$. In this case, the normal operator $D_u^N$ has index $0$ and is therefore bijective. The tangent part of the deformation operator $D_u$ is the Cauchy-Riemann operator $\overline{\partial}_{T_u}$ induced by the holomorphic structure of $T_u$
\begin{equation*}
\overline{\partial}_{T_u}: V_\gamma^T \oplus W^{1,p,\delta}(T_u) \rightarrow L^{p,\delta}(\Lambda^{0,1}T^*\C\otimes T_u).
\end{equation*}
It is surjective and has index $6$. We can explicitly identify its kernel. Let $z_0\in \C$ denote the unique critical point of $du$. Then $\operatorname{ker}\overline{\partial}_{T_u}$ consists of all sections of $T_u$ of the form
\begin{equation}
\label{eq:fast_planes_automatic_regularity_proof_kernel}
\C \ni z \mapsto du(z) (a_1(z-z_0) + a_0 + a_{-1}(z-z_0)^{-1}) \in (T_u)_z
\end{equation}
where $a_1,a_0,a_{-1}\in \C$. Note that \eqref{eq:fast_planes_automatic_regularity_proof_kernel} is a well-defined section of $T_u$ because in a holomorphic chart near $z_0$, the derivative of $u$ can be written as $du(z) = (z-z_0)F(z)$ where $F$ is holomorphic and satisfies $F(z_0)\neq 0$. For every fixed $z\in \C$, the map
\begin{equation}
\label{eq:fast_planes_automatic_regularity_proof_kernel_eval_z}
\C^3 \ni (a_1,a_0,a_{-1}) \mapsto du(z) (a_1(z-z_0) + a_0 + a_{-1}(z-z_0)^{-1}) \in (T_u)_z
\end{equation}
is surjective. If $z$ is not a zero of $du$, this is clear because $(T_u)_z = \operatorname{im}du(z)$. If $z=z_0$ is the unique zero of $du$, write $du(z) = (z-z_0)F(z)$ as above and recall that $(T_u)_{z_0} = \C \cdot F(z_0)$ by definition. Surjectivity of the map \eqref{eq:fast_planes_automatic_regularity_proof_kernel_eval_z} implies that the image of $d\operatorname{ev}_W(u,z)$ is equal to $(T_u)_z$.
\end{proof}

\begin{prop}[Generic regularity]
\label{prop:fast_planes_generic_regularity}
Suppose that the almost complex structure $\hat{J}$ on $\hat{X}$ is generic or a member of a generic $1$-parameter family. Then every $u\in \MM^{\operatorname{fast}}(\hat{X},\gamma,\hat{J})$ satisfies $Z(du)\in \left\{ 0,1 \right\}$. Moreover, the set of $u$ satisfying $Z(du)=1$ is finite.
\end{prop}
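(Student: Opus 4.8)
\textbf{Overall plan and Step 1 ($Z(du)\in\{0,1\}$).} I would treat the two assertions separately: the bound $Z(du)\leq 1$ is a combination of generic transversality for somewhere injective curves with the automatic regularity of Cauchy--Riemann operators on line bundles, while finiteness of $\{u:Z(du)=1\}$ needs a transversality argument for the non-immersed locus together with a compactness argument ruling out escape into the symplectization. For the first assertion: since $\gamma$ is simple, every fast plane asymptotic to $\gamma$ is somewhere injective, so for generic $\hat J$ the full linearization $D\overline{\partial}_{\hat J}(u)$ is surjective (Section~\ref{subsec:fredholm_theory}). By the block triangular form \eqref{eq:block_matrix_form_linearization} and the surjectivity of $G_u+\overline{\partial}_{T_u}$, this is equivalent to surjectivity of the normal operator $D_u^N$. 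As $\dim_{\C}\hat X=2$, the bundle $N_u$ has complex rank $1$, so Proposition~\ref{prop:automatic_regularity_linear_CR_operator}(i) applies and $D_u^N$ is surjective iff $\operatorname{ind}^c(D_u^N)\geq 0$; by \eqref{eq:fast_planes_constrained_Fredholm_index} and \eqref{eq:fast_planes_Fredholm_index_normal_operator} this index equals $2-2Z(du)$, which is nonnegative exactly when $Z(du)\leq 1$. For a member of a generic $1$-parameter family the same holds, since the stratum of fast planes with $Z(du)\geq 2$ has virtual codimension at least $4$ in the parametrized universal moduli space and hence remains empty after adding a single family parameter.

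\textbf{Step 2: discreteness of $\{u:Z(du)=1\}$.} For a fast plane $u$ with $Z(du)=1$, the differential $du$ has a single zero $z_0$, of order $1$, and $\operatorname{ind}^c(D_u^N)=0$, so $u$ is regular and $D_u^N$ is bijective (Proposition~\ref{prop:fast_planes_automatic_regularity}(ii)). I would prove that $\{u:Z(du)=1\}$ is discrete via a transversality statement on the universal moduli space: over the space of admissible almost complex structures, consider the space $\widetilde{\MM}_1$ of fast planes with one marked point and the section $\widetilde\Psi(\hat J,u,z):=du(z)$ of the complex rank-$2$ bundle with fiber $\operatorname{Hom}_{\C}(T_z\C,T_{u(z)}\hat X)$, and show that $\widetilde\Psi$ is transverse to the zero section. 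Since the zero of $du$ at $z_0$ has order exactly $1$, varying the marked point already moves $\widetilde\Psi$ in the generalized tangent direction $(T_u)_{z_0}$ of the fiber (this is visible from the explicit kernel of $\overline{\partial}_{T_u}$ recorded in \eqref{eq:fast_planes_automatic_regularity_proof_kernel}), while variations of $\hat J$ supported near an injective point of $u$, propagated to $z_0$ by a unique continuation argument, fill the complementary normal direction $(N_u)_{z_0}$. Sard--Smale then yields, for generic $\hat J$, that the fiber of $\widetilde\Psi^{-1}(0)$ over $\hat J$ is a manifold of dimension $\dim\MM^{\operatorname{fast}}_1(\hat X,\gamma,\hat J)-4=4-4=0$; forgetting the marked point is a bijection onto $\{u:Z(du)=1\}$ by Step~1, so this set is discrete. (Equivalently, the same transversality read off \eqref{eq:fast_planes_automatic_regularity_proof_kernel} says that near such a $u$ the surface $\MM^{\operatorname{fast}}(\hat X,\gamma,\hat J)$ is a $1$-complex-parameter family in which the critical point of the differential disappears, i.e.\ all nearby planes are immersed.)

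\textbf{Step 3: finiteness from discreteness.} It remains to prove that $\{u:Z(du)=1\}$ is compact, for then, being discrete, it is finite. First, near its positive puncture a fast plane is $C^1$-close to the trivial half-cylinder over $\gamma$, so there its differential contains the $\partial_a$-direction and is nowhere zero; hence the zeros of $du$ lie in a fixed compact region of $\C$, uniformly along convergent sequences. Consequently $\{Z(du)\geq 1\}$ is closed under SFT convergence inside $\MM^{\operatorname{fast}}(\hat X,\gamma,\hat J)$, and by Step~1 it coincides with $\{Z(du)=1\}$. By Proposition~\ref{prop:fast_planes_compactness}(ii), the only other possibility for the limit of a sequence in this set is a fast plane $v$ in the symplectization $\R\times Y$; but such a $v$ is immersed by Remark~\ref{rem:fast_plane}, and the uniformly bounded zeros of the approximating differentials would then have to converge to a zero of $dv$ -- a contradiction. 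Hence $\{u:Z(du)=1\}$ is sequentially compact, and thus finite. The case of a generic $1$-parameter family is handled in the same way, working with the parametrized moduli spaces, whose dimensions exceed the unparametrized ones by one, so that the $Z=1$ locus becomes a compact $1$-manifold.

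\textbf{Main obstacle.} The delicate point is the transversality of $\widetilde\Psi$: that a generic $\hat J$ carries no fast plane with a degenerate cusp -- equivalently, that the non-immersed locus of somewhere injective curves has real codimension $2$. This is classical in dimension $4$ but genuinely analytic: one must combine a localized $\hat J$-perturbation at an injective point with a unique continuation estimate transporting its effect to the (a priori non-injective) critical point, while checking that the exponential-weight framework of Section~\ref{phcce} keeps the perturbed planes fast. The remaining ingredients -- the index computations, the automatic regularity, and the compactness argument -- are routine given what has already been established.
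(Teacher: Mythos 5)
Your proposal follows essentially the same route as the paper's proof, with a somewhat different packaging of the transversality step. Step~1 is the paper's argument: since $\operatorname{ind}^c(D_u^N)=2-2Z(du)\leq -2$ when $Z(du)\geq 2$, the cokernel of $D_u$ has dimension at least $2$, contradicting the generic bound of at most $1$ for somewhere injective curves over a $1$-parameter family; your ``virtual codimension at least $4$'' phrasing is looser but reaches the same conclusion. For finiteness of the $Z(du)=1$ locus you and the paper both pass to the moduli space $\MM^{\operatorname{fast}}_{\operatorname{crit}}(\hat X,\gamma,\hat J)$ of fast planes with a marked point at which $du$ vanishes. The compactness step is the same: the SFT limit cannot be a fast plane in $\R\times Y$, which is immersed by Remark~\ref{rem:fast_plane}, and the marked point cannot drift onto a trivial-cylinder level, where $du_\gamma\neq 0$ --- the latter is exactly what your ``uniformly bounded zeros'' discussion is aiming at, and the marked-point framework makes it automatic. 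For $0$-dimensionality the paper follows \cite[\S 4.7]{wen16} and restricts the deformation operator to the codimension-$4$ subspace of sections with $\nabla^{1,0}\eta(z_0)=0$, computing that $D_u^{\operatorname{crit}}$ has index $2$ and, from the explicit kernel \eqref{eq:fast_planes_automatic_regularity_proof_kernel}, that $\ker D_u^{\operatorname{crit}}$ has real dimension $2$ unless $F(z_0)$ and $F'(z_0)$ are $\C$-linearly dependent, a codimension-$2$ condition excluded for generic $\hat J$ even in a $1$-parameter family. Your Sard--Smale argument for the universal section $\widetilde\Psi(\hat J,u,z):=du(z)$ is a different but morally equivalent packaging of the same constrained transversality; both hinge on the ingredient you correctly flag as the main obstacle, namely that localized $\hat J$-perturbations, propagated by unique continuation, fill the normal direction at the critical point.
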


\begin{rem}
{\rm Note that any fast plane in $\hat{X}$ which does not intersect the interior of $X$ is automatically immersed by Remark \ref{rem:fast_plane}. In Proposition \ref{prop:fast_planes_generic_regularity} it is therefore possible to fix the almost complex structure $\hat{J}$ on the positive cylindrical end $X^+=[0,\infty)\times Y$ and only make it generic in the interior of $X$.}
\end{rem}

\begin{proof}
We assume by contradiction that $u\in\MM^{\operatorname{fast}}(\hat{X},\gamma,\hat{J})$ satisfies $Z(du)\geq 2$. By equations \eqref{eq:fast_planes_constrained_Fredholm_index} and \eqref{eq:fast_planes_Fredholm_index_normal_operator}, the normal operator $D_u^N$ has index at most $-2$. Hence the image of $D_u^N$ has codimension at least $2$ and the same must be true for the image of the full deformation operator $D_u$. Recall that $u$ is somewhere injective because $\gamma$ is a simple orbit. Thus it follows from the assumption that $\hat{J}$ is contained in a generic $1$-parameter family that the image of $D_u$ has codimension at most $1$. This is a contradiction and we conclude that $u$ cannot exist.

It remains to argue that the set of fast planes $u$ satisfying $Z(du) = 1$ is finite. Consider the moduli space
\begin{equation*}
\MM^{\operatorname{fast}}_{\operatorname{crit}}(\hat{X},\gamma,\hat{J}) \coloneqq \left\{ (u,z)\in \MM^{\operatorname{fast}}_1(\hat{X},\gamma,\hat{J}) \mid du(z) = 0 \right\}.
\end{equation*}
We claim that this moduli space is compact. Indeed, let $(u_k,z_k)$ be a sequence in $\MM^{\operatorname{fast}}_{\operatorname{crit}}(\hat{X},\gamma,\hat{J})$. After passing to a subsequence, we can assume that $(u_k,z_k)$ converges in the sense of SFT to a limiting pseudoholomorphic building with one marked point. By Proposition \ref{prop:fast_planes_compactness}, the sequence $u_k$ without the marked point converges to a fast plane $u$ either in $\hat{X}$ or in $\R\times Y$. This results in the following possible options for the SFT limit of the sequence $(u_k,z_k)$ with marked point: a single level building consisting of a fast plane with marked point $(u,z)$ asymptotic to $\gamma$ in $\hat{X}$ or in $\R\times Y$; or a two level building with bottom level a fast plane $u$ asymptotic to $\gamma$ in $\hat{X}$ or $\R\times Y$ and top level the trivial cylinder with marked point $(u_\gamma,z)$ over $\gamma$ in $\R\times Y$. Since $du_\gamma(z)\neq 0$, the sequence $(u_k,z_k)$, which satisfies $du_k(z_k)=0$, clearly cannot converge to such a two level building. Similarly, since all fast planes in symplectizations are immersed, we would have $du(z)\neq 0$ in the case of a single level limiting building in the symplectization $\R\times Y$, also ruling out this case. Hence the limiting building must be a fast plane $(u,z)$ in $\hat{X}$ satisfying $du(z)= 0$, i.e. an element of $\MM^{\operatorname{fast}}_{\operatorname{crit}}(\hat{X},\gamma,\hat{J})$. This implies that $\MM^{\operatorname{fast}}_{\operatorname{crit}}(\hat{X},\gamma,\hat{J})$ is compact. Our goal is to show that it is also regular and $0$-dimensional. Finiteness is then an immediate consequence.

Let $(u,z_0)\in \MM^{\operatorname{fast}}_{\operatorname{crit}}(\hat{X},\gamma,\hat{J})$. We recall from \cite[\S 4.7]{wen16} that one can set up a Fredholm theory for the moduli space $\MM^{\operatorname{fast}}_{\operatorname{crit}}(\hat{X},\gamma,\hat{J})$ such that the deformation operator $D_u^{\operatorname{crit}}$ is simply given by the restriction of the deformation operator $D_u$ to the subspace $V_\gamma \oplus W^{2,p,(\delta,c)}_{\operatorname{crit}}(u^*T\hat{X})$ where
\begin{equation*}
W^{2,p,(\delta,c)}_{\operatorname{crit}}(u^*T\hat{X}) \coloneqq \left\{ \eta \in W^{2,p,(\delta,c)}(u^*T\hat{X}) \mid \nabla^{1,0} \eta(z_0)= 0 \right\}.
\end{equation*}
Note that $W^{2,p,(\delta,c)}_{\operatorname{crit}}(u^*T\hat{X})$ has codimension $4$ in $W^{2,p,(\delta,c)}(u^*T\hat{X})$. Since $D_u$ has index $6$, we conclude that $D_u^{\operatorname{crit}}$ has index $2$. Recall from the proof of Proposition \ref{prop:fast_planes_automatic_regularity} that the kernel of $D_u$ consists of all sections of $T_u$ of the form
\begin{equation*}
\C \in z \mapsto du(z) (a_1 (z-z_0) + a_0 + a_{-1}(z-z_0)^{-1}) \in (T_u)_z
\end{equation*}
where $a_j \in \C$. In a local holomorphic chart around $z_0$ we can write $du(z) = (z-z_0)F(z)$ where $F$ is holomorphic. A simple computation shows that an element of the kernel of $D_u$ is contained in $V_\gamma \oplus W^{2,p,(\delta,c)}_{\operatorname{crit}}$ if and only if
\begin{equation*}
a_0 F(z_0) + a_{-1} F'(z_0) = 0.
\end{equation*}
Thus the dimension of $\operatorname{ker}D_u^{\operatorname{crit}}$ is $2$ if $F(z_0)$ and $F'(z_0)$ are linearly independent over $\C$ and $4$ otherwise. Since the index of $D_u^{\operatorname{crit}}$ is $2$, this means that $D_u^{\operatorname{crit}}$ is surjective in the former case and has cokernel of dimension $2$ in the latter case. Since $\hat{J}$ is contained in a generic $1$-parameter family, the latter case cannot happen. We conclude that the moduli space of parametrized fast planes with vanishing derivative at $z_0$ is regular and $2$-dimensional. Note that the group of biholomorphisms of $\C$ fixing $z_0$ is $2$-dimensional. Hence the moduli space of unparametrized fast planes with vanishing derivative at a marked point is $0$-dimensional.
\end{proof}

By Proposition \ref{prop:fast_planes_automatic_regularity} the evaluation map
\begin{equation*}
\operatorname{ev}_{\R\times Y} : \MM^{\operatorname{fast}}_1(\R\times Y,\gamma,J) \rightarrow \R\times Y
\end{equation*}
is an orientation preserving local diffeomorphism between smooth $4$-manifolds for every symplectization admissible almost complex structure $J$. Whenever the moduli space $\MM^{\operatorname{fast}}_1(\R\times Y,\gamma,J)$ is non-empty, the evaluation map $\operatorname{ev}_{\R\times Y}$ restricts to a covering map
\begin{equation*}
\operatorname{ev}_{\R\times Y}|_{\operatorname{ev}_{\R\times Y}^{-1}(\R\times (Y\setminus \gamma))} : \operatorname{ev}_{\R\times Y}^{-1}(\R\times (Y\setminus \gamma)) \rightarrow \R\times (Y\setminus \gamma)
\end{equation*}
of finite positive degree by Proposition \ref{prop:fast_planes_properness}. We let $\operatorname{deg}(\operatorname{ev}_{\R\times Y})>0$ denote the degree of this covering map. In the case that $\MM^{\operatorname{fast}}_1(\R\times Y,\gamma,J)$ is empty, we set $\operatorname{deg}(\operatorname{ev}_{\R\times Y}) \coloneqq 0$. Note that for every point $z\in \R\times (Y\setminus \gamma)$, the degree $\operatorname{deg}(\operatorname{ev}_{\R\times Y})$ agrees with both the algebraic and the geometric count of $\operatorname{ev}_{\R\times Y}^{-1}(z)$.

If $\hat{J}$ is a generic admissible almost complex structure on $\hat{X}$, then
\begin{equation*}
\operatorname{ev}_{\hat{X}} : \MM^{\operatorname{fast}}_1(\hat{X},\gamma,\hat{J}) \rightarrow \hat{X}
\end{equation*}
is a proper smooth map between smooth oriented $4$-manifolds by Propositions \ref{prop:fast_planes_properness}, \ref{prop:fast_planes_automatic_regularity} and \ref{prop:fast_planes_generic_regularity} and as such has a well-defined degree $\operatorname{deg}(\operatorname{ev}_{\hat{X}})$. It is an orientation preserving local diffeomorphism on the complement of the locus of non-immersed fast planes, which consists of finitely many fibers of the forgetful map $\MM^{\operatorname{fast}}_1(\hat{X},\gamma,\hat{J})\rightarrow \MM^{\operatorname{fast}}(\hat{X},\gamma,\hat{J})$. Therefore, the degree $\operatorname{deg}(\operatorname{ev}_{\hat{X}})$ is non-negative and zero exactly if $\MM^{\operatorname{fast}}_{1}(\hat{X},\gamma,\hat{J})$ is empty. If $z\in \hat{X}$ is a point not contained in the union of the images of all non-immersed planes, which is a set of codimension $2$, then the algebraic and geometric counts of $\operatorname{ev}_{\hat{X}}^{-1}(z)$ both agree with $\operatorname{deg}(\operatorname{ev}_{\hat{X}})$.

\begin{prop}[Degree]
\label{prop:degree_invariance}
The degrees $\operatorname{deg}(\operatorname{ev}_{\R\times Y})$ and $\operatorname{deg}(\operatorname{ev}_{\hat{X}})$ agree and are independent of the choice of the almost complex structure.
\end{prop}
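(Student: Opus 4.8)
The plan is to establish the statement in three stages. First, \textbf{(a)} the degree $\operatorname{deg}(\operatorname{ev}_{\R\times Y})$ does not depend on the symplectization admissible $J$; second, \textbf{(b)} the degree $\operatorname{deg}(\operatorname{ev}_{\hat{X}})$ does not depend on the admissible $\hat{J}$; third, \textbf{(c)} the two degrees agree when $\hat{J}$ restricts to $J$ on the positive end. Since every admissible almost complex structure on $\hat{X}$ restricts on $X^+=[0,\infty)\times Y$ to some symplectization admissible almost complex structure on $\R\times Y$, combining (a), (b), (c) immediately gives the proposition.

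For (a), I would join two symplectization admissible structures $J_0,J_1$ by a path $(J_s)_{s\in[0,1]}$ of symplectization admissible almost complex structures and form the parametrized moduli space $\bigcup_{s\in[0,1]}\MM^{\operatorname{fast}}_1(\R\times Y,\gamma,J_s)\times\{s\}$. By the automatic regularity of fast planes in symplectizations (Proposition \ref{prop:fast_planes_automatic_regularity}(\ref{item:fast_planes_automatic_regularity_symplectization})) the normal operators are already surjective, so no genericity is needed and this parametrized space is a smooth oriented $5$-manifold whose boundary consists of the moduli spaces at $s=0,1$. The induced evaluation map to $(\R\times(Y\setminus\gamma))\times[0,1]$ is proper by Proposition \ref{prop:fast_planes_properness}(i) and an orientation preserving local diffeomorphism by Proposition \ref{prop:fast_planes_automatic_regularity}(\ref{item:fast_planes_automatic_regularity_evaluation_map_symplectization}); hence the preimage of $\{q\}\times[0,1]$ for a regular value $q$ is a compact oriented $1$-dimensional cobordism between $\operatorname{ev}_{\R\times Y,J_0}^{-1}(q)$ and $\operatorname{ev}_{\R\times Y,J_1}^{-1}(q)$, and the (unsigned) counts of these fibres, i.e.\ the two degrees, coincide. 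Stage (b) runs along the same lines with generic admissible $\hat{J}_0,\hat{J}_1$ joined by a generic path, now invoking Proposition \ref{prop:fast_planes_generic_regularity} (so that along the path all fast planes satisfy $Z(du)\in\{0,1\}$ and the $Z(du)=1$ locus is a manifold), Proposition \ref{prop:fast_planes_automatic_regularity}(\ref{item:fast_planes_automatic_regularity_completion}) for smoothness, and Proposition \ref{prop:fast_planes_properness}(ii) for properness; the finitely many non-immersed planes at each parameter sweep out a subset of dimension at most $3$ in the $5$-dimensional target, hence do not affect the degree.

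Stage (c) is the core. Fix a generic admissible $\hat{J}$ on $\hat{X}$ with $\hat{J}|_{X^+}=J$ (genericity being needed only in $\operatorname{int}X$) and fix $y_0\in Y\setminus\gamma$. The key claim is that there is $R_0>0$ so that for every $R\ge R_0$, every fast plane in $\hat{X}$ through $(R,y_0)$, as well as every fast plane in $\R\times Y$ through $(R,y_0)$, has image contained in $(0,\infty)\times Y\subset X^+$. For the first half I would argue by contradiction: a sequence of fast planes $u_k$ through $(R_k,y_0)$ with $R_k\to\infty$ has, by Proposition \ref{prop:fast_planes_compactness}(ii), a subsequence converging in the sense of SFT to a \emph{single} fast plane $u_\infty$ in $\hat{X}$ or in $\R\times Y$; if $u_\infty$ lies in $\hat{X}$ the convergence involves no shift, so the images of $u_k$ are eventually trapped in a fixed bounded set together with a small neighbourhood of the cylinder $\R\times\gamma$, which for $R_k$ large cannot contain $(R_k,y_0)$ since $y_0\notin\gamma$; hence $u_\infty$ lies in $\R\times Y$, the sequence escapes upward, and the images of $u_k$ eventually lie in $(0,\infty)\times Y$. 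The second half uses that $\operatorname{ev}_{\R\times Y}^{-1}(0,y_0)$ is a finite set (a fibre of a finite covering), that each such plane has $a$-coordinate bounded below, and the translation $\R$-action, which pushes all of them into $(0,\infty)\times Y$ at the cost of raising $R$. Enlarging $R_0$ so that $(R,y_0)$ also avoids the images of the finitely many non-immersed fast planes in $\hat{X}$ (each $C^0$-close to $\R\times\gamma$ at large heights), restriction and inclusion then give, for $R\ge R_0$, a bijection $\operatorname{ev}_{\hat{X}}^{-1}(R,y_0)\leftrightarrow\operatorname{ev}_{\R\times Y}^{-1}(R,y_0)$; since every ingredient in the normal operators, their canonical complex orientations, and the linearized evaluation maps is supported where $\hat{J}=J$, this bijection preserves signs, whence $\operatorname{deg}(\operatorname{ev}_{\hat{X}})=\#\operatorname{ev}_{\hat{X}}^{-1}(R,y_0)=\#\operatorname{ev}_{\R\times Y}^{-1}(R,y_0)=\operatorname{deg}(\operatorname{ev}_{\R\times Y})$.

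I expect the main obstacle to be making Stage (c) precise, in particular the claim that fast planes through a far-out point must remain in the cylindrical end and the bookkeeping of where the marked point travels under SFT convergence; this is a soft argument, but it genuinely relies on the strong form of Proposition \ref{prop:fast_planes_compactness}(ii) (no multi-level buildings, by dynamical convexity). Stages (a) and (b) are routine parametrized-moduli cobordism arguments once the transversality inputs recorded in Propositions \ref{prop:fast_planes_automatic_regularity} and \ref{prop:fast_planes_generic_regularity} are in hand.
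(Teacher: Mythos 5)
Your argument is correct and follows essentially the same strategy as the paper: prove $J$-independence of $\operatorname{deg}(\operatorname{ev}_{\R\times Y})$ via a parametrized cobordism, and prove $\operatorname{deg}(\operatorname{ev}_{\R\times Y})=\operatorname{deg}(\operatorname{ev}_{\hat X})$ by evaluating at a point $(R,y_0)$ far out in the cylindrical end and using SFT compactness to show that all fast planes (in either $\R\times Y$ or $\hat X$) through $(R,y_0)$ eventually lie in $X^+$, where the two almost complex structures agree. The one place you do more than necessary is stage (b): the paper simply observes that $\hat J$-independence of $\operatorname{deg}(\operatorname{ev}_{\hat X})$ is a formal consequence of (a) and (c) (compare $\hat J_0$ and $\hat J_1$ through their symplectization restrictions $J_0,J_1$), which sidesteps having to track the non-immersed locus along a generic $1$-parameter family in $\hat X$; your direct cobordism argument for (b) works but incurs the extra bookkeeping you yourself anticipated.
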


\begin{proof}
Let us begin by showing that $\operatorname{deg}(\operatorname{ev}_{\R\times Y})$ is independent of the choice of $J$. To this end, consider two symplectization admissible almost complex structures $J_0$ and $J_1$. For $k\in \left\{ 0,1 \right\}$, let $\operatorname{ev}_{\R\times Y}^k$ denote the resulting evaluation maps. Connect $J_0$ to $J_1$ via a smooth path $(J_t)_{t\in [0,1]}$. Let $\MM^{\operatorname{fast}}_1(\R\times Y,\gamma,(J_t)_{t\in [0,1]})$ denote the moduli space consisting of all tuples $(t,(u,z))$ where $t\in [0,1]$ and $[u,z]\in \MM^{\operatorname{fast}}_1(\R\times Y,\gamma,J_t)$. We have an evaluation map
\begin{equation*}
\operatorname{ev}_{[0,1]\times \R\times Y}:\MM^{\operatorname{fast}}_1(\R\times Y,\gamma,(J_t)_{t\in [0,1]}) \rightarrow [0,1]\times \R\times Y \qquad (t,(u,z)) \mapsto (t,u(z)).
\end{equation*}
If follows exactly as in the proofs of Propositions \ref{prop:fast_planes_compactness}, \ref{prop:fast_planes_properness} and \ref{prop:fast_planes_automatic_regularity} that the restriction of $\operatorname{ev}_{[0,1]\times \R\times Y}$ to the preimage of $[0,1]\times \R\times (Y\setminus \gamma)$ is a finite covering map. Over $\left\{ 0,1 \right\}\times \R\times (Y\setminus \gamma)$ this covering map restricts to the covering maps induced by $\operatorname{ev}_{\R\times Y}^0$ and $\operatorname{ev}_{\R\times Y}^1$, which therefore have the same degree.

Next, consider a generic admissible almost complex structure $\hat{J}$ on $\hat{X}$. Let $J$ be the symplectization admissible almost complex structure on $\R\times Y$ whose restriction to $X^+$ agrees with $\hat{J}$. Fix a point $q\in Y\setminus \gamma$. We claim that, for every sufficiently large $a \gg 0$, every fast plane in $\R\times Y$ or $\hat{X}$ asymptotic to $\gamma$ and passing through $(a,q)$ is contained in $X^+$. In the case of $\R\times Y$, this easily follows from Propsitions \ref{prop:fast_planes_properness} and \ref{prop:fast_planes_automatic_regularity}. Indeed, there are only finitely many fast planes $u_i$ passing through $(0,q)$. For every $a$, the fast planes passing through $(a,q)$ are precisely the translates of $u_i$ along the $\R$-factor of $\R\times Y$ by amount $a$. For $a\gg 0$, these finitely many translates are clearly contained in $X^+$. Next, we consider the case of $\hat{X}$. Assume by contradiction that the claim fails. Then we may pick, for every $k\geq 0$, a fast plane with marked point $(u_k,z_k)$ which intsects $X$ and satisfies $u_k(z_k) = (k,q)\in X^+$. By Proposition \ref{prop:fast_planes_compactness} applied to the sequence $u_k$ without the marked points, we may pass to a subsequence which converges to a fast plane $u$ in either $\hat{X}$ or the symplectization $\R\times Y$. Since each $u_k$ intersects $X$, the same must be true for $u$, which is therefore contained in $\hat{X}$. We apply the SFT compactness theorem to the sequence $(u_k,z_k)$ with marked points and pass to a further subsequence which converges to a pseudoholomorphic building with marked point. As in the proof of Proposition \ref{prop:fast_planes_properness}, there are two options for the limiting building: a single layer building consisting of a fast plane with marked point $(u,z)$ in $\hat{X}$; or a two layer building consisting of a fast plane $u$ in $\hat{X}$ and the trivial cylinder over $\gamma$ with marked point $(u_\gamma,z)$ in $\R\times Y$. In the first case, we have $u_k(z_k)\rightarrow u(z)$ as $k\rightarrow \infty$, which clearly contradicts $u_k(z_k) = (k,q)$. In the second case, we have $\operatorname{pr}_Y(u_k(z_k))\rightarrow \operatorname{pr}_Y(u_\gamma(z))$ as $k\rightarrow \infty$, where $\operatorname{pr}_Y:\R\times Y\rightarrow Y$ denotes the natural projection. This is also a contradiction because $\operatorname{pr}_Y(u_k(z_k)) = q \notin \gamma$ and $\operatorname{pr}_Y(u_\gamma(z)) \in \gamma$. This concludes the proof of the claim in the case of $\hat{X}$.
 
Let us now fix $a\gg 0$ sufficiently large as above. Then we have $\operatorname{ev}_{\R\times Y}^{-1}(a,q) = \operatorname{ev}_{\hat{X}}^{-1}(a,q)$. Since the degrees of $\operatorname{ev}_{\R\times Y}$ and $\operatorname{ev}_{\hat{X}}$ are both given by the geometric count of this set, they must agree. Finally, we observe that since we already know that $\operatorname{deg}(\operatorname{ev}_{\R\times Y})$ is independent of the almost complex structure, the same must be true for $\operatorname{deg}(\operatorname{ev}_{\hat{X}})$.
\end{proof}

\section{Detecting Hopf orbits}
\label{sec:detecting_hopf}

The main result of this section states that the degree of the evaluation map from the moduli space of fast planes with marked point discussed in Section \ref{sec:fast_planes} detects Hopf orbits. Before stating it, let us make the following definition. Let $X\subset \R^4$ be a domain which is  uniformly starshaped with respect to the origin and let $(Y,\xi = \operatorname{ker}\lambda)$ be the contact boundary of $X$.

\begin{defn} Let $K\subset (Y,\xi)$ be a transverse knot. We call an immersed disk $D\subset X$ a {\it positively immersed symplectic disk with boundary $K$} if it satisfies the following properties:
\begin{enumerate}[(i)]
\item $D$ is symplectic, i.e. the symplectic form on $X$ restricts to an area form on $D$.
\item Consider a collar neighbourhood $(-\varepsilon,0]\times Y$ of $Y$ induced by the radial Liouville vector field. For $\varepsilon>0$ sufficietly small, the intersection of $D$ with this collar neighbourhood is given by $(-\varepsilon,0]\times K$.
\item All self-intersection points of $D$ are positive transverse double points. Positivity means that if we orient $D$ via the symplectic form, then the orientations of the two branches of $D$ meeting at a double point induce the natural symplectic orientation of $X$.
\end{enumerate}
\end{defn}

\begin{rem}
{\rm The existence of such a positively immersed symplectic disk only depends on the transverse knot type of $K$. If $X'$ is another uniformly starshaped domain with contact boundary $Y'$, then the radial projection $Y\rightarrow Y'$ is a contactomorphism. If $K\subset Y$ and $K'\subset Y'$ are two transverse knots corresponding to each other via this contactomorphism, then $K$ bounds a positively immersed symplectic disk in $X$ if and only if $K'$ bounds a positively immersed symplectic disk in $X'$.}
\end{rem}

If $D$ is a positively immersed symplectic disk bounded by $K$, then the self-linking number of $K$ and the number of double points of $D$ are related by the formula
\begin{equation*}
\operatorname{sl}(K) = -1 + 2 \cdot \#(\text{double points of $D$}),
\end{equation*}
see \cite[proof of Corollary 1.9]{hwz99}. In particular, we see that if $K$ bounds one embedded symplectic disk, then all positively immersed symplectic disks bounded by $K$ are embedded.

Let us now return to the setting of Section \ref{sec:fast_planes}. Let $X\subset \R^4$ be a uniformly starshaped domain with boundary $Y\coloneqq \partial X$, let $\gamma$ be a simple periodic orbit on $Y$, and assume that the Reeb flow on $Y$ is nondegenerate and dynamically convex up to action $\MA(\gamma)$. Recall that the evaluation maps
\begin{equation*}
\operatorname{ev}_{\R\times Y} : \MM^{\operatorname{fast}}_1(\R\times Y,\gamma,J)\rightarrow \R\times Y \quad \text{and}\quad \operatorname{ev}_{\hat{X}} : \MM^{\operatorname{fast}}_1(\hat{X},\gamma,\hat{J}) \rightarrow \hat{X}
\end{equation*}
have well-defined non-negative finite degrees which moreover agree and are independent of the choice of almost complex structure. In the following, let us abbreviate this common degree by $\operatorname{deg}(\operatorname{ev})$.

\begin{thm}
\label{thm:degree_characterizes_hopf}
The degree $\operatorname{deg}(\operatorname{ev})$ is non-negative, and strictly positive if and only if $\gamma$ bounds a positively immersed symplectic disk. Moreover, the following statements are equivalent:
\begin{enumerate}[(i)]
\item \label{item:degree_characterizes_hopf_deg_1} $\operatorname{deg}(\operatorname{ev})=1$;
\item \label{item:degree_characterizes_hopf_embedded_disk} $\gamma$ bounds an embedded symplectic disk;
\item \label{item:degree_characterizes_hopf_hopf_orbit} $\gamma$ is a Hopf orbit;
\item \label{item:degree_characterizes_hopf_dgss} $\gamma$ bounds a disk-like global surface of section.
\end{enumerate}
\end{thm}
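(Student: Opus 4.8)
The plan is to treat the two halves of the statement in turn, the first being a dictionary between fast planes and positively immersed symplectic disks, the second a chain of implications $(i)\Rightarrow(ii)\Rightarrow(iii)\Rightarrow(iv)\Rightarrow(i)$ of which only $(i)\Rightarrow(ii)$ is genuinely new. For the dictionary, I would argue as follows. Given a generic admissible $\hat{J}$ and a fast plane $u$ in $\hat{X}$ asymptotic to $\gamma$, its restriction to the preimage of $X$ is an immersed symplectic disk whose self-intersections are positive transverse double points by positivity of intersections of $\hat{J}$-holomorphic curves in dimension four; after a small modification near the boundary to match the radial collar, and after perturbing the finitely many order-one critical points of $du$ (see Proposition~\ref{prop:fast_planes_generic_regularity}) into pairs of positive double points, one obtains a positively immersed symplectic disk bounded by $\gamma$. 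Hence $\deg(\operatorname{ev})>0$ implies such a disk exists. Conversely, given a positively immersed symplectic disk $D$ bounded by $\gamma$, the positivity of its double points is exactly what allows one to choose an admissible almost complex structure making $D$ holomorphic, so that $D$ is the image of a pseudoholomorphic plane $u$ asymptotic to $\gamma$; the writhe/self-linking formula of \cite{hwz99}, together with the relation $\operatorname{sl}(\gamma)=-1+2\cdot\#(\text{double points of }D)$, pins the asymptotic winding number to $\operatorname{wind}_\infty(u)=1$, so $u$ is fast. Since $\deg(\operatorname{ev})$ is independent of the almost complex structure (Proposition~\ref{prop:degree_invariance}; for a non-generic $\hat{J}$ one connects it to a generic one by a path and runs the compactness arguments of Section~\ref{sec:fast_planes} along the path), we get $\deg(\operatorname{ev})\ge 1$. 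Together with the non-negativity established at the end of Section~\ref{sec:fast_planes}, this proves the first assertion.

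For the new implication $(i)\Rightarrow(ii)$, fix a generic $\hat{J}$. By Propositions~\ref{prop:fast_planes_automatic_regularity} and~\ref{prop:fast_planes_generic_regularity}, $\MM^{\operatorname{fast}}_1(\hat{X},\gamma,\hat{J})$ is a smooth oriented $4$-manifold and $\operatorname{ev}_{\hat{X}}$ is proper, has degree $1$, and is an orientation-preserving local diffeomorphism away from the codimension-two image of the finitely many non-immersed planes. Since $\deg(\operatorname{ev})=1>0$ the moduli space is non-empty and contains an immersed plane. If some fast plane $u$ in $\hat{X}$ were not embedded it would have a positive transverse double point $q=u(z_1)=u(z_2)$ with $z_1\ne z_2$; as $u$ is immersed at $z_1$ and $z_2$, the map $\operatorname{ev}_{\hat{X}}$ is an orientation-preserving local diffeomorphism near $(u,z_1)$ and near $(u,z_2)$, so a regular value near $q$ would have two distinct preimages of positive sign, forcing $\deg(\operatorname{ev})\ge 2$, a contradiction. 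Hence every immersed fast plane is embedded; restricting one to $X$ and matching the collar yields an embedded symplectic disk bounded by $\gamma$, which is $(ii)$.

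The remaining implications combine known results with the symplectization picture. For $(ii)\Rightarrow(iii)$, the double-point formula gives $\operatorname{sl}(\gamma)=-1$; and by the first half $\deg(\operatorname{ev})=\deg(\operatorname{ev}_{\R\times Y})=1$, so $\operatorname{ev}_{\R\times Y}$ is a degree-one covering of $\R\times(Y\setminus\gamma)$, hence a diffeomorphism. Consequently the fast planes in $\R\times Y$ are embedded and mutually disjoint, and a short $\R$-translation argument shows that the projection of each to $Y$ is an embedded disk bounded by $\gamma$, so $\gamma$ is unknotted; being unknotted with $\operatorname{sl}=-1$ it is a Hopf orbit. The implication $(iii)\Rightarrow(iv)$ is Hryniewicz's theorem \cite[Theorem~1.7]{hry14}, applicable since $Y$ is dynamically convex up to action $\MA(\gamma)$. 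Finally, for $(iv)\Rightarrow(i)$ I would, following \cite{hry12,hry14} and the analysis of \cite{hss22,hhr24}, choose an almost complex structure adapted to the open book of $Y$ with binding $\gamma$ whose pages are the leaves of the global surface of section, so that the pages lift to mutually disjoint fast $J$-holomorphic planes foliating $\R\times(Y\setminus\gamma)$; positivity of intersections then forces any fast plane to coincide with one of these pages, so exactly one fast plane passes through each point and $\deg(\operatorname{ev}_{\R\times Y})=\deg(\operatorname{ev})=1$.

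The main obstacle is the step $(i)\Rightarrow(ii)$ and, more precisely, the bookkeeping that converts the bare equality $\deg(\operatorname{ev})=1$ into honest embeddedness and disjointness of the fast planes: one has to exclude interior double points \emph{and} non-immersed points of fast planes using only the degree hypothesis, and this must be done uniformly enough to move freely between $\hat{X}$ and its positive end $\R\times Y$. Ruling out the non-immersed planes relies on Proposition~\ref{prop:fast_planes_generic_regularity} and on the description of $d\operatorname{ev}_{\hat{X}}$ at a non-immersed plane in Proposition~\ref{prop:fast_planes_automatic_regularity}\,\ref{item:fast_planes_automatic_regularity_evaluation_map_non_immersed}, while the self-linking/writhe computation underlying both the dictionary and $(ii)\Rightarrow(iii)$ is the other delicate point, being exactly what forces $\operatorname{wind}_\infty=1$.
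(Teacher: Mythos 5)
Your first paragraph (the dictionary between $\deg(\operatorname{ev})>0$ and positively immersed symplectic disks) and your argument for $(i)\Rightarrow(ii)$ match the paper in substance; the paper streamlines the forward direction of the dictionary by simply selecting one of the (generically) immersed fast planes rather than perturbing critical points of $du$, but that is cosmetic.

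The genuine gap is in your $(ii)\Rightarrow(iii)$. You write that ``by the first half $\deg(\operatorname{ev})=1$'', but the first half of the statement only gives $\deg(\operatorname{ev})>0$. Within your chain $(i)\Rightarrow(ii)\Rightarrow(iii)\Rightarrow(iv)\Rightarrow(i)$ you are not entitled to assume $(i)$ while proving $(ii)\Rightarrow(iii)$, and nothing in your first two paragraphs rules out $\deg(\operatorname{ev})\geq 2$ under hypothesis $(ii)$: two \emph{distinct} embedded fast planes could a priori intersect, giving a regular value with two preimages. Closing this requires the intersection-theoretic argument the paper isolates as a separate claim: once $\operatorname{sl}(\gamma)=-1$, for any two distinct fast planes $u,v$ asymptotic to $\gamma$ the algebraic intersection number satisfies
\[
u\cdot v = \operatorname{sl}(\gamma) + \operatorname{wind}^\tau(e) \leq -1 + 1 = 0,
\]
where $e$ is the relative asymptotic eigenfunction and $\operatorname{wind}^\tau(e)\le 1$ because the planes are fast; positivity of intersections then forces $u$ and $v$ to be disjoint. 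This, together with properness and the local-diffeomorphism property, is what shows $\operatorname{ev}_{\hat X}$ is a bijection and hence that $(ii)\Rightarrow(i)$, and it is also what gives embeddedness and mutual disjointness of the fast planes in $\R\times Y$ needed for your $(ii)\Rightarrow(iii)$ step. Your proposal identifies the self-linking/writhe computation as ``delicate'' but never actually carries it out, and without it the logical loop does not close.

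Your $(iv)\Rightarrow(i)$ argument also takes a different and heavier route than the paper's. You propose to realise the pages of the global surface of section as projections of fast $J$-holomorphic planes by choosing an almost complex structure adapted to the open book; this is a nontrivial inverse problem (the standard arguments of \cite{hwz98,hry12} construct the foliation by holomorphic planes first and then project to $Y$, not the other way around), and you would need a result making the given leaves holomorphic. The paper closes the cycle more economically: the boundary of a disk-like global surface of section is a Hopf orbit (standard), a Hopf orbit bounds an embedded symplectic disk (a Hopf fiber does), and then the intersection-theoretic claim above gives $\deg(\operatorname{ev})=1$. That route needs no adapted open book construction and I would recommend it over yours.

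Finally, a minor inaccuracy in your $(i)\Rightarrow(ii)$ step: a non-embedded immersed plane need not have a \emph{transverse} double point a priori. This does not matter for your argument, because you only use that there exist $z_1\neq z_2$ with $u(z_1)=u(z_2)$ and that $\operatorname{ev}_{\hat X}$ is a local diffeomorphism near $(u,z_1)$ and $(u,z_2)$ (so a \emph{nearby} regular value has two preimages of positive sign); phrase it that way and drop the word ``transverse''.
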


Note that Theorem \ref{thm:degree_characterizes_hopf} fully characterizes in purely topological terms when a simple periodic orbit on the boundary of a nondegenerate dynamically convex domain is the asymptotic limit of a fast pseudoholomorphic plane. This is particularly interesting in view of \cite[Theorem 1.4]{hss22}. This theorem states that if a periodic orbit on a nondegenerate dynamically convex domain is the limit of a fast plane, then it actually bounds a global surface of section for the Reeb flow. We obtain the following immediate corollary:

\begin{cor}
Let $X\subset \R^4$ be a nondegenerate dynamically convex domain and let $\gamma$ be a simple periodic orbit whose underlying transverse knot type admits a positively immersed symplectic disk. Then $\gamma$ bounds a global surface of section.
\end{cor}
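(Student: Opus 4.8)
The plan is to chain Theorem \ref{thm:degree_characterizes_hopf} with \cite[Theorem 1.4]{hss22}, the latter asserting that a periodic orbit on a nondegenerate dynamically convex domain which is the asymptotic limit of a fast pseudoholomorphic plane bounds a global surface of section.

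First I would check that the hypotheses of Theorem \ref{thm:degree_characterizes_hopf} are in force. The domain $X$ is nondegenerate and dynamically convex, so the Reeb flow on $Y=\partial X$ is nondegenerate and dynamically convex up to action $\MA(\gamma)$, and $\gamma$ is simple by assumption. Next, the Remark following the definition of a positively immersed symplectic disk tells us that the existence of such a disk depends only on the transverse knot type of its boundary; hence the hypothesis that the transverse knot type of $\gamma$ admits a positively immersed symplectic disk is equivalent to $\gamma$ itself bounding such a disk in $X$. Theorem \ref{thm:degree_characterizes_hopf} then yields $\operatorname{deg}(\operatorname{ev})>0$.

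Finally I would unwind the meaning of $\operatorname{deg}(\operatorname{ev})>0$. By Proposition \ref{prop:degree_invariance} we have $\operatorname{deg}(\operatorname{ev})=\operatorname{deg}(\operatorname{ev}_{\R\times Y})$ for any symplectization admissible $J$, and by construction this number vanishes exactly when $\MM^{\operatorname{fast}}_1(\R\times Y,\gamma,J)$ is empty. Since the forgetful map to $\MM^{\operatorname{fast}}(\R\times Y,\gamma,J)$ is surjective with fibres $\C$, positivity of the degree forces $\MM^{\operatorname{fast}}(\R\times Y,\gamma,J)\neq\emptyset$, so there exists a fast pseudoholomorphic plane in $\R\times Y$ positively asymptotic to $\gamma$. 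Applying \cite[Theorem 1.4]{hss22} to this plane gives that $\gamma$ bounds a global surface of section.

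Since the argument is a direct concatenation of previously established results, I do not expect a serious obstacle. The only small point to verify is that the dynamical convexity hypothesis of \cite[Theorem 1.4]{hss22}, which is imposed on all of $\partial X$, is genuinely available here — but this is immediate from the global dynamical convexity assumption on $X$, which is stronger than the ``up to action $\MA(\gamma)$'' version used throughout Section \ref{sec:detecting_hopf}.
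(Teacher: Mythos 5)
Your argument is correct and is precisely the chain the paper has in mind: Theorem \ref{thm:degree_characterizes_hopf} gives $\operatorname{deg}(\operatorname{ev})>0$ from the existence of a positively immersed symplectic disk, which is the same as the nonemptiness of the moduli space of fast planes asymptotic to $\gamma$, and \cite[Theorem 1.4]{hss22} then yields the global surface of section. The paper states this as an ``immediate corollary'' of exactly these two inputs, so there is nothing to add.
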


\begin{proof}[Proof of Theorem \ref{thm:degree_characterizes_hopf}]
We begin by showing three claims.

\begin{claim}
\label{claim:degree_characterization_hopf_pisd_implies_deg_g_0}
Assume that $\gamma$ bounds a positively immersed symplectic disk. Then $\operatorname{deg}(\operatorname{ev})>0$.
\end{claim}

\begin{proof}
Starting from a positively immersed symplectic disk with boundary $\gamma$, one can construct a positively immersed symplectic plane in $\hat{X}$ whose intersection with the positive cylindrical end $X^+$ is given by the half cylinder $[0,\infty)\times \gamma$. We may pick an admissible almost complex structure $\hat{J}$ on $\hat{X}$ which turns this positively immersed symplectic plane into an immersed pseudoholomorphic plane $u$ positively asymptotic to $\gamma$. Note that $u$ has trivial asymptotic behaviour at $\gamma$ and is therefore a fast plane. By assertion \eqref{item:fast_planes_automatic_regularity_completion} in Proposition \ref{prop:fast_planes_automatic_regularity}, the fast plane $u$ is regular. It therefore persists under small perturbations of the almost complex structure and remains immersed. This shows that there exists a generic admissible almost complex structure $\hat{J}$, still denoted by the same symbol, and an immersed fast plane $u\in \MM^{\operatorname{fast}}(\hat{X},\gamma,\hat{J})$. By Proposition \ref{prop:fast_planes_generic_regularity} the moduli space $\MM^{\operatorname{fast}}(\hat{X},\gamma,\hat{J})$ only contains finitely many non-immersed planes. The union of their images cannot contain the image of the immersed plane $u$. Hence we can pick a point $z\in\C$ such that $u(z)$ is not contained in any non-immersed plane. As observed above, the degree $\operatorname{deg}(\operatorname{ev})$ is equal to the geometric count $\operatorname{ev}_{\hat{X}}^{-1}(u(z))$. This count is strictly positive because $(u,z)$ is contained in $\operatorname{ev}_{\hat{X}}^{-1}(u(z))$.
\end{proof}

\begin{claim}
\label{claim:degree_characterization_hopf_non_embedded_pisd_implies_deg_g_1}
Assume that $\gamma$ bounds a positively immersed symplectic disk which is not embedded. Then $\operatorname{deg}(\operatorname{ev})>1$.
\end{claim}

\begin{proof}
By Claim \ref{claim:degree_characterization_hopf_pisd_implies_deg_g_0}, the degree $\operatorname{deg}(\operatorname{ev})$ is non-negative. We can therefore find an immersed fast plane $u$ asymptotic to $\gamma$. Since $\gamma$ bounds a non-embedded positively immersed symplectic disk, the plane $u$ cannot be embedded. This means that there exist distinct points $z_1\neq z_2\in \C$ such that $u(z_1)=u(z_2) \eqqcolon p$. We observe that $(u,z_1)$ and $(u,z_2)$ represent distinct elements of the moduli space $\MM^{\operatorname{fast}}_1(\hat{X},\gamma,\hat{J})$. Indeed, since $u$ is somewhere injective, any biholomorphism $\varphi$ of $\C$ satisfying $u\circ \varphi = u$ is necessarily equal to the identity and can certainly not carry $z_2$ to $z_1$. This shows that the geometric count of $\operatorname{ev}_{\hat{X}}^{-1}(p)$ is strictly bigger than $1$. By assertion \eqref{item:fast_planes_automatic_regularity_evaluation_map_completion} in Proposition \ref{prop:fast_planes_automatic_regularity}, the evaluation map $\operatorname{ev}_{\hat{X}}$ is a local diffeomorphism near the points $(u,z_1)$ and $(u,z_2)$ in $\MM^{\operatorname{fast}}_1(\hat{X},\gamma,\hat{J})$. This implies that the geometric count of $\operatorname{ev}_{\hat{X}}^{-1}(q)$ is strictly bigger than $1$ for all $q$ sufficiently close to $p$. Since a generic such $q$ is not contained in the image of any non-immersed fast plane in $\MM^{\operatorname{fast}}_1(\hat{X},\gamma,\hat{J})$, we obtain that the degree $\operatorname{deg}(\operatorname{ev})$ is strictly bigger than $1$.
\end{proof}

\begin{claim}
\label{claim:degree_characterization_hopf_embedded_disk_implies_deg_1}
Assume that $\gamma$ bounds an embedded symplectic disk. Then $\operatorname{ev}_{\hat{X}}$ is a diffeomorphism and $\operatorname{ev}_{\R\times Y}$ is a diffeomorphism onto $\R\times (Y\setminus \gamma)$. In particular, $\operatorname{deg}(\operatorname{ev})=1$.
\end{claim}

\begin{proof}
The proof of this claim is probably well-known to experts. For convenience of the reader, we still carry it out. Since $\gamma$ bounds an embedded symplectic disk, any pseudoholomorphic plane asymptotic to $\gamma$ must be embedded. Our goal is to show that any two fast planes $u$ and $v$ asymptotic to $\gamma$ with distinct images must in fact have disjoint images. Let us fix two such fast planes $u$ and $v$. Since $u$ and $v$ have distinct images, at least one of them, say $u$, has non-trivial asymptotic behaviour. Let $e_u$ and $\nu_u$ be the asymptotic eigenfunction and eigenvalue of $u$. Moreover, let $e$ and $\nu$ be the relative asymptotic eigenfunction and eigenvalue of $u$ and $v$. Note that we have $\nu \leq \nu_u < 0$. Let $\tau$ be a symplectic trivialization of $\gamma^*\xi$ which extends to a global trivialization of the contact structure $\xi$ on $Y$. Since $u$ is a fast plane, we have $\operatorname{wind}^\tau(e_u) \leq 1$ by definition. Since the winding number is non-decreasing on the spectrum $\sigma(A_\gamma)$ and $\nu \leq \nu_u$, we obtain that $\operatorname{wind}^\tau(e)\leq 1$. Recall that $\operatorname{sl}(\gamma) = -1$ because $\gamma$ bounds an embedded symplectic disk. The self-linking number of $\gamma$ can be computed as the algebraic count of intersections between $u$ and a pushoff via a generic section of the normal bundle $N_u$ which, near infinity, is non-vanishing and constant with respect to the trivialization $\tau$. This implies that the algebraic count of intersections $u\cdot v$ between $u$ and $v$ is given by 
\begin{equation*}
u\cdot v = \operatorname{sl}(\gamma) + \operatorname{wind}^\tau(e) \leq -1 + 1 = 0.
\end{equation*}
Positivity of intersections then immediately implies that $u$ and $v$ are disjoint. Note that we could also have proved this assertion using more general intersection theoretic results for punctured pseudoholomorphic curves due to Hutchings \cite{hut02} and Siefring \cite{sie11}.

Consider now the evaluation map $\operatorname{ev}_{\hat{X}}$. The facts that all fast planes asymptotic to $\gamma$ are embedded and that any two distinct fast planes are disjoint implies that $\operatorname{ev}_{\hat{X}}$ is injective. By Propositions \ref{prop:fast_planes_properness} and \ref{prop:fast_planes_automatic_regularity} it is also proper and a local diffeomorphism. Since we have already established that $\operatorname{deg}(\operatorname{ev}_{\hat{X}})>0$, we can deduce that $\operatorname{ev}_{\hat{X}}$ is a diffeomorphism, implying that $\operatorname{deg}(\operatorname{ev}) = 1$. It remains to argue that any fast plane in the symplectization is disjoint from the trivial cylinder $\R\times \gamma$. Assume by contradiction that $u$ is a fast plane intersecting $\R\times \gamma$. Since $u$ is asymptotic to $\gamma$, this would imply that $u$ intersects the translated plane $u+c$ for all $c\gg 0$ sufficiently large. However, this is impossible because $u$ and $u+c$ are distinct and thus disjoint by the above discussion.
\end{proof}

Let us now turn to the proof of Theorem \ref{thm:degree_characterizes_hopf}. If $\operatorname{deg}(\operatorname{ev})>0$, then there exists an immersed fast plane asymptotic to $\gamma$. Starting from such a plane, it is not hard to construct a positively immersed symplectic disk with boundary $\gamma$. Conversely, if $\gamma$ bounds a positively immersed symplectic disk, then $\operatorname{deg}(\operatorname{ev})>0$ by Claim \ref{claim:degree_characterization_hopf_pisd_implies_deg_g_0}. This shows the first assertion of Theorem \ref{thm:degree_characterizes_hopf}.

Next, we verify the equivalence of statements \eqref{item:degree_characterizes_hopf_deg_1} - \eqref{item:degree_characterizes_hopf_dgss}. Suppose that $\operatorname{deg}(\operatorname{ev})= 1$. Then $\gamma$ bounds a positively immersed symplectic disk by the first assertion of Theorem \ref{thm:degree_characterizes_hopf}. By Claim \ref{claim:degree_characterization_hopf_non_embedded_pisd_implies_deg_g_1}, this disk must be embedded. Conversely, it $\gamma$ bounds an embedded symplectic disk, then $\operatorname{deg}(\operatorname{ev})=1$ by Claim \ref{claim:degree_characterization_hopf_embedded_disk_implies_deg_1}. This shows the equivalence of statements \eqref{item:degree_characterizes_hopf_deg_1} and \eqref{item:degree_characterizes_hopf_embedded_disk}.

Assume that $\gamma$ bounds an embedded symplectic disk. Then Claim \ref{claim:degree_characterization_hopf_embedded_disk_implies_deg_1} implies that the evaluation map $\operatorname{ev}_{\R\times Y}$ is a diffeomorphism onto $\R\times (Y\setminus \gamma)$. In this situation, it is well-known that the projection of each fast plane $u$ to $Y$ is an embedded disk-like global surface of section with boundary $\gamma$ (see \cite{hwz98, hry12}). The existence of a disk-like global surface of section with boundary $\gamma$ clearly implies that $\gamma$ is Hopf. Finally, every Hopf orbit must bound an embedded symplectic disk because every fiber of the Hopf fibration does. This shows the equivalence of statements \eqref{item:degree_characterizes_hopf_embedded_disk}, \eqref{item:degree_characterizes_hopf_hopf_orbit} and \eqref{item:degree_characterizes_hopf_dgss}.
\end{proof}

\section{Floer theory and Clarke duality}
\label{floclasec}

\subsection{The Floer complex of the direct action functional}
\label{flosubsec}

We recall the definition of the Floer complex which is associated to a suitable time-periodic Hamiltonian $H$ and time-periodic almost complex structure $J$ on $(\R^{2n},\omega_0)$.  Given a Hamiltonian $H\in C^{\infty}(\T\times \R^{2n})$, let $X_H$ be the time-periodic Hamiltonian vector field defined by
\[
\imath_{X_H} \omega_0 = - dH,
\]
where $d$ denotes differentiation with respect to the spatial variables. We assume that $H$ satisfies the following conditions:

\paragraph{\sc (Linear growth of the Hamiltonian vector field)} There exists $c>0$ such that $|X_H(t,z)|\leq c(1+|z|)$ for every $(t,z)\in \T\times \R^{2n}$.

\paragraph{\sc (Non-resonance at infinity)} There exist positive numbers $\epsilon>0$ and $r>0$ such that, for every smooth curve $\gamma: \T \rightarrow \R^{2n}$ satisfying
\[
\|\dot{\gamma} - X_H(\cdot,\gamma)\|_{L^2(\T)} < \epsilon,
\]
the inequality $\|\gamma\|_{L^2(\T)} \leq r$ holds.

\paragraph{\sc (Non-degeneracy)} All 1-periodic orbits $\gamma$ of $X_H$ are nondegenerate, meaning that 1 is not an eigenvalue of the differential of the time-1 flow at $\gamma(0)$.

\begin{rem}
\label{exham}
{\rm Here is a typical class of Hamiltonians which satisfy the first two conditions. Let $X\subset \R^{2n}$ be a domain which is uniformly starshaped with respect to the origin. Denote by $H_X$ the unique positively 2-homogeneous function taking the value 1 on $\partial X$ and assume that $H\in C^{\infty}(\T\times \R^{2n})$ satisfies 
\[
H(t,z) = \eta H_X(z) + \sigma, \qquad \forall t\in \T, \forall z \in \R^{2n} \mbox{ with } |z|\geq R,
\]
where $\eta>0$, $\sigma\in \R$ and $R>0$. Then the Hamltonian vector field of $H$ has linear growth. Moreover, $H$ is non-resonant at infinity provided that $\eta$ is different from all the actions of closed characteristics on $\partial X$. See \cite[Lemma 4.1]{ak22}. Another class of Hamiltonians satisfying these two conditions is described in \cite[Remark 4.2]{ak22}.}
\end{rem}

By the above assumptions, the set $\mathcal{P}(H)$ of 1-periodic orbits of $X_H$ is finite. Its elements are precisely the critical points of the direct action functional
\begin{equation}
	\label{direct_action}
	\mathcal{A}_H(\gamma) := \int_{\T} \gamma^* \lambda_0 - \int_{\T} H(t,\gamma(t))\, dt, \qquad \gamma\in C^{\infty}(\T,\R^{2n}),
\end{equation}
where $\lambda_0$ is any primitive of $\omega_0$.

Let $J=J(t,z)$ be a smooth $\omega_0$-compatible time-periodic almost complex structure on $\R^{2n}$ which is globally bounded as map into the space of endomorphisms of $\R^{2n}$. 

Let $\gamma\in \mathcal{P}(H)$. The symplectic form $\omega_0$ and the almost complex structure $J$ induce a Hermitian structure on the vector bundle $\gamma^* (T \R^{2n})$, which carries the asymptotic operator $A_{H,\gamma} := - J \nabla_t^{X_H}$, where $\nabla_t^{X_H}$ is the symplectic connection which is obtained by linearizing the flow of $X_H$ along $\gamma$. In terms of the natural trivialization of $T \R^{2n}$, we have
\[
A_{H,\gamma} = - J(\cdot,\gamma(\cdot)) \frac{d}{dt} - \nabla_J^2 H(\cdot,\gamma(\cdot)),
\]
where $\nabla_J^2 H$ denotes the spatial Hessian of $H$ with respect to the time-dependent Riemannian structure induced by $J$.  The asymptotic operator $A_{H,\gamma}$ is nondegenerate and the corresponding path in the linear symplectic group is given by the differential of the flow of $X_H$ along $\gamma$. We denote by $\mathrm{CZ}(\gamma)$ the Conley-Zehnder index of this path. The \textit{orientation line of $\gamma$} $\mathfrak{o}_{\gamma}$ is defined to be the orientation line of a Cauchy-Riemann type operator on $\C$ which is positively asymptotic to $A_{H,\gamma}$. This is well defined because the space of these Cauchy-Riemann type operators is contractible. The choice of a different $J$ produces a canonically isomorphic orientation line $\mathfrak{o}_{\gamma}$.

For every pair of 1-periodic orbits $\gamma_+$, $\gamma_-$ in $\mathcal{P}(H)$, consider the space of Floer cylinders 
\begin{equation}
\label{modfloer}
\begin{split}
\mathcal{M}_{\partial^F}^{\#}(\gamma_+,\gamma_-) := \bigl\{ u\in C^{\infty}(\R\times \T , \R^{2n}) \mid  & \; \partial_s u + J(t,u)(\partial_t u - X_H(t,u)) = 0 , \\ & \lim_{s\rightarrow \pm \infty} u(s,\cdot) = \gamma_{\pm} \bigr\}.
\end{split}
\end{equation}
The energy identity for $u\in \mathcal{M}_{\partial^F}^{\#}(\gamma_+,\gamma_-)$ reads
\[
E(u):= \int_{\R\times \T} |\partial_s u|_{J_t}^2\, ds \, dt = \mathcal{A}_H(\gamma_+) - \mathcal{A}_H(\gamma_-),
\]
where $|\cdot|_{J_t}^2 = \omega_0(\cdot,J_t\cdot)$. Together with the above assumptions of linear growth of $X_H$ and non-resonance at infinity, the uniform upper bound on the energy implies that the cylinders in $\mathcal{M}_{\partial^F}^{\#}(\gamma_+,\gamma_-)$ take values in a compact subset of $\R^{2n}$, see \cite[Proposition 3.1]{ak22}. Then a standard argument involving elliptic estimates and the fact that $\omega_0$ is exact implies that each space $\mathcal{M}_{\partial^F}^{\#}(\gamma_+,\gamma_-)$ is pre-compact in the $C^{\infty}_{\mathrm{loc}}$ topology.

The linearization of the Floer equation at a solution $u\in \mathcal{M}_{\partial^F}^{\#}(\gamma_+,\gamma_-)$ is a real Cauchy-Riemann type operator $D_u$ on the Hermitian vector bundle 
\[
u^*(T\R^{2n})\rightarrow \R\times \T
\]
which is asymptotic to the operators $A_{H,\gamma_{\pm}}$ for $s\rightarrow \pm \infty$. The operator $D_u$ is Fredholm of index
\[
\ind D_u = \mathrm{CZ}(\gamma_+) - \mathrm{CZ}(\gamma_-).
\]
If $J$ is generic, then all the spaces $\mathcal{M}_{\partial^F}^{\#}(\gamma_+,\gamma_-)$ are cut transversely and are hence smooth manifolds of dimension $\mathrm{CZ}(\gamma_+) - \mathrm{CZ}(\gamma_-)$. We fix such a generic $J$.

Now consider the case of index difference $\mathrm{CZ}(\gamma_+)- \mathrm{CZ}(\gamma_-)=1$. The quotient 
\[
\mathcal{M}_{\partial^F}(\gamma_+,\gamma_-) := \mathcal{M}_{\partial^F}^{\#}(\gamma_+,\gamma_-)/\R
\]
by the free $\R$-action $(\sigma,u)\mapsto u(\sigma + \cdot, \cdot)$ is discrete and compact, and hence a finite set. The algebraic count of $\MM_{\partial^F}(\gamma_+,\gamma_-)$ naturally takes value in $\mathfrak{o}_{\gamma_+}^*\otimes \mathfrak{o}_{\gamma_-}$. Indeed, let $D_{\gamma_-}$ be a real Cauchy-Riemann type operator on $\C$ which is positively asymptotic to $A_{H,\gamma_-}$ and consider $u\in \MM_{\partial^F}^\#(\gamma_+,\gamma_-)$. Then gluing the linearization $D_u$ with $D_{\gamma_-}$ produces a Cauchy-Riemann type operator on $\C$ which is positively asymptotic to $A_{H,\gamma_+}$. Hence the Floer-Hofer kernel gluing operation yields a canonical isomorphism
\begin{equation*}
\mathfrak{o}_{\gamma_+}\cong \mathfrak{o}_{D_u}\otimes \mathfrak{o}_{\gamma_-}.
\end{equation*}
In other words, the orientation local system of $\MM_{\partial^F}^\#(\gamma_+,\gamma_-)$ is naturally isomorphic to $\mathfrak{o}_{\gamma_+}\otimes\mathfrak{o}_{\gamma_-}^*$. Since $\MM_{\partial^F}(\gamma_+,\gamma_-)$ is obtained by taking the quotient by the above $\R$-action, its orientation local system is naturally isomorphic to $\mathfrak{o}_{\gamma_+}\otimes \mathfrak{o}_{\gamma_-}^*\otimes \mathfrak{o}_\R^* \cong \mathfrak{o}_{\gamma_+}\otimes \mathfrak{o}_{\gamma_-}^*$. Here we identify $\mathfrak{o}_\R\cong \Z$ using the obvious orientation of $\R$. A choice of generator of $\mathfrak{o}_{\gamma_+}\otimes \mathfrak{o}_{\gamma_-}^*$ thus induces on orientation of $\MM_{\partial^F}(\gamma_+,\gamma_-)$, which in turn gives rise to a $\Z$-valued count. This precisely describes an element of the dual line $(\mathfrak{o}_{\gamma_+}\otimes \mathfrak{o}_{\gamma_-}^*)^* \cong \mathfrak{o}_{\gamma_+}^*\otimes \mathfrak{o}_{\gamma_-}$, i.e. yields a canonical count
\begin{equation*}
\#\MM_{\partial^F}(\gamma_+,\gamma_-) \in \mathfrak{o}_{\gamma_+}^*\otimes \mathfrak{o}_{\gamma_-} \cong \operatorname{Hom}_\Z(\mathfrak{o}_{\gamma_+},\mathfrak{o}_{\gamma_-}).
\end{equation*}
Note that the absolute value $|\#\MM_{\partial^F}(\gamma_+,\gamma_-)|\in \Z_{\geq 0}$ is well-defined and independent of choices.

The \textit{Floer complex} $\partial^F : F_*(H) \rightarrow F_{*-1}(H)$ of $(H,J)$ is defined as
\begin{equation}
\label{floer_complex}
	F_k(H):= \bigoplus_{\substack{\gamma\in \mathcal{P}(H)\\ \mathrm{CZ}(\gamma)=k}} \mathfrak{o}_{\gamma}, \qquad \partial^F := \bigoplus_{\substack{(\gamma_+,\gamma_-)\in \mathcal{P}(H)^2\\ \mathrm{CZ}(\gamma_+) - \mathrm{CZ}(\gamma_-) = 1}} \#\mathcal{M}_{\partial^F}(\gamma_+,\gamma_-).
\end{equation}
Indeed, a standard argument proves that $\partial^F$ is a boundary operator. The resulting object is a $\Z$-graded chain complex of free Abelian groups $(F_k(H),\partial^F)$ which is filtered by the action $\mathcal{A}_H$. Changing the generic almost complex structure $J$ produces an isomorphic $\Z$-graded and filtered chain complex. 

\subsection{The Morse complex of the dual action functional}

We summarize some facts about Clarke's dual action functional and its Morse complex. We refer to \cite[Sections 5 and 6]{ak22} for the proofs. Note that we are adopting different sign conventions and are making some other minor changes.

We now assume that the Hamiltonian $H\in C^{\infty}(\T\times \R^{2n})$ is nondegenerate, non-resonant at infinity and satisfies the following condition:

\paragraph{\sc (Quadratic convexity)} There are positive numbers $\underline{h},\overline{h}$ such that $\underline{h} I \leq \nabla^2 H(t,z) \leq \overline{h} I$ for every $(t,z)\in \T\times \R^{2n}$.

\medskip

This condition implies the linear growth at infinity of the Hamiltonian vector field. A typical asymptotic behaviour of quadratically convex Hamiltonians is the one described in Remark \ref{exham}, where now the domain $X$ is assumed to be uniformly convex, meaning that its boundary has strictly positive sectional curvature. 

Denote by
\[
H^1_0(\T,\R^{2n}) := \Bigl\{ \gamma\in H^1(\T,\R^{2n}) \mid \int_{\T} \gamma(t)\, dt = 0 \Bigr\}
\]
the Hilbert space of loops in $\R^{2n}$ of Sobolev regularity $H^1$ with vanishing mean and by
\[
\mathbb{P}: H^1(\T,\R^{2n}) \rightarrow H^1_0(\T,\R^{2n}), \qquad \gamma\mapsto \gamma -  \int_{\T} \gamma(t)\, dt,
\]
the linear projection onto it along the space of constant loops. The Hilbert space $H^1_0(\T,\R^{2n})$ is the domain of Clarke's dual action functional, which takes the form
\[
\Psi_{H^*}(\gamma) :=- \int_{\T} \gamma^* \lambda_0 + \int_{\T} H^*(t,- J_0 \gamma'(t))\, dt, \qquad \gamma\in H^1_0(\T,\R^{2n}),
\]
where $H^*\in C^{\infty}(\T\times \R^{2n})$ is the Fenchel conjugate of $H$:
\[
H^*(t,z) := \max_{w\in \R^{2n}} \bigl( w\cdot z - H(t,w) \bigr).
\]
There is a one-to-one correspondence between critical points of the dual action functional and critical points of the direct action functional, i.e.\ 1-periodic orbits of $X_H$: if $\gamma\in C^{\infty}(\T,\R^{2n})$ is a critical point of $\mathcal{A}_H$ then $\mathbb{P}(\gamma)$ is a critical point of $\Psi_{H^*}$ and, conversely, if $\gamma\in H^1_0(\T,\R^{2n})$ is a critical point of $\Psi_{H^*}$ then there exists a unique vector $z\in \R^{2n}$ such that $\gamma+z$ is a critical point of $\mathcal{A}_H$. The values of the two functionals at critical points coincide:
\begin{equation}
\label{cfr0}
\Psi_{H^*}(\mathbb{P} (\gamma)) = \mathcal{A}_H(\gamma) \qquad \forall \gamma\in \mathcal{P}(H).
\end{equation}
One advantage of the dual action functional $\Psi_{H^*}$ is that its critical points have finite Morse index $i_{\mathrm{M}}$: more precisely, we have
\begin{equation}
\label{finind}
i_{\mathrm{M}}(\mathbb{P}(\gamma),\Psi_{H^*}) = \mathrm{CZ}(\gamma) - n, \qquad \forall \gamma\in \mathcal{P}(H).
\end{equation}
The functional $\Psi_{H^*}$ is continuously differentiable on $H^1_0(\T,\R^{2n})$ but in general only twice G\^ateaux differentiable. In order to associate a Morse complex to it, it is useful to restrict it to a suitable finite dimensional submanifold of $H^1_0(\T,\R^{2n})$ which contains all its critical points and has the property that the restriction of $\Psi_{H^*}$ to it is smooth. Given a natural number $N$, we consider the orthogonal splitting
\[
\begin{split}
H^1_0(\T,\R^{2n}) = \mathbb{H}_N \oplus \mathbb{H}^N, \qquad \mathbb{H}_N &:= \{ \gamma \mid \widehat{\gamma}(k) = 0 \mbox{ if } k <0 \mbox{ or } k> N\}, \\
\mathbb{H}^N &:= \{ \gamma \mid \widehat{\gamma}(k) = 0 \mbox{ if } 1\leq k \leq N\},
\end{split}
\]
where $\widehat{\gamma}(k)\in \R^{2n}$ are the Fourier coefficients of $\gamma$ in the representation
\[
\gamma(t) = \sum_{k\in \Z} e^{2\pi k t J_0} \widehat{\gamma}(k).
\]
Denote by $\mathbb{P}_N$ the projector onto the $2nN$-dimensional subspace $\mathbb{H}_N$ along $\mathbb{H}^N$.

If $N$ is large enough, then for every $x\in  \mathbb{H}_N$ there exists $Y_N(x)\in \mathbb{H}^N$ such that the restriction of $\Psi_{H^*}$ to the affine subspace $x + \mathbb{H}^N$ has a unique critical point $x+Y_N(x)$, which is a global minimizer. The map $Y_N:  \mathbb{H}_N \rightarrow  \mathbb{H}^N$ is smooth and takes values in the space of smooth loops. We denote by $M\subset H^1_0(\T,\R^{2n})$ the graph of $Y_N$ and by $\psi_{H^*}$ the restriction of $\Psi_{H^*}$ to $M$:
\[
M:= \{ x + Y_N(x) \mid x\in \mathbb{H}_N\}, \qquad \psi_{H^*}:= \Psi_{H^*}|_M.
\]
We have
\begin{equation}
\label{cfr1}
\psi_{H^*} (q) \leq \Psi_{H^*}(q+y) \qquad \forall q\in M,\; \forall y\in \mathbb{H}^N.
\end{equation} 
The function $\psi_{H^*}$ on the finite dimensional manifold $M$ is smooth, Morse and satisfies the Palais-Smale condition, once $M$ is endowed with the Riemannian structure induced by the ambient Hilbert space $H^1_0(\T,\R^{2n})$. The critical points of $\psi_{H^*}$ coincide with the critical points of $\Psi_{H^*}$ and have the same Morse index. By the above considerations, we then have
\begin{equation}
\label{indici}
i_{\mathrm{M}}(\mathbb{P}(\gamma)) :=  i_{\mathrm{M}}(\mathbb{P}(\gamma),\psi_{H^*}) = \mathrm{CZ}(\gamma) - n, \qquad \forall \gamma\in \mathcal{P}(H).
\end{equation}
By the non-degeneracy and non-resonance assumption, $\psi_{H^*}$ has finitely many critical points. Given a Riemannian metric $g$ on $M$ which is uniformly equivalent to the one induced by the ambient Hilbert space, we denote by $\phi^t_{-\nabla \psi_{H^*}}$ the negative gradient flow of $\psi_{H^*}$ with respect to $g$ and by
\[
\begin{split}
W^u(p) &:= \{ q\in M \mid \phi^t_{-\nabla \psi_{H^*}}(q) \rightarrow p \mbox{ for } t\rightarrow -\infty\}, \\
W^s(p) &:= \{ q\in  M \mid \phi^t_{-\nabla \psi_{H^*}}(q) \rightarrow p \mbox{ for } t\rightarrow +\infty\},
\end{split}
\]
the unstable and stable manifold of the critical point $p$ of $\psi_{H^*}$ with respect to this flow. The unstable manifold of $p$ has dimension $i_{\mathrm{M}}(p)$. 

When the metric $g$ is generic, unstable and stable manifolds of each pair of critical points intersect transversally and $\psi_{H^*}$ has a well-defined Morse complex $\{M_*(\psi_{H^*}),\partial^M\}$ over $\Z$, whose definition we briefly recall.

The orientation line of a critical point $p$ of $\psi_{H^*}$ is defined to be
\[
\mathfrak{o}_p := \mathfrak{o}_{T_p W^u(p)},
\]
and we consider the free Abelian group
\[
M_k(\psi_{H^*}) := \bigoplus_{\substack{p\in \mathrm{crit}(\psi_{H^*})\\ i_{\mathrm{M}}(p)=k}} \mathfrak{o}_p.
\]
Since the unstable manifold of $p$ is orientable and connected,  we obtain canonical isomorphisms
\begin{equation}
\label{ide1}
\mathfrak{o}_{T_x W^u(p)} \cong \mathfrak{o}_p \qquad \forall x\in W^u(p).
\end{equation}
Since $T_p M = T_p W^u(p) \oplus T_p W^s(p)$, we also have canonical isomorphisms
\begin{equation}
\label{ide2}
\mathfrak{o}_{\frac{T_x M}{T_x W^s(p)}} \cong \mathfrak{o}_{\frac{T_p M}{T_p W^s(p)}} \cong \mathfrak{o}_{T_{p} W^u(p)} = \mathfrak{o}_p \qquad \forall x\in W^s(p).
\end{equation}
Let $x\in W^u(p_+) \cap W^s(p_-)$. From the Morse-Smale transversality condition $T_x W^u(p_+) + T_xW^s(p_-)= T_x M$, we obtain the exact sequence
\[
0 \rightarrow T_x \bigl( W^u(p_+) \cap  W^s(p_-) \bigr) \rightarrow T_x W^u(p_+)  \rightarrow \frac{T_x M}{T_x W^s(p_-)} \rightarrow 0,
\]
where the first map is the inclusion and the second one the restriction of the quotient projection. Together with \eqref{ide1} and \eqref{ide2}, the above exact sequence induces the canonical isomorphism
\begin{equation}
\label{canMorse}
\mathfrak{o}_{p_+} \cong \mathfrak{o}_{T_x ( W^u(p_+) \cap  W^s(p_-))} \otimes \mathfrak{o}_{p_-}.
\end{equation}
This means that the orientation local system of $W^u(p_+)\cap W^s(p_-)$ is canonically isomorphic to $\mathfrak{o}_{p_+}\otimes \mathfrak{o}_{p_-}^*$. The gradient flow of $\nabla\psi_{H^*}$ induces an $\R$-action on $W^u(p_+)\cap W^s(p_-)$, which is free whenever $p_+\neq p_-$. In this case, the orientation local system of the quotient
\begin{equation*}
\MM_{\partial^M}(p_+,p_-) \coloneqq W^u(p_+)\cap W^s(p_-)/\R
\end{equation*}
is canonically isomorphic to $\mathfrak{o}_{p_+}\otimes \mathfrak{o}_{p_-}^*\otimes \mathfrak{o}_\R^* \cong \mathfrak{o}_{p_+}\otimes \mathfrak{o}_{p_-}^*$, where we again identify $\mathfrak{o}_\R\cong \Z$ using the standard orientation of $\R$.

If $i_{\mathrm{M}}(p_+) - i_{\mathrm{M}}(p_-) = 1$, then $\MM_{\partial^M}(p_+,p_-)$ is $0$-dimensional and we have a canonical count (cf. Section \ref{flosubsec})
\begin{equation*}
\#\MM_{\partial^M}(p_+,p_-) \in \mathfrak{o}_{p_+}^* \otimes \mathfrak{o}_{p_-} \cong \operatorname{Hom}_\Z(\mathfrak{o}_{p_+},\mathfrak{o}_{p_-}).
\end{equation*}
As in Section \ref{flosubsec}, the absolute value $|\#\MM_{\partial^M}(p_+,p_-)|\in \Z_{\geq 0}$ does not depend on choices. The homomorphism $\partial^M : M_*(\psi_{H^*}) \rightarrow M_{*-1}(\psi_{H^*})$ is defined by the formula
\[
\partial^M := \bigoplus_{\substack{(p_+,p_-) \in \mathrm{crit}(\psi_{H^*})^2 \\  i_{\mathrm{M}}(p_+) -    i_{\mathrm{M}}(p_-)   = 1 }} \;\; \#\mathcal{M}_{\partial^M}(p_+,p_-).
\]
A standard argument proves that $\partial^M$ is a boundary operator and that the $k$-th homology group of the Morse complex $\{M_*(\psi_{H^*}),\partial^M\}$ is isomorphic to the singular homology group $H_k(\mathbb{H}_N,\{\psi_{H^*}<c\})$, where $c$ is a real number which is smaller than the smallest critical value of $\psi_{H^*}$. It is clear from the construction that we get an isomorphic filtered chain complex with a different choice of large $N\in\N$, which corresponds to the isomorphism induced by  $(\mathbb{H}_N,\{\psi_{H^*}^N<c\})\subset (\mathbb{H}_{N'},\{\psi_{H^*}^{N'}<c\})$ for $N<N'$ on the singular homology side, where we added the superscript $N$ to $\psi_{H^*}^N$ for distinction.

\begin{rem}
\rm{Note that we are defining the Morse complex using the negative gradient flow of $\psi_{H^*}$, but we are orienting the flow lines by the positive gradient. The reason is that this choice simplifies the form of the isomorphism with Floer homology, the reason being that the Floer equation appearing in \eqref{modfloer} is a positive gradient equation for the action functional $\mathcal{A}_H$.}
\end{rem}

\subsection{An isomorphism}

Assume that the Hamiltonian $H\in C^{\infty}(\T\times \R^{2n})$ is nondegenerate, non-resonant at infinity and quadratically convex. Fixing a generic time-periodic almost complex structure $J$ on $\R^{2n}$ and a generic Riemann metric $g$ on $M$ as above, both the Floer complex of $(H,J)$ and the Morse complex of $(\psi_{H^*},g)$ are well defined. We then have the following result.
 
\begin{thm}
\label{isom}
There is a chain complex isomorphism
\[
\Theta_* : M_*(\psi_{H^*}) \rightarrow F_{*+n}(H)
\]
preserving the action filtrations.
\end{thm}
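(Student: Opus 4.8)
The plan is to promote the chain isomorphism constructed over $\Z/2$ in \cite{ak22} to integer coefficients, the genuinely new ingredient being a coherent treatment of orientations. First I would set up the hybrid moduli spaces $\mathcal{M}^\Theta(\gamma,p)$, for $\gamma\in\mathcal{P}(H)$ and $p\in\mathrm{crit}(\psi_{H^*})$, of the type used in \cite{ak22}: a negative Floer half-cylinder $u:(-\infty,0]\times\T\to\R^{2n}$ for $(H,J)$ asymptotic to $\gamma$ at $-\infty$, together with a negative gradient half-trajectory of $\psi_{H^*}$ issuing from the loop $u(0,\cdot)\in M$ and converging to $p$. For generic $(J,g)$ these are transversally cut out smooth manifolds of dimension $\mathrm{CZ}(\gamma)-n-i_{\mathrm{M}}(p)$ with the expected compactifications, the linear-growth and non-resonance hypotheses on $H$ confining the half-cylinders to a fixed compact set. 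By \eqref{cfr0} one gets the a priori estimate that $\mathcal{M}^\Theta(\gamma,p)=\emptyset$ unless $\mathcal{A}_H(\gamma)\ge\psi_{H^*}(p)$, with equality forcing $p=\mathbb{P}(\gamma)$ and the only solution being the constant configuration.

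The heart of the matter is to show that the orientation local system of $\mathcal{M}^\Theta(\gamma,p)$ is canonically isomorphic to $\mathfrak{o}_\gamma\otimes\mathfrak{o}_p^*$, so that when $\mathrm{CZ}(\gamma)-n=i_{\mathrm{M}}(p)$ the signed count of the compact $0$-dimensional space $\mathcal{M}^\Theta(\gamma,p)$ is a canonical element of $\mathfrak{o}_\gamma\otimes\mathfrak{o}_p^*\cong\operatorname{Hom}_\Z(\mathfrak{o}_p,\mathfrak{o}_\gamma)$; the homomorphism $\Theta_*$ is then the matrix with these entries. I would establish this identification by a Floer--Hofer type linear gluing: gluing the linearization of the hybrid problem at a solution with a Cauchy--Riemann operator on $\C$ positively asymptotic to $A_{H,\gamma}$ (which by definition represents $\mathfrak{o}_\gamma$) produces, after excising the finite-dimensional reduction, an operator whose orientation line is canonically that of the negative eigenspace of the Hessian of $\psi_{H^*}$ at $p$, i.e. $\mathfrak{o}_p$, up to complex orientations which contribute trivially. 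The two pieces of bookkeeping to verify are that the linearization of the reduction map $x\mapsto x+Y_N(x)$ and the inclusion $M\hookrightarrow H^1_0(\T,\R^{2n})$ are orientation-coherent, and that the passage from $\mathcal{A}_H$ to $\Psi_{H^*}$ via Fenchel conjugation introduces no sign, the grading consistency being recorded by \eqref{indici}. This is the purpose of Appendices \ref{appA} and \ref{appB}: the first develops the determinant-line formalism together with the Floer--Hofer gluing at the level of determinant lines already invoked in Section \ref{CRsec}, and the second applies it to the dual action functional.

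With orientations in hand, the chain-map identity $\partial^F\circ\Theta_*=\Theta_*\circ\partial^M$ follows from the usual analysis of the compactified $1$-dimensional hybrid spaces: their boundary consists of configurations with a Floer cylinder broken off at $-\infty$, contributing $\partial^F\Theta_*$, and configurations with a Morse trajectory broken off at $p$, contributing $\Theta_*\partial^M$; matching boundary orientations via the Floer--Hofer gluing isomorphism \eqref{eq:floer_hofer_kernel_gluing} on one side and the Morse gluing isomorphism underlying \eqref{canMorse} on the other makes the two contributions cancel. Here the convention of orienting Morse trajectories by the positive gradient, explained in the Remark above, is precisely what aligns the signs, since the Floer equation in \eqref{modfloer} is a positive gradient equation for $\mathcal{A}_H$. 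Finally, $\Theta_*$ preserves the action filtration by \eqref{cfr0}, and it is an isomorphism: the a priori estimate shows that, with respect to the order on critical points given by the action, $\Theta_*$ is block upper-triangular with diagonal blocks given by the counts of the constant hybrid configurations, which are regular and equal $\pm1$, hence isomorphisms of the rank-one lines $\mathfrak{o}_{\mathbb{P}(\gamma)}\cong\mathfrak{o}_\gamma$; a downward induction over the finitely many critical values then shows $\Theta_*$ is a filtered chain isomorphism.

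The main obstacle is the orientation comparison of the second paragraph, and within it the linear gluing statement identifying the orientation line of the hybrid deformation operator with $\mathfrak{o}_\gamma\otimes\mathfrak{o}_p^*$: the Morse-side line is built from a finite-dimensional negative eigenspace whereas the Floer-side line comes from an infinite-dimensional Fredholm package, so reconciling them requires a careful excision through the finite-dimensional reduction and a check that Fenchel duality is orientation-neutral. Everything else is a by-now-routine adaptation of the arguments of \cite{ak22} from $\Z/2$ to $\Z$.
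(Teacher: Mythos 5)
Your proposal follows exactly the architecture of the paper's proof in Appendices~\ref{appA}--\ref{appB}: hybrid moduli spaces of Floer negative half-cylinders with a finite-dimensional boundary condition coming from the Morse theory of $\psi_{H^*}$, determinant-line gluing in the style of Floer--Hofer for coherent orientations, the chain-map identity from compactified $1$-dimensional spaces, and a triangularity argument (via the duality estimate) for the isomorphism. However, your bookkeeping is internally inconsistent in a way that would cause the argument to fail as written. The boundary condition must be that the finite-dimensional reduction of $\mathbb{P}_+u(0,\cdot)$ lies in the \emph{unstable} manifold $W^u(p)$, not the stable one; the phrase ``issuing from the loop $u(0,\cdot)$ and converging to $p$'' reads as a stable-manifold condition, which has the wrong codimension. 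The half-cylinder operator in $\mathcal{D}_-(\gamma)$ has Fredholm index $n-\operatorname{CZ}(\gamma)$ (Proposition~\ref{hybridop}), and intersecting with the $i_{\mathrm{M}}(p)$-dimensional subspace $\mathbb{P}_+T_qW^u(p)$ gives $\dim\mathcal{M}_{\Theta}(p,\gamma)=i_{\mathrm{M}}(p)+n-\operatorname{CZ}(\gamma)$, the opposite of your $\operatorname{CZ}(\gamma)-n-i_{\mathrm{M}}(p)$; with your formula, the spaces needed for $\partial^F\Theta=\Theta\partial^M$ have virtual dimension $-1$, and the breaking analysis never gets started. Likewise, since the Floer equation raises $\mathcal{A}_H$ along a negative half-cylinder and \eqref{ines} bounds $\mathcal{A}_H(u(0,\cdot))$ from above by $\psi_{H^*}$ of its reduction, which in turn is $\leq\psi_{H^*}(p)$ on $W^u(p)$, the correct a priori estimate is $\mathcal{A}_H(\gamma)\leq\psi_{H^*}(p)$, not $\geq$; this is precisely what makes $\Theta$ filtration-preserving, and with your inequality it would not be.

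Two smaller remarks. The worry that ``Fenchel conjugation might not be orientation-neutral'' is a red herring: the moduli space is defined directly through the trace condition $\mathbb{P}_+u(0,\cdot)\in\mathbb{P}_+W^u(p)$, a finite-dimensional linear constraint, and the orientation identification $\mathfrak{o}_{T_u\mathcal{M}_\Theta(p,\gamma)}\cong\mathfrak{o}_p\otimes\mathfrak{o}_\gamma^*$ follows from the determinant-line isomorphism \eqref{isoor} for preimages of transverse finite-dimensional subspaces together with the identification $\mathfrak{o}_\gamma\cong\mathfrak{o}_{\mathcal{D}_-(\gamma)}^*$ obtained by capping with a complex-linear Cauchy--Riemann operator; Legendre/Fenchel duality enters only through the inequality \eqref{ines}, where orientations play no role. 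Finally, asserting that ``matching boundary orientations makes the two contributions cancel'' is precisely the step where nearly all the work of the paper's proof is concentrated: the verifications that the two breaking types carry boundary orientations $\epsilon(u)\epsilon(\widehat{v})$ and $-\epsilon(\widehat{x})\epsilon(u)$ respectively are carried out in detail in Appendix~\ref{appB}, and should not be waved through.
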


The above theorem is proven in \cite{ak22} in the case of $\Z/2$ coefficients. Its upgrade to integer coefficients is proven in Appendix \ref{appB} below, after some preliminaries about orientations of certain Fredholm operators which are discussed in Appendix \ref{appA}.

\begin{rem}
{\rm A consequence of the above theorem is that if $X\subset \R^{2n}$ is a convex body then the capacity from symplectic homology with integer coefficients of $X$ coincides with the minimal action of closed generalized characteristics on $\partial X$. This is proven in \cite[Section 11]{ak22} for the symplectic homology with $\Z/2$ coefficients. The same proof, now building on Theorem \ref{isom}, gives us the desired result.}
\end{rem}

\section{Floer cylinders descending from the systole}
\label{conv-count-sec}

Let $X\subset \R^{2n}$ be a uniformly convex domain. Throughout this section, we assume that $\partial X$ has a unique closed characteristic $\gamma$ of minimal action and that $\gamma$ is nondegenerate. 

Up to a translation, we may assume that the interior of $X$ contains the origin and we denote by $H_X: \R^{2n} \rightarrow \R$ the positively 2-homogeneous function taking the value $1$ on $\partial X$. Since the unique closed characteristic of minimal action is nondegenerate, we can find a number $\eta> \mathcal{A}(\gamma)$ such that no closed characteristics of $\partial X$ have action in the interval $(\mathcal{A}(\gamma),\eta]$. We consider a smooth function $k:[0,+\infty) \rightarrow \R$ such that
\[
k|_{[0,\frac{1}{4}]} = 0, \quad k''\geq 0,     \quad k'(1) = \mathcal{A}(\gamma), \quad k''(1)>0, \quad k'|_{[2,+\infty)} = \eta.
\]
The smooth function $k\circ H_X$ is quadratically convex away from the rescaled domain $\frac{1}{2} X$, on which it vanishes identically. We can modify $k\circ H_X$ in a small neighborhood of $\frac{1}{2} X$ and obtain a smooth function $\tilde{K}$ which is quadratically convex on $\R^{2n}$ and achieves its minimum at the origin, with $\tilde{K}(0)<0$ and Hessian at $0$ with small positive eigenvalues. The fixed point set of the time-$1$ map of the Hamiltonian flow of $\tilde{K}$ consists of the point $z_0=0$, which defines a nondegenerate constant orbit with Conley-Zehnder index $\mathrm{CZ}(z_0)=n$, and the circle $\gamma$, which is a degenerate orbit with nullity 1. Let $h\in \Z$ be the Conley-Zehnder index of $\gamma$, which we recall is extended to degenerate orbits by lower semicontinuity. It follows from Ekeland's work \cite{eke90} that $h=n+1$, and we will recover this fact below.

By \cite[Proposition 2.2]{cfhw96}, we can perturb $\tilde{K}$ near $\gamma$ in a time-dependent way and obtain a smooth function $K: \T \times \R^{2n}\rightarrow \R$ such that:
\begin{enumerate}[(i)]
\item \label{item:floer_descending_from_systole_first} $K(t,z) = \eta H_X(z) + \sigma$ outside of a compact subset of $\T\times \R^{2n}$, where $\sigma\in \R$;
\item $K$ is nondegenerate, non-resonant at infinity (see Remark \ref{exham}), and quadratically convex;
\item $X_K$ has three 1-periodic orbits: the constant orbit $z_0=0$ and two non-constant orbits $\gamma_-$ and $\gamma_+$ near $\gamma$ ;
\item \label{item:floer_descending_from_systole_last} $\mathrm{CZ}(z_0)=n$, $\mathrm{CZ}(\gamma_-)=h$, and $\mathrm{CZ}(\gamma_+)=h+1$.
\end{enumerate}

By (ii), the Clarke dual functional $\Psi_{K^*}$ and its finite dimensional reduction $\psi_{K^*}$ are well-defined, and the latter function has a well-defined Morse complex.
By (iii) and (iv), the function $\psi_{K^*}$ has three critical points, which are given by the projections by $\mathbb{P}$ of $z_0$, $\gamma_-$ and $\gamma_+$ and have Morse index 0, $h-n$ and $h+1-n$, respectively, see \eqref{indici}.

Assumption (i) implies that $\psi_{K^*}$ is unbounded from below. For convenience of the reader, we repeat the argument in \cite[Remark 11.3]{ak22} here. A suitable reparametrization $\widehat{\gamma}: \R \rightarrow \R^{2n}$ of the closed characteristic $\gamma$ satisfies
\[
\widehat{\gamma}'(t) = \mathcal{A}(\gamma) X_{H_X}(\widehat{\gamma}(t)),
\]
and hence is a 1-periodic orbit of the Hamiltonian flow defined by
\[
\widehat{H}:= \mathcal{A}(\gamma) H_X.
\]
The fact that $\widehat{H}$ is 2-homogeneous implies that the critical point $\widehat{\gamma}$ of $\mathcal{A}_{\widehat{H}}$ has vanishing Hamiltonian action and hence
\begin{equation}
\label{zero}
\Psi_{\widehat{H}^*}(\mathbb{P}(\widehat{\gamma})) = \mathcal{A}_{\widehat{H}} (\widehat{\gamma})=0.
\end{equation}
By (i) we have
\[
K(t,z) \geq \eta H_X(z) - c = \theta \widehat{H}(z) - c \qquad \forall (t,z)\in \T\times \R^{2n},
\]
where $\theta:= \frac{\eta}{\mathcal{A}(\gamma)}>1$ and $c$ is suitably large. Then
\[
K^*(t,z) \leq \theta \widehat{H}^*\left(\frac{z}{\theta}\right) + c = \frac{1}{\theta}   \widehat{H}^* (z) + c \qquad \forall (t,z)\in \T\times \R^{2n},
\]
and hence for every $s>0$ we have
\[
\begin{split}
\Psi_{K^*}(s \mathbb{P}(\widehat{\gamma})) &= - \mathcal{A}(s \widehat{\gamma}) + \int_{\T} K^*(t,-s J_0 \widehat{\gamma}'(t)) \, dt \leq - s^2 \mathcal{A}(\gamma) + \frac{1}{\theta} \int_{\T}  \widehat{H}^* (-s J_0 \widehat{\gamma}'(t)) \, dt + c \\ &= \frac{s^2}{\theta} \Psi_{\widehat{H}^*}(\mathbb{P}(\widehat{\gamma})) + \left( \frac{1}{\theta} - 1 \right) \mathcal{A}(\gamma) s^2 + c =  \left( \frac{1}{\theta} - 1 \right) \mathcal{A}(\gamma) s^2 + c,
\end{split}
\]
where in the last equality we have used \eqref{zero}. Since $\theta>1$, the above inequality implies that $\Psi_{K^*}(s \mathbb{P}(\widehat{\gamma}))$ tends to $-\infty$ for $s\rightarrow +\infty$ and hence $\Psi_{K^*}$ is unbounded from below. By \eqref{cfr1}, the reduced function $\psi_{K^*}$ is unbounded from below as well, as claimed.

This implies that the relative homology group $H_0(M, \{\psi_{K^*}< a\})$ vanishes for every $a\in \R$. If $a$ is smaller than the minimum of $\psi_{K^*}$ over its critical set, then the Morse homology of $\psi_{K^*}$ is isomorphic to $H_0(M, \{\psi_{K^*}< a\})$. The fact that the latter group vanishes implies that the degree zero cycle defined by $\mathbb{P}(z_0)$ is a boundary. Therefore, the function $\psi_{K^*}$ must have a critical point of Morse index one and this is necessarily $\mathbb{P}(\gamma_-)$, because if $\mathbb{P}(\gamma_+)$ had index one, then $\mathbb{P}(\gamma_-)$ would be a second critical point of index zero and the Morse homology of $\psi_{K^*}$ in degree zero would not vanish. We deduce that $h=n+1$, as claimed above. The fact that the degree zero cycle defined by $\mathbb{P}(z_0)$ is a boundary implies that $|\#\MM_{\partial^M}(\mathbb{P}(\gamma_-),\mathbb{P}(z_0))|=1$. We also observe that, since the unstable manifold of $\mathbb{P}(\gamma_-)$ is one-dimensional, this implies that one of the two branches of this unstable manifold converges to $\mathbb{P}(z_0)$ while on the other branch $\psi_{K^*}$ tends to $-\infty$.

Let $J$ be a globally bounded time-periodic $\omega_0$-compatible almost complex structure on $\R^{2n}$ such that $(K,J)$ is regular, meaning that all the moduli spaces of Floer cylinders are cut transversally. Since $\mathrm{CZ}(\gamma_-)=n+1$, the moduli space $\mathcal{M}_{\partial^F}(\gamma_-,z_0)$ is $0$-dimensional and compact. From Theorem \ref{isom} and the fact that $|\#\MM_{\partial^M}(\mathbb{P}(\gamma_-),\mathbb{P}(z_0))|=1$, we conclude that $|\# \mathcal{M}_{\partial^F}(\gamma_-,z_0)|=1$. We summarize our findings into the following theorem.

\begin{thm}
\label{convex-count}
Let $K: \T \times \R^{2n} \rightarrow \R$ be a smooth Hamiltonian satisfying the above conditions (i)-(iv) and assume that the pair $(K,J)$ is regular. Then $h=n+1$ and $|\# \mathcal{M}_{\partial^F}(\gamma_-,z_0)|=1$.
\end{thm}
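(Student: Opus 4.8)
The plan is to read both conclusions off the Morse theory of the reduced Clarke dual functional $\psi_{K^*}$ and then transport the resulting count to the Floer complex via Theorem~\ref{isom}. By conditions (iii)--(iv) and \eqref{indici}, the three critical points $\mathbb P(z_0)$, $\mathbb P(\gamma_-)$, $\mathbb P(\gamma_+)$ of $\psi_{K^*}$ have Morse indices $0$, $h-n$, $h+1-n$; in particular $h\geq n$, and the Morse complex $M_*(\psi_{K^*})$ is free of rank $1$ in exactly these three degrees.

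First I would show that $\psi_{K^*}$ is unbounded from below. Using condition (i) one has $K(t,z)\geq \theta\,\widehat H(z)-c$ pointwise, with $\widehat H=\MA(\gamma)H_X$ and $\theta=\eta/\MA(\gamma)>1$; passing to Fenchel conjugates gives $K^*(t,z)\leq \tfrac1\theta\widehat H^*(z)+c$. Evaluating $\Psi_{K^*}$ along the ray $s\mapsto s\,\mathbb P(\widehat\gamma)$, where $\widehat\gamma$ is the $1$-periodic reparametrization of $\gamma$, and using that $\widehat H$ is $2$-homogeneous so that $\Psi_{\widehat H^*}(\mathbb P(\widehat\gamma))=\MA_{\widehat H}(\widehat\gamma)=0$, one obtains $\Psi_{K^*}(s\,\mathbb P(\widehat\gamma))\leq(\tfrac1\theta-1)\MA(\gamma)s^2+c\to-\infty$, and by \eqref{cfr1} the same holds for $\psi_{K^*}$.

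Next I would use this to pin down $h$ and the Morse differential. Since the finite-dimensional manifold $M$ is connected and the sublevel set $\{\psi_{K^*}<a\}$ is nonempty for every $a\in\R$, the relative singular homology of the pair $(M,\{\psi_{K^*}<a\})$ vanishes in degree $0$; taking $a$ below the smallest critical value identifies this group with the degree-$0$ Morse homology of $\psi_{K^*}$, which is therefore zero. The class of the index-$0$ critical point $\mathbb P(z_0)$ must then be a boundary, so there is a critical point of Morse index $1$. If $h=n$, both $\mathbb P(z_0)$ and $\mathbb P(\gamma_-)$ lie in degree $0$ while only $\mathbb P(\gamma_+)$ lies in degree $1$, forcing $\operatorname{rank}H_0\geq 1$; if $h\geq n+2$, nothing lies in degree $1$, again forcing $\operatorname{rank}H_0\geq 1$. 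Both possibilities contradict the vanishing just established, so $h=n+1$, and the critical points sit in degrees $0,1,2$. Vanishing of $H_0$ now forces $\partial^M\colon M_1(\psi_{K^*})\to M_0(\psi_{K^*})$, a homomorphism $\Z\to\Z$ between rank-$1$ groups, to be surjective, hence $\pm 1$; thus $|\#\MM_{\partial^M}(\mathbb P(\gamma_-),\mathbb P(z_0))|=1$.

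Finally I would invoke Theorem~\ref{isom}: the chain isomorphism $\Theta_*\colon M_*(\psi_{K^*})\to F_{*+n}(K)$ carries $M_0$ and $M_1$ isomorphically onto $F_n(K)=\mathfrak o_{z_0}$ and $F_{n+1}(K)=\mathfrak o_{\gamma_-}$ (these are the only orbits in the relevant indices by (iii)--(iv)) and intertwines $\partial^M$ with $\partial^F$. Since all groups involved are free of rank $1$, the matrix coefficient of $\partial^F$ from $\gamma_-$ to $z_0$ agrees up to sign with that of $\partial^M$, so $|\#\mathcal{M}_{\partial^F}(\gamma_-,z_0)|=1$; the moduli space is $0$-dimensional and compact because $\mathrm{CZ}(\gamma_-)-\mathrm{CZ}(z_0)=1$. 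The substantive work is not in this argument, which is bookkeeping in a rank-$1$ complex, but in its two inputs: the unboundedness estimate (which needs the finite-dimensional reduction to respect the action, i.e.\ \eqref{cfr1}) and, above all, the integral refinement of the Floer--Morse isomorphism of Theorem~\ref{isom}, whose orientation analysis is deferred to the appendices.
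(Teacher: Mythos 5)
Your proposal is correct and follows essentially the same route as the paper: the unboundedness-from-below estimate via the ray $s\mapsto s\,\mathbb P(\widehat\gamma)$, the vanishing of degree-$0$ Morse homology to force $h=n+1$ and $|\#\MM_{\partial^M}(\mathbb P(\gamma_-),\mathbb P(z_0))|=1$, and transport to the Floer side via Theorem~\ref{isom}. Your case analysis ruling out $h=n$ and $h\geq n+2$ is slightly more explicit than the paper's phrasing, but the argument is the same.
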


\begin{rem}
{\rm The above proof also shows that in the Morse complex of $\psi_{K^*}$ the boundary of the chain which is associated to the critical point $\mathbb{P}(\gamma_+)$ is zero. This fact can also be deduced from the analogous statement for the chain defined by $\gamma_+$ in the Floer complex of $(K,J)$, which is proven in the above mentioned \cite[Proposition 2.2]{cfhw96}. This leads to a less direct but shorter proof of the fact that the Clarke dual functional is unbounded from below when condition (i) holds. See \cite[Lemma 11.2]{ak22}.}
\end{rem}

\begin{rem}
{\rm Let us also sketch a third way of determining the Morse complex of $\psi_{K^*}$. Consider the finite dimensional reduction $\psi_{\tilde{K}^*}$ of the Clarke dual action functional of the autonomous Hamiltonian $\tilde{K}$. Recall that $K$ was obtained from $\tilde{K}$ by a small time dependent perturbation. The function $\psi_{\tilde{K}^*}$ is a Morse-Bott function on a manifold diffeomorphic to some finite dimensional vector space $V$ and satisfies the Palais-Smale condition. It has a local minimum corresponding to the constant orbit $z_0$ and a $\T$-family of critical points of some index $k\geq 1$ corresponding to $\gamma$. Hence $V$ is obtained from the sublevel set $\left\{ \psi_{\tilde{K}^*} < a \right\}$ for a sufficiently low value of $a$ by attaching a single $0$-handle and a $\T$-family of $k$-handles. If the sublevel set $\left\{ \psi_{\tilde{K}^*} <a \right\}$ were empty, then $V$ would be homotopy equivalent to the one-point compactification of $\T\times \R^k$, contradicting the fact that $V$ is contractible. Hence $\left\{ \psi_{\tilde{K}^*} <a \right\}$ is non-empty. But then the only way to turn the union of $\left\{ \psi_{\tilde{K}^*} <a \right\}$ and the $0$-handle corresponding to $z_0$ into a connected space is to attach a $\T$-family of $1$-handles connecting the two sets. The unstable manifold of the $\T$-family of index $1$ critical points is an infinite cylinder $\R\times \T$. On one side, this cylinder converges to the local minimum corresponding to $z_0$, and on the other side the value of $\psi_{\tilde{K}^*}$ tends to $-\infty$. The function $\psi_{K^*}$ is obtained from $\psi_{\tilde{K}^*}$ by a small perturbation splitting the $\T$-family of index $1$ critical points into two critical points of indices $1$ and $2$, respectively. Clearly, there is a unique gradient flow line connecting the index $1$ critical point and the local minimum.}
\end{rem}

\section{Pseudoholomorphic planes descending from the systole}
\label{ns-stat}

Throughout this section, let $X\subset \R^{2n}$ be a uniformly convex domain containing the origin in its interior. As in Section \ref{conv-count-sec}, we assume that the Reeb flow of $\lambda$ on $Y=\partial X$ has a unique periodic orbit $\gamma$ of minimal action and that $\gamma$ is nondegenerate. Recall that in this situation we must have $\operatorname{CZ}(\gamma) = n+1$.

Let $J$ be an admissible almost complex structure on the symplectic completion $\hat{X}$ and let $\MM_1(\hat{X},\gamma,J)$ denote the moduli space of unparametrized $J$-holomorphic planes with one marked point positively asymptotic to $\gamma$. The virtual dimension of this moduli space is $2n$. Since $\gamma$ is a simple orbit, planes asymptotic to $\gamma$ are automatically somewhere injective. Therefore, $\MM_1(\hat{X},\gamma,J)$ is a smooth $2n$-manifold for generic $J$. The marked point gives rise to an evaluation map
\begin{equation}
\label{eq:eval_map_high_dim}
\operatorname{ev}_{\hat{X}} : \MM_1(\hat{X},\gamma,J) \rightarrow \hat{X}.
\end{equation}
This evaluation map is proper. Indeed, since $\gamma$ has minimal action, the SFT limit of a sequence of planes asypmtotic to $\gamma$ can only have one single layer. It must be a plane asymptotic to $\gamma$ either in the symplectization of $Y$ or in the completion $\hat{X}$. Using this observation, properness of $\operatorname{ev}_{\hat{X}}$ follows from an argument analogous to the one given in the proof of Proposition \ref{prop:fast_planes_properness}.

As a consequence of the above discussion, we see that the evaluation map $\operatorname{ev}_{\hat{X}}$ has a well-defined degree for generic $J$. The sign of this degree depends on a choice of orientation of the moduli space $\MM_1(\hat{X},\gamma,J)$. Note that the orientation local system of $\MM_1(\hat{X},\gamma,J)$ is canonically isomorphic to the orientation line $\mathfrak{o}_\gamma$ of the Reeb orbit $\gamma$ (see Section \ref{subsec:fredholm_theory}). In order to avoid a choice of orientation, we can regard the degree of $\operatorname{ev}_{\hat{X}}$ as an element of $\mathfrak{o}_{\gamma}^*$, i.e.
\begin{equation*}
\operatorname{deg}(\operatorname{ev}_{\hat{X}}) \in \mathfrak{o}_\gamma^*.
\end{equation*}
Note that the absolute value $|\operatorname{deg}(\operatorname{ev}_{\hat{X}})|\in \Z_{\geq 0}$ does not depend on a choice of orientation. The degree of $\operatorname{ev}_{\hat{X}}$ is independent of the choice of generic $J$. This is a consequence of the fact that for a family of almost complex structures $(J_t)_{t\in [0,1]}$, the evaluation map
\begin{equation*}
\operatorname{ev}_{[0,1]\times \hat{X}} : \MM_1(\hat{X},\gamma,(J_t)_{t\in [0,1]}) \rightarrow [0,1]\times \hat{X} \quad (t,[u,z]) \mapsto (t,u(z))
\end{equation*}
is proper (cf. Propostion \ref{prop:degree_invariance}). Again, properness of $\operatorname{ev}_{[0,1]\times \hat{X}}$ relies on $\gamma$ being the Reeb orbit of minimal action.

Let us point out that, in the case $n=2$, we have $\operatorname{CZ}(\gamma)=3$, which implies that all pseudoholomorphic planes positively asymptotic to $\gamma$ are automatically fast. This means that in this case, the evaluation map \eqref{eq:eval_map_high_dim} is the same as the one discussed in Section \ref{sec:fast_planes}.

The goal of the present section is to prove the following result.

\begin{thm}
\label{count-planes}
We have $|\operatorname{deg}(\operatorname{ev}_{\hat{X}})| = 1$.
\end{thm}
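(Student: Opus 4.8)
The plan is to deduce Theorem \ref{count-planes} from Theorem \ref{convex-count} by a neck stretching argument, modeled on the comparison between symplectic homology and linearized contact homology of Bourgeois--Oancea \cite{bo09a}. Recall from Section \ref{conv-count-sec} that the Hamiltonian $K$ is autonomous and equal to $\eta H_X + \sigma$ outside a small neighborhood of the systole $\gamma$, that its constant orbit $z_0$ sits at the minimum of $K$ deep inside $X$, and that the two orbits $\gamma_\pm$ near $\gamma$ satisfy $\operatorname{CZ}(\gamma_-) = n+1 = \operatorname{CZ}(\gamma)$ and $\operatorname{CZ}(\gamma_+) = n+2$; it is $\gamma_-$ that is relevant, precisely because its Conley--Zehnder index matches that of $\gamma$. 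The goal is to show that, up to an overall sign fixed by the orientation conventions of Section \ref{subsec:fredholm_theory} and Appendices \ref{appA}--\ref{appB}, the signed count $\#\mathcal{M}_{\partial^F}(\gamma_-, z_0)$ equals the signed count of finite energy $J$-holomorphic planes in $\hat{X}$ positively asymptotic to $\gamma$ and passing through a prescribed generic point; since, by the discussion at the end of Section \ref{ns-stat}, the latter count has absolute value $|\operatorname{deg}(\operatorname{ev}_{\hat{X}})|$, the theorem then follows from $|\#\mathcal{M}_{\partial^F}(\gamma_-, z_0)| = 1$.

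Concretely, I would first identify $\R^{2n}$ symplectically with $\hat{X} = X \cup_{\partial X}([0,\infty)\times\partial X)$ and arrange, after a $C^0$-small modification localized near the minimum, that the point constraint carried by $z_0$ is a fixed generic point of $\hat{X}$. Next I would introduce a family of almost complex structures $(J_R)$ which insert a neck of length $R$ along (a hypersurface close to) $\partial X$ and agree there with a symplectization admissible structure; since $\gamma_-$, $\gamma$ and $z_0$ are fixed and the energy of every cylinder in $\mathcal{M}_{\partial^F}^{\#}(\gamma_-, z_0)$ is bounded by $\mathcal{A}_K(\gamma_-) - \mathcal{A}_K(z_0)$, the SFT compactness theorem yields, as $R\to\infty$, convergence to broken configurations. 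The crucial point is that any Reeb orbit arising as a matching asymptotic at the neck has action $\le \mathcal{A}(\gamma)$, hence (the perturbation being small and $\gamma$ simple by minimality) must be $\gamma$ itself; the index formula then forces the bottom level in $\hat{X}$ to be a single finite energy plane $u$ positively asymptotic to $\gamma$, the symplectization levels above it to be trivial cylinders over $\gamma$ modulo $\R$-translation together with the standard local cascade relating $\gamma_-$ to $\gamma$ coming from the perturbation of the Morse--Bott family of orbits of $\tilde{K}$, and the $s\to-\infty$ end tracking $z_0$ to impose exactly the point constraint on $u$. A gluing argument in the reverse direction reconstructs, for $R$ large, a unique Floer cylinder from $\gamma_-$ to $z_0$ from each such constrained plane, with the sign matched via the canonical isomorphisms $\mathfrak{o}_{\gamma_-}\cong\mathfrak{o}_\gamma$ and $\mathfrak{o}_{z_0}\cong\Z$. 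This gives the equality of counts; since when $n = 2$ this count is the degree of Section \ref{sec:fast_planes} and for general $n$ it is the count over a generic point away from the images of non-immersed planes, we conclude $|\operatorname{deg}(\operatorname{ev}_{\hat{X}})| = 1$.

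The hard part will be carrying out this neck stretching with Hamiltonian terms present precisely enough to pin down the \emph{signs}, rather than merely the absolute values. This requires ruling out all unwanted degenerations --- additional Reeb breakings (excluded by minimality of $\mathcal{A}(\gamma)$), sphere and plane bubbling (excluded since $\omega_0$ is exact on $\hat{X}$), and multiply covered limits (excluded since $\gamma$ is simple) --- verifying that each plane through the generic point contributes exactly $\pm1$ to the glued count, and reconciling the orientation conventions for Floer cylinders, symplectization curves, and curves in the completion. If the direct gluing becomes unwieldy, a cleaner route is to interpolate through a $1$-parameter family of data whose two ends are the Floer picture and the fully stretched picture and to run a cobordism argument, with properness of the parametrized evaluation map playing the role of Proposition \ref{prop:fast_planes_properness}.
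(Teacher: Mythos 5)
Your general idea is the right one — stretch the neck to transform the Floer count from Theorem~\ref{convex-count} into a count of pseudoholomorphic planes, à la Bourgeois--Oancea — but as written there is a genuine gap, and the paper's actual argument is substantially more involved to close it.

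The crux you gloss over is how the constant Hamiltonian orbit $z_0$ becomes a point constraint on a \emph{pseudoholomorphic} plane. If you stretch only along (a hypersurface near) $\partial X$, the bottom SFT level is still a hybrid Floer cylinder: the Hamiltonian $H$ has a nondegenerate minimum at $z_0$, so it is nonzero on a neighborhood $U_{z_0}$, and the bottom level retains the Hamiltonian equation there. It is therefore \emph{not} a finite energy $J$-holomorphic plane, and the phrase ``the $s\to-\infty$ end tracking $z_0$ to impose exactly the point constraint on $u$'' is not justified by any standard compactness or gluing step. Nor is it clear what ``a $C^0$-small modification localized near the minimum'' that turns a Floer asymptotic at a constant orbit into a marked-point constraint would be; these are different kinds of moduli problems (Floer cylinders vs.\ holomorphic curves with a marked point), and there is no soft deformation between them. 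Making this precise is exactly the technical content of the proof of Theorem~\ref{count-planes}.

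The paper resolves this by a two-stage construction you do not have. First, Section~\ref{neck_stretching_H} replaces $K$ by a Hamiltonian $H$ vanishing on $X\setminus U_{z_0}$ (the paper explicitly notes ``$K$ is not suitable for neck stretching'') and proves $|\#\MM_{\partial^F}(\gamma_-,z_0;H,J)|=1$ by a continuation argument, using $K\leq H$ and $K(z_0)=H(z_0)$. Second, it stretches along \emph{two} hypersurfaces: $\partial X$ and the boundary $F$ of a small irrational ellipsoid $E$ with $\overline{U}_{z_0}\subset\operatorname{int}(E)\subset E\subset\operatorname{int}(X)$. The Floer cylinder then splits into three levels (Lemmas~\ref{lem:neckstretching_compactness}--\ref{lem:gluing1}): a hybrid cylinder in $\hat N$ from $\gamma_-$ to $\gamma$, a genuinely pseudoholomorphic cylinder in $\hat M$ from $\gamma$ to the ellipsoid systole $\gamma_E$, and a hybrid cylinder in $\hat E$ from $\gamma_E$ to $z_0$; this yields $|\#\MM(\hat M,\gamma,\gamma_E,J^M)|=1$ (Corollary~\ref{cor:cylinder_count}). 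The connection to $\operatorname{deg}(\operatorname{ev}_{\hat X})$ is then made by a \emph{second} neck stretch (along $F$) of planes in $\hat X$ through a point $p\in E$, together with the separate Lemma~\ref{lem:deg_ev_E}, which computes $|\operatorname{deg}(\operatorname{ev}_{\hat E})|=1$ by embedding $E$ into $\C\mathrm P^n$ and using the count of lines through two points. Note that the paper never directly equates the Floer count and the plane count; both are expressed via the common factor $\#\MM(\hat M,\gamma,\gamma_E)$. Your single-stretch argument skips all of the machinery that isolates and disposes of the Hamiltonian term near $z_0$, and it is precisely this machinery that does the real work.
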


The proof of Theorem \ref{count-planes} will occupy the rest of this section. We sketch an outline below.\\[-1ex]

\noindent{\sc Step 1.} The key idea of the proof is to transform Floer cylinders of the Hamiltonian $K$ we studied in Theorem \ref{convex-count} into pseudoholomorphic planes using the neck stretching technique. However, $K$ is not suitable for neck stretching, so we consider another Hamiltonian  $H$. The Hamiltonian vector field $X_H$ of $H$ has $\gamma_-$ and $z_0$ as nondegenerate 1-periodic orbits. Here, $z_0$ is the constant orbit mapping to the origin of $\R^{2n}$, and $\gamma_-$ is the orbit with $\mathrm{CZ}(\gamma_-)=n+1$ that splits off from $\gamma$ as in Section \ref{conv-count-sec}. The Hamiltonian $H$ is zero on $X\setminus U_{z_0}$, where $U_{z_0}$ is a neighborhood of $z_0$, so that Floer cylinders of $H$ are pseudoholomorphic on this region. In Proposition \ref{prop:floer_count_1}, we prove that the absolute value of the algebraic count of Floer cylinders of $H$ descending from $\gamma_-$ to $z_0$ is equal to $1$. This follows from Theorem \ref{convex-count} and a continuation argument. This is the content of Section \ref{neck_stretching_H}. \\[-1ex]

\noindent{\sc Step 2.} We take an irrational ellipsoid $E$ such that $\overline{U}_{z_0}\subset \mathrm{int}(E)$ and $E\subset \mathrm{int}(X)$. We show that if one stretches the neck along the hypersurfaces $F:=\partial E$ and $Y=\partial X$, any Floer cylinder of $H$ descending from $\gamma_-$ to $z_0$ as in Step 1 breaks into three pieces. The middle piece is a pseudoholomorphic cylinder in the symplectic completion $\hat{M}$ of the Liouville cobordism $M:=X\setminus\mathrm{int}(E)$ between $Y$ and $F$. This  cylinder is positively asymptotic to $\gamma$ and negatively asymptotic to the minimal action orbit $\gamma_E$ on $F$. A gluing theorem implies that the absolute value of the algebraic count of such pseudoholomorphic cylinders is equal to $1$. We introduce and investigate the relevant moduli spaces in Sections \ref{top}, \ref{middle}, and \ref{lowest}. The gluing result is discussed in Section \ref{neck-stretching1}.\\[-1ex]

\noindent{\sc Step 3.} Let $\hat{E}$ be the symplectic completion of $E$. For any  point $z\in\hat{E}$, the absolute value of the algebraic count of pseudoholomorphic planes in $\hat{E}$ passing through $z$ and asymptotic to $\gamma_E$ is equal to $1$. By gluing such a plane and a cylinder in $\hat{M}$ as in Step 2, we obtain a pseudoholomorphic plane in the symplectic completion $\hat{X}$ passing through $z$ and asymptotic to $\gamma$. A gluing theorem implies that the absolute value of the algebraic count of such planes is equal to $1$, which proves Theorem \ref{count-planes}. This is carried out in Section \ref{neck-stretching2}.

\subsection{Floer cylinders}
\label{neck_stretching_H}

Let $\eta>\MA(\gamma)=\MA_{\min}(X)$ be a constant such that $\gamma$ is the unique periodic orbit of $\lambda$ with action smaller than or equal to $\eta$. We fix a constant $a\in(1,2)$  sufficiently close to $1$ and consider a smooth function $h:[0,\infty)\to\R$ satisfying 
\begin{equation} 
\label{eq:h}
\begin{split}
&h=0 \mbox{ on a neighborhood of } [0,1], \\
h''\geq 0, &\quad h'(a)=\MA(\gamma),\quad h''(a)>0,\quad   h'|_{[2,+\infty)}=\eta.
\end{split}
\end{equation} 
Then the non-constant $1$-periodic orbits of the Hamiltonian flow of $h\circ H_X$ are precisely given by $\sqrt{a}\gamma(\MA(\gamma)(t+t_0))$ for $t_0 \in \T$, where as before $H_X$ is the positively 2-homogeneous function on $\R^{2n}$ taking the value 1 on $\partial X$. We modify $h\circ H_X$ in a small neighborhood $U_{z_0}$ of the origin $z_0$ by adding a non-positive $C^2$-small perturbation such that the resulting Hamiltonian $\tilde{H}\in C^\infty(\R^{2n})$ has the property that the origin $z_0$ is the unique minimum point of $\tilde{H}$, satisfies $\tilde{H}(z_0)<0$, and is nondegenerate as a 1-periodic orbit of $X_{\tilde{H}}$. Moreover, we can arrange the perturbation so that no new non-constant $1$-periodic orbits are created and $z_0$ is the only constant $1$-periodic orbit of $\tilde{H}$ outside of $\tilde{H}^{-1}(0)$.

We further perturb $\tilde{H}$ to $H\in C^\infty(\T\times \R^{2n})$ in a small neighborhood of $\sqrt{a}\gamma$ as in  Section \ref{conv-count-sec} so that $X_H$ has precisely two non-constant 1-periodic orbits $\gamma_-$ and $\gamma_+$, both of which are nondegenerate and moreover satisfy $\mathrm{CZ}(\gamma_-)=n+1$, $\mathrm{CZ}(\gamma_+)=n+2$, and $\MA_H(\gamma_-)<\MA_H(\gamma_+)$. Note that the 1-periodic orbits of $X_H$ are  $z_0$, $\gamma_-$, $\gamma_+$, and the constant orbits in the region $H^{-1}(0)$. The first three are nondegenerate and the action $\MA_H$ takes positive values on them, while the constant orbits in $H^{-1}(0)$ are degenerate and have vanishing action. Note that, by the convexity of $h$,
\[
\MA_{h\circ H_X}(\sqrt{a}\gamma)= a \mathcal{A}(\gamma) - h(a) > a \mathcal{A}(\gamma) - h'(a)(a-1) = \MA(\gamma),
\]
and that, by taking $a\in(1,2)$ close to $1$, we can make $\MA_{h\circ H_X}(\sqrt{a}\gamma)$ arbitrarily close to $\MA(\gamma)$, and in turn $\MA_H(\gamma_-)$  and $\MA_H(\gamma_+)$ also arbitrarily close to $\MA(\gamma)$. In particular, we can arrange that $\gamma$ is the only periodic Reeb orbit of $\lambda$ with action smaller than $\MA_H(\gamma_-)$.

Consider a compatible time-periodic almost complex structure $J$ on $\R^{2n}$ as in Section \ref{flosubsec}. As defined in \eqref{modfloer}, we let ${\mathcal{M}}_{\partial^F}^\#(\gamma_-,z_0;H,J)$ denote the moduli space of Floer cylinders of $H$ which are positively asymptotic to $\gamma_-$ and negatively asymptotic to $z_0$. Moreover, we let
\[
{\mathcal{M}}_{\partial^F}(\gamma_-,z_0;H,J):={\mathcal{M}}_{\partial^F}^\#(\gamma_-,z_0;H,J)/\R,
\]
denote the quotient space, where the $\R$-action is given by $(\sigma,u)\mapsto u(\sigma+\cdot,\cdot)$. The virtual dimension of this quotient space is given by $\operatorname{CZ}(\gamma_-)-\operatorname{CZ}(z_0)-1 = 0$. As in Section \ref{flosubsec}, we have a well-defined algebraic count $\#\MM_{\partial^F}(\gamma_-,z_0;H,J) \in \mathfrak{o}_{\gamma_-}^*\otimes \mathfrak{o}_{z_0}$ for generic $J$. The goal of this subsection is to show the following:

\begin{prop}
\label{prop:floer_count_1}
For generic $J$, we have $|\#{\mathcal{M}}_{\partial^F}(\gamma_-,z_0;H,J)|=1$.
\end{prop}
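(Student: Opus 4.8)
The plan is to deduce the count for $H$ from the count for $K$ obtained in Theorem \ref{convex-count} by a continuation argument, the only genuine issue being the large degenerate family of constant $1$-periodic orbits of $X_H$ contained in $H^{-1}(0)$. The key point is an action separation: every orbit in that family has action $0$, whereas $z_0,\gamma_-,\gamma_+$ have positive action (indeed $\MA_H(z_0)=-\tilde H(z_0)>0$, and $\MA_H(\gamma_\pm)$ lies slightly above $\MA(\gamma)$). Fix $\epsilon>0$ below the positive actions of the constant orbit at the origin for both $K$ and $H$, and $\eta'<\eta$ above the actions of $\gamma_+$ for both. Since the action is strictly decreasing along nonconstant Floer cylinders and bubbling is excluded by exactness of $\omega_0$, any broken limit of a sequence in $\MM^\#_{\partial^F}(\gamma_-,z_0;H,J)$ has all of its asymptotic orbits of action in $[\MA_H(z_0),\MA_H(\gamma_-)]\subset(\epsilon,\eta')$; the only $1$-periodic orbits of $X_H$ in that action range are $z_0,\gamma_-,\gamma_+$, and $\gamma_+$ is excluded for index reasons, so no breaking occurs. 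Combined with the a priori $C^0$-bound coming from linear growth and non-resonance at infinity (\cite[Prop.\ 3.1]{ak22}), this shows that $\MM_{\partial^F}(\gamma_-,z_0;H,J)$ is a compact $0$-manifold for generic $J$; write $c:=\#\MM_{\partial^F}(\gamma_-,z_0;H,J)\in\mathfrak{o}_{\gamma_-}^*\otimes\mathfrak{o}_{z_0}$, so the claim is $|c|=1$. The same no-breaking argument applied to the relevant $1$-dimensional moduli spaces shows that $z_0,\gamma_-,\gamma_+$, placed in degrees $n,n+1,n+2$ and equipped with the Floer differential counting cylinders between them, form a genuine chain complex $F^{(\epsilon,\eta')}_*(H)$, with $\partial^F\gamma_-=c\,z_0$ and (for degree reasons) $\partial^F z_0=0$, $\partial^F\gamma_+\in\mathfrak{o}_{\gamma_-}$. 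In particular $H_n\big(F^{(\epsilon,\eta')}(H)\big)=\operatorname{coker}\big(c\colon\mathfrak{o}_{\gamma_-}\to\mathfrak{o}_{z_0}\big)\cong\Z/|c|\Z$.

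The second step computes this homology via continuation. I would join $K$ to $H$ by a smooth homotopy $(H_\tau)_{\tau\in[0,1]}$, chosen (by selecting the convex profiles $k,h$ and the local perturbations with some foresight, and adding a constant to $K$ if needed) so that $H_0=K\ge H=H_1$ pointwise and each $H_\tau$ equals $\eta H_X+\mathrm{const}$ outside a fixed compact set. Then $\partial_\tau H_\tau\le0$, so continuation cylinders have nondecreasing action, non-resonance at infinity holds uniformly and yields uniform $C^0$-bounds, and exactness of $\omega_0$ rules out bubbling. Arguing exactly as in the first step, monotonicity forces every continuation cylinder, and every element of the $1$-parameter families used to compare two such homotopies, that has asymptotics of positive action to have all its asymptotic orbits of action in $(\epsilon,\eta')$, so the action-$0$ locus of $X_H$ never enters the Gromov--Floer compactifications. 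Hence the continuation maps restrict to chain maps between $F^{(\epsilon,\eta')}_*(K)$ and $F^{(\epsilon,\eta')}_*(H)$ whose composites are chain homotopic to the identities, so the two complexes have isomorphic homology. Since all $1$-periodic orbits of $X_K$ already have action in $(\epsilon,\eta')$, $F^{(\epsilon,\eta')}_*(K)=F_*(K)$; and $H_n(F_*(K))=\operatorname{coker}\big(\partial^F\colon F_{n+1}(K)\to F_n(K)\big)=0$ because $|\#\MM_{\partial^F}(\gamma_-,z_0;K,J)|=1$ by Theorem \ref{convex-count}.

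Combining the two steps gives $\Z/|c|\Z\cong H_n\big(F^{(\epsilon,\eta')}(H)\big)\cong H_n(F_*(K))=0$, hence $|c|=1$, which is the proposition. The step I expect to require the most care is the continuation argument, where one must verify that the degenerate family $H^{-1}(0)$ plays no role: this comes down to choosing the homotopy monotone and with uniform control at infinity, and then repeating the no-breaking analysis of the first step for continuation trajectories and for homotopies of homotopies, so that all relevant compactifications involve only the orbits $z_0,\gamma_-,\gamma_+$. A technically lighter variant, if one prefers, is to first replace $H$ by a genuinely nondegenerate Hamiltonian $H'$ agreeing with $H$ near $z_0,\gamma_\pm$ and outside a small neighbourhood of $H^{-1}(0)$, all of whose extra $1$-periodic orbits have action $<\epsilon$; then $F^{(\epsilon,\eta')}_*(H')=F^{(\epsilon,\eta')}_*(H)$ and the continuation argument runs in the standard nondegenerate framework. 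Everything else is routine Floer theory.
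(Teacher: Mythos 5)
Your strategy is the same as the paper's at the level of ideas: isolate $z_0,\gamma_-,\gamma_+$ in a positive action window and import the count for $K$ from Theorem~\ref{convex-count} via continuation. Step~1 is correct and is also what the paper does (with the window $(0,\infty)$). The gap is in Step~2, and it comes from your choice $K\ge H$ and the quasi-isomorphism route.

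The paper chooses $K$ so that $K\le H$ \emph{and} $K(z_0)=H(z_0)$. The monotone continuation then goes $\mathfrak c\colon F_*^{(0,\infty)}(K)\to F_*^{(0,\infty)}(H)$ and, because $\mathcal A_K(z_0)=\mathcal A_H(z_0)$, the energy identity forces $E(u)=0$ for every $u\in\mathcal M_{\mathfrak c}(z_0,z_0)$, so the constant cylinder is the only element and $\mathfrak c_n$ is an isomorphism. Combined with the chain map property and $|\#\mathcal M_{\partial^F}(\gamma_-,z_0;K,J)|=1$, this already puts $z_0$ in the image of $\partial^F_H$; no inverse continuation and no chain homotopies are needed. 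Your choice $K\ge H$ (after adding a constant to $K$) destroys the equality $K(z_0)=H(z_0)$, so this direct energy argument is unavailable, and the monotone continuation ends up going the ``wrong'' way, $F^{(\epsilon,\eta')}_*(H)\to F_*(K)$. You therefore fall back on asserting that the roundtrips are chain homotopic to the identity on the action-windowed complexes, but this does not follow from the stated monotonicity. The reverse continuation from $K$ to $H$ uses a non-monotone homotopy ($\partial_sG_s\ge0$), and then the energy identity $E(u)=\mathcal A_K(x_+)-\mathcal A_H(x_-)+\int\partial_sG_s$ no longer confines $\mathcal A_H(x_-)$ to $(\epsilon,\eta')$; these cylinders can end on the $0$-action constant orbits of $X_H$. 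The chain homotopy $h'$ witnessing $\mathfrak c_2\mathfrak c_1\simeq\operatorname{id}$ on $F_*(H)$ is built from a $1$-parameter family of homotopies whose $s$-derivative has both signs, so $h'$ neither preserves $F^{<\eta'}_*(H)$ nor $F^{\le\epsilon}_*(H)$ and hence does not descend to $F^{(\epsilon,\eta')}_*(H)$. So ``the two complexes have isomorphic homology'' is not established by these means, and the same objection applies to the ``technically lighter variant'' with a nondegenerate $H'$. The clean repair is simply to choose $K\le H$ with $K(z_0)=H(z_0)$ and run the one-directional argument, which is what the paper does.
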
 

\begin{proof}
Fix a generic $J$. Since $H$ has only finitely many $1$-periodic orbits of positive action, all of which are nondegenerate, we have a well-defined Floer complex $F_*^{(0,\infty)}(H)$ in the positive action window $(0,\infty)$. This complex is supported in degrees $n$, $n+1$ and $n+2$ and we have
\[
F_n^{(0,\infty)}(H)=\mathfrak{o}_{z_0},\quad F_{n+1}^{(0,\infty)}(H)=\mathfrak{o}_{\gamma_{-}},\quad F_{n+2}^{(0,\infty)}(H)=\mathfrak{o}_{\gamma_{+}}.
\]
The statement that $|\#{\mathcal{M}}_{\partial^F}(\gamma_-,z_0;H,J)|=1$ is equivalent to saying that $F_n^{(0,\infty)}(H)$ is contained in the image of the boundary operator.

Let us choose a Hamiltonian $K\in C^\infty(\T\times \R^{2n})$ for which properties \eqref{item:floer_descending_from_systole_first}-\eqref{item:floer_descending_from_systole_last} in Section \ref{conv-count-sec} hold and which in addition satisfies:
\begin{itemize}
\item $K\leq H$,
\item $K(z_0) = H(z_0)$.
\end{itemize}
The condition $K\leq H$ allows for the construction of a continuation chain homomorphism
\begin{equation*}
\mathfrak{c} : F_*^{(0,\infty)}(K)\rightarrow F_*^{(0,\infty)}(H)
\end{equation*}
perserving degree and action filtration, as we recall below. It follows from Theorem \ref{convex-count} that $F_n^{(0,\infty)}(K)=\mathfrak{o}_{z_0}$ is contained in the image of the Floer differential of the complex $F_*^{(0,\infty)}(K)$. Our goal is to show that $\mathfrak{c}$ restricts to an isomorphism
\begin{equation*}
\mathfrak{c}_n: F_n^{(0,\infty)}(K) \rightarrow F_n^{(0,\infty)}(H).
\end{equation*}
Since $\mathfrak{c}$ is a chain homomorphism, this would imply that $F_n^{(0,\infty)}(H)$ is contained in the image of the Floer differential of the complex $F_*^{(0,\infty)}(H)$ as desired.

The chain homomorphism $\mathfrak{c}$ is constructed as follows. We interpolate between $K$ and $H$ via a homotopy $G_s\in C^\infty(\T\times \R^{2n})$ smoothly parametrized by $s\in \R$ such that
\begin{itemize}
\item $\partial_sG_s \leq 0$,
\item $G_s=H$ for $s\leq 0$ and $G_s=K$ for $s\geq 0$,
\item $X_{G_s}$ has linear growth with constants uniform in $s$.
\end{itemize}
Moreover, we choose a uniformly bounded homotopy of almost complex structures $(J_s)_{s\in \R}$ starting and ending at $J$. For every pair of 1-periodic orbits $(x_+,x_-)\in\mathcal{P}(K)\times \mathcal{P}(H)$ of positive action, consider the space of continuation Floer cylinders
\[
\begin{split}
\mathcal{M}_{\mathfrak{c}}(x_+,x_-) := \bigl\{ u\in C^{\infty}(\R\times \T , \R^{2n}) \mid  & \; \partial_s u + J_s(t,u)(\partial_t u - X_{G_s}(t,u)) = 0 , \\ & \lim_{s\rightarrow \pm \infty} u(s,\cdot) = x_{\pm} \bigr\}.
\end{split}
\]
The condition $\partial_sG_s\leq 0$ yields the energy inequality 
\begin{equation}
\label{energy_ineq}
0 \leq E(u)=\int_{\R\times\T}|\partial_su|_{J_{s,t}}^2\,ds\,dt\leq \MA_K(x_+)-\MA_H(x_-)
\end{equation}
for every $u\in\mathcal{M}_{\mathfrak{c}}(x_+,x_-)$. The linear growth of $X_{G_s}$ and the non-resonance of $K$ and $H$ at infinity imply a uniform $C^0$-bound for cylinders in $\MM_{\mathfrak{c}}(x_+,x_-)$ via a similar argument as in \cite[Proposition 3.1]{ak22}. We conclude that the moduli spaces of continuation Floer cylinders are compact up to breaking into continuation Floer cylinders and ordinary Floer cylinders of $K$ and $H$. Since $x_+$ and $x_-$ are assumed to have positive action, it follows from the energy inequality \eqref{energy_ineq} that breaking has to occur at positive action orbits.

If the homotopy $J_s$ is chosen generically, the linearized operator $D_u$ at $u\in \mathcal{M}_{\mathfrak{c}}(x_+,x_-)$ is surjective, and hence $\MM_{\mathfrak{c}}(x_+,x_-)$ is a smooth manifold of dimension $\mathrm{CZ}(x_+)-\mathrm{CZ}(x_-)$. Assume that $\mathrm{CZ}(x_+)=\mathrm{CZ}(x_-)$. As in the definition of the Floer complex in Section \ref{flosubsec}, we have a canonical count
\begin{equation*}
\#\MM_\mathfrak{c}(x_+,x_-)\in \mathfrak{o}_{x_+}^*\otimes \mathfrak{o}_{x_-}\cong \operatorname{Hom}_\Z(\mathfrak{o}_{x_+},\mathfrak{o}_{x_-}).
\end{equation*}
We define
\[
\mathfrak{c}:F_*^{(0,\infty)}(K)\rightarrow F_*^{(0,\infty)}(H),
\qquad 
\mathfrak{c}:=\bigoplus_{(x_+,x_-)} \#\mathcal{M}_{\mathfrak{c}}(x_+,x_-),
\]
where the direct sum ranges over all pairs $(x_+,x_-)\in\mathcal{P}(K)\times \mathcal{P}(H)$ with positive action and satisfying $\mathrm{CZ}(x_+)=\mathrm{CZ}(x_-)$. By the above discussion, this is a well-defined chain map.

The constant cylinder mapping to $z_0$ belongs to $\MM_{\mathfrak{c}}(z_0,z_0)$. Since $\MA_H(z_0)=-H(z_0)=-K(z_0)=\MA_K(z_0)$, the constant cylinder is the only element of $\MM_{\mathfrak{c}}(z_0,z_0)$ by the energy inequality \eqref{energy_ineq}. This implies that $\mathfrak{c}_n$ is an isomorphism.
\end{proof}

\subsection{Hybrid Floer cylinders in the top level}
\label{top}
Consider the strong symplectic cobordism $N:=2X\setminus \operatorname{int}(X)$ between $2Y$ and $Y$ and let $\hat{N}$ denote its symplectic completion. We define a smooth Hamiltonian $H^N$ by 
\[
H^N:\T\times\hat{N}\to\R\qquad  H^N:=
\begin{cases}
0 & \text{on }N^-, \\
H & \text{on }\hat{N}\setminus N^-.
\end{cases}
\] 
Here, $H$ is the Hamiltonian constructed in Section \ref{neck_stretching_H} and we identify $\hat{N}$ with $\R^{2n}\setminus \left\{ 0 \right\}$ via the flow induced by the radial Liouville vector field on $\R^{2n}\setminus\left\{ 0 \right\}$. The 1-periodic orbits $\gamma_-$ and $\gamma_+$ of $X_H$ are contained in $N$ and are therefore also $1$-periodic orbits of $X_{H^N}$. In fact, they are the only non-constant $1$-periodic orbits of $X_{H^N}$.

We call a smooth $\T$-family $J=(J_t)_{t\in\T}$ of almost complex structures on $\hat{N}$ {\it admissible} if each $J_t$ is an admissible almost complex structure on $\hat{N}$ in the sense introduced in Section \ref{subsec:setup_pseudoholomorphic_curves} and the restriction of $J$ to the cylindrical ends $N^\pm$ of $\hat{N}$ is independent of $t$. Let $J$ be such an admissible family. We consider Floer cylinders 
\begin{equation}
\label{hybrid_cylinder}
u:\R\times \mathbb{T}\rightarrow \hat{N},\qquad \partial_su + J(t,u) (\partial_tu - X_{ H^N}(t,u)) = 0.
\end{equation}
Note that the intersection of such a Floer cylinder with the negative cylindrical end $N^-$ is pseudoholomorphic. We are interested in Floer cylinders which, as $s\rightarrow +\infty$, converge to the Hamiltonian periodic orbit $\gamma_-$ and, as $s\rightarrow -\infty$, are negatively asymptotic to the Reeb orbit $\gamma$. More percisely, this means that $u( (-\infty,-s_0)\times \T)\subset N^-$ for $s_0\gg 0$ sufficiently large, which allows us to write $u$ in components
\begin{equation*}
u|_{(-\infty,-s_0)\times \T} =(a,v):(-\infty,-s_0)\times \mathbb{T} \longrightarrow N^-=(-\infty,0]\times Y,
\end{equation*}
and that the following asymptotic conditions hold:
\begin{equation}
\label{hybrid_cylinder_asymp_top}
\begin{split}
&\lim_{s\rightarrow +\infty}u(s,t) =  \gamma_-(t),\\
&\lim_{s\rightarrow -\infty} v(s,t) =  \gamma(\mathcal{A}(\gamma)(t + t_0))  \;\mbox{ for some }\; t_0\in\T,\qquad 
\lim_{s\rightarrow -\infty} a(s,t)=-\infty.
\end{split}
\end{equation}
We define moduli spaces of such hybrid Floer cylinders by
\begin{align*}
\MM^\#(\hat{N},\gamma_-,\gamma, H^N,J) &\coloneqq
\left\{ u:\R\times \mathbb{T}\rightarrow \hat{N} \mid \text{$u$ satisfies \eqref{hybrid_cylinder} and \eqref{hybrid_cylinder_asymp_top}} \right\},\\[1ex]
\MM(\hat{N},\gamma_-,\gamma, H^N,J) &\coloneqq \MM^\#(\hat{N}, \gamma_-, \gamma, H^N,J)/\R,
\end{align*}	
where the $\R$-action is given by $(\sigma,u)\mapsto u(\sigma+\cdot,\cdot)$. Note that the image of any $u\in \MM^\#(\hat{N}, \gamma_-, \gamma, H^N,J)$ is contained in $\hat{N}\setminus N^+$ by a maximum principle (see e.g.\ \cite[Lemma 1.8]{vit99}).

\begin{lem}
\label{lem:compactness_mixed_partialX}
For every admissible $\T$-family of almost complex structures $J=(J_t)_t$ on $\hat{N}$, the moduli space $\MM(\hat{N},\gamma_-,\gamma,H^N,J)$ is compact.
\end{lem}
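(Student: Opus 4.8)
\textbf{Proof plan for Lemma \ref{lem:compactness_mixed_partialX}.}
The strategy is to apply the SFT/Floer compactness machinery to a sequence $(u_k)_k \subset \MM(\hat{N},\gamma_-,\gamma,H^N,J)$ and to rule out, one by one, every degeneration that would take the limit outside the moduli space. First I would establish the relevant a priori bounds. The Floer energy of the $H$-part and the Hofer energy of the pseudoholomorphic part on $N^-$ are controlled by the action difference $\MA_H(\gamma_-) - \MA(\gamma)$, which is fixed; this gives a uniform energy bound. For the $C^0$-bound, the crucial input is the maximum principle already noted in the excerpt (via \cite[Lemma 1.8]{vit99}): every $u\in\MM^\#(\hat{N},\gamma_-,\gamma,H^N,J)$ has image in $\hat{N}\setminus N^+$, so the curves cannot escape into the positive cylindrical end. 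On the negative end the asymptotic condition $a(s,t)\to-\infty$ together with the action bound confines the curves, as in the proof of Proposition \ref{prop:fast_planes_properness}. Hence no energy or $C^0$ blow-up occurs and the compactness theorem for Floer cylinders in symplectic cobordisms (a combination of \cite{flo88a} and \cite{BEHWZ03}) applies.

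Next I would analyze the possible limiting buildings. A subsequence of $(u_k)_k$ converges to a pseudoholomorphic/Floer building consisting of: a main Floer-cylinder level in $\hat{N}$ (where the Hamiltonian term is present), possibly several Floer-cylinder levels in the symplectization $\R\times \R^{2n}$ of the trivial cobordism at the top/bottom where $H^N$ is autonomous — but since $H^N$ is time-\emph{dependent} only near $\gamma_-$ and these orbits are in the compact part, such extra levels would be Floer cylinders of the autonomous pieces — and SFT-type levels in the symplectization $\R\times Y$ of the concave end. The key point is that the only non-constant $1$-periodic orbits of $X_{H^N}$ are $\gamma_-$ and $\gamma_+$, with $\MA_H(\gamma_+) > \MA_H(\gamma_-)$, so by the energy inequality no intermediate Floer breaking at $\gamma_+$ is possible when descending from $\gamma_-$; and breaking at a constant orbit of $H^{-1}(0)$ (action zero) is excluded because the remaining piece asymptotic to $\gamma$ at $-\infty$ would have negative energy. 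For the SFT breaking at the negative end into $\R\times Y$: here I would invoke that $\gamma$ is the Reeb orbit of smallest action among all orbits of action $\le \MA_H(\gamma_-)$ (arranged in Section \ref{neck_stretching_H}), so any broken-off curve in $\R\times Y$ would be a cylinder from $\gamma$ to $\gamma$, hence a trivial cylinder by the action/Stokes argument, which does not constitute a genuine level. Therefore the building has a single level, a hybrid Floer cylinder in $\MM^\#(\hat{N},\gamma_-,\gamma,H^N,J)$, and passing to the quotient by the $\R$-action gives convergence in $\MM(\hat{N},\gamma_-,\gamma,H^N,J)$.

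Finally I would confirm that the quotient by $\R$ does not create a non-compactness of its own: the only way this could fail is if a sequence escapes to the ends of the $\R$-orbit, i.e.\ if the reparametrization parameters $\sigma_k\to\pm\infty$, which is precisely the Floer-breaking scenario already excluded above. Thus $\MM(\hat{N},\gamma_-,\gamma,H^N,J)$ is sequentially compact, and since it is metrizable, compact.

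\textbf{Main obstacle.} The delicate step is the case analysis of the limiting building at the concave end $Y$: one must be sure that \emph{every} SFT sublevel that breaks off is a cover of the trivial cylinder over $\gamma$, using minimality of $\MA(\gamma)$ and the fact that the $H$-perturbation keeps $\MA_H(\gamma_-)$ arbitrarily close to $\MA(\gamma)$ so that no other Reeb orbit can appear as an intermediate asymptote. A secondary subtlety is making the maximum principle argument fully rigorous at the interface between the autonomous region $N^-$ (where $H^N=0$, so the equation is genuinely pseudoholomorphic and \cite[Lemma 1.8]{vit99} applies directly) and the region where $H^N=H\not\equiv0$; one needs that $H$ vanishes on a neighborhood of $Y$ inside $N$ so that the maximum principle is valid on a collar, which is guaranteed by the construction of $H$ in Section \ref{neck_stretching_H} (it is zero on $X\setminus U_{z_0}$).
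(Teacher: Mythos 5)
Your overall scheme --- control the energy by the fixed action drop $\MA_H(\gamma_-) - \MA(\gamma)$, confine the curves via the maximum principle, and then rule out each mode of degeneration one by one --- is the same as the paper's, and your breaking analysis is essentially correct: you exclude Floer breaking using the absence of Hamiltonian $1$-periodic orbits of $H^N$ with action in $(\MA(\gamma),\MA_H(\gamma_-))$ and exclude SFT breaking at the concave end using the fact, arranged in Section~\ref{neck_stretching_H}, that $\gamma$ is the only Reeb orbit of $Y$ with action at most $\MA_H(\gamma_-)$.

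The genuine gap is that you never address \emph{bubbling}, which is a separate degeneration from Floer or SFT breaking: the derivatives $|du_k(s_k,t_k)|$ can blow up at interior points with bounded energy and, after rescaling, a tree of punctured finite-energy spheres is produced. Inside $\hat N\setminus N^+$ this is impossible by exactness, but inside the symplectization $\R\times Y$ attached at the concave end such bubbles are not ruled out a priori by your argument, because they need not be asymptotic at $-\infty$ to $\gamma$: a bubble tree has at least one positive Reeb asymptote of possibly large action and is not a cylinder from $\gamma$ to $\gamma$. The paper closes this with a quantitative estimate absent from your proposal: a non-constant bubble tree in $\R\times Y$ has $d\lambda$-energy at least $\MA(\gamma)$ (the minimal Reeb period on $Y$), while the total energy available is $\MA_H(\gamma_-) - \MA(\gamma)$, which is arranged, by taking $a$ close to $1$ in the construction of $H$, to be \emph{strictly less than} $\MA(\gamma)$. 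You invoke the closeness of $\MA_H(\gamma_-)$ to $\MA(\gamma)$ only to exclude intermediate Reeb asymptotes, but you do not extract from it the stronger inequality $\MA_H(\gamma_-) - \MA(\gamma) < \MA(\gamma)$ and apply it to bubbling. Without that step the compactness claim is not established: a priori a bubble could carry away a large chunk of Reeb action in the concave end. Once you add this bubbling estimate, your proposal coincides with the paper's proof.
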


\begin{proof}
There are two types of degenerations of hybrid Floer cylinders that could potentially result in non-compactness of the moduli space: bubbling and breaking. Our strategy is to rule out both of them via energy considerations.

First, we introduce an appropriate notion of energy of hybrid Floer cylinders. Recall that $N^-=(-\infty,0]\times Y$ is attached to the collar neighborhood $[0,\epsilon)\times Y$ of the concave boundary of $N$. We can therefore regard $(-\infty,\epsilon)\times Y$ as a subset of $\hat{N}$. Let us fix a smooth function $\varphi:(-\infty,\epsilon)\to(1,\infty)$ such that $\varphi'> 0$ everywhere, $\varphi(s)=e^s$ for $s\in(\epsilon/2,\epsilon)$, and $\lim_{s\rightarrow -\infty} \varphi(s) = 1$. We define a smooth symplectic form $\omega_\varphi$ on $\hat{N}$ by 
\begin{equation*}
\omega_{\varphi}:=d\lambda_\varphi,\qquad \lambda_{\varphi}:=
\begin{cases}
\varphi\lambda & \textrm{ on } (-\infty,\epsilon)\times Y,\\
\lambda_0 & \textrm{ on } \hat{N}\setminus ((-\infty,\epsilon)\times Y).
\end{cases}
\end{equation*}
Here we recall that $\lambda_0$ is the standard radial Liouville $1$-form on $\R^{2n}$, that $\lambda$ is the restriction of $\lambda_0$ to $Y$, and that $\hat{N}$ is identified with $\R^{2n}\setminus \left\{ 0 \right\}$ via the flow of the radial Liouville vector field. We assume that $\epsilon>0$ is chosen sufficiently small so that $H^N=0$ on $(-\infty,\epsilon)\times Y$. Then the Hamiltonian vector field $X_{H^N}$ of $H^N$ with respect to the original symplectic form $d\lambda_0$ on $\hat{N}$ agrees with the Hamiltonian vector field of $H^N$ with respect to $\omega_\varphi$.

Let $J$ be an admissible $\T$-family of almost complex structures on $\hat{N}$. Note that $J$ is $\omega_\varphi$-compatible. Let $u\in \MM^\#(\hat{N},\gamma_-,\gamma,H^N,J)$ be a hybrid Floer cylinder. Then the energy of $u$ is given by  
\begin{equation*}
E(u) \coloneqq \int_{\R\times\T} \bigl( u^*\omega_{\varphi}-u^*dH^N\wedge dt \bigr) = \int_{\R\times\T}\omega_{\varphi}(\partial_su,J_t(u)\partial_su)\,ds\,dt \geq 0,
\end{equation*} 
where the equality between the second and the third term follows from the Floer equation \eqref{hybrid_cylinder}.  Consider the action functional 
\begin{equation*}
\MA_{H^N,\varphi}: C^\infty(\T,\hat{N})\to\R,\qquad \MA_{H^N,\varphi}(x)= \int_\T x^*\lambda_\varphi-\int_\T H^N(t,x(t))\,dt.
\end{equation*}
The 1-periodic orbits of $X_{H^N}$ are precisely the critical points of $\MA_{H^N,\varphi}$. The hybrid Floer cylinder $u$ is a positive gradient flow line of $\MA_{H^N,\varphi}$ with respect to the $L^2$-metric induced by $\omega_\varphi(\cdot,J\cdot)$. In particular, for any $-\infty < s_0 < s_1 < \infty$, we have
\begin{equation}
\label{eq:action_mixed_cylinder} 
\MA_{H^N,\varphi}(u(s_1,\cdot))-\MA_{H^N,\varphi}(u(s_0,\cdot)) = \int_{(s_0,s_1)\times \T}\omega_{\varphi}(\partial_su,J(u)\partial_su)\,ds\,dt \geq 0.
\end{equation}
From this, we see that
\begin{equation*}
E(u) = \lim_{s\rightarrow +\infty} \MA_{H^N,\varphi}(u(s,\cdot)) - \lim_{s\rightarrow -\infty} \MA_{H^N,\varphi}(u(s,\cdot)) = \MA_H(\gamma_-) - \MA(\gamma).
\end{equation*}

Consider a sequence $u_k \in \MM^\#(\hat{N},\gamma_-,\gamma,H^N,J)$. Bubbling happens if, for a suitable sequence $(s_k,t_k)\in \R\times\T$, the derivatives $|du_k(s_k,t_k)|$ are unbounded. In our current setting of hybrid Floer cylinders in $\hat{N}$, bubbling can be analyzed exactly in the same way as in the SFT compactness theorem for punctured pseudoholomorphic curves (see \cite[Section 10]{BEHWZ03}). Note that since $\hat{N}$ is an exact symplectic manifold, there do not exist any non-constant punctured finite energy spheres in $\hat{N}\setminus N^+$. Therefore, bubbling can only result in punctured finite energy spheres in the symplectization $\R\times Y$. In fact, each time such a finite energy sphere bubbles off, one obtains an entire pseudoholomorphic building $v$ which consists of a tree of punctured finite energy spheres $v_i:S^2\setminus\Gamma_i\rightarrow \R\times Y$ in possibly multiple symplectization levels. The building $v$ has exactly one positive asymptotic Reeb orbit and no negative asymptotic Reeb orbits. Therefore, the $d\lambda$-energy of $v$, i.e. the quantity
\begin{equation*}
E_{d\lambda}(v) = \sum_i \int_{S^2\setminus \Gamma_i} v_i^*d\lambda,
\end{equation*}
is at least $\MA(\gamma)$, the minimal action of a periodic Reeb orbit of $Y$. Recall from our construction of the Hamiltonian $H$ in Section \ref{neck_stretching_H} that $\MA_H(\gamma_-)$ is arranged to be close to $\MA(\gamma)$. In particular, we can assume that the energy $E(u_k) = \MA_H(\gamma_-) - \MA(\gamma)$ is strictly smaller than $\MA(\gamma)$. It follows that bubbles cannot form.

The absence of bubbles implies that $\|du_k\|_{L^\infty}$ remains bounded. Standard compactness arguments for Floer cylinders and pseudoholomorphic curves then imply that, for every sequence of reparametrizations $u_k(\sigma_k+s,t)$, there exists a subsequence which converges in $C^\infty_{\operatorname{loc}}$ to either a Floer cylinder in $\hat{N}$ satisfying \eqref{hybrid_cylinder} or a pseudoholomorphic cylinder in $\R\times Y$.

Breaking could happen in two different ways: either as a Floer cylinder at a $1$-periodic Hamiltonian orbit or as a pseudoholomorphic cylinder at a periodic Reeb orbit of $Y$. The former type of breaking occurs if a suitable sequence of reparametrizations $u_k(\sigma_k+s,t)$ converges in $C^\infty_{\operatorname{loc}}$ to $\alpha(t)$ for a $1$-periodic orbit $\alpha$ of $X_{H^N}$ with action $\MA_{H^N,\varphi}(\alpha) \in (\MA(\gamma),\MA_H(\gamma_-))$. Since there does not exist any such orbit $\alpha$, we can rule out breaking as a Floer cylinder. Breaking as a pseudoholomorphic cylinder occurs if a sequence of reparametrizations $u_k(\sigma_k+s,t)$ converges in $C^\infty_{\operatorname{loc}}$ to the trivial cylinder $u_\beta(s,t)$ in $\R\times Y$ over some periodic Reeb orbit $\beta$ with action $\MA(\beta) \in (\MA(\gamma), \MA_H(\gamma_-))$. Recall from Section \ref{neck_stretching_H} that $\MA_H(\gamma_-)$ is arranged to be sufficiently close to $\MA(\gamma)$ so that no such orbit $\beta$ exists. Hence, we can also rule out breaking as a pseudoholomorphic cylinder. We deduce that the moduli space $\MM(\hat{N},\gamma_-,\gamma,H^N,J)$ is compact.
\end{proof}

By linearizing the Floer equation \eqref{hybrid_cylinder} at $u\in  \MM^\#(\hat{N}, \gamma_-, \gamma, H^N,J)$, we obtain a real Cauchy-Riemann type operator 
\begin{equation}
\label{top_D_u}
D_u:V\oplus W^{1,p,\delta}(u^*T\hat{N}) \rightarrow L^{p,\delta}(u^*T\hat{N}).	
\end{equation}
Here, $V$ is a $2$-dimensional real vector space spanned by two smooth sections of $u^*T\hat{N}$ which are supported in the negative half cylinder and, near the negative puncture, are equal to $\partial_a$ and $R_\lambda$, respectively. As before, $W^{1,p,\delta}$ and $L^{p,\delta}$ are Sobolev spaces with exponential weight $\delta>0$, which is chosen to be sufficiently small. The operator $D_u$ is positively asymptotic to the asymptotic operator $A_{H^N,\gamma_-}=A_{H,\gamma_-}$ (see Section \ref{flosubsec}). With respect to the splitting $\gamma^* TN^-=\gamma^*\langle \partial_a,R_\lambda\rangle\oplus\gamma^*\xi$, the negative asymptotic operator of $D_u$ can be written as $A_0\oplus A_\gamma$, where $A_0 = -J\partial_t$ and $A_\gamma$ is the asymptotic operator of the Reeb orbit $\gamma$ (see Section \ref{subsec:asymptotics_at_punctures}). Let us point out that since $\gamma_-$ and $\gamma$ are nondegenerate as Hamiltonian and Reeb orbits, respectively, the weight $\delta$ is unnecessary (but harmless) at the positive end of $\R\times\T$ and also in the direction of $\xi$ at the negative end. The Fredholm index of $D_u$ is 
\[
\begin{split}
\mathrm{ind}(D_u) &= \mathrm{CZ}^{-\delta}(A_{H,\gamma_-})-(\mathrm{CZ}^{\delta}(A_0)+\mathrm{CZ}^{\delta}(A_\gamma))+\dim V \\
&= \mathrm{CZ}(A_{H,\gamma_-})-(\mathrm{CZ}^{\delta}(A_0)+\mathrm{CZ}(A_\gamma))+\dim V \\
&=n+1-(1+(n+1))+2=1.
\end{split}
\]
Since every cylinder $u\in \MM^\#(\hat{N}, \gamma_-, \gamma, H^N,J)$ crosses the region on which $J$ is $t$-dependent due to the asymptotic condition in \eqref{hybrid_cylinder_asymp_top}, the operator $D_u$ is surjective for a generic choice of $J$. In this case, $\MM(\hat{N},\gamma_-,\gamma, H^N,J)$ is a compact $0$-dimensional manifold by Lemma \ref{lem:compactness_mixed_partialX}.

The Floer-Hofer kernel gluing operation induces a natural isomorphism $\mathfrak{o}_{D_u}\otimes \mathfrak{o}_{\gamma} \cong \mathfrak{o}_{\gamma_-}$. The orientation local system of the moduli space $\MM^{\#}(\hat{N},\gamma_-,\gamma,H^N,J)$ is therefore naturally isomorphic to $\mathfrak{o}_{\gamma_-}\otimes \mathfrak{o}_{\gamma}^*$. Since $\MM(\hat{N},\gamma_-,\gamma,H^N,J)$ is obtained by quotienting out the $\R$-action, its orientation local system is naturally isomorphic to $\mathfrak{o}_{\gamma_-}\otimes \mathfrak{o}_{\gamma}^*\otimes \mathfrak{o}_{\R}^* \cong \mathfrak{o}_{\gamma_-}\otimes \mathfrak{o}_{\gamma}^*$. Here we use the obvious orientation of $\R$ to identify $\mathfrak{o}_\R\cong \Z$. The algebraic count $\#\MM(\hat{N},\gamma_-,\gamma,H^N,J)$ thus naturally takes values in the dual space $\mathfrak{o}_{\gamma_-}^*\otimes \mathfrak{o}_\gamma$.

\begin{lem}
\label{inv_top}
The count $\#\MM(\hat{N},\gamma_-,\gamma, H^N,J) \in \mathfrak{o}_{\gamma_-}^*\otimes \mathfrak{o}_{\gamma}$ is independent of the choice of generic admissible $\T$-family $J$ of almost complex structures on $\hat{N}$.
\end{lem}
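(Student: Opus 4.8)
The plan is to run the standard cobordism argument for invariance of counts of solutions to a parametrized Floer-type problem, adapted to the present hybrid setting. Let $J^0 = (J^0_t)_t$ and $J^1 = (J^1_t)_t$ be two generic admissible $\T$-families of almost complex structures on $\hat N$. First I would connect them by a smooth path $(J^\lambda)_{\lambda \in [0,1]}$ of admissible $\T$-families, and consider the parametrized moduli space
\[
\MM(\hat N, \gamma_-,\gamma,H^N, (J^\lambda)_\lambda) \coloneqq \bigl\{ (\lambda, [u]) \mid \lambda\in[0,1], \; [u]\in \MM(\hat N,\gamma_-,\gamma,H^N,J^\lambda)\bigr\}.
\]
For a generic path with fixed endpoints $J^0, J^1$, this space is a compact $1$-dimensional manifold with boundary, and its boundary is exactly $\bigl(\{0\}\times \MM(\hat N,\gamma_-,\gamma,H^N,J^0)\bigr) \cup \bigl(\{1\}\times \MM(\hat N,\gamma_-,\gamma,H^N,J^1)\bigr)$. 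The orientations fit together: the parametrized moduli space carries an orientation local system naturally isomorphic to $\mathfrak{o}_{\gamma_-}\otimes \mathfrak{o}_\gamma^*$ (the index goes up by one because of the $[0,1]$-parameter, but the gluing isomorphism $\mathfrak{o}_{D_u}\otimes\mathfrak{o}_\gamma\cong\mathfrak{o}_{\gamma_-}$ is unaffected), so counting the boundary with signs gives $\#\MM(\hat N,\gamma_-,\gamma,H^N,J^1) - \#\MM(\hat N,\gamma_-,\gamma,H^N,J^0) = 0$ in $\mathfrak{o}_{\gamma_-}^*\otimes\mathfrak{o}_\gamma$. This is the formal skeleton; the two things that need to be checked are compactness of the parametrized moduli space and transversality for generic paths.

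Transversality is routine: since every cylinder $u\in\MM^\#(\hat N,\gamma_-,\gamma,H^N,J^\lambda)$ must cross the region where the almost complex structure is allowed to be $t$-dependent (this is forced by the asymptotic condition $\lim_{s\to+\infty}u(s,\cdot) = \gamma_-$, exactly as in the discussion preceding the lemma), a standard Sard--Smale argument applied to the universal moduli space over the space of paths shows that for a generic path the linearized operators $D_{(\lambda,u)}$, now of index $2$ before quotienting by the $\R$-action, are all surjective. Hence $\MM(\hat N,\gamma_-,\gamma,H^N,(J^\lambda)_\lambda)$ is a smooth $1$-manifold with the expected boundary.

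The main obstacle, and the only part requiring real work, is compactness of the parametrized moduli space — i.e.\ extending Lemma \ref{lem:compactness_mixed_partialX} uniformly over $\lambda\in[0,1]$. The proof of Lemma \ref{lem:compactness_mixed_partialX} goes through verbatim: the energy of any $u\in\MM^\#(\hat N,\gamma_-,\gamma,H^N,J^\lambda)$ is again $\MA_H(\gamma_-)-\MA(\gamma)$, which by our construction of $H$ in Section \ref{neck_stretching_H} is strictly smaller than $\MA(\gamma)$, so no bubbling of finite-energy spheres in $\R\times Y$ can occur (their $d\lambda$-energy would be at least $\MA(\gamma)$), and the absence of $1$-periodic Hamiltonian orbits and of periodic Reeb orbits with action in the open interval $(\MA(\gamma),\MA_H(\gamma_-))$ rules out both types of breaking. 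The only new point is that all the relevant constants (the $C^0$-bound coming from linear growth of $X_{H^N}$ and non-resonance, the derivative bounds from the no-bubbling argument) can be taken uniform in $\lambda$ because the path $(J^\lambda)_\lambda$ is continuous with compact parameter space and the Hamiltonian $H^N$ is fixed. A subsequence of any sequence $(\lambda_k, u_k)$ then converges after reparametrization, with $\lambda_k\to\lambda_\infty$, to an element of $\MM(\hat N,\gamma_-,\gamma,H^N,J^{\lambda_\infty})$, proving compactness. Once compactness and transversality are in hand, the boundary-counting argument above yields the claimed invariance.
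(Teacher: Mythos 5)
Your argument is correct and matches the paper's proof: both pass to a generic homotopy $(J^\rho)_{\rho\in[0,1]}$, form the parametrized moduli space, verify it is a compact smooth $1$-manifold with boundary using exactly the energy considerations from Lemma~\ref{lem:compactness_mixed_partialX}, and conclude via the cobordism argument. The only difference is that you spell out the transversality (Sard--Smale) and orientation bookkeeping explicitly, which the paper leaves implicit.
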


\begin{proof}	
Let $J^0=(J^0_t)_{t\in\T}$ and $J^1=(J^1_t)_{t\in\T}$ be two generic admissible $\T$-families of almost complex structures. We choose a smooth homotopy $(J^\rho)_{\rho\in[0,1]}=(J^\rho_t)_{\rho\in[0,1],t\in\T}$ of admissible families of almost complex structures interpolating between $J^0$ and $J^1$ and consider the moduli space 
\[
\MM(\hat{N},\gamma_-,\gamma, H^N,(J^\rho)_{\rho\in[0,1]}):= \{ (\rho,[u]) \mid \rho\in[0,1],\; [u]\in \MM(\hat{N},\gamma_-,\gamma, H^N,J^\rho) \}.
\]
For a generic choice of homotopy $(J^\rho)$, it is a smooth 1-dimensional manifold. It suffices to show that this moduli space is compact. This can be done in the same way as in the proof of Lemma \ref{lem:compactness_mixed_partialX}. Exactly the  same energy considerations can be used to rule out bubbling and breaking.
\end{proof}

\subsection{Pseudoholomorphic cylinders in the middle level}
\label{middle}

Recall that the Hamiltonian $H$ constructed in Section \ref{neck_stretching_H} vanishes in the region $X\setminus U_{z_0}$, where $U_{z_0}$ is the neighborhood of $z_0$ in which we modified $h\circ H_X$ to obtain $H$. We may assume that $U_{z_0}$ is sufficiently small. Then there exist positive real numbers $a_1 < \dots < a_n$ linearly independent over $\Q$ such that the ellipsoid
\begin{equation*}
E\coloneqq\left\{(x_1,y_1,\dots,x_n,y_n)\in \R^{2n} \,\Big|\, \sum_{j=1}^n\frac{\pi (x_j^2+y_j^2)}{a_j}\leq 1 \right\}
\end{equation*}
satisfies $\overline{U}_{z_0}\subset \operatorname{int}(E)$ and $E\subset \operatorname{int}(X)$. The Reeb flow of the contact form $\lambda_E\coloneqq\lambda_0|_{F}$ on $F=\partial E$ has exactly $n$ simple periodic orbits and they have actions $a_1,\dots,a_n$. Moreover, the periodic orbit of minimal action $a_1$, denoted by $\gamma_E$, has Conley-Zehnder index $\mathrm{CZ}(\gamma_E)=n+1$. All other periodic orbits, including multiply covered ones, have Conley-Zehnder index at least $n+3$.

Let $M\coloneqq X\setminus \operatorname{int}(E)$ denote the strong symplectic cobordism between  $Y=\partial X$ and $F\coloneqq\partial E$. Let $J$ be an almost complex structure on the symplectic completion $\hat{M}$ which is admissible in the sense of Section \ref{subsec:setup_pseudoholomorphic_curves}. In the present subsection, we are interested in the moduli space
\begin{equation*}
\MM(\hat{M},\gamma,\gamma_E,J)
\end{equation*}
of unparametrized $J$-holomorphic cylinders in $\hat{M}$ which are positively asymptotic to $\gamma$ and negatively asymptotic to $\gamma_E$.

\begin{lem} 
\label{lem:compactness_hol_generic}
 For a generic admissible almost complex structure $J$ on $\hat{M}$, the moduli space $\MM(\hat{M}, \gamma, \gamma_E,J)$ is compact. 
\end{lem}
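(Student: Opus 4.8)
The plan is to run the same SFT-compactness argument as in Lemma~\ref{lem:compactness_mixed_partialX}, now exploiting the minimality of the actions $\MA(\gamma)$ on $Y$ and $a_1=\MA(\gamma_E)$ on $F$ to rule out every possible degeneration. First I would fix an admissible $J$ on $\hat M$ and a sequence $(u_k)_k$ in $\MM(\hat M,\gamma,\gamma_E,J)$, and pass to an SFT limit, which is a pseudoholomorphic building whose levels live in the symplectization $\R\times Y$ (top), the completion $\hat M$ (middle — possibly several levels after stretching is not needed here, but in any case finitely many cobordism-type levels), and the symplectization $\R\times F$ (bottom). The building has one positive end asymptotic to $\gamma$ and one negative end asymptotic to $\gamma_E$, and total $d\lambda$-type energy controlled by $\MA(\gamma)-a_1$. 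The key numerical input is that $\gamma$ is the orbit on $Y$ of minimal action (and, by the arrangement in Section~\ref{neck_stretching_H}, the only Reeb orbit of $\lambda$ with action $\le\MA_H(\gamma_-)$, hence in particular the unique one up to action slightly above $\MA(\gamma)$), while on $F$ the orbit $\gamma_E$ has action $a_1$ strictly smaller than $a_2$ and than any multiple cover, and the cobordism $M$ is exact.

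The key steps, in order: (1) \emph{No bubbling.} Since $\hat M$ is exact, any sphere bubble must escape into $\R\times Y$ or $\R\times F$ and carries $d\lambda$-energy at least $\MA(\gamma)$ or $a_1$; the total energy budget $\MA(\gamma)-a_1$ is too small to accommodate such a bubble (using, as in Section~\ref{neck_stretching_H}, that $a_1$ may be taken close to — but here we just need $a_1<\MA(\gamma)$ and that $\MA(\gamma)$ is smallest on $Y$). (2) \emph{No extra levels in the symplectizations.} A non-trivial finite-energy cylinder or sphere level in $\R\times Y$ would have to have its positive end at $\gamma$ and negative end at some orbit of action $<\MA(\gamma)$, impossible since $\gamma$ is the systole; similarly a non-trivial level in $\R\times F$ would need a positive end at an orbit of action $<a_1$ below it, again impossible. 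Trivial-cylinder levels can be discarded. (3) \emph{No breaking in $\hat M$.} A breaking of the $\hat M$-level into two cylinders would produce an intermediate Reeb orbit on $Y$ or $F$ with action strictly between $a_1$ and $\MA(\gamma)$; by the choice of the ellipsoid (orbits on $F$ other than $\gamma_E$ have much larger action) and the minimality of $\MA(\gamma)$ on $Y$ — and by the genericity of $J$ ruling out non-trivial multiply covered components — no such configuration exists. Hence the limit building consists of a single cylinder in $\hat M$ asymptotic to $\gamma$ and $\gamma_E$, i.e.\ an element of $\MM(\hat M,\gamma,\gamma_E,J)$, and convergence upgrades to the required topology by standard elliptic bootstrapping. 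One also checks, via the index formula \eqref{eq:index_formula_CR_operator} applied to the normal operator as in Section~\ref{phcce} and somewhere-injectivity (inherited because $\gamma$ is simple), that for generic $J$ this moduli space is cut out transversely and is a compact $0$-dimensional manifold.

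The step I expect to be the main obstacle is (3), ruling out breaking at intermediate Reeb orbits: one must be careful that a broken configuration could a priori involve a multiply covered trivial cylinder over $\gamma_E$ or over $\gamma$ glued to a genuine cylinder, or a cascade passing through the ellipsoid orbits $\gamma_E^{(k)}$ of higher action. Here the precise action bookkeeping matters — the positive end of every level must be asymptotic to an orbit of action at most $\MA(\gamma)$, and since the only such orbit on $Y$ is $\gamma$ and the only one on $F$ with action below $a_2$ is $\gamma_E$, the only orbits that can appear as breaking orbits are $\gamma$ and $\gamma_E$ themselves, forcing all the ``extra'' pieces to be trivial cylinders, which are then removed. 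Combined with the $J$-genericity input (which excludes non-simple components of negative index), this pins down the limit. The remaining verifications — exactness of $M$, the energy identity $E(u)=\MA(\gamma)-a_1$ for cylinders in $\MM(\hat M,\gamma,\gamma_E,J)$, and the computation of $\operatorname{CZ}(\gamma_E)=n+1$ and of the higher-action orbits on $F$ — are routine and already recorded in the preceding text.
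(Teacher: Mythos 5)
Your proposal rests primarily on action/energy bookkeeping, and this is where it breaks down. The paper imposes only that $E\subset\operatorname{int}(X)$ and $\overline{U}_{z_0}\subset\operatorname{int}(E)$, with the $a_j$ linearly independent over $\Q$; there is no hypothesis that $a_2,\dots,a_n$ exceed $\MA(\gamma)$, nor that they are ``much larger'' than $a_1$. In fact, since $U_{z_0}$ is taken small, the ellipsoid can be taken small and all the actions $a_j$ can be strictly less than $\MA(\gamma)$. Consequently, a limiting building whose top level $u\subset\hat M$ has an extra negative puncture asymptotic to, say, the second orbit of action $a_2$, or whose building passes through that orbit, cannot be excluded by positivity of $d\lambda$-energy alone: the energy budget $\MA(\gamma)-a_1$ happily accommodates such a configuration. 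The same problem affects your step (1): a ``bubble'' into $\R\times F$ needs $d\lambda$-energy at least $a_1$, and $\MA(\gamma)-a_1\ge a_1$ is perfectly possible.

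What is actually used in the paper, and is missing from your argument, is the Fredholm index inequality for the top level. Because $\gamma$ is simple, the top-level curve $u$ in $\hat M$ is somewhere injective; for generic $J$ this forces $\operatorname{ind}(u)\ge 0$, and the index formula
\[
\operatorname{ind}(u) = (n-3)\bigl(1-\#\Gamma^-\bigr) + \operatorname{CZ}(\gamma) - \sum_{z\in\Gamma^-}\operatorname{CZ}(\gamma_z),
\]
combined with dynamical convexity of $F$ ($\operatorname{CZ}(\gamma_z)\ge n+1$ for every Reeb orbit on $F$, with $\gamma_E$ the unique orbit of index exactly $n+1$, all others $\ge n+3$), forces $\#\Gamma^-=1$ and the single negative asymptotic to have index $n+1$, hence to be $\gamma_E$. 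This is the step that rules out both multiple negative punctures and negative punctures at higher-index orbits; action considerations are only used to place the top level in $\hat M$ (minimality of $\MA(\gamma)$ on $Y$) and to kill trivial symplectization levels over $\gamma_E$ (minimality of $a_1$ on $F$). You invoke the index formula only at the very end for transversality; it is in fact the heart of the compactness proof.
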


\begin{proof}
Let $(u_k)_{k}$ be a sequence in $\MM(\hat{M},\gamma,\gamma_E,J)$. By the SFT compactness theorem, after possibly passing to a subsequence, it converges to a pseudoholomorphic building. The top level $u$ of the limiting building is a punctured pseudoholomorphic sphere, either in the symplectization of $Y$ or in $\hat{M}$. It has exactly one positive puncture, at which it is asymptotic to $\gamma$, and at least one negative puncture. Since $\gamma$ is the Reeb orbit of $Y$ of minimal action, the only such punctured sphere in the symplectization of $Y$ is the trivial cylinder over $\gamma$. We conclude that $u$ is contained in $\hat{M}$. Let $\Gamma^-$ denote the set of negative punctures of $u$. For $z\in \Gamma^-$, let $\gamma_z$ denote the corresponding negative asymptotic Reeb orbit on $F$. As reviewed in Section \ref{subsec:fredholm_theory}, the Fredholm index of $u$ is given by
\begin{equation}
\label{eq:compactness_hol_generic_proof}
\operatorname{ind}(u) = (n-3)(1-\# \Gamma^-) + \operatorname{CZ}(\gamma) - \sum\limits_{z\in \Gamma^-} \operatorname{CZ}(\gamma_z).
\end{equation}
Note that $u$ is somewhere injective because the Reeb orbit $\gamma$ is simple. Since $J$ is generic, we must have $\operatorname{ind}(u)\geq 0$. We have $\operatorname{CZ}(\gamma) = n+1$ and $\operatorname{CZ}(\gamma_z)\geq n+1$ for all $z\in \Gamma^-$ because the Reeb flow on $F$ is dynamically convex. Combining these facts with \eqref{eq:compactness_hol_generic_proof}, we deduce that $u$ has exactly one negative puncture and that the corresponding asymptotic Reeb orbit must have Conley-Zehnder index $n+1$. The only such Reeb orbit on $F$ is $\gamma_E$. Therefore, $u$ is contained in $\MM(\hat{M},\gamma,\gamma_E,J)$, concluding the proof of compactness.
\end{proof}

Recall from Section \ref{subsec:fredholm_theory} that the Fredholm index of a cylinder $u\in \MM(\hat{M},\gamma,\gamma_E,J)$ is given by $\operatorname{ind}(u) = \operatorname{CZ}(\gamma) - \operatorname{CZ}(\gamma_E) = 0$. The moduli space $\MM(\hat{M},\gamma,\gamma_E,J)$ only contains somewhere injective curves because $\gamma$ is simple and is therefore regular for a generic $J$ admissible on $\hat{M}$. By Lemma \ref{lem:compactness_hol_generic}, the moduli space $\MM(\hat{M},\gamma,\gamma_E,J)$ is then a compact $0$-dimensional manifold and we have a canonical algebraic count $\#\MM(\hat{M},\gamma,\gamma_E,J)$ taking values in $\mathfrak{o}_\gamma^*\otimes \mathfrak{o}_{\gamma_E}$.

\begin{lem}
The count $\#\MM(\hat{M},\gamma,\gamma_E,J) \in \mathfrak{o}_\gamma^*\otimes \mathfrak{o}_{\gamma_E}$ is independent of the choice of a generic admissible almost complex structure $J$ on $\hat{M}$. 
\end{lem}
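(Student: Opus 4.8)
The plan is to mimic the proof of Lemma \ref{inv_top}, running a parametrized cobordism argument in which the only thing that needs to be checked carefully is compactness of the $1$-parameter moduli space. So let $J^0$ and $J^1$ be two generic admissible almost complex structures on $\hat M$ for which $\MM(\hat M,\gamma,\gamma_E,J^i)$ is cut out transversally, and choose a generic smooth homotopy $(J^\rho)_{\rho\in[0,1]}$ of admissible almost complex structures on $\hat M$ from $J^0$ to $J^1$. Form the parametrized moduli space
\[
\MM(\hat M,\gamma,\gamma_E,(J^\rho)_{\rho\in[0,1]}) := \{ (\rho,[u]) \mid \rho\in[0,1],\; [u]\in \MM(\hat M,\gamma,\gamma_E,J^\rho) \}.
\]
Since every curve in sight is somewhere injective (because $\gamma$ is simple), standard transversality for parametrized moduli spaces shows that for generic $(J^\rho)$ this space is a smooth $1$-dimensional manifold with boundary $\MM(\hat M,\gamma,\gamma_E,J^0)\sqcup \MM(\hat M,\gamma,\gamma_E,J^1)$. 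The orientation local systems of the two endpoint moduli spaces are canonically identified with $\mathfrak o_\gamma\otimes\mathfrak o_{\gamma_E}^*$ via the Floer--Hofer kernel gluing isomorphism \eqref{eq:isomorphism_orientation_lines_moduli_space_and_orbits}, and this identification extends over the parametrized moduli space; hence an orientation of the compact oriented $1$-manifold gives $\#\MM(\hat M,\gamma,\gamma_E,J^1) - \#\MM(\hat M,\gamma,\gamma_E,J^0) = 0$ in $\mathfrak o_\gamma^*\otimes\mathfrak o_{\gamma_E}$, as desired.

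The substance of the argument is therefore to show that $\MM(\hat M,\gamma,\gamma_E,(J^\rho)_{\rho\in[0,1]})$ is compact, i.e.\ that no degeneration occurs. I would run the SFT compactness theorem on a sequence $(\rho_k,[u_k])$; after passing to a subsequence, $\rho_k\to\rho_\infty$ and $u_k$ converges to a pseudoholomorphic building for $J^{\rho_\infty}$. The key point is that the energy of every $u_k$ equals $\MA(\gamma)-\MA(\gamma_E)=\MA(\gamma)-a_1$, which is strictly positive, and I would choose the ellipsoid $E$ small enough — which is already arranged in this subsection — so that additionally $a_1 = \MA(\gamma_E)$ is so close to $0$ (equivalently, the relevant action gap is controlled) that no unexpected intermediate orbits can appear. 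Concretely: the building's top level is a punctured sphere with one positive puncture asymptotic to $\gamma$; since $\gamma$ is the Reeb orbit on $Y$ of minimal action, the only such curve in the symplectization of $Y$ is the trivial cylinder, so the top level lies in $\hat M$. By the index computation already carried out in the proof of Lemma \ref{lem:compactness_hol_generic} — the Fredholm index of a somewhere injective $J^{\rho_\infty}$-curve in $\hat M$ with positive asymptotic $\gamma$ and negative asymptotics $\gamma_z$, $z\in\Gamma^-$, is $(n-3)(1-\#\Gamma^-) + \mathrm{CZ}(\gamma) - \sum_{z\in\Gamma^-}\mathrm{CZ}(\gamma_z)$ — together with the fact that in a generic $1$-parameter family every somewhere injective curve has index $\geq -1$, dynamical convexity of $F$ ($\mathrm{CZ}(\gamma_z)\geq n+1$, with the next orbit after $\gamma_E$ having index $\geq n+3$) forces $\#\Gamma^- = 1$ and the single negative asymptotic to have $\mathrm{CZ}=n+1$, hence to equal $\gamma_E$. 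Finally, bubbling of finite-energy spheres is excluded exactly as in Lemma \ref{lem:compactness_mixed_partialX}: since $\hat M$ is exact, any bubble is a finite-energy building in the symplectization $\R\times Y$ with $d\lambda$-energy at least $\MA(\gamma)$, but this exceeds the total energy $\MA(\gamma)-a_1$ of $u_k$ once $E$ is chosen small, so no bubbles form. Breaking at $F$ into a several-level building at a lower level is ruled out by the same index/action bookkeeping — the negative asymptotic of the top level is already $\gamma_E$, the orbit of minimal action on $F$, so there is nothing below it.

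The main obstacle I anticipate is not any single deep estimate but the careful verification that the generic-$1$-parameter transversality gives index $\geq -1$ for all strata of the limiting building simultaneously, including the case where the top level is connected to lower levels over $F$, so that the index inequality $(n-3)(1-\#\Gamma^-) + (n+1) - \sum_{z}\mathrm{CZ}(\gamma_z) \geq -1$ can be combined with $\mathrm{CZ}(\gamma_z)\geq n+1$ to conclude $\#\Gamma^-=1$. In dimension $2n=4$ this is $0\cdot(1-\#\Gamma^-) + 3 - \sum_z \mathrm{CZ}(\gamma_z)\geq -1$, forcing $\#\Gamma^-=1$ and $\mathrm{CZ}(\gamma_z)=3$ immediately; for general $n$ one uses the same curvature/convexity input together with $(n-3)(1-\#\Gamma^-)\leq 0$ when $\#\Gamma^-\geq 1$. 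Once this bookkeeping is in place, the cobordism argument closes exactly as in Lemma \ref{inv_top}, and the invariance of $\#\MM(\hat M,\gamma,\gamma_E,J)\in\mathfrak o_\gamma^*\otimes\mathfrak o_{\gamma_E}$ follows.
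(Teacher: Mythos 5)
Your proposal follows the same route as the paper: a parametrized cobordism argument via a generic homotopy $(J^\rho)_{\rho\in[0,1]}$, with compactness of the $1$-dimensional moduli space established by SFT compactness together with the Fredholm index computation from Lemma \ref{lem:compactness_hol_generic}, using that the parametrized virtual dimension is $\geq 0$ for somewhere injective curves in a generic $1$-parameter family and that $\gamma_E$ is the only orbit on $F$ with Conley--Zehnder index below $n+3$. Two small remarks: for $n=2$ the coefficient $n-3$ equals $-1$, not $0$, so the specialization of your inequality should read $\#\Gamma^- + 3 \geq \sum_{z\in\Gamma^-}\operatorname{CZ}(\gamma_z)$ (which still forces $\#\Gamma^-=1$ and $\gamma_z=\gamma_E$); and the energy/bubbling discussion and the appeal to choosing $E$ small are superfluous here --- $\hat{M}$ is exact so no non-constant closed spheres exist, and the paper invokes SFT compactness directly without any smallness hypothesis on $E$.
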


\begin{proof}
Let $J^0$ and $J^1$ be generic admissible almost complex structures on $\hat{M}$. We choose a smooth homotopy $(J^\rho)_{\rho\in[0,1]}$ of admissible almost complex structures between $J^0$ and $J^1$, and define 
\[
\MM(\hat{M},\gamma,\gamma_E,(J^\rho)_{\rho\in[0,1]})\coloneqq \{(\rho,u)\mid \rho\in[0,1],\; u\in \MM(\hat{M},\gamma,\gamma_E,J^\rho)\}.
\]
We choose $(J^\rho)_\rho$ to be generic so that the above moduli space is a smooth $1$-dimensional manifold. It suffices to show that this moduli space
is compact for generic $(J^\rho)_\rho$. We prove this via a Fredholm index argument similar to the one in the proof of Lemma \ref{lem:compactness_hol_generic}. This argument will make crucial use of the fact that $\gamma_E$ is the only periodic Reeb orbit on $F$ of Conley-Zehnder index at most $n+2$.

Let $(\rho_k,u_k)_{k}$ be  a sequence  in $\MM(\hat{M},\gamma,\gamma_E,(J^\rho)_{\rho\in[0,1]})$. After passing to a subsequence, $\rho_k$ converges to some $\rho\in[0,1]$ and $u_k$ converges to a $J^\rho$-holomorphic building by the SFT compactness theorem. By the same reasoning as in the proof of Lemma \ref{lem:compactness_hol_generic}, the top level $u$ of the limiting building must be a punctured pseudoholomorphic sphere in $\hat{M}$ because $\gamma$ is the orbit of least action on $Y$. It has exactly one positive puncture, at which it is asymptotic to $\gamma$ and at least one negative puncture. Let $\Gamma^-$ denote the set of negative punctures of $u$ and let $\gamma_z$ be the asymptotic orbit of $z\in \Gamma^-$. Let $\MM$ denote the moduli space consisting of all tuples $(\sigma,v)$ where $\sigma\in [0,1]$ and $v$ is a punctured (unparametrized) $J^\sigma$-holomorphic sphere in $\hat{M}$ with the same asymptotic orbits as $u$. The virtual dimension of $\MM$ is given by
\begin{equation*}
1 + (n-3)(1-\#\Gamma^-) + \operatorname{CZ}(\gamma) - \sum\limits_{z\in \Gamma^-} \operatorname{CZ}(\gamma_z).
\end{equation*}
Since the homotopy $(J^\rho)_\rho$ was chosen generically and curves in $\MM$ are automatically somewhere injective, this virtual dimension must be non-negative. Since $\operatorname{CZ}(\gamma) = \operatorname{CZ}(\gamma_E) = n+1$ and all other periodic Reeb orbits on $F$ have Conley-Zehnder index at least $n+3$, the only possibility for this to happen is that $\gamma_E$ is the only negative asymptotic orbit of $u$. This concludes the proof of compactness.
\end{proof}

\subsection{Hybrid Floer cylinders in the bottom level}
\label{lowest}
\label{cylinder_E}

We consider the symplectic completion $\hat{E}$ of the ellipsoid $E$ and the smooth Hamiltonian
\begin{equation*}
H^E:\hat{E}\rightarrow\R,\qquad H^E(z):=
\begin{cases}
H(z) & z\in E, \\
0 & z \in E^+.
\end{cases}
\end{equation*}
Here we recall that the function $H\in C^\infty(\T\times\R^{2n})$ defined in Section \ref{neck_stretching_H} is autonomous on $E$. The Hamiltonian vector field $X_{H^E}$ has only constant $1$-periodic orbits, namely the constant orbit mapping to the unique minimum point $z_0$ of $H^E$ and the constant orbits mapping to $(H^E)^{-1}(0)$. For an admissible almost complex structure $J$ on $\hat{E}$, we consider Floer cylinders
\begin{equation}
\label{eq:mixed_moduli_space_E}
u:\R\times \mathbb{T}\rightarrow \hat{E},\qquad \partial_su + J(u) (\partial_tu - X_{H^E}(u)) = 0, 
\end{equation}
which are, as $s\rightarrow +\infty$,  positively asymptotic to the periodic Reeb orbit $\gamma_E$ on $F$ and, as $s\rightarrow -\infty$, converge to the Hamiltonian orbit $z_0$. Writing $u|_{(s_0,\infty)\times \T} = (a,v)$ in components for $s_0\gg 0$ sufficiently large, we can express these asymptotic conditions as
\begin{equation}
\label{eq:mixed_moduli_space_E_asymptotic}
\begin{split}
&\lim_{s\rightarrow -\infty} u(s,t) =  z_0,\\
&\lim_{s\rightarrow +\infty} v(s,t) =  \gamma_E(\mathcal{A}(\gamma_E)(t + t_0))  \;\mbox{ for some }\; t_0\in\T  , \qquad 
\lim_{s\rightarrow +\infty} a(s,t)=+\infty.
\end{split} 
\end{equation}
We are interested in the moduli spaces
\begin{align*}
\MM^\#(\hat{E}, \gamma_E, z_0,H^E,J) &\coloneqq \left\{ u:\R\times \mathbb{T}\rightarrow \hat{E} \mid \text{$u$ satisfies \eqref{eq:mixed_moduli_space_E} and \eqref{eq:mixed_moduli_space_E_asymptotic}} \right\},\\
\MM(\hat{E},\gamma_E,z_0,H^E,J) &\coloneqq \MM^\#(\hat{E}, \gamma_E, z_0,H^E,J)/(\R\times\T),
\end{align*}
where the $(\R\times\T)$-action is given by $(\sigma,\tau,u)\mapsto u(\sigma+\cdot,\tau+\cdot)$.

\begin{lem}
\label{lem:compactness_mixed_E}
The moduli space $\MM(\hat{E},\gamma_E,z_0,H^E,J)$ is compact for every admissible almost complex structure $J$  on $\hat{E}$.
\end{lem}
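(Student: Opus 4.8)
The plan is to prove compactness of $\MM(\hat{E},\gamma_E,z_0,H^E,J)$ by the same energy-driven scheme used for Lemma \ref{lem:compactness_mixed_partialX}, suitably dualized to account for the fact that here the Reeb orbit is the \emph{positive} asymptotic and the Hamiltonian orbit is the \emph{negative} one. First I would fix an appropriate symplectic form and action functional on $\hat{E}$ adapted to the cylindrical end: choose a smooth $\varphi:(-\varepsilon,\infty)\to(1,\infty)$ with $\varphi'>0$, $\varphi(s)=e^s$ near $s=0$, and $\lim_{s\to+\infty}\varphi(s)$ finite, set $\lambda_\varphi:=\varphi\lambda_E$ on the end $E^+=[0,\infty)\times F$ and $\lambda_\varphi:=\lambda_0$ on $E$, and let $\omega_\varphi:=d\lambda_\varphi$. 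Since $H^E$ vanishes on the region where $\varphi$ is non-constant (we arrange $\varepsilon$ small enough), $X_{H^E}$ is unchanged and $J$ is $\omega_\varphi$-compatible. The energy of $u\in\MM^\#(\hat{E},\gamma_E,z_0,H^E,J)$ is then $E(u)=\int_{\R\times\T}\omega_\varphi(\partial_su,J(u)\partial_su)\,ds\,dt\geq 0$, and because $u$ is a positive gradient flow line of the action functional $\MA_{H^E,\varphi}(x)=\int_\T x^*\lambda_\varphi-\int_\T H^E(t,x(t))\,dt$, one computes $E(u)=\MA(\gamma_E)-\MA_{H^E,\varphi}(z_0)=\MA(\gamma_E)-(-H(z_0))=a_1+H(z_0)$, a fixed finite quantity.

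Next I would rule out bubbling. Given a sequence $u_k\in\MM^\#$, if $|du_k|$ blows up, then by the SFT/Floer bubbling analysis (cf.\ \cite[Section 10]{BEHWZ03}) a finite-energy punctured sphere bubbles off in the symplectization $\R\times F$ — since $\hat{E}$ is exact there are no non-constant finite-energy spheres in $\hat{E}\setminus E^+$ — and the resulting pseudoholomorphic building has $d\lambda_E$-energy at least $\MA(\gamma_E)=a_1$, the minimal Reeb action on $F$. Here is the place to invoke the construction in Section \ref{neck_stretching_H}: $a$ was taken close to $1$ and $U_{z_0}$ small, so $H(z_0)$ can be made arbitrarily small in absolute value, making $E(u_k)=a_1+H(z_0)$ strictly smaller than $a_1$; hence bubbling is impossible and $\|du_k\|_{L^\infty}$ stays bounded.

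With a uniform derivative bound, standard $C^\infty_{\operatorname{loc}}$ compactness for Floer cylinders together with the SFT compactness theorem gives a limiting building whose levels are Floer cylinders in $\hat{E}$ solving \eqref{eq:mixed_moduli_space_E} or trivial/non-trivial pseudoholomorphic cylinders in $\R\times F$. Breaking at an intermediate Hamiltonian $1$-periodic orbit $\alpha$ of $X_{H^E}$ would force $\MA_{H^E,\varphi}(\alpha)\in(-H(z_0),a_1)$; but the only $1$-periodic orbits of $X_{H^E}$ are $z_0$ (action $-H(z_0)$) and the constant orbits in $(H^E)^{-1}(0)$ (action $0$), and by exactness a constant orbit cannot be an asymptotic limit of a nonconstant Floer cylinder with positive energy passing through it — more precisely, the action window $(-H(z_0),a_1)$ excluding $0$ contains no critical value once $|H(z_0)|$ is small, so no such $\alpha$ exists. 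Breaking as a pseudoholomorphic cylinder in $\R\times F$ at a Reeb orbit $\beta$ would need $\MA(\beta)\in(-H(z_0),a_1)$, which is empty since $a_1=\MA(\gamma_E)$ is the minimal Reeb action on $F$. (One should also note that $u_k((-\infty,-s_0)\times\T)$ stays in a fixed compact subset near $z_0$ by a maximum-principle argument, so no escape to the negative "end" occurs there.) Therefore the building has a single level, a cylinder in $\MM^\#(\hat{E},\gamma_E,z_0,H^E,J)$, and after quotienting by the free $(\R\times\T)$-action we conclude $\MM(\hat{E},\gamma_E,z_0,H^E,J)$ is compact.

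The main obstacle, as in the $\hat{N}$ case, is the bookkeeping of which degenerations are a priori possible and the verification that the energy bound $a_1+H(z_0)<a_1$ — equivalently $H(z_0)<0$ with $|H(z_0)|$ small relative to the dynamically-convex spectral gap on $F$ — can indeed be arranged simultaneously with all the earlier constraints on $H$ and $E$. The ellipsoid $E$ was chosen \emph{after} fixing the small neighborhood $U_{z_0}$ with $\overline{U}_{z_0}\subset\operatorname{int}(E)\subset\operatorname{int}(X)$ and with $a_1<\cdots<a_n$ rationally independent, and $H$ restricted to $E$ equals the small autonomous perturbation of $h\circ H_X$ near the origin; since that perturbation is $C^2$-small one has $|H(z_0)|$ as small as desired, so the requisite inequality holds. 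The remaining steps (the action identity, the bubbling estimate, the exclusion of Floer and Reeb breaking) are then formally identical to the corresponding parts of the proof of Lemma \ref{lem:compactness_mixed_partialX}, with the roles of positive and negative punctures interchanged.
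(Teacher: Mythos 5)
Your proof follows the paper's argument very closely: you mirror Lemma~\ref{lem:compactness_mixed_partialX} by introducing an adapted exact symplectic form $\omega_\varphi$, computing $E(u)=\MA(\gamma_E)-\MA_H(z_0)$, using $E(u)<\MA(\gamma_E)$ to exclude bubbling, and using the emptiness of the action window $(\MA_H(z_0),\MA(\gamma_E))$ to exclude both Floer-type and Reeb-type breaking. This is exactly the strategy of the paper's proof.

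Two small slips are worth flagging, neither of which affects the validity of the argument. First, for the $\hat{E}$ case the correct choice is $\varphi:(-\epsilon,\infty)\to(0,1)$ with $\varphi(s)=e^s$ near $s=-\epsilon$ and $\lim_{s\to +\infty}\varphi(s)=1$; your stated range $(1,\infty)$ together with $\varphi(s)=e^s$ near $s=0$ is internally inconsistent and appears copied from the $\hat{N}$ construction, where the cylindrical end points the other way. Second, your reason for $E(u)=\MA(\gamma_E)+H(z_0)<\MA(\gamma_E)$ is stated backwards: the strict inequality follows solely from $H(z_0)<0$, which holds because the $C^2$-small perturbation of $h\circ H_X$ near the origin is chosen non-positive, and not from $|H(z_0)|$ being small. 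Indeed making $|H(z_0)|$ smaller pushes $E(u)$ \emph{closer} to $\MA(\gamma_E)$ and would weaken, not help, the estimate. Unlike the $\hat{N}$ case, where one really does need $\MA_H(\gamma_-)$ close to $\MA(\gamma)$ (hence $a$ close to $1$), at this step no fine-tuning is required: the single sign condition $\MA_H(z_0)>0$ already gives the bubbling bound.
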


\begin{proof}
The proof of Lemma \ref{lem:compactness_mixed_E} is similar to the proof of Lemma \ref{lem:compactness_mixed_partialX}. We use energy considerations to rule out all possible degenerations of hybrid Floer cylinders in $\MM(\hat{E},\gamma_E,z_0,H^E,J)$.

The appropriate notion of energy of hybrid Floer cylinders is defined as follows. Recall that $E^+=[0,\infty)\times F$ is attached to the collar neighborhood $(-\epsilon,0]\times F$ of the  boundary of $E$. Fix a smooth function $\varphi:(-\epsilon,\infty)\to(0,1)$ such that $\varphi'> 0$ everywhere, $\varphi(s)=e^s$ for $s\in(-\epsilon,-\epsilon/2)$, and $\lim_{s\rightarrow +\infty}\varphi(s) = 1$. We define a smooth symplectic form $\omega_\varphi$ on $\hat{E}$ by 
\[
\omega_{\varphi}:=d\lambda_\varphi,\qquad \lambda_{\varphi}:=
\begin{cases}
\lambda_0 & \textrm{ on } E\setminus ((-\epsilon,0]\times  F)\\
\varphi\lambda_E & \textrm{ on } (-\epsilon,\infty)\times F.
\end{cases}
\]
Let $\epsilon>0$ be so  small that $H^E=0$ on $(-\epsilon,\infty)\times F$. Then the Hamiltonian vector field $X_{H^E}$ of $H^E$ with respect to $d\lambda_0$ agrees with the Hamiltonian vector field of $H^E$ with respect to $\omega_\varphi$.

Let $J$ be an admissible almost complex structure on $\hat{E}$. Note that $J$ is $\omega_\varphi$-compatible. The energy of $u\in\MM^\#(\hat{E}, \gamma_E, z_0,H^E,J)$ is defined by
\begin{equation*}
E(u) \coloneqq \int_{\R\times \T} \bigl( u^*\omega_{\varphi}-u^*dH^E\wedge dt \bigr) = \int_{\R\times\T}\omega_{\varphi}(\partial_su,J(u)\partial_su)\,ds\,dt\geq 0.
\end{equation*}
The critical points of the action functional 
\begin{equation*}
\MA_{H^E,\varphi}: C^\infty(\T,\hat{E})\to\R,\qquad \MA_{H^E,\varphi}(x)= \int_\T x^*\lambda_\varphi-\int_\T H^E(t,x(t))\,dt
\end{equation*}
are precisely the $1$-periodic orbits of $X_{H^E}$. Cylinders in $\MM^\#(\hat{E}, \gamma_E, z_0,H^E,J)$ are positive gradient flow lines of $\MA_{H^N,\varphi}$ with respect to the $L^2$-metric induced by $\omega_\varphi(\cdot,J\cdot)$. Using this we compute, for any $-\infty< s_0< s_1<\infty$,
\begin{equation}
\label{eq:action_mixed_cylinder_bottom} 
\MA_{H^E,\varphi}(u(s_1,\cdot))-\MA_{H^E,\varphi}(u(s_0,\cdot)) = \int_{(s_0,s_1)\times \T}\omega_{\varphi}(\partial_su,J(u)\partial_su)\,ds\,dt \geq 0.
\end{equation}
Moreover, we have
\begin{equation*}
E(u) = \lim_{s\rightarrow +\infty}\MA_{H^E,\varphi}(u(s,\cdot)) - \lim_{s\rightarrow -\infty} \MA_{H^E,\varphi}(u(s,\cdot)) = \MA(\gamma_E) - \MA_H(z_0).
\end{equation*}

Consider a sequence of hybrid Floer cylinders $u_k\in \MM^\#(\hat{E},\gamma_E,z_0,H^E,J)$. Note that every punctured finite energy sphere in $\hat{E}$ or $\R\times F$ must have at least one positive asymptotic Reeb orbit. Arguing similarly as in the proof of Lemma \ref{lem:compactness_mixed_partialX}, we therefore see that the formation of a bubble requires energy at least $\MA(\gamma_E)$, the minimal action of a periodic Reeb orbit of $F$. However, the action $\MA_H(z_0)$ is strictly positive, implying that the energy $E(u_k) = \MA(\gamma_E) - \MA_H(z_0)$ is strictly smaller than $\MA(\gamma_E)$. This rules out bubbling. The remaining potential degenerations of $(u_k)_k$ are breaking as a pseudoholomorphic cylinder at a periodic Reeb orbit of $F$ and breaking as a Floer cylinder at a $1$-periodic Hamiltonian orbit. The former type of breaking can be ruled out because there does not exist any periodic Reeb orbit $\alpha$ on $F$ with action $\MA(\alpha) \in (\MA_H(z_0),\MA(\gamma_E))$. The latter type of breaking cannot occur because there does not exist any $1$-periodic orbit $\beta$ of $X_{H^E}$ with action $\MA_{H^E,\varphi}(\beta) \in (\MA_H(z_0),\MA(\gamma_E))$. We conclude that $\MM(\hat{E},\gamma_E,z_0,H^E,J)$ is compact.
\end{proof}

By linearizing the Floer equation in \eqref{eq:mixed_moduli_space_E} at $u\in\MM^\#(\hat{E},\gamma_E,z_0,H^E,J)$, we obtain a real Cauchy-Riemann type operator
\[
D_u:V\oplus W^{1,p,\delta}(u^*T\hat{E}) \rightarrow L^{p,\delta}(u^*T\hat{E}).
\]
Here, $V$ is a $2$-dimensional real vector space spanned by two smooth sections of $u^*T\hat{E}$ which are supported in the positive half cylinder and, near the positive puncture, are constant equal to $\partial_a$ and $R_{\lambda_E}$, respectively.  As before, the weight $\delta>0$ in the Sobolev and Lebesgue spaces $W^{1,p,\delta}$ and $L^{p,\delta}$ is chosen sufficiently small. With respect to the decomposition $\gamma_E^*\langle \partial_a, R_{\lambda_E}\rangle\oplus \gamma_E^*\operatorname{ker}\lambda_E$, the positive asymptotic operator of $D_u$ is given by $A_0\oplus A_{\gamma_E}$, where $A_0 = -J\partial_t$ and $A_{\gamma_E}$ is the asymptotic operator of the periodic Reeb orbit $\gamma_E$. The negative asymptotic operator of $D_u$ is $A_{H^E,z_0}=A_{H,z_0}$. Similarly to the situation in Section \ref{top}, the small asymptotic weight $\delta$ is really only necessary in the direction $\gamma_E^*\langle \partial_a,R_{\lambda_E}\rangle$ at the positive end, but it does no harm to impose it everywhere. The operator $D_u$ is Fredholm with index 
\begin{equation*}
\begin{split}
\operatorname{ind}(D_u)&=\mathrm{CZ}^{-\delta}(A_0)+\mathrm{CZ}^{-\delta}(A_{\gamma_E})-\mathrm{CZ}^{\delta}(A_{H,z_0})+\dim V\\
&=\mathrm{CZ}^{-\delta}(A_0)+\mathrm{CZ}(A_{\gamma_E})-\mathrm{CZ}(A_{H,z_0})+\dim V\\
&=-1+(n+1)-n+2=2.
\end{split}
\end{equation*}
Note that the hybrid cylinder $u$ is pseudoholomorphic on the complement of $U_{z_0}$. Since the asymptotic Reeb orbit $\gamma_E$ of $u$ is simple, $u$ must have an injective point in $\operatorname{int}(E) \setminus \overline{U}_{z_0}$, i.e. there exists $(s,t)\in \R\times \T$ such that $u(s,t) \in \operatorname{int}(E)\setminus \overline{U}_{z_0}$, the linearization $du(s,t)$ is injective, and $u^{-1}(u(s,t)) = \left\{ (s,t) \right\}$. It therefore follows from standard transversality theory (see e.g. \cite[Chapter 7]{wen16b}) that for a generic choice of admissible almost complex structure $J$ on $\hat{E}$, every hybrid cylinder $u\in \MM^\#(\hat{E},\gamma_E,z_0,H^E,J)$ is regular, i.e. has a surjective linearized operator $D_u$. For such a $J$, the moduli space $\MM(\hat{E},\gamma_E,z_0,H^E,J)$ is a smooth compact $0$-dimensional manifold. The algebraic count $\#\MM(\hat{E},\gamma_E,z_0,H^E,J)$ canonically takes values in $\mathfrak{o}_{\gamma_E}^*\otimes \mathfrak{o}_{z_0}$.

\begin{lem}
The count $\#\MM(\hat{E},\gamma_E,z_0,H^E,J) \in \mathfrak{o}_{\gamma_E}^*\otimes \mathfrak{o}_{z_0}$ is independent of the choice of generic admissible almost complex structure $J$ on $\hat{E}$.
\end{lem}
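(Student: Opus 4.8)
The plan is to run the standard cobordism argument, in exact parallel with the proof of Lemma \ref{inv_top}. Let $J^0$ and $J^1$ be two generic admissible almost complex structures on $\hat{E}$. Choose a smooth homotopy $(J^\rho)_{\rho\in[0,1]}$ of admissible almost complex structures interpolating between them, with fixed endpoints, and form the parametrized moduli space
\[
\MM(\hat{E},\gamma_E,z_0,H^E,(J^\rho)_{\rho\in[0,1]}):=\{(\rho,u)\mid \rho\in[0,1],\ u\in \MM(\hat{E},\gamma_E,z_0,H^E,J^\rho)\}.
\]
First I would argue that for a generic choice of homotopy this is a smooth $1$-dimensional manifold with boundary, the boundary being the disjoint union of the fibers over $\rho=0$ and $\rho=1$. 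This rests on the same transversality input used in the unparametrized case: every hybrid Floer cylinder $u$ is pseudoholomorphic on the complement of $U_{z_0}$ and, since $\gamma_E$ is a simple Reeb orbit, $u$ has an injective point lying in $\operatorname{int}(E)\setminus\overline{U}_{z_0}$, so the usual Sard--Smale argument (cf.\ \cite[Chapter 7]{wen16b}) applies with $J$ constrained to remain admissible.

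The crux of the proof is compactness of the parametrized moduli space, and here the argument is a verbatim copy of the proof of Lemma \ref{lem:compactness_mixed_E}. Equip $\hat{E}$ with the symplectic form $\omega_\varphi=d\lambda_\varphi$ constructed there, with respect to which each $J^\rho$ is compatible. Every element $u$ of the parametrized space has energy $E(u)=\MA(\gamma_E)-\MA_H(z_0)$, which is strictly smaller than $\MA(\gamma_E)$ because $\MA_H(z_0)>0$; since any finite energy sphere bubbling off in $\hat{E}$ or in $\R\times F$ carries at least $\MA(\gamma_E)$ of $d\lambda_E$-energy, no bubbling can occur. Breaking as a pseudoholomorphic cylinder at a periodic Reeb orbit on $F$, or as a Floer cylinder at a $1$-periodic orbit of $X_{H^E}$, is excluded because there is no periodic Reeb orbit on $F$ and no $1$-periodic orbit of $X_{H^E}$ whose action lies in the window $(\MA_H(z_0),\MA(\gamma_E))$. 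All of these estimates are uniform in $\rho$, so the argument goes through word for word.

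Finally, the orientation bookkeeping is identical to the unparametrized setting: the Floer--Hofer kernel gluing operation identifies the orientation local system of $\MM^\#(\hat{E},\gamma_E,z_0,H^E,(J^\rho)_{\rho\in[0,1]})$ with $\mathfrak{o}_{\gamma_E}\otimes\mathfrak{o}_{z_0}^*$, and after quotienting out the $(\R\times\T)$-action and recording the $[0,1]$-factor one obtains an oriented cobordism, inside the line $\mathfrak{o}_{\gamma_E}^*\otimes\mathfrak{o}_{z_0}$, between $\MM(\hat{E},\gamma_E,z_0,H^E,J^0)$ and $\MM(\hat{E},\gamma_E,z_0,H^E,J^1)$. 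Since the signed count of the boundary of a compact oriented $1$-manifold is zero, the two counts agree. I do not expect any genuine obstacle; the only point deserving care is that admissibility of $J$ is compatible with achieving transversality for the $1$-parameter family, which is precisely what the injective-point argument above delivers.
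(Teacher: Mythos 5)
Your proof is correct and follows exactly the route the paper intends: the paper's own proof is the one-line ``This follows from an easy adaptation of the proofs of Lemmas \ref{inv_top} and \ref{lem:compactness_mixed_E},'' and you have simply carried out that adaptation in full, combining the cobordism argument of Lemma \ref{inv_top} with the bubbling/breaking exclusion via the $\omega_\varphi$-energy of Lemma \ref{lem:compactness_mixed_E} and the injective-point transversality argument already set up in Section \ref{cylinder_E}.
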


\begin{proof}
This follows from an easy adaptation of the proofs of Lemmas \ref{inv_top} and \ref{lem:compactness_mixed_E}.
\end{proof}

\subsection{Stretching the neck}
\label{neck-stretching1}
The hypersurfaces $F$ and $Y$ divide $\R^{2n}$ into three pieces $E$, $M=X\setminus\mathrm{int}(E)$, and $\hat{N}\setminus \operatorname{int}(N^-)$. We want to stretch the neck along $F$ and $Y$. To this end, we fix a sufficiently small constant $\epsilon>0$ and consider a family of smooth manifolds 
\[
\hat{W}_R := E\underset{\alpha_R^-}{\cup}\big((-R-\epsilon,\epsilon)\times F\big)\underset{\alpha_R^+}{\cup} M \underset{\beta_R^-}{\cup} \big((-R-\epsilon,\epsilon)\times Y\big) \underset{\beta_R^+}{\cup} (\hat{N}\setminus \operatorname{int}(N^-))
\]
parametrized by $R\geq0$, where the attaching maps are given by
\begin{alignat*}{2}
&\alpha_R^-:(-R-\epsilon,-R]\times F\to E,\qquad &&(t,x)\mapsto \phi_{Z}^{t+R}(x), \\
&\alpha_R^+:[0,\epsilon)\times F\to M,\quad 	&&(t,x)\mapsto \phi_{Z}^{t}(x),\\
&\beta_R^-:(-R-\epsilon,-R]\times Y\to M,\qquad &&(t,x)\mapsto \phi_{Z}^{t+R}(x), \\
&\beta_R^+:[0,\epsilon)\times Y\to \hat{N}\setminus \operatorname{int}(N^-),\quad 	&&(t,x)\mapsto \phi_{Z}^{t}(x).
\end{alignat*}
Here, $\phi_Z^t$ denotes the flow of the radial Liouville vector field $Z$. 
Let $J^E$, $J^M$, and $J^N=(J^N_t)_{t\in \T}$ be admissible almost complex structures on $\hat{E}$, $\hat{M}$, and $\hat{N}$, respectively. We assume that they are generic in the sense of Sections \ref{top}, \ref{middle}, and \ref{lowest}, and that there exist admissible almost complex structures $J^F$ and $J^Y$ on the symplectizations $\R\times F$ and $\R\times Y$, respectively, such that
\[
\begin{split}
&J^E=J^F \mbox{ on } E^+=[0,\infty)\times F,\qquad J^M=J^F \mbox{ on } M^-=(-\infty,0]\times F,\\
&J^M=J^Y \mbox{ on } N^+=[0,\infty)\times Y,\qquad J^N=J^Y \mbox{ on } N^-=(-\infty,0]\times Y.
\end{split}
\]
Gluing such a triple $(J^E,J^M,J^N)$ defines an almost complex structures $J_R$ on $\hat{W}_R$. Similarly, we glue the Hamiltonians $H^E$ on $\hat{E}$ and $H^N$ on $\hat{N}$ to obtain a Hamiltonian $H_R$ on $\hat{W}_R$. To be precise, we set
\[
(H_R,J_R):=
\begin{cases}
(H^E,J^E) & \mbox{ on } 	E,\\
(\;0\;,J^M) & \mbox{ on }  ((-R,\epsilon)\times F) \cup M  \cup ((-R-\epsilon,0)\times Y),\\
(H^N,J^N) & \mbox{ on } \hat{N}\setminus \operatorname{int}(N^-).
\end{cases}
\]

\begin{lem}
\label{lem:neckstretching_compactness}
Let $(R_k)_k$ be a sequence of positive real numbers diverging to $+\infty$ and let $u_k\in  {\mathcal{M}}_{\partial^F}(\hat{W}_{R_k},\gamma_-,z_0;H_{R_k},J_{R_k})$. After passing to a subsequence, the sequence $(u_k)_k$ converges in the sense of SFT to a hybrid Floer/pseudoholomorphic building with three levels 
\[
\big(u^N,u^M,u^E\big)\in\MM(\hat{N},\gamma_-,\gamma, H^N,J^N) \times  \MM(\hat{M},\gamma, \gamma_E,J^M) \times \MM(\hat{E},\gamma_E,z_0,H^E,J^E).
\]
\end{lem}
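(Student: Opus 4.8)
The plan is to apply the SFT compactness theorem of \cite{BEHWZ03} to the sequence $(u_k)_k$ and then to identify the levels of the limiting building using the energy bookkeeping already established in Sections \ref{top}, \ref{middle}, and \ref{lowest}. First I would observe that each $u_k$ is a solution of a Floer-type equation which is genuinely pseudoholomorphic away from the fixed compact region $U_{z_0}\subset E$ where $H_{R_k}$ is non-autonomous and non-zero, and that the Hamiltonian perturbation is uniformly controlled (independent of $R_k$). The total energy is uniformly bounded: tracing through the action computations in the preceding subsections, $E(u_k) = \MA_H(\gamma_-) - \MA_H(z_0)$, independent of $k$. This uniform energy bound, together with the uniform control on the Hamiltonian term, puts us in the setting of the SFT compactness theorem for sequences with stretched necks along $F$ and $Y$; passing to a subsequence, $(u_k)_k$ converges to a holomorphic building whose levels live in $\hat E$, $\R\times F$, $\hat M$, $\R\times Y$, and $\hat N$, with matching asymptotic Reeb orbits at the breaking circles and a Floer cylinder rather than a pseudoholomorphic one only in the levels containing $U_{z_0}$ (i.e.\ in $\hat E$ and in $\hat N$, where $H^E$ and $H^N$ are supported).

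Next I would rule out all but the claimed three-level configuration by the same Fredholm index and action arguments used to prove Lemmas \ref{lem:compactness_mixed_partialX}, \ref{lem:compactness_hol_generic}, and \ref{lem:compactness_mixed_E}. The key facts are: $\gamma$ is the unique Reeb orbit on $Y$ of action $\le \MA_H(\gamma_-)$, so no nontrivial pseudoholomorphic sphere with only positive ends asymptotic to orbits on $Y$ of such small action can bubble off, and no breaking along $Y$ at an orbit other than $\gamma$ is possible; similarly $\gamma_E$ is the unique Reeb orbit on $F$ with Conley--Zehnder index $\le n+2$, and all other orbits on $F$ (including multiple covers) have index $\ge n+3$, which forces any piece in $\hat M$ or $\R\times F$ to have exactly one negative puncture, asymptotic to $\gamma_E$. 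As in the cited lemmas, a level in the symplectization $\R\times F$ or $\R\times Y$ with exactly one positive and one negative puncture asymptotic to the same orbit must be a trivial cylinder, hence can be discarded; and the index constraint $\operatorname{ind}(u^M) = \operatorname{CZ}(\gamma) - \operatorname{CZ}(\gamma_E) = 0$ forces the middle level to be a single cylinder in $\MM(\hat M,\gamma,\gamma_E,J^M)$ rather than a building. The level containing $z_0$ must lie in $\hat E$ and be a hybrid Floer cylinder positively asymptotic to $\gamma_E$ and converging to $z_0$, hence an element of $\MM(\hat E,\gamma_E,z_0,H^E,J^E)$; by the same reasoning, the level containing $\gamma_-$ must lie in $\hat N$ and be a hybrid Floer cylinder negatively asymptotic to $\gamma$ and converging to $\gamma_-$, i.e.\ an element of $\MM(\hat N,\gamma_-,\gamma,H^N,J^N)$. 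Connectivity of the building then matches the three pieces along $\gamma$ and $\gamma_E$, yielding exactly $(u^N, u^M, u^E)$ in the asserted product.

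I would then note two points that need a small amount of care. First, the breaking along $F$ and $Y$ could a priori produce additional trivial-cylinder levels in $\R\times F$ or $\R\times Y$; these are absorbed into the neck-stretching and contribute nothing new, exactly as in the standard SFT compactness statement, so the building genuinely has just the three nontrivial levels. Second, one must check that the SFT limit does not degenerate further inside $\hat E$ or $\hat N$ (e.g.\ by a cylinder breaking at an intermediate Hamiltonian $1$-periodic orbit or at a periodic Reeb orbit): this is precisely excluded by the action-window arguments of Lemmas \ref{lem:compactness_mixed_partialX} and \ref{lem:compactness_mixed_E}, since $\MA_H(\gamma_-)$ and $\MA_H(z_0)$ are arranged so that no $1$-periodic Hamiltonian orbit of $X_{H^N}$ has action in $(\MA(\gamma),\MA_H(\gamma_-))$, no $1$-periodic orbit of $X_{H^E}$ has action in $(\MA_H(z_0),\MA(\gamma_E))$, and no periodic Reeb orbit of $Y$ or $F$ lies in the corresponding windows.

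The main obstacle, in my estimation, is purely bookkeeping rather than conceptual: one must carefully track how the (bounded) Hamiltonian perturbation interacts with the neck-stretching compactness theorem — i.e.\ confirm that the standard SFT compactness machinery of \cite{BEHWZ03} applies verbatim to these ``hybrid'' Floer/pseudoholomorphic equations, where the equation is a genuine Floer equation in the fixed regions $U_{z_0}\subset E$ and in $\hat N\setminus N^-$ but a pure $\bar\partial_J$ equation along the growing necks — and verify that breaking can only occur at Reeb orbits in the neck and at Hamiltonian orbits in the Floer regions, never at a mixture. Once this is granted, the identification of the levels is dictated entirely by the index and action estimates already in place.
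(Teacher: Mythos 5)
Your proposal follows essentially the same route as the paper's proof: apply SFT compactness to the sequence with stretched necks, then use the action-window arguments of Lemmas \ref{lem:compactness_mixed_partialX} and \ref{lem:compactness_mixed_E} to rule out intermediate Hamiltonian breaking and negative Reeb punctures in the $\hat N$ and $\hat E$ levels, and the Fredholm-index/dynamical-convexity argument of Lemma \ref{lem:compactness_hol_generic} (plus simplicity and minimality of $\gamma$ and $\gamma_E$) to force the middle piece to be a single cylinder in $\MM(\hat M,\gamma,\gamma_E,J^M)$ with no symplectization levels surviving. The ``bookkeeping" concern you flag about whether SFT compactness applies verbatim to the hybrid equation is treated by the paper just as lightly, by invoking SFT compactness together with standard Floer-cylinder compactness and noting that the Hamiltonian term is supported in fixed compact regions.
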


\begin{proof}
Lemma \ref{lem:neckstretching_compactness} follows from essentially the same energy and index considerations we already used in the proofs of Lemmas \ref{lem:compactness_mixed_partialX}, \ref{lem:compactness_hol_generic} and \ref{lem:compactness_mixed_E}.

By the SFT compactness theorem and standard compactness results for Floer cylinders, we may pick a subsequence such that $u_k$ converges to a hybrid Floer/pseudoholomorphic building. A priori, the $\hat{N}$-level of the limiting building could consist of a broken chain of Floer cylinders, possibly with punctures negatively asymptotic to periodic Reeb orbits of $Y$. There are no $1$-periodic orbits $\alpha$ of $X_{H^N}$ in $\hat{N}$ with action $\MA_{H^N,\varphi}(\alpha)\in (\MA_H(z_0),\MA_H(\gamma_-))$, where $\MA_{H^N,\varphi}$ is the action functional introduced in the proof of Lemma \ref{lem:compactness_mixed_partialX}. Hence we can rule out breaking of $u_k$ at a Hamiltonian orbit in $\hat{N}$ and conclude that the $\hat{N}$-level of the limiting building must be a single punctured Floer cylinder $u^N:(\R\times \T)\setminus\Gamma \rightarrow \hat{N}$ which, as $s\rightarrow +\infty$, is asymptotic to $\gamma_-$ and, as $s\rightarrow -\infty$, is negatively asymptotic to some periodic Reeb orbit $\gamma_{-\infty}$ of $Y$. At each puncture $z\in \Gamma$, the cylinder $u^N$ is negatively asymptotic to some periodic Reeb orbit $\gamma_z$. The energy of hybrid Floer cylinders introduced in the proof of Lemma \ref{lem:compactness_mixed_partialX} also makes sense for the punctured Floer cylinder $u^N$:
\begin{equation*}
E(u^N) = \int_{(\R\times \T)\setminus \Gamma} \bigl( (u^N)^*\omega_\varphi - (u^N)^*dH^N\wedge dt \bigl) = \int_{(\R\times \T)\setminus\Gamma} \omega_\varphi(\partial_su^N,J(u^N)\partial_su^N) \,ds \,dt \geq 0.
\end{equation*}
It satisfies
\begin{equation}
\label{eq:neckstretching_compactness_proof_punctured_energy}
E(u^N) = \MA_H(\gamma_-) - \MA(\gamma_{-\infty}) - \sum_{z\in \Gamma} \MA(\gamma_z).
\end{equation}
Recall that $\gamma$ is the only periodic Reeb orbit of $Y$ of action at most $\MA_H(\gamma_-)$ and that $\MA_H(\gamma_-)$ is arranged to be very close to $\MA(\gamma)$. Thus it follows from \eqref{eq:neckstretching_compactness_proof_punctured_energy} and non-negativity of the energy $E(u^N)$ that $\Gamma = \emptyset$ and $\gamma_{-\infty} = \gamma$. In other words, the $\hat{N}$-level of the limiting building is a hybrid Floer cylinder $u^N\in \MM(\hat{N},\gamma_-,\gamma,H^N,J^N)$.

The limiting building cannot have a level in the symplectization $\R\times Y$. The reason is that because $\gamma$ is the Reeb orbit of minimal action, the only punctured pseudoholomorphic curve in $\R\times Y$ which is positively asymptotic to $\gamma$ and has at least one negative puncture is the trivial cylinder over $\gamma$.

The level of the limiting building contained in $\hat{M}$ must be a punctured pseudoholomorphic sphere positively asymptotic to $\gamma$ and with at least one negative puncture. As shown in the proof of Lemma \ref{lem:compactness_hol_generic}, the only possibility for such a punctured sphere to have non-negative Fredholm index is to be a cylinder negatively asymptotic to $\gamma_E$. Since the almost complex structure is chosen generically, the Fredholm index must be non-negative. Thus the second level of the limiting building is a pseudoholomorphic cylinder $u^M\in \MM(\hat{M},\gamma,\gamma_E,J^M)$.

Since $\gamma_E$ is the Reeb orbit of $F$ of smallest action, the same reasoning we used to rule out a level in $\R\times Y$ also rules out a level in $\R\times F$.

Finally, we observe that we cannot have breaking at $1$-periodic Hamiltonian orbits in $\hat{E}$ because $z_0$ is the only Hamiltonian orbit in this region with positive action. This shows that the $\hat{E}$-level of the limiting building is a hybrid Floer cylinder $u^E\in \MM(\hat{E},\gamma_E,z_0,H^E,J^E)$.
\end{proof}

\begin{lem}
\label{lem:gluing1}
For every sufficiently large $R> 0$, there exists a gluing homeomorphism
\[
\MM(\hat{N},\gamma_-,\gamma, H^N,J^N) \times  \MM(\hat{M},\gamma, \gamma_E,J^M) \times \MM(\hat{E},\gamma_E,z_0,H^E,J^E) \to {\mathcal{M}}_{\partial^F}(\hat{W}_R,\gamma_-,z_0;H_R,J_R).
\]
The isomorphism between orientation local systems induced by this gluing homeomorphism agrees with the natural isomorphism $(\mathfrak{o}_{\gamma_-}\otimes\mathfrak{o}_\gamma^*)\otimes (\mathfrak{o}_{\gamma}\otimes \mathfrak{o}_{\gamma_E}^*)\otimes (\mathfrak{o}_{\gamma_E} \otimes \mathfrak{o}_{z_0}^*) \cong \mathfrak{o}_{\gamma_-}\otimes \mathfrak{o}_{z_0}^*$. In particular, 
\begin{multline*}
|\#\MM(\hat{N},\gamma_-,\gamma, H^N,J^N)|\cdot 
|\#\MM(\hat{M},\gamma, \gamma_E,J^M)|\cdot
|\#\MM(\hat{E},\gamma_E,z_0,H^E,J^E)| \\
=
|\# {\mathcal{M}}_{\partial^F}(\hat{W}_R,\gamma_-,z_0;H_R,J_R)|.
\end{multline*}
\end{lem}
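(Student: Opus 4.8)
The plan is to prove Lemma~\ref{lem:gluing1} by the standard neck-stretching gluing construction, combined with the compatibility of nonlinear gluing with the Floer--Hofer linear gluing on orientation lines. If one of the three moduli spaces on the left is empty, then by Lemma~\ref{lem:neckstretching_compactness} the target moduli space is also empty for all large $R$, and the asserted equality reads $0=0$; so assume all three factors are nonempty. Recall that, by the choices made in Sections~\ref{top}, \ref{middle} and~\ref{lowest}, each of $\MM(\hat{N},\gamma_-,\gamma,H^N,J^N)$, $\MM(\hat{M},\gamma,\gamma_E,J^M)$ and $\MM(\hat{E},\gamma_E,z_0,H^E,J^E)$ is a compact, regular, zero-dimensional manifold, hence a finite set; equivalently, for any representative of each factor the associated deformation operator, reduced modulo the relevant reparametrization action ($\R$ for the $\hat{N}$-level, $\operatorname{Aut}(\R\times\T,j)$ for the $\hat{M}$-level, $\R\times\T$ for the autonomous $\hat{E}$-level), is an isomorphism. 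Moreover $\gamma_-$, $z_0$ are nondegenerate Hamiltonian orbits and $\gamma$, $\gamma_E$ are nondegenerate Reeb orbits which are simple, so there is a unique asymptotic marker at each of the two interior orbits and no ambiguity in the gluing.

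First I would carry out the pre-gluing. Given a triple $(u^N,u^M,u^E)$ and a large gluing length $R$, one inserts long trivial cylinders over $\gamma$ and $\gamma_E$ into the two neck regions $(-R-\epsilon,\epsilon)\times Y$ and $(-R-\epsilon,\epsilon)\times F$ of $\hat{W}_R$ and interpolates between $u^N$, $u^M$, $u^E$ and these trivial cylinders using cut-off functions supported where the curves are already exponentially $C^\infty$-close to the trivial cylinders; this is possible by the exponential convergence recalled in Section~\ref{subsec:asymptotics_at_punctures}, and is especially clean here because $H_R\equiv 0$ on the entire neck region and on $M$, so those parts are genuinely $J_R$-holomorphic. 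The resulting approximate solution $u^{\mathrm{app}}_R$ has Floer error $\overline{\partial}$-equation norm decaying exponentially in $R$ in the relevant weighted Sobolev spaces, and the linearized operator $D_{u^{\mathrm{app}}_R}$ admits a right inverse whose norm is bounded uniformly in $R$ (obtained by gluing the bounded right inverses of the three reduced deformation operators and correcting). A quantitative inverse function theorem then produces, for all sufficiently large $R$, a unique genuine Floer cylinder $u_R$ near $u^{\mathrm{app}}_R$; this construction descends to a well-defined injective map from the product of the three moduli spaces into $\mathcal{M}_{\partial^F}(\hat{W}_R,\gamma_-,z_0;H_R,J_R)$, and it also shows that the target moduli space is regular at each $u_R$, hence a finite set. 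Surjectivity for $R$ large follows from Lemma~\ref{lem:neckstretching_compactness}: if there were, for $R_k\to\infty$, elements of the target not in the image, a subsequence would converge to a three-level building as in the lemma, contradicting uniqueness in the gluing construction for $R$ large. Since both sides are finite discrete sets, the resulting bijection is a homeomorphism.

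It remains to identify the effect of this homeomorphism on orientations. By the constructions in Sections~\ref{top}, \ref{middle}, \ref{lowest} and in Section~\ref{subsec:fredholm_theory} (see \eqref{eq:isomorphism_orientation_lines_moduli_space_and_orbits}), the orientation local systems of the three factors are canonically $\mathfrak{o}_{\gamma_-}\otimes\mathfrak{o}_\gamma^*$, $\mathfrak{o}_\gamma\otimes\mathfrak{o}_{\gamma_E}^*$ and $\mathfrak{o}_{\gamma_E}\otimes\mathfrak{o}_{z_0}^*$, while that of $\mathcal{M}_{\partial^F}(\hat{W}_R,\gamma_-,z_0;H_R,J_R)$ is $\mathfrak{o}_{\gamma_-}\otimes\mathfrak{o}_{z_0}^*$. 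The linearized operator $D_{u_R}$ is, up to a small correction absorbed by the uniformly bounded right inverse, a Floer--Hofer glued operator of $D_{u^N}$, $D_{u^M}$, $D_{u^E}$ along the nondegenerate asymptotic operators $A_\gamma$ and $A_{\gamma_E}$; by the standard fact that linear gluing computes the orientation effect of nonlinear gluing (cf.\ the naturality of the Floer--Hofer kernel gluing operation, \eqref{eq:floer_hofer_kernel_gluing}--\eqref{eq:floer_hofer_kernel_gluing_naturality}), the isomorphism of orientation lines induced by the gluing homeomorphism is the iterated Floer--Hofer kernel-gluing isomorphism. Contracting the two dual pairs $\mathfrak{o}_\gamma^*\otimes\mathfrak{o}_\gamma$ and $\mathfrak{o}_{\gamma_E}^*\otimes\mathfrak{o}_{\gamma_E}$, this is precisely the natural isomorphism $(\mathfrak{o}_{\gamma_-}\otimes\mathfrak{o}_\gamma^*)\otimes(\mathfrak{o}_\gamma\otimes\mathfrak{o}_{\gamma_E}^*)\otimes(\mathfrak{o}_{\gamma_E}\otimes\mathfrak{o}_{z_0}^*)\cong\mathfrak{o}_{\gamma_-}\otimes\mathfrak{o}_{z_0}^*$ asserted in the lemma. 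Summing over the bijectively matched points of the moduli spaces, the signed count $\#\mathcal{M}_{\partial^F}(\hat{W}_R,\gamma_-,z_0;H_R,J_R)$ equals the image of $\#\MM(\hat{N},\dots)\otimes\#\MM(\hat{M},\dots)\otimes\#\MM(\hat{E},\dots)$ under this contraction, and taking absolute values yields the displayed multiplicative identity. The main obstacle is the gluing analysis itself — establishing the uniform bound on the right inverse simultaneously across the two necks and verifying that the orientation-line isomorphism produced by the nonlinear gluing is the algebraic Floer--Hofer one — but both are by now standard, and the hybrid Floer/pseudoholomorphic nature of the curves adds no new difficulty since the Hamiltonian vanishes identically on the whole neck region where the gluing takes place.
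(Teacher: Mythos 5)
Your proposal is correct and takes essentially the same approach as the paper: the paper's own proof simply cites standard gluing results (Pardon's gluing construction for contact homology and Hutchings--Taubes gluing in ECH, noting that the hybrid nature of the cylinders poses no issue since they are genuinely pseudoholomorphic in the neck regions), while you carry out the details of that standard pre-gluing/Newton-iteration argument and the Floer--Hofer orientation compatibility that those references establish.
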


\begin{proof}
Lemma \ref{lem:gluing1} follows from Lemma \ref{lem:neckstretching_compactness} and a gluing construction for punctured pseudoholomorphic curves asymptotic to periodic Reeb orbits. Gluing results in this setting have appeared in \cite{par19} in the context of contact homology and in \cite{ht07, ht09} in the context of embedded contact homology. The gluing technique described in \cite[Section 5]{par19} can be applied to our present setting without significant changes. The fact that the hybrid Floer cylinders under consideration are not pseudoholomorphic away from the gluing region does not affect the gluing construction.
\end{proof}

\begin{cor}
\label{cor:cylinder_count}
We have $|\#\MM(\hat{M},\gamma,\gamma_E,J^M)|=1$.
\end{cor}

\begin{proof}
Let $\psi_R:\hat{W}_R\to \R^{2n}$ be the diffeomorphism satisfying $H_R=H\circ \psi_R$ mentioned above. This induces a bijection
 \[
 {\mathcal{M}}_{\partial^F}(\gamma_-,z_0;H,(\psi_R)_*J_R) \cong  {\mathcal{M}}_{\partial^F}(\hat{W}_R,\gamma_-,z_0;H_R,J_R),
 \]
where elements in the the moduli space on the left are Floer cylinders in $\R^{2n}$, see Section \ref{flosubsec}. The induced map of orientation local systems is simply the identity map of $\mathfrak{o}_{\gamma_-}\otimes \mathfrak{o}_{z_0}^*$. By Proposition \ref{prop:floer_count_1}, the absolute value of the algebraic count of the moduli space on the left is equal to $1$. Hence Lemma \ref{lem:gluing1} implies that
\begin{equation*}
|\#\MM(\hat{N},\gamma_-,\gamma, H^N,J^N)|\cdot
|\#\MM(\hat{M},\gamma, \gamma_E,J^M)|\cdot
|\#\MM(\hat{E},\gamma_E,z_0,H^E,J)|=1,
\end{equation*}
from which we conclude that $|\#\MM(\hat{M},\gamma, \gamma_E,J^M)|=1$.
\end{proof}

\subsection{Proof of Theorem \ref{count-planes}}
\label{neck-stretching2}

Adapting the neck stretching argument in \cite{hwz03}, one can prove Theorem \ref{count-planes} for the ellipsoid $E$.

\begin{lem}
\label{lem:deg_ev_E}
For a generic admissible almost complex structure $J^E$ on $\hat{E}$, the absolute value of the degree of $\operatorname{ev}_{\hat{E}} : \MM_1(\hat{E},\gamma_E,{J}^E) \rightarrow \hat{E}$ is equal to $1$.
\end{lem}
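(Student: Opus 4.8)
The plan is to prove Lemma~\ref{lem:deg_ev_E} by a self-contained neck-stretching argument in which the ellipsoid $E$ plays the role of the convex domain $X$ and a very small ball $B$ around the minimum $z_0$ plays the role of the ellipsoid in the previous subsections. More precisely, I would first exploit the fact that the Reeb dynamics on $F=\partial E$ is completely explicit: the contact form $\lambda_E$ has exactly $n$ simple periodic orbits, of actions $a_1<\dots<a_n$, with $\gamma_E$ of action $a_1$ and Conley--Zehnder index $n+1$, and all other (including iterated) orbits of index $\ge n+3$. This dynamical convexity up to action $\mathcal{A}(\gamma_E)$ is exactly the hypothesis under which all the compactness, properness, automatic-regularity and degree-invariance statements of Section~\ref{sec:fast_planes} (Propositions~\ref{prop:fast_planes_compactness}--\ref{prop:degree_invariance}) apply to $\gamma=\gamma_E$ and $X=E$; since $\mathrm{CZ}(\gamma_E)=n+1$, in the case $n=2$ every plane asymptotic to $\gamma_E$ is fast, so $\operatorname{ev}_{\hat E}$ has a well-defined degree $\deg(\operatorname{ev}_{\hat E})$ which, by Proposition~\ref{prop:degree_invariance}, is independent of the choice of $J^E$ and agrees with the degree of $\operatorname{ev}_{\R\times F}$ on $\MM^{\operatorname{fast}}_1(\R\times F,\gamma_E,J^F)$. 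So it suffices to compute this degree for one convenient model, and the natural choice is the \emph{linear} complex structure on the symplectization of the standard contact $F$.

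The cleanest route from here, and the one I would take, is to invoke Theorem~\ref{thm:degree_characterizes_hopf} applied to $X=E$ and $\gamma=\gamma_E$: the theorem says $\deg(\operatorname{ev})=1$ if and only if $\gamma_E$ bounds an embedded symplectic disk, equivalently if and only if $\gamma_E$ is a Hopf orbit. But $\gamma_E$ \emph{is} a Hopf orbit --- the shortest closed characteristic on the boundary of an irrational ellipsoid in $\R^4$ is unknotted with self-linking number $-1$, since up to the obvious Liouville deformation it is a fiber of the Hopf fibration on $S^3$ (explicitly, $\gamma_E=\{z_2=0\}\cap F$, which bounds the embedded symplectic disk $\{z_2=0\}\cap E$ inside $E$). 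Hence by the equivalence (ii)$\Leftrightarrow$(i) in Theorem~\ref{thm:degree_characterizes_hopf} we get $\deg(\operatorname{ev}_{\hat E})=\pm1$, i.e.\ $|\deg(\operatorname{ev}_{\hat E})|=1$. I would present the argument in this order: (1) verify that the Reeb flow on $F$ is nondegenerate and dynamically convex up to action $\mathcal{A}(\gamma_E)$, so the whole Section~\ref{sec:fast_planes}/\ref{sec:detecting_hopf} machinery applies with $(X,\gamma)=(E,\gamma_E)$; (2) exhibit the embedded symplectic disk $\{z_2=0\}\cap E$ bounded by $\gamma_E$; (3) conclude via Theorem~\ref{thm:degree_characterizes_hopf}.

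If one instead wants the hands-on neck-stretching proof alluded to in the lemma (``adapting the neck stretching argument in \cite{hwz03}''), the structure would parallel Sections~\ref{top}--\ref{neck-stretching1}: take a tiny round ball $B$ with $\overline{U}_{z_0}\subset\operatorname{int}(B)\subset B\subset\operatorname{int}(E)$, write $\hat E$ as the gluing of $\hat B$, the cobordism $E\setminus\operatorname{int}(B)$, and its completion, stretch the neck along $\partial B$ and $F$, and show that a generic fast plane through a fixed point $(a,q)$ with $a$ large breaks into a plane in the symplectization $\R\times F$ (the one we want to count), a cylinder in the completion of $E\setminus\operatorname{int}(B)$ from $\gamma_E$ to the shortest orbit on $\partial B$, and a plane in $\hat B\cong\R^4$ through $z_0$; the last two counts are each $\pm1$ (the ball count is the classical HWZ computation, and the cobordism count is $1$ by the same index/compactness argument as Lemma~\ref{lem:compactness_hol_generic}), and a gluing theorem as in Lemma~\ref{lem:gluing1} then forces $|\deg(\operatorname{ev}_{\hat E})|=1$. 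The main obstacle in either approach is really bookkeeping rather than a genuine difficulty: in the first approach one must carefully check that $\gamma_E$'s transverse knot type is Hopf and that the hypotheses of Theorem~\ref{thm:degree_characterizes_hopf} are literally met by the irrational ellipsoid (nondegeneracy, dynamical convexity up to $\mathcal{A}(\gamma_E)$, simplicity of $\gamma_E$); in the second, the delicate point is controlling the compactness of the stretched moduli spaces near the two ``small'' orbits and verifying that the orientations multiply correctly under gluing, exactly as in Corollary~\ref{cor:cylinder_count}. I would use the first, shorter route.
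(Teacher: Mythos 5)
Your first route is correct for $n=2$ and is genuinely different from the paper's. The paper proves Lemma~\ref{lem:deg_ev_E} in arbitrary dimension $2n$: it symplectically embeds $E$ into $(\C\mathrm{P}^n, c\,\omega_{\mathrm{FS}})$, uses the Gromov--Witten fact that the count of lines through two generic points equals~$1$, and then stretches the neck along $F=\partial E$ so that a line through $p\in\operatorname{int}(E)$ and $q\in\operatorname{int}(\C\mathrm{P}^n\setminus E)$ splits into a plane in $\hat E$ through $p$ asymptotic to $\gamma_E$ and a curve in the completion of $\C\mathrm{P}^n\setminus\operatorname{int}(E)$ through $q$; an index argument pins down the latter and gluing forces both counts to be $\pm 1$. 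You instead invoke Theorem~\ref{thm:degree_characterizes_hopf} for $(X,\gamma)=(E,\gamma_E)$, which is logically available (Sections~\ref{sec:fast_planes} and~\ref{sec:detecting_hopf} precede Section~\ref{ns-stat} and are independent of it), and reduce the computation to the visibly true statement that $\gamma_E$ bounds the embedded symplectic disk $\{z_2=0\}\cap E$. That is shorter and cleaner --- provided $n=2$, which is all that the Main Theorem ultimately uses.

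However, there are two caveats you should register. First, Lemma~\ref{lem:deg_ev_E} is stated in $\R^{2n}$, and Section~\ref{ns-stat} is deliberately carried out in general dimension; Theorem~\ref{thm:degree_characterizes_hopf} and the whole fast-plane/Hopf machinery are intrinsically four-dimensional (the identification $\MM_1=\MM_1^{\mathrm{fast}}$ and the self-linking/winding arguments all live in $\R^4$), so your argument does not prove the lemma as stated --- it proves the $n=2$ case, which would require restricting Section~\ref{ns-stat} to $n=2$ throughout. This is a legitimate simplification of the paper's scope, but it is a scope change, not just a shorter proof of the same statement. Second, your fallback ``hands-on'' route is not actually an alternative: a round ball $B$ has a Morse--Bott $S^1$-family of closed characteristics and does not fit the nondegenerate framework set up in Sections~\ref{CRsec}--\ref{sec:fast_planes}, and if you replace it by an irrational ellipsoid you have simply reproduced the problem one scale down --- the appeal to ``the classical HWZ computation'' for the ball count is, once unpacked, precisely the $\C\mathrm{P}^2$ sphere-count argument that the paper executes explicitly. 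So only your first route stands, and it stands only in dimension four.
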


\begin{proof}
It is well known from Gromov-Witten theory that the count of pseudoholomorphic spheres in $\C \mathrm{P}^n$ representing the homology class of a complex line and passing through two prescribed points is equal to $1$. Our strategy is to deduce the lemma from this fact via a neck-stretching argument. To this end, we symplectically embed $(E,\omega_0|_E)$ into $(\C \mathrm{P}^n,c\cdot \omega_{\mathrm{FS}})$, where $\omega_{\mathrm{FS}}$ is the Fubini-Study form and $c>0$ is sufficiently large. We set $V\coloneqq \C \mathrm{P}^n \setminus \operatorname{int}(E)$ and fix two points $p\in\mathrm{int}(E)$ and $q\in \operatorname{int}(V)$. We pick generic admissible almost complex structures $J^E$ and $J^V$ on the completions $\hat{E}$ and $\hat{V}$, respectively, which, on the cylindrical ends, agree with the same admissible almost complex structure $J^F$ on $\R\times F$. For $R\geq 0$, let $Q_R$ be the smooth manifold obtained from $\C \mathrm{P}^n$ by replacing a neighborhood $(-\epsilon,\epsilon)\times F$ of $F$ by $(-R-\epsilon,\epsilon)\times F$. Let $J_R$ be the almost complex structure on $Q_R$ obtained by gluing $J^E$ and $J^V$. Note that, for every $R\geq 0$, the manifold $Q_R$ is diffeomorphic to $\C \mathrm{P}^n$ and that the pushforward of $J_R$ via a suitable diffeomorphism $\psi_R:Q_R\rightarrow \C \mathrm{P}^n$ compressing the neck is a compatible almost complex structure on $\C \mathrm{P}^n$.

Let $A \in H_2(\C \mathrm{P}^n)$ denote the homology class represented by a complex line. Let 
\[
\MM_2(Q_R,A,J_R)
\] 
denote the moduli space of unparametrized $J_R$-holomorphic spheres in $Q_R$ with two marked points and representing the homology class $A$. Since the almost complex structures are chosen generically, this moduli space is a closed $4n$-manifold for all $R>0$ in the complement of some discrete subset of $\R$. It comes equipped with a canonical complex orientation. We have an evaluation map
\begin{equation*}
\operatorname{ev}_{Q_R} : \MM_2(Q_R,A,J_R) \rightarrow Q_R \times Q_R
\end{equation*}
and the algebraic count of $\operatorname{ev}_{Q_R}^{-1}(p,q)$ is equal to $1$ (see e.g.\ \cite[Example 7.1.14]{ms12}).

Note that we have isomorphisms $H_2(\C \mathrm{P}^n) \cong H_2(\C \mathrm{P}^n,E) \cong H_2(\hat{V},V^-)$ and we abbreviate the image of $A$ under these isomorphisms by the same symbol. Let $\MM_1(\hat{V},A,\gamma_E,J^V)$ denote the moduli space of unparametrized $J^V$-holomorphic planes with one marked point representing the homology class $A$ and negatively asymptotic to $\gamma_E$. This moduli space is a smooth manifold, since $J^V$ is generic and all planes contained in it are necessarily somewhere injective. While the evaluation map $\operatorname{ev}_{\hat{V}}$ at the marked point is not necessarily proper, we argue that $\operatorname{ev}_{\hat{V}}^{-1}(q)$ is nevertheless a compact $0$-manifold. Indeed, suppose that $u_k$ is a sequence in $\operatorname{ev}_{\hat{V}}^{-1}(q)$ and pick a subsequence which converges to a pseudoholomorphic building in the sense of SFT. Note that each non-constant punctured pseudoholomorphic curve in $\hat{V}$ must represent a positive multiple of the homology class $A$ for action reasons. Therefore, the top level of the limiting building must consist of a single punctured pseudoholomorphic sphere $u$ in $\hat{V}$ representing $A$ and passing through $q$. Since $A$ is indivisible, $u$ must be somewhere injective. Since $J^V$ is generic and $u$ passes through the prescribed point $q$, it must have Fredholm index at least $2n-2$. Let $\Gamma^-$ denote the set of negative punctures and, for $z\in \Gamma^-$, let $\gamma_z$ be the corresponding negative asymptotic orbit. Then the Fredholm index is given by
\begin{equation*}
(n-3)(2-\#\Gamma^-) + 2(n+1) - \sum\limits_{z\in \Gamma^-} \operatorname{CZ}(\gamma_z).
\end{equation*}
Recall that the orbit $\gamma_E$ has Conley-Zehnder index equal to $n+1$ and that all other Reeb orbits on $F$ have index at least $n+3$. This allows us to conclude that $u$ has exactly one negative puncture, at which it is asymptotic to $\gamma_E$. This proves compactness of $\operatorname{ev}_{\hat{V}}^{-1}(q)$. Note that the orientation local system of the moduli space $\MM_1(\hat{V},A,\gamma_E,J^V)$ is canonically isomorphic to $\mathfrak{o}_{\gamma_E}^*$. Therefore, we have a canonical count $\# \operatorname{ev}_{\hat{V}}^{-1}(q) \in \mathfrak{o}_{\gamma_E}$.

For sufficiently large $R\gg0$, there is a gluing homeomorphism
\begin{equation}
\label{eq:deg_ev_E_proof_gluing_homeo}
\operatorname{ev}_{\hat{V}}^{-1}(q) \times \operatorname{ev}_{\hat{E}}^{-1}(p) \cong \operatorname{ev}_{Q_R}^{-1}(p,q).
\end{equation}
Similarly to the discussion of gluing in Section \ref{neck-stretching1}, the proof of this claim splits into two parts. The first part is an analogue of Lemma \ref{lem:neckstretching_compactness}. One shows that, for any sequence $R_k$ diverging to $+\infty$ and for any sequence of pseudoholomorphic spheres $u_k \in \operatorname{ev}_{Q_{R_k}}^{-1}(p,q)$, there exists a subsequence converging to a $2$-level pseudoholomorphic building whose top level is an element of $\operatorname{ev}_{\hat{V}}^{-1}(q)$ and whose bottom level is an element of $\operatorname{ev}_{\hat{E}}^{-1}(p)$. In order to see this, note that the Fredholm index consideration used above to show that $\operatorname{ev}_{\hat{V}}^{-1}(q)$ is compact also implies that the top level of a limiting building of $u_k$ must be an element of $\operatorname{ev}_{\hat{V}}^{-1}(q)$. Using that each $u_k$ passes through the point $p$ and $\gamma_E$ is the systole of the Reeb flow on $F$, it follows that there must be exactly two levels and that the bottom level must be an element of $\operatorname{ev}_{\hat{E}}^{-1}(p)$. The second part of the construction of \eqref{eq:deg_ev_E_proof_gluing_homeo} is a gluing result for punctured pseudoholomorphic curves. As in Lemma \ref{lem:gluing1}, the construction given in \cite[Section 5]{par19} can be carried out in the present setting and we omit further details.

The isomorphism of orientation local systems induced by \eqref{eq:deg_ev_E_proof_gluing_homeo} agrees with the natural isomorphism $\mathfrak{o}_{\gamma_E}^* \otimes \mathfrak{o}_{\gamma_E} \cong \Z$. We conclude that
\begin{equation*}
|\# \operatorname{ev}_{\hat{V}}^{-1}(q)| \cdot |\# \operatorname{ev}_{\hat{E}}^{-1}(p)| = |\# \operatorname{ev}_{Q_R}^{-1}(p,q)| = 1.
\end{equation*}
Clearly, this implies that $|\operatorname{deg}(\operatorname{ev}_{\hat{E}})| = |\# \operatorname{ev}_{\hat{E}}^{-1}(p)|=1$.
\end{proof}

Recall that the ellipsoid $E$ is contained in the uniformly convex domain $X$. The hypersurface $F=\partial E$ divides $\hat{X}$ into $E$ and $\hat{M}\setminus \operatorname{int}(M^-)$. We consider pseudoholomorphic planes in $\hat{X}$ asymptotic to $\gamma$ passing through a point $p\in E$ and stretch the neck along $F$. For $R\geq 0$, let $\hat{X}_R$ denote the manifold obtained from $\hat{X}$ by replacing a neighborhood $(-\epsilon,\epsilon)\times F$ of $F$ by the neck $(-R-\epsilon,\epsilon)\times F$. Let $J^E$ and $J^M$ be generic admissible almost complex structures on $\hat{E}$ and $\hat{M}$, respectively, which, on the cylindrical ends, agree with the same admissible almost complex structure $J^F$ on the symplectization $\R\times F$. Gluing $J^E$ and $J^M$ induces an almost complex structure $J_R$ on $\hat{X}_R$.

Let $\MM_1(\hat{X}_R,\gamma,J_R)$ denote the moduli space of unparametrized $J_R$-holomorphic planes in $\hat{X}_R$ with one marked point and positively asymptotic to $\gamma$. We are interested in the moduli space of such planes passing through a point $p\in E$, i.e., in the preimage $\operatorname{ev}_{\hat{X}_R}^{-1}(p)$ under the evaluation map at the marked point.

\begin{lem}
\label{lem:neckstretching_compactness2}
Let $(R_k)_k$ be a sequence of positive real numbers diverging to $+\infty$ and let $(u_k,z_k)\in \operatorname{ev}_{\hat{X}_R}^{-1}(p)$. After passing to a subsequence, the sequence $(u_k,z_k)$ converges in the sense of SFT to a pseudoholomorphic building with two levels 
\[
(u^M,(u^E,z))\in  \MM(\hat{M},\gamma,\gamma_E,J^M) \times \operatorname{ev}_{\hat{E}}^{-1}(p).
\]
\end{lem}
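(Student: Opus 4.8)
The plan is to mirror the compactness and gluing package developed in Sections~\ref{top}--\ref{neck-stretching1}, now applied to the single neck along $F$ inside $\hat X$. First I would fix a sequence $R_k\to+\infty$ and a sequence $(u_k,z_k)\in\operatorname{ev}_{\hat X_{R_k}}^{-1}(p)$, where each $u_k$ is a $J_{R_k}$-holomorphic plane positively asymptotic to $\gamma$ with $u_k(z_k)=p\in E$. Since $\gamma$ is the Reeb orbit of minimal action on $Y$, the energies $E(u_k)$ are all equal to $\MA(\gamma)$, so by the SFT compactness theorem of \cite{BEHWZ03} a subsequence converges to a pseudoholomorphic building whose levels are punctured finite-energy curves in $\hat M$, in $\R\times F$, and in $\hat E$.

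Next I would argue, by exactly the Fredholm-index and action bookkeeping used in the proofs of Lemmas~\ref{lem:compactness_hol_generic} and \ref{lem:neckstretching_compactness}, that the building has precisely two nontrivial levels of the expected form. The top level is a punctured pseudoholomorphic sphere in $\hat M$ with one positive puncture asymptotic to $\gamma$; since $\MA(\gamma)=\MA_{\min}(Y)$, this top level cannot be a broken chain in $\R\times Y$, so it is a single sphere, and a negative puncture asymptotic to any Reeb orbit other than $\gamma_E$ forces negative Fredholm index (using $\operatorname{CZ}(\gamma)=\operatorname{CZ}(\gamma_E)=n+1$ and that every other orbit on $F$ has index $\ge n+3$), which is excluded for generic $J^M$; hence the top level is $u^M\in\MM(\hat M,\gamma,\gamma_E,J^M)$. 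A possible intermediate level in $\R\times F$ is ruled out exactly as before because $\gamma_E$ is the systole of $F$, so the only curve there positively asymptotic to $\gamma_E$ with a negative puncture is the trivial cylinder. The bottom level then is a plane $u^E$ in $\hat E$ negatively asymptotic to $\gamma_E$, and since the marked points $z_k$ satisfy $u_k(z_k)=p\in E$, a routine argument (as in the proof of Proposition~\ref{prop:fast_planes_properness}) shows the marked point descends to the $\hat E$-level and $u^E(z)=p$, i.e.\ $(u^E,z)\in\operatorname{ev}_{\hat E}^{-1}(p)$. Here I would invoke the discussion of $\operatorname{ev}_{\hat E}^{-1}(p)$ from the proof of Lemma~\ref{lem:deg_ev_E} to know this preimage is a compact $0$-manifold consisting of immersed planes.

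The main obstacle is the same one that arises throughout Section~\ref{neck-stretching1}: one must be careful that no curve component bubbles off into $E$ or escapes to a higher symplectization level. This is controlled entirely by the energy identity $E(u_k)=\MA(\gamma)$ together with the fact that any nonconstant punctured sphere in $\hat E$ or $\R\times F$ carries $d\lambda$-energy at least $\MA(\gamma_E)$, which by our arrangement of $E$ is strictly larger than $\MA(\gamma)$ would allow to split off (more precisely, a bubble or an extra level would consume action $\ge\MA(\gamma_E)>\MA(\gamma)-\MA(\gamma_E)\ge 0$, contradicting the energy budget). The remaining content is standard: one checks that each surviving level is somewhere injective (immediate since $\gamma$ and $\gamma_E$ are simple), applies generic transversality so that the relevant moduli spaces are smooth of the expected dimension, and verifies that the marked-point constraint $u(z)=p$ cuts down dimensions consistently across the break. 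I expect no new analytic input beyond what was already established in Lemmas~\ref{lem:compactness_hol_generic}, \ref{lem:neckstretching_compactness} and \ref{lem:deg_ev_E}, so the proof will read as a short adaptation referencing those arguments.
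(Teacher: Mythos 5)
Your proof follows essentially the same route as the paper: SFT compactness identifies the limiting building, minimality of $\MA(\gamma)$ on $Y$ pins the top nontrivial level inside $\hat M$, the Fredholm index count of Lemma~\ref{lem:compactness_hol_generic} forces that level to be a cylinder to $\gamma_E$, minimality of $\MA(\gamma_E)$ on $F$ excludes nontrivial intermediate levels in $\R\times F$, and the marked-point constraint descends to the $\hat E$-level as in Proposition~\ref{prop:fast_planes_properness}.

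One small caution about your concluding energy remark: the inequality $\MA(\gamma_E) > \MA(\gamma) - \MA(\gamma_E)$, i.e.\ $2\MA(\gamma_E) > \MA(\gamma)$, is not forced by the arrangement of $E$ (indeed $E$ may be taken arbitrarily small), so this line of the ``no energy for an extra bubble'' bookkeeping is not actually available. It is also not needed: closed bubbles cannot form because $\hat E$, $\hat M$ and their symplectizations are exact, and extra nontrivial levels are already excluded by the minimal-action and Fredholm-index arguments in your second paragraph. Dropping that parenthetical gives a clean proof matching the paper's.
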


\begin{proof}
Pick a subsequence such that $u_k$ converges to some limiting building. The top level of this building is a punctured pseudoholomorphic sphere, either in $\hat{M}$ or in the symplectization of $Y$, which is positively asymptotic to $\gamma$. It must have at least one negative puncture because each $u_k$ passes through the point $p$. Since $\gamma$ is the orbit of minimal action on $Y$, we conclude that the top level is contained in $\hat{M}$. It follows from the same Fredholm index argument that was used in the proof of Lemma \ref{lem:compactness_hol_generic} that the top level is in fact a cylinder $u^M \in \MM(\hat{M},\gamma,\gamma_E,J^M)$. Using that $\gamma_E$ is the Reeb orbit of minimal action on $F$, deduce that there is only one remaining level, which is a plane $u^E$ in $\hat{E}$ positively asymptotic to $\gamma_E$ and with marked point $z$ mapping to $p$.
\end{proof}

\begin{lem}
\label{lem:gluing2}
For every sufficiently large $R> 0$, there exists a gluing homeomorphism
\[
\MM(\hat{M},\gamma,\gamma_E,J^M) \times \operatorname{ev}_{\hat{E}}^{-1}(p) \to \operatorname{ev}_{\hat{X}_R}^{-1}(p).
\]
The isomorphism of orientation local systems induced by this homeomorphism agrees with the natural isomorphism
\begin{equation*}
\mathfrak{o}_{\gamma}\otimes \mathfrak{o}_{\gamma_E}^* \otimes \mathfrak{o}_{\gamma_E} \cong \mathfrak{o}_{\gamma}.
\end{equation*}
In particular, 
\[
|\#\MM(\hat{M},\gamma,\gamma_E,J^M)| \cdot |\#\operatorname{ev}_{\hat{E}}^{-1}(p)| = |\# \operatorname{ev}_{\hat{X}_R}^{-1}(p)|.
\]	
\end{lem}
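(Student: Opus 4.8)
The plan is to mirror the proof of Lemma \ref{lem:gluing1} almost verbatim, combining the compactness statement of Lemma \ref{lem:neckstretching_compactness2} with a standard gluing construction for punctured pseudoholomorphic curves. First I would record the relevant transversality facts: for a generic admissible $J^M$ on $\hat{M}$ the moduli space $\MM(\hat{M},\gamma,\gamma_E,J^M)$ is a compact regular $0$-manifold by Lemma \ref{lem:compactness_hol_generic} and the discussion following it; and for a generic $J^E$ together with a generic choice of the point $p\in\operatorname{int}(E)$ the fibre $\operatorname{ev}_{\hat{E}}^{-1}(p)\subset\MM_1(\hat{E},\gamma_E,J^E)$ is likewise a compact regular $0$-manifold — its compactness was already established inside the proof of Lemma \ref{lem:deg_ev_E} (using that $\gamma_E$ is the orbit of least action on $F$), regularity holds because $J^E$ is generic and all planes asymptotic to the simple orbit $\gamma_E$ are somewhere injective, and the dimension is $\dim\MM_1(\hat{E},\gamma_E,J^E)-\dim\hat{E}=0$.

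Next I would carry out the gluing. Given a pair $(u^M,(u^E,z))$ in the left-hand product, one preglues the cylinder $u^M$ and the plane $u^E$ along their common asymptotic Reeb orbit $\gamma_E$ across the neck $(-R-\epsilon,\epsilon)\times F$, retaining the marked point $z$, and then corrects the preglued map to an honest $J_R$-holomorphic plane by the usual contraction argument. The key observation is that both the marked point $z$ and its prescribed value $p\in E$ lie entirely in the $\hat{E}$-piece, away from the gluing region, so the evaluation constraint $\operatorname{ev}=p$ does not interact with the neck and the gluing respects the preimages $\operatorname{ev}^{-1}(p)$ on both sides. As in the proof of Lemma \ref{lem:gluing1}, the technique of \cite[Section 5]{par19} applies with no essential change; in fact the present situation is slightly simpler, since here $u^M$ and $u^E$ are genuinely $J$-holomorphic near the neck (there is no Hamiltonian term in the gluing region). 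Because all moduli spaces involved are cut out transversely and $0$-dimensional, for every sufficiently large $R$ this produces a well-defined continuous map from the product to $\operatorname{ev}_{\hat{X}_R}^{-1}(p)$; surjectivity for large $R$ is exactly Lemma \ref{lem:neckstretching_compactness2}, while injectivity and the local homeomorphism property are standard consequences of the gluing estimates.

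Finally I would check the orientations. The pregluing carries along the Floer--Hofer kernel gluing operation \eqref{eq:floer_hofer_kernel_gluing} on the orientation lines of the linearized operators; composing the canonical identifications $\mathfrak{o}_{\MM(\hat{M},\gamma,\gamma_E,J^M)}\cong\mathfrak{o}_\gamma\otimes\mathfrak{o}_{\gamma_E}^{*}$, $\mathfrak{o}_{\operatorname{ev}_{\hat{E}}^{-1}(p)}\cong\mathfrak{o}_{\gamma_E}$, and $\mathfrak{o}_{\operatorname{ev}_{\hat{X}_R}^{-1}(p)}\cong\mathfrak{o}_\gamma$ with the gluing isomorphism yields precisely the natural identification $\mathfrak{o}_\gamma\otimes\mathfrak{o}_{\gamma_E}^{*}\otimes\mathfrak{o}_{\gamma_E}\cong\mathfrak{o}_\gamma$, as in Lemma \ref{lem:gluing1} and \cite[Section 5]{par19}; passing to absolute values of the algebraic counts then gives the displayed multiplicativity. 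I do not expect a genuinely new difficulty in this lemma: the analytic heart is the gluing theorem, which is outsourced to \cite{par19}, and the only bookkeeping specific to this statement is propagating the one-marked-point evaluation constraint through the gluing — which, as noted above, is painless because the constraint is supported in $\hat{E}$, away from the neck — together with the orientation computation, which is identical in structure to that of the already-established Lemma \ref{lem:gluing1}.
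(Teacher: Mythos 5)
Your proposal is correct and follows essentially the same route as the paper: invoke Lemma \ref{lem:neckstretching_compactness2} for SFT convergence/surjectivity, the gluing construction of \cite[Section 5]{par19} for the local homeomorphism, and the Floer--Hofer kernel gluing for the identification of orientation local systems, noting that the marked-point constraint lives in $\hat{E}$ away from the neck. The paper's proof is a two-line citation; your account fills in the same details one would need to check, including the (slightly more careful) observation that $p$ should be a regular value of $\operatorname{ev}_{\hat{E}}$ so that the fibre is a regular $0$-manifold.
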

\begin{proof}
Similarly to Lemma \ref{lem:gluing1}, the assertion of Lemma \ref{lem:gluing2} follows from Lemma \ref{lem:neckstretching_compactness2} and the gluing construction for punctured pseudoholomorphic curves in \cite[Section 5]{par19}.	
\end{proof}

Finally, we are in a position to prove Theorem \ref{count-planes}.

\begin{proof}[Proof of Theorem \ref{count-planes}]
Let $\psi_R:\hat{X}_R\to\hat{X}_0=\hat{X}$ be a diffeomorphism given by shrinking $(-R-\epsilon,\epsilon)\times F$ to $(-\epsilon,\epsilon)\times F$ and equip $\hat{X}$ with the almost complex structure $(\psi_R)_*J_R$. This induces a bijection
\[
\operatorname{ev}_{\hat{X}}^{-1}(p) \cong \operatorname{ev}_{\hat{X}_R}^{-1}(p),
\]
whose induced map of orientation local systems is simply the identity map of $\mathfrak{o}_\gamma$. In combination with Lemma \ref{lem:gluing2}, this implies
\[
|\operatorname{deg}(\operatorname{ev}_{\hat{X}})| = |\#\MM(\hat{M},\gamma,\gamma_E,J^M)| \cdot |\operatorname{deg}(\operatorname{ev}_{\hat{E}})|.
\]
Both factors appearing in the product on the right hand side are equal to $1$, as proved in Corollary \ref{cor:cylinder_count} and Lemma \ref{lem:deg_ev_E}. This proves that $|\operatorname{deg}(\operatorname{ev}_{\hat{X}})| = 1$.
\end{proof}

\section{Proofs of Theorem \ref{main} and Corollary \ref{cor:hopf_systole_on_convex_domain}}
\label{sec:proof_main_theorem}

\begin{proof}[Proof of Theorem \ref{main}]
Up to a translation, we can assume that the origin is contained in the interior of $X$. If the Reeb flow of $\lambda=\lambda_0|_{\partial X}$ has a unique periodic orbit $\gamma$ of minimal action and $\gamma$ is nondegenerate, then Theorems \ref{thm:degree_characterizes_hopf} and \ref{count-planes} imply that $\gamma$ bounds a disk-like global surface of section.

The general case can be proved by the following perturbation argument. Using the radial projection $\rho: S^3 \rightarrow \partial X$, we pull the contact form $\lambda$ back to $S^3$ and obtain the contact form $\lambda_X := \rho^* \lambda$ on $S^3$ inducing the standard contact structure $\xi_{\mathrm{st}}=\ker \lambda_0|_{S^3}$. Let $\gamma$ be a periodic Reeb orbit of $\lambda_X$ of minimal action $a_0:= \MA_{\min}(X)$. Let $K$ denote its knot type and $k$ its self-linking number. Our goal is to prove that $K$ is the unknot and $k=-1$. Once we know this, it follows from \cite[Theorem 1.7]{hry14} that $\gamma$ bounds a disk-like global surface of section.

Note that $\gamma$ might not be isolated among periodic orbits of action $a_0$ and that the number $a_0$ might not be isolated in the action spectrum of $\lambda_X$. We consider the compact set $\Gamma\subset S^3$ consisting of all periodic Reeb orbits of $\lambda_X$ of action not exceeding $\frac{5}{3} a_0$. Note that all of these Reeb orbits are simple because $\frac{5}{3}<2$. We define $\Gamma_0$ to be the subset of $\Gamma$ consisting of those periodic orbits which have knot type $K$ and self-linking number $k$. The set $\Gamma_0$ is not empty because it contains $\gamma$.

In the space of embedded closed curves which are transverse to $\xi_{\mathrm{st}}$, having a given knot type and a given self-linking number is a $C^1$-open condition. We deduce that the compact set $\Gamma$ is partitioned into finitely many open and closed subsets corresponding to different pairs (knot type, self-linking number), with $\Gamma_0$ being one of these subsets. Therefore, we can find open neighborhoods $U$ of $\Gamma_0$ and $V$ of $\Gamma\setminus \Gamma_0$ such that $\overline{U} \cap \overline{V} = \emptyset$. If $\Gamma\setminus \Gamma_0$ is empty, we can take $U=S^3$ and $V=\emptyset$.

Let $\chi$ be a smooth real function on $S^3$ such that $\chi=0$ on $U$ and $\chi=1$ on $V$. For every $\epsilon>0$, we consider the contact form
\begin{equation*}
\lambda_{\epsilon} := e^{\epsilon \chi} \lambda_X.
\end{equation*}
If a contact form is $C^1$-close to $\lambda_X$, then its Reeb vector field is $C^0$-close to the Reeb vector field of $\lambda_X$ and hence its periodic orbits of action not exceeding $\frac{4}{3}a_0$ are $C^1$-close to some periodic orbits of $\lambda_X$ of action not exceeding $\frac{5}{3} a_0$. Therefore, if $\epsilon>0$ is small enough, then every periodic orbit of $\lambda_{\epsilon}$ of action not exceeding $\frac{4}{3}a_0$ is contained in either  $U$ or $V$. By the form of $\lambda_{\epsilon}$, the periodic orbits of $\lambda_{\epsilon}$ which are fully contained in either $U$ or $V$ coincide with the periodic orbits of $\lambda_X$ contained in these sets. Their action is unchanged in the case of orbits in $U$ and gets multiplied by the factor $e^{\epsilon}>1$ in the case of orbits in $V$. We conclude that the following assertions hold for some fixed small $\epsilon>0$:
\begin{enumerate}[(i)]
\item $\gamma$ is a periodic orbit of $\lambda_{\epsilon}$ of minimal action and its action is equal to $a_0$.
\item All periodic orbits of $\lambda_{\epsilon}$ of action strictly less than $e^\epsilon a_0$ have knot type $K$ and self-linking number $k$.
\end{enumerate}
Taking a generic $C^{\infty}$-small perturbation $f_{\epsilon}$ of the function $e^{\epsilon\chi}$, we find a contact form $\tilde{\lambda}_{\epsilon} = f_{\epsilon} \lambda_X$ all of whose periodic Reeb orbits are nondegenerate and such that there is just one periodic orbit $\tilde{\gamma}_{\epsilon}$ of minimal action. It is possible to arrange $f_{\epsilon}$ so that the action of $\tilde{\gamma}_{\epsilon}$ is at most $a_0$. Indeed, simply choose a perturbation $f_{\epsilon}$ which is generic subject to the conditions that $f_{\epsilon}(p) = 1$ and $df_{\epsilon}(p)=0$ for every point $p$ in the image of $\gamma$. Then $\gamma$ is a periodic orbit of $\tilde{\lambda}_{\epsilon}$ of action $a_0$, implying that $\tilde{\gamma}_{\epsilon}$ has action at most $a_0$.

Since $\tilde{\lambda}_{\epsilon}$ is $C^1$-close to $\lambda_{\epsilon}$ and $\tilde{\gamma}_{\epsilon}$ has action at most $a_0$, it follows that $\tilde{\gamma}_{\epsilon}$ is $C^1$-close to some periodic orbit of $\lambda_{\epsilon}$ of action strictly less than $e^{\epsilon}a_0$. By assertion (ii), the orbit $\tilde{\gamma}_{\epsilon}$ must have knot type $K$ and self-linking number $k$. Note that the contact form $\tilde{\lambda}_{\epsilon}$ is the radial pullback to $S^3$ of the restriction of $\lambda_0$ to the boundary of some uniformly convex domain $\tilde{X}_{\epsilon}$ because $f_{\epsilon}$ is $C^2$-close to the constant function $1$. By the special case considered at the beginning of this proof, the orbit $\tilde{\gamma}_{\epsilon}$ therefore bounds a disk-like global surface of section and in particular is a Hopf orbit. We conclude that $K$ is the unknot and $k=-1$, as we wished to prove.
\end{proof}

\begin{proof}[Proof of Corollary \ref{cor:hopf_systole_on_convex_domain}]
Let $X\subset \R^4$ be a smooth convex domain which is not necessarily uniformly convex. Pick a sequence $X_k$ of uniformly convex domains converging to $X$ with respect to the $C^\infty$ topology. For each $k$, pick a closed characteristic $\gamma_k$ of $\partial X_k$ of minimal action. By Theorem \ref{main}, each $\gamma_k$ bounds a disk-like global surface of section and is therefore a Hopf orbit. After passing to a subsequence, the sequence $\gamma_k$ smoothly converges to a periodic orbit $\gamma$ on $\partial X$. Since $\MA_{\min}$ is a capacity on convex bodies, the sequence $\MA_{\min}(X_k)$ converges to $\MA_{\min}(X)$. It follows that the orbit $\gamma$ must have minimal action and therefore be simple. Since each $\gamma_k$ is Hopf, it is easy to see that $\gamma$ is Hopf as well.
\end{proof}

\appendix

\section{Orientations of certain Fredholm operators}
\label{appA}

\subsection{The determinant bundle} 

Given two real Banach spaces $X$ and $Y$, we denote by $\Phi(X,Y)$ the space of Fredholm operators from  $X$ and $Y$. The \textit{determinant line} over $T\in \Phi(X,Y)$ is the real line
\[
\Det(T):= \Lambda^{\max}(\ker T) \otimes (\Lambda^{\max}(\coker T))^*,
\]
where $\Lambda^{\max}$ denotes the top exterior power functor on the category of finite dimensional real vector spaces, with the usual convention $\Lambda^{\max}((0)):=\R$. The union of these lines defines the total space $\Det(\Phi(X,Y))$ of an analytic real line bundle over $\Phi(X,Y)$ called \textit{determinant bundle}, where the base is equipped with the operator norm topology. See e.g.\ \cite{qui85}, \cite{fh93} or \cite[Section 7]{ama09}. 

It is useful to recall the definition of the analytic structure on this line bundle, as an ingredient of this construction is used also in the proof of Theorem \ref{isom} in Appendix \ref{appB}. We fix a finite dimensional subspace $V\subset Y$ and consider the open set $\mathcal{T}_V$ of those operators $T\in \Phi(X,Y)$ which are transverse to $V$, meaning that $V+T(X)=Y$. For $T$ in $\mathcal{T}_V$, we have the exact sequence
\begin{equation}
\label{exact}
0 \rightarrow  \Ker T \stackrel{i}{\rightarrow} T^{-1} (V) \stackrel{\tau}{\rightarrow} V \stackrel{\pi}{\rightarrow} \coker T \rightarrow 0,
\end{equation}
where $i$ denotes the inclusion, $\tau$ the restriction of $T$ and $\pi$ the restriction of the quotient projection onto $\coker T = Y/T(X)$. This exact sequence induces the canonical isomorphism
\begin{equation}
\label{isodet}
\Det(T) = \Lambda^{\max}(\Ker T) \otimes (\Lambda^{\max}(\coker T))^*  \cong \Lambda^{\max}(T^{-1}(V)) \otimes  \Lambda^{\max} (V)^*,
\end{equation}
given by
\[
\alpha \otimes (\pi_* \gamma)^*  \mapsto (i_* \alpha \wedge \beta) \otimes (\gamma \wedge \tau_* \beta)^*,
\]
where $\alpha$ generates $\Lambda^{\max}(\ker T)$, $i_*\alpha \wedge \beta$ generates $\Lambda^{\max}(T^{-1}(V))$, $\gamma \wedge \tau_*\beta$ generates $\Lambda^{\max}(V)$, and the superscript $*$ indicates the dual generator. 

By the above isomorphism, we can identify the determinant line bundle over $\mathcal{T}_V$ with the line bundle whose fiber at $T$ is $ \Lambda^{\max}(T^{-1}(V)) \otimes  \Lambda^{\max}(V)^*$. Since the map $T\mapsto T^{-1}(V)$ is analytic on $\mathcal{T}_V$, the latter line bundle is analytic. The sets $\mathcal{T}_V$ for $V$ in the finite dimensional Grassmannian of $Y$ build an open cover of $\Phi(X,Y)$ and it suffices to check that the transition mappings are analytic. This verification uses the particular choice of the sign of the isomorphism \eqref{isodet} (see \cite[Sections 5]{ama09} and \cite[Section 11]{sei08b}).

Denote by $\mathcal{T}$ the open subset of $\Phi(X,Y) \times \mathrm{Gr}_{\mathrm{fin}}(Y)$ consisting of the pairs $(T,V)$ such that $T$ is transverse to $V$, where $\mathrm{Gr}_{\mathrm{fin}}(Y)$ denotes the Grassmannian of finite dimensional vector subspaces of $Y$. The above construction and the analyticity of the map  
\begin{equation}
\label{invim}
\mathcal{T} \rightarrow \mathrm{Gr}_{\mathrm{fin}}(X), \qquad (T,V) \mapsto T^{-1}(V)
\end{equation}
show that \eqref{isodet} defines an analytic lift of this map to an analytic bundle morphism 
\[
\bigl(\Det(\Phi(X,Y))\otimes \Lambda^{\max}(\mathrm{Gr}_{\mathrm{fin}}(Y))\bigr)|_{\mathcal{T}} \longrightarrow \Lambda^{\max}(\mathrm{Gr}_{\mathrm{fin}}(X))
\]
which is a fiberwise isomorphism.

\subsection{The orientation local system} 

The \textit{orientation line} of $T\in \Phi(X,Y)$ is the free $\Z$-module of rank 1
\begin{equation}
\label{orfred}
\mathfrak{o}_T := \mathfrak{o}_{\ker T} \otimes \mathfrak{o}_{\coker T}^* \cong \mathfrak{o}_{\Lambda^{\max}(\ker T)} \otimes \mathfrak{o}_{\Lambda^{\max}(\coker T)}^* \cong \mathfrak{o}_{\Det(T)},
\end{equation}
where we recall that the orientation line of the $n$-dimensional real vector space $V$, $0\leq n < \infty$, is defined as
\[
\mathfrak{o}_V := H_n(V,V\setminus \{0\};\Z),
\]
and we have used the canonical isomorphism
\[
\mathfrak{o}_V \cong \mathfrak{o}_{\Lambda^{\max}(V)}.
\]
The same isomorphism and the exterior product induce a canonical isomorphism
\begin{equation}
\label{or-sum}
\mathfrak{o}_V \otimes \mathfrak{o}_W \cong \mathfrak{o}_{V\oplus W},
\end{equation}
where $V$ and $W$ are finite dimensional real vector spaces. 
The isomorphism \eqref{orfred} and the vector bundle structure of the determinant bundle imply that the orientation lines of Fredholm operators build a local system of  free $\Z$-modules of rank 1 over $\Phi(X,Y)$. 

If an operator $T\in \Phi(X,Y)$ is transverse to the finite dimensional subspace $V\subset Y$, then the isomorphism \eqref{isodet} determines a canonical isomorphism
\begin{equation}
\label{isoor}
\mathfrak{o}_T \cong  \mathfrak{o}_{T^{-1}(V)} \otimes  \mathfrak{o}_V^*,
\end{equation}
which defines a lift of the map \eqref{invim} to the orientation local systems.

When $X$ and $Y$ are separable infinite dimensional Hilbert spaces, the connected components of $\Phi(X,Y)$ are labeled by the Fredholm index and each of them has fundamental group $\Z_2$. In this case, the orientation local system over $\Phi(X,Y)$ is, up to isomorphism, the unique non-trivial local system of  free $\Z$-modules of rank 1 over this space.

\subsection{Orientations of Cauchy-Riemann type linear operators on cylinders}
 
In the remaining part of this appendix, we discuss the gluing of orientations over certain spaces of real Cauchy Riemann type operators. Unlike in Section \ref{CRsec}, we here work in a Hilbert setting and consider only trivial symplectic vector bundles over a Riemann surface $\Sigma$ which is either a cylinder or a half cylinder. The complex structure on these trivial vector bundles is allowed to be non-constant. We start with the case of the cylinder, recalling results from \cite{fh93}.

Denote by $\mathcal{J}(\R^{2n},\omega_0)$ the space of $\omega_0$-compatible complex structures on $\R^{2n}$, i.e., linear isomorphisms $J: \R^{2n} \rightarrow \R^{2n}$ such that $J^2=-\mathrm{id}$ and $\omega_0(\cdot,J \cdot)$ is a positive definite scalar product. The standard complex structure 
\[
J_0(x,y) := (-y,x),
\]
which corresponds to the multiplication by $i$ in the identification $\R^{2n} \cong \C^n$ given by $(x,y)\mapsto x+iy$, is $\omega_0$-compatible.

Set $\T:= \R/\Z$ and denote by $\mathcal{A}(2n)$ the space of pairs $(j,a)$ where $j\in C(\T, \mathcal{J}(\R^{2n},\omega_0))$ and $a\in C(\T,\mathrm{sp}(2n))$ 
 is a loop into the Lie algebra of the linear symplectic group $\mathrm{Sp}(2n)$ such that the path $w:[0,1] \rightarrow \mathrm{Sp}(2n)$ defined by
\begin{equation}
\label{symp-path}
w'(t) = a(t) w(t), \qquad w(0)=\mathrm{id},
\end{equation}
is nondegenerate, meaning that $1$ is not an eigenvalue of $w(1)$. We denote by $\mathrm{CZ}(a):= \mathrm{CZ}(w)$ the Conley-Zehnder index of the nondegenerate symplectic path $w$.

Given two elements $\alpha_+=(j_+,a_+)$ and $\alpha_-=(j_-,a_-)$ in $\mathcal{A}(2n)$, we denote by $\mathcal{D}(\alpha_+,\alpha_-)$ the space of operators of the form
\[
H^1(\R\times \T,\R^{2n}) \rightarrow L^2 (\R\times \T,\R^{2n}), \quad u \mapsto \partial_s u + J(s,t) ( \partial_t u - A(s,t) u), \quad (s,t)\in \R\times \T,
\]
such that, denoting by $\overline{\R}:= \R\cup\{\pm \infty\}$ the extended real line and by $\mathcal{L}(\R^{2n})$ the space of linear endomorphisms of $\R^{2n}$, we have:
\begin{enumerate}[(i)]
\item $J\in C( \overline{\R}\times \T, \mathcal{J}(\R^{2n},\omega_0))$ satisfies $J(\pm \infty,t) = j_{\pm}(t)$ for every $t\in \T$;
\item $A\in C( \overline{\R}\times \T,\mathcal{L}(\R^{2n}))$ satisfies $A(\pm \infty,t) = a_{\pm}(t)$ for every $t\in \T$.
\end{enumerate}
It is well known that these operators are Fredholm of index $\mathrm{CZ}(a_+) - \mathrm{CZ}(a_-)$, see \cite{sz92}. 

The fact that $\mathcal{J}(\R^{2n},\omega_0)$ is contractible and the space of maps $A$ satisfying (ii) is convex implies that $\mathcal{D}(\alpha_+,\alpha_-)$ is a contractible subspace of $\Phi(H^1(\R\times \T,\R^{2n}),L^2 (\R\times \T,\R^{2n}))$, see \cite[Proposition 7]{fh93}. Therefore, the orientation local system is trivial on $\mathcal{D}(\alpha_+,\alpha_-)$, meaning that we have a canonical isomorphism
\[
\mathfrak{o}_{D_1} \cong \mathfrak{o}_{D_2}
\]
for every pair of operators $D_1, D_2\in \mathcal{D}(\alpha_+,\alpha_-)$. We can then unambiguously denote by
\[
\mathfrak{o}_{\mathcal{D}(\alpha_+,\alpha_-)}
\]
the orientation line of an arbitrary $D\in \mathcal{D}(\alpha_+,\alpha_-)$.

Now let $D\in \mathcal{D}(\alpha_1,\alpha_2)$ and $D'\in \mathcal{D}(\alpha_2,\alpha_3)$ for some $\alpha_1,\alpha_2,\alpha_3\in \mathcal{A}(2n)$. The linear gluing construction which is described in \cite[Section 3]{fh93}) defines a 1-parameter family of operators $\{D\#_{\sigma} D'\}_{\sigma>0}$ in $\mathcal{D}(\alpha_1,\alpha_3)$ and isomorphisms
\[
\mathfrak{o}_D \otimes \mathfrak{o}_{D'} \cong \mathfrak{o}_{D\#_{\sigma} D'}.
\]
This induces a canonical isomorphism
\[
\mathfrak{o}_{\mathcal{D}(\alpha_1,\alpha_2)} \otimes \mathfrak{o}_{\mathcal{D}(\alpha_2,\alpha_3)} \cong \mathfrak{o}_{\mathcal{D}(\alpha_1,\alpha_3)}.
\]

\subsection{Orientations of Cauchy-Riemann type linear operators on negative half-cylinders} 

Any $\gamma\in L^2(\T,\R^{2n})$ has the Fourier representation
\begin{equation}
\label{fourier}
\gamma(t) = \sum_{k\in \Z} e^{2\pi k t J_0} \widehat{\gamma}(k),
\end{equation}
where $\widehat{\gamma}: \Z \rightarrow \R^{2n}$ is square-summable. The Hilbert space
\[
\begin{split}
H^{\frac{1}{2}}(\T,\R^{2n}) &:= \Bigl\{ \gamma \in L^2(\T,\R^{2n}) \mid \sum_{k\in \Z} |k| |\widehat{\gamma}(k)|^2 < +\infty \Bigr\}, \\ (\gamma,\beta)_{\frac{1}{2}} &:= \widehat{\gamma}(0) \cdot \widehat{\beta}(0) + 2\pi \sum_{k\in \Z} |k| \, \widehat{\gamma}(k) \cdot \widehat{\beta}(k), 
\end{split}
\]
has the orthogonal splitting
\[
H^{\frac{1}{2}}(\T,\R^{2n}) = \mathbb{H}_+ \oplus \mathbb{H}_0 \oplus \mathbb{H}_-,
\]
where
\[
\mathbb{H}_+ := \Bigl\{  \gamma \in H^{\frac{1}{2}}(\T,\R^{2n}) \mid \widehat{\gamma}(k) = 0 \; \forall k\leq 0 \Bigr\}, \quad
 \mathbb{H}_- := \Bigl\{  \gamma \in H^{\frac{1}{2}}(\T,\R^{2n}) \mid \widehat{\gamma}(k) = 0 \; \forall k\geq 0 \Bigr\},
\]
and $\mathbb{H}_0\cong \R^{2n}$ denotes the space of constant loops. Denote by $\mathbb{P}_+$, $\mathbb{P}_0$ and $\mathbb{P}_-$ the corresponding projectors.

Given an element $\alpha=(j,a)\in \mathcal{A}(2n)$, we denote by $\mathcal{D}_-(\alpha)$ the space of operators of the form
\[
\begin{split}
H^1((-\infty,0)\times \T,\R^{2n}) \longrightarrow  L^2 ((-\infty,0)\times \T,\R^{2n}) \times \mathbb{H}^+, \\
\qquad u \mapsto \bigl(\partial_s u + J(s,t) (\partial_t u - A(s,t) u), \mathbb{P}_+ u(0,\cdot) \bigr)
\end{split}
\]
where
\begin{enumerate}[(i)]
\item $J\in C( [-\infty,0] \times \T, \mathcal{J}(\R^{2n},\omega_0))$ satisfies $J(- \infty,t) = j(t)$ for every $t\in \T$ and $J(s,t)=J_0$ for every $(s,t)\in [-1,0]\times \T$;
\item $A\in C(  [-\infty,0] \times \T,\mathcal{L}(\R^{2n}))$ satisfies $A(- \infty,t) = a(t)$ for every $t\in \T$.
\end{enumerate}
Here, $u(0,\cdot)$ denotes the trace of the $H^1$ map $u$ on the boundary $\{0\}\times \T$ of the cylinder, which is indeed an element of $H^{\frac{1}{2}}(\T,\R^{2n})$. The next result is a simple variant of \cite[Proposition 8.2]{ak22}.

\begin{prop}
\label{hybridop}
Assume that $\alpha=(j,a) \in \mathcal{A}(2n)$. Then every operator in $\mathcal{D}_-(\alpha)$ is Fredholm of index $n - \mathrm{CZ}(a)$.
\end{prop}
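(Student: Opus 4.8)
The plan is to compute the Fredholm index of an operator $D \in \mathcal{D}_-(\alpha)$ by relating it to operators on the full cylinder, where the index is already known, via a standard gluing/doubling argument together with an index computation for a model operator on the half-cylinder. First I would observe that the Fredholm property itself is routine: the operator is a Cauchy-Riemann type operator on the half-cylinder $(-\infty,0)\times\T$ with a nondegenerate asymptotic operator $A_\alpha := -J_0\partial_t - a$ at $s=-\infty$ (nondegeneracy is exactly the condition $\mathrm{CZ}(a)$ is defined, i.e.\ $1\notin\operatorname{spec} w(1)$), and the boundary condition $\mathbb{P}_+ u(0,\cdot)=0$ is the standard half-cylinder Lagrangian-type boundary condition coming from the splitting $H^{1/2}(\T,\R^{2n})=\mathbb{H}_+\oplus\mathbb{H}_0\oplus\mathbb{H}_-$. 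This is precisely the Fredholm setup of \cite[Proposition 8.2]{ak22}, and Fredholmness follows from the same elliptic estimates; the only thing that needs a separate argument is the value of the index.

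The key step is the index computation, which I would carry out by a linear gluing argument. Given $D\in\mathcal{D}_-(\alpha)$, glue it to a ``cap'' operator $D_+\in\mathcal{D}_+(\bar\alpha)$ on the positive half-cylinder $[0,+\infty)\times\T$ carrying the complementary boundary condition $\mathbb{P}_- u(0,\cdot)=0$ and asymptotic to $\alpha$ at $s=+\infty$, with matching trace conditions at $s=0$. The glued operator lives on the full cylinder $\R\times\T$ and lies in $\mathcal{D}(\alpha,\alpha)$ after the boundary conditions recombine to an interior matching condition; its index is $\mathrm{CZ}(a)-\mathrm{CZ}(a)=0$ by the cylinder theory recalled above (following \cite{sz92,fh93}). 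Linear gluing gives $\operatorname{ind}(D)+\operatorname{ind}(D_+)=0$, so it suffices to show $\operatorname{ind}(D_+)=\mathrm{CZ}(a)-n$. For the cap operator I would reduce, by the homotopy invariance of the index and the contractibility of $\mathcal{J}(\R^{2n},\omega_0)$ and of the space of admissible $A$, to a model operator which is $\partial_s+J_0\partial_t - a(t)$ near $s=0$ with $J$ constant equal to $J_0$ there; then I can compute the index of this model directly from the Fourier decomposition of the trace. Concretely, for the translation-invariant model operator $\partial_s+J_0\partial_t-a$ on $[0,+\infty)\times\T$ with boundary condition $\mathbb{P}_-u(0,\cdot)=0$, the kernel and cokernel are described by the spectral decomposition of $A_\alpha$, and a direct count — exactly as in the standard derivation of the index of the asymptotic operator's positive half-cylinder problem, cf.\ the conventions in Section~\ref{CRsec} — yields $\operatorname{ind}(D_+)=\mathrm{CZ}(a)-n$. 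Combining, $\operatorname{ind}(D)=n-\mathrm{CZ}(a)$.

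Alternatively, and perhaps more cleanly, I would deduce the statement directly from the exact-sequence/splitting description already used in the paper: the operator $D\in\mathcal{D}_-(\alpha)$ is, up to homotopy through Fredholm operators, the direct sum of (a) the Cauchy-Riemann operator on the half-cylinder with the full ``spectral'' asymptotic/boundary setup, whose index is computed by the relative Conley-Zehnder contribution, and (b) a finite-dimensional piece coming from the $\mathbb{H}_0\cong\R^{2n}$ part of the trace, contributing $-n$ after accounting for the $2n$-dimensional constant loops against the projection. This is the structure already exploited in \cite[Section 8]{ak22}; the present statement is the ``negative end'' mirror of that analysis, and I would simply transcribe the computation there, checking that the sign conventions (the shift by $n$ rather than $2n$, reflecting that half of the constant directions are absorbed by $\mathbb{P}_+$) come out as stated. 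The main obstacle I anticipate is purely bookkeeping: pinning down the index normalization so that the constant-loop directions contribute exactly $-n$, i.e.\ correctly tracking the interplay between the boundary projection $\mathbb{P}_+$, the extension conventions for $J$ and $A$, and the definition of $\mathrm{CZ}$ via the path $w$ in \eqref{symp-path}. Once the model computation is done carefully, homotopy invariance and linear gluing do the rest, and I expect no serious analytic difficulty beyond what is already in \cite{fh93,ak22}.
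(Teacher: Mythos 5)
Your primary route (gluing $D$ to a ``cap'' on the positive half-cylinder along $\{s=0\}$ and claiming the result is the full-cylinder operator of index $0$) does not work as stated, and the error is not just bookkeeping. The boundary condition $\mathbb{P}_+u_-(0,\cdot)=0$ built into $D$ and the boundary condition $\mathbb{P}_-u_+(0,\cdot)=0$ for the cap do not ``recombine into an interior matching condition'': together with the matching $u_-(0,\cdot)=u_+(0,\cdot)$ they force the common trace to lie in $\mathbb{H}_0\oplus\{0\}$, a constraint of infinite codimension that the full-cylinder operator does not carry, so the glued object is not Fredholm and certainly does not have index $0$. Relatedly, the subspaces $\ker\mathbb{P}_+=\mathbb{H}_0\oplus\mathbb{H}_-$ and $\ker\mathbb{P}_-=\mathbb{H}_0\oplus\mathbb{H}_+$ are not complementary (they overlap in $\mathbb{H}_0\cong\R^{2n}$), so ``the complementary boundary condition $\mathbb{P}_-u(0,\cdot)=0$'' is already a misnomer. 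Finally, your asserted cap index $\operatorname{ind}(D_+)=\mathrm{CZ}(a)-n$ is off by $2n$: in the translation-invariant model with $a=2\pi\theta J_0$, $0<\theta<1$, $n=1$ (so $\mathrm{CZ}(a)=1$), the Cauchy data space at $s=0$ of decaying solutions on $[0,\infty)\times\T$ is the negative eigenspace $\mathbb{H}_0\oplus\mathbb{H}_-$ of $A_\alpha$, which meets $\ker\mathbb{P}_-=\mathbb{H}_0\oplus\mathbb{H}_+$ exactly in $\mathbb{H}_0$, giving $\ker D_+\cong\R^{2}$ and $\operatorname{coker}D_+=0$, so $\operatorname{ind}(D_+)=2=n+\mathrm{CZ}(a)$, not $\mathrm{CZ}(a)-n=0$. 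Your final answer $n-\mathrm{CZ}(a)$ therefore only comes out right because two independent $2n$-shifts cancel.

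The correct gluing argument is the one already set up in Appendix \ref{appA} of the paper: glue an operator in $\mathcal{D}_-(\alpha_1)$ to an operator in $\mathcal{D}(\alpha_1,\alpha_2)$ at the common \emph{asymptotic} ends (where everything is translation-invariant, so the Floer--Hofer linear gluing of \cite{fh93} applies), obtaining an operator in $\mathcal{D}_-(\alpha_2)$ and the additivity $\operatorname{ind}(\mathcal{D}_-(\alpha_2))=\operatorname{ind}(\mathcal{D}_-(\alpha_1))+\mathrm{CZ}(a_1)-\mathrm{CZ}(a_2)$. This reduces the whole proposition to a single model case (e.g.\ the computation for $a=2\pi\theta J_0$ with $0<\theta<1$, where $\mathrm{CZ}=n$ and the kernel/cokernel of the half-cylinder problem both vanish, giving index $0=n-\mathrm{CZ}$), and the general $\alpha$ follows. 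Your alternative route — transcribe the computation of \cite[Proposition 8.2]{ak22} for the ``negative end'' version — is what the paper itself does (it is stated as a simple variant of that proposition), and is the cleaner way to proceed; just be aware that if you want the gluing version, you must glue at cylindrical ends, not along the finite boundary circle.
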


Again, the space $\mathcal{D}_-(\alpha)$ is contractible, and hence the orientation local system is trivial on $\mathcal{D}_-(\alpha)$. This allows us to define the orientation line
\[
\mathfrak{o}_{\mathcal{D}_-(\alpha)}
\]
as $\mathfrak{o}_D$ for an arbitrary $D\in \mathcal{D}_-(\alpha)$.

Arguing as in \cite[Section 3]{fh93}, an operator $D_1$ in $\mathcal{D}_-(\alpha_1)$ defined by the data $(J_1,A_1)$ and an operator $D_2$ in $\mathcal{D}(\alpha_1,\alpha_2)$ defined by the data $(J_2,A_2)$ and can be glued producing a 1-parameter family of operators $\{D_1 \#_{\sigma} D_2\}_{\sigma>0}$ in $\mathcal{D}_-(\alpha_2)$ of the form
\[
(D_1 \#_{\sigma} D_2)(u) =   \bigl(\partial_s u + J(s,t) (\partial_t u - A(s,t) u), \mathbb{P}_+ u(0,\cdot) \bigr)
\]
where $(J,A)=(J_1,A_1)$ for $s\in [-\sigma,0]$ and $(J,A)=(J_2,A_2)(\cdot+3\sigma,\cdot)$ for $s\in (-\infty,-2\sigma]$, together with isomorphisms
\[
\mathfrak{o}_{D_1} \otimes \mathfrak{o}_{D_2} \cong \mathfrak{o}_{D_1 \#_{\sigma} D_2}.
\]
A variant of \cite[Theorem 10]{fh93} now gives us a canonical isomorphism
\begin{equation}
\label{isocan}
\mathfrak{o}_{\mathcal{D}_-(\alpha_1)} \otimes \mathfrak{o}_{\mathcal{D}(\alpha_1,\alpha_2)} \cong \mathfrak{o}_{\mathcal{D}_-(\alpha_2)},
\end{equation}
for every $\alpha_1,\alpha_2\in \mathcal{A}(2n)$. 

\section{Proof of Theorem \ref{isom}}
\label{appB}

Let $H\in C^{\infty}(\T\times \R^{2n})$ be a quadratically convex, nondegenerate and non-resonant at infinity time-periodic Hamiltonian. In this Appendix, we recall the definition of the isomorphism from the Morse complex of $\psi_{H^*}$ to the Floer complex of $H$ from \cite{ak22} and upgrade it to integer coefficients, hence proving Theorem \ref{isom}. We use the notation introduced in Section \ref{floclasec} and in Appendix \ref{appA}.

\subsection{A second look at the Floer complex of $H$} 

Given $\gamma\in \mathcal{P}(H)$, we set
\[
j_{\gamma}(t):= J(t,\gamma(t)), \qquad a_{\gamma}(t):= dX_H(t,\gamma(t)), \qquad \alpha_{\gamma}:= (j_{\gamma},a_{\gamma})\in \mathcal{A}(2n),
\]
where $J$ is the time-periodic almost complex structure on $\R^{2n}$ which is used in the definition of the Floer complex of $H$. We also simplify the notation by setting for every $\gamma_+,\gamma_-\in \mathcal{P}(H)$
\[
\mathcal{D}(\gamma_+,\gamma_-):= \mathcal{D}(\alpha_{\gamma_+},\alpha_{\gamma_-}), \qquad \mathcal{D}_-(\gamma_-):= \mathcal{D}_-(\alpha_{\gamma_-}).
\]
Here, $\mathcal{D}$ and $\mathcal{D}_-$ are the spaces of real Cauchy-Riemann type operators from Appendix \ref{appA}.

Following a standard approach, the orientation line $\mathfrak{o}_{\gamma}$ of $\gamma\in \mathcal{P}(H)$ is defined in Section \ref{flosubsec} as the orientation line of any real Cauchy-Riemann type operator $D$ on $\C$ which is positively asymptotic to the asymptotic operator associated to $\alpha_{\gamma}$. We start by noticing that we have a canonical isomorphism
\begin{equation}
\label{altdefn}
\mathfrak{o}_{\gamma} \cong \mathfrak{o}_{\mathcal{D}_-(\gamma)}^*.
\end{equation}
Indeed, by gluing the above operator $D$ with an operator $D'$ in $\mathcal{D}_-(\gamma)$ we obtain a real Cauchy-Riemann type operator $D'\# D$ on the closed disk $\D$. The space of these operators on $\D$ is contractible and contains a connected subset consisting of complex linear operators. Therefore, $\mathfrak{o}_{D'\# D}$ is canonically isomorphic to $\Z$, and the gluing isomorphism
\[
\mathfrak{o}_{\mathcal{D}_-(\gamma)} \otimes \mathfrak{o}_{\gamma}= \mathfrak{o}_{D'} \otimes \mathfrak{o}_{D} \cong \mathfrak{o}_{D'\# D} \cong \Z
\]
induces the isomorphim \eqref{altdefn}.

We shall hereafter identify $\mathfrak{o}_{\gamma}$ with $\mathfrak{o}_{\mathcal{D}_-(\gamma)}^*$. The space of Floer cylinders $\mathcal{M}_{\partial^F}^{\#}(\gamma_+,\gamma_-)$ coincides with the set of zeroes of the nonlinear map
\[
\tilde{u} + H^1(\R\times \T,\R^{2n}) \rightarrow L^2(\R\times \T,\R^{2n}), \qquad u\mapsto \partial_s u + J(t,u) (\partial_t u - X_H(t,u)),
\]
where $\tilde{u}: \R\times \T \rightarrow \R^{2n}$ is a smooth map such that $\tilde{u}(-s,t) = \gamma_-(t)$ and $\tilde{u}(s,t) = \gamma_+(t)$ for every $s\geq 1$ and $t\in \T$ (see \cite[Proposition A.1]{ak22} for the regularity result behind this fact). The differential of this map at $u\in \mathcal{M}_{\partial^F}^{\#}(\gamma_+,\gamma_-)$ is an operator $D_u$ in the space $\mathcal{D}(\gamma_+,\gamma_-)$. For a generic $J$, $D_u$ is surjective with kernel $T_u \mathcal{M}_{\partial^F}^{\#}(\gamma_+,\gamma_-)$, and the canonical isomorphism \eqref{isocan} reads  
\[
\mathfrak{o}_{\mathcal{D}_-(\gamma_+)} \otimes \mathfrak{o}_{T_u \mathcal{M}_{\partial^F}^{\#}(\gamma_+,\gamma_-)} \cong \mathfrak{o}_{\mathcal{D}_-(\gamma_-)},
\]
or, equivalently,
\begin{equation}
\label{isocan2}
\mathfrak{o}_{T_u \mathcal{M}_{\partial^F}^{\#}(\gamma_+,\gamma_-)} \cong  
\mathfrak{o}_{\gamma_+} \otimes \mathfrak{o}_{\gamma_-}^*.
\end{equation}
As discussed in Section \ref{flosubsec}, in the case $\mathrm{CZ}(\gamma_+) - \mathrm{CZ}(\gamma_-)=1$  the quotient of $\mathcal{M}_{\partial^F}^{\#}(\gamma_+,\gamma_-)$  by the $\R$-action given by translations of the $s$ variable is a finite set, and \eqref{isocan2} defines a canonical count
\[
\#  \mathcal{M}_{\partial^F}(\gamma_+,\gamma_-) \in ( \mathfrak{o}_{\gamma_+} \otimes \mathfrak{o}_{\gamma_-}^*)^* \cong \mathrm{Hom}_{\Z}(\mathfrak{o}_{\gamma_+} , \mathfrak{o}_{\gamma_-}),
\]
which defines the boundary operator in the Floer complex of $(H,J)$:
\[
\partial^F =  \bigoplus_{\substack{(\gamma_+,\gamma_-)\in \mathcal{P}(H)^2\\ \mathrm{CZ}(\gamma_+) - \mathrm{CZ}(\gamma_-) = 1}} \#  \mathcal{M}_{\partial^F}(\gamma_+,\gamma_-): F_*(H) \rightarrow F_{*-1}(H).
\]

\subsection{The isomorphism}

Let $J\in C^{\infty}((-\infty,0] \times \T \times \R^{2n},\mathcal{J}(\R^{2n},\omega_0))$ be uniformly bounded map such that $J(s,t,z)=J_0$ for every $(s,t,z)\in [-1,0]\times \T \times \R^{2n}$ and $J(s,t,z)$ is independent of $s$ for $s$ small enough. Let $g$ be a Riemannian metric on the submanifold 
\[
M= \mathop{\mathrm{graph}}(Y_N : \mathbb{H}_N \rightarrow \mathbb{H}^N) \subset H^1_0(\T,\R^{2n})
\]
which is uniformly equivalent to the one induced by the ambient Hilbert space.

Recall that $\mathbb{P}_+$ denotes the orthogonal projector onto the subspace $\mathbb{H}_+$ of $H^{\frac{1}{2}}(\T,\R^{2n})$ consisting of loops $\gamma$ such that $\widehat{\gamma}(k)=0$ for every $k\leq 0$. We denote by the same symbol the projector on $H^1_0(\T,\R^{2n})$ having the same definition and note that the restriction of $\mathbb{P}_+$ to $M$ is a diffeomorphism onto the finite dimensional vector space $\mathbb{H}_N\subset \mathbb{H}_+$. In particular, for every critical point $p\in M$ of $\psi_{H^*}$ the set $\mathbb{P}_+ W^u(p)$ is a submanifold of dimension $i_{\mathrm{M}}(p)$ of $\mathbb{H}_+$.

Given a critical point $p\in M$ of $\psi_{H^*}$ and a 1-periodic orbit $\gamma\in \mathcal{P}(H)$, we consider the following space of Floer negative half-cylinders:
\[
\begin{split}
\mathcal{M}_{\Theta}(p,\gamma) := \{ u\in C^{\infty}((-\infty,0]\times \T,\R^{2n}) \mid & \; \partial_s u + J(s,t,u)(\partial_t u - X_H(t,u)) = 0, \\ & \lim_{s\rightarrow -\infty} u(s,\cdot) = \gamma, \; u(0,\cdot) \in \mathbb{P}_+^{-1}(\mathbb{P}_+W^u(p)) \}.
\end{split}
\]
The last condition says that the loop $u(0,\cdot)$ has the form
\[
u(0,t) = q(t) + \beta(t) \qquad \mbox{where } q\in W^u(p)\subset M \mbox{ and } \beta\in \mathbb{H}_0 \oplus \mathbb{H}_-.
\]
The space $\mathcal{M}_{\Theta}(p,\gamma)$ is the inverse image of the submanifold $\{0\}\times \mathbb{P}_+W^u(p)$ by the smooth nonlinear map
\begin{equation}
\label{maphybrid}
\begin{split}
\gamma + H^1((-\infty,0)\times \T,\R^{2n}) \rightarrow L^2((-\infty,0)\times \T,\R^{2n}) \times \mathbb{H}_+,  \\
u \mapsto \bigl( \partial_s u + J(s,t,u)(\partial_t u - X_H(t,u)) , \mathbb{P}_+ u(0,\cdot) \bigr).
\end{split}
\end{equation}
The differential of the above map at an element $u\in \mathcal{M}_{\Theta}(p,\gamma)$ is an operator $D_u$ belonging to the space $\mathcal{D}_-(\gamma)$. By Proposition \ref{hybridop}, $D_u$ is Fredholm of index $n-\mathrm{CZ}(\gamma)$. 

If $J$ and $g$ are generic, the above map is transverse to the submanifold $\{0\}\times \mathbb{P}_+W^u(p)$ and hence $\mathcal{M}_{\Theta}(p,\gamma)$ is a smooth manifold of dimension
\[
\dim \mathcal{M}_{\Theta}(p,\gamma) = \ind D_u + \dim  W^u(p) = n - \mathrm{CZ}(\gamma) + i_{\mathrm{M}}(p).
\]
If $\gamma_p$ denotes the 1-periodic orbit corresponding to $p$ in the bijective correspondence between critical points of $\Phi_H$ and $\psi_{H^*}$, i.e.\ $\mathbb{P}(\gamma_p)=p$, identity \eqref{indici} implies that
\[
\dim \mathcal{M}_{\Theta}(p,\gamma) = \mathrm{CZ}(\gamma_p) - \mathrm{CZ}(\gamma).
\]
The tangent space of $\mathcal{M}_{\Theta}(p,\gamma)$ at $u$ is given by
\[
T_u \mathcal{M}_{\Theta} (p,\gamma) = D_u^{-1} \bigl( (0) \times \mathbb{P}_+ T_q W^u(p) \bigr),
\]
where
\[
q := (\mathrm{id} \times Y_N) ( \mathbb{P}_+ u(0,\cdot)) \in W^u(p).
\]
The canonical isomorphism \eqref{isoor} reads
\[
\mathfrak{o}_{D_u} \cong \mathfrak{o}_{T_u \mathcal{M}_{\Theta} (p,\gamma)} \otimes \mathfrak{o}_{T_q W^u(p)}^*.
\]
Since $\mathfrak{o}_p = \mathfrak{o}_{T_p W^u(p)} \cong \mathfrak{o}_{T_q W^u(p)} $, $\mathfrak{o}_{\gamma}$ is identified with $\mathfrak{o}_{\mathcal{D}_-(\gamma)}^*$, and $D_u\in\mathcal{D}_-(\gamma)$, we obtain the canonical isomorphism
\begin{equation}
\label{orMTheta}
\mathfrak{o}_{T_u \mathcal{M}_{\Theta}(p,\gamma)} \cong \mathfrak{o}_p \otimes \mathfrak{o}_{\gamma}^*.
\end{equation}

The compactness up to breaking of the spaces $\mathcal{M}_{\Theta}(p,\gamma)$ follows from the inequality
\begin{equation}
\label{ines}
\mathcal{A}_H(\beta+\eta) \leq \Psi_{H^*}(\mathbb{P}(\beta)) - \frac{1}{2} \|\mathbb{P}_- \eta\|^2_{H^{\frac{1}{2}}} \qquad \forall \beta\in H^1(\T,\R^{2n}), \; \forall \eta\in \mathbb{H}_0 \oplus \mathbb{H}_-,
\end{equation}
with equality holding if and only if $\beta' = X_H(\beta+\eta)$ almost everywhere, which gives the uniform energy estimates for the elements of $\mathcal{M}_{\Theta}(p,\gamma)$ and hence the starting point for proving compactness, see \cite[Section 9]{ak22}.

Now assume that $p\in \mathrm{crit}(\psi_{H^*})$ and $\gamma\in \mathcal{P}(H)$ satisfy
\[
\mathrm{CZ} (\gamma) = i_{\mathrm{M}}(p) + n.
\]
In this case, $\mathcal{M}_{\Theta}(p,\gamma)$ is $0$-dimensional and compact, and hence a finite set. By \eqref{orMTheta}, the choice of a generator of $\mathfrak{o}_p \otimes \mathfrak{o}_{\gamma}^*$ determines an orientation of this finite set, and hence a $\Z$-valued count of it. This gives us a canonical count
\[
\# \mathcal{M}_{\Theta}(p,\gamma) \in ( \mathfrak{o}_p \otimes \mathfrak{o}_{\gamma}^*)^* \cong \mathrm{Hom}_{\Z}( \mathfrak{o}_p, \mathfrak{o}_{\gamma}).
\]
We can then define the homomorphism
\[
\Theta: M_*(\psi_{H^*}) \rightarrow F_{*+n}(H)
\]
as
\[
\Theta := \bigoplus_{\substack{(p,\gamma) \in \mathrm{crit} (\psi_{H^*}) \times \mathcal{P}(H) \\ \mathrm{CZ}(\gamma) - i_{\mathrm{M}}(p) = n}} \# \mathcal{M}_{\Theta}(p,\gamma).
\]
The arguments from \cite[Section 10]{ak22} show that $\Theta$ is an isomorphism preserving the action filtrations. This uses the inequality \eqref{ines}. In order to prove Theorem \ref{isom}, we just have to show that $\Theta$ is a chain map. This is done in the next section.

\subsection{Proof of the chain map property}

It is convenient to fix generators $o_p$ and $o_{\gamma}$ of the orientation lines $\mathfrak{o}_p= \mathfrak{o}_{T_p W^u(p)}$ and $\mathfrak{o}_{\gamma}= \mathfrak{o}_{\mathcal{D}_-(\gamma)}^*$, for every $p\in \mathrm{crit}(\psi_{H^*})$ and $\gamma\in \mathcal{P}(H)$.
Thanks to \eqref{canMorse}, \eqref{isocan2}, and \eqref{orMTheta}, these generators induce:
\begin{enumerate}[(i)]
\item An orientation of $W^u(p_+)\cap W^s(p_-)$ for every pair of critical points $p_+,p_-$ of $\psi_{H^*}$. In particular, if ${i}_M(p_+) - {i}_M(p_-)=1$ we have
\[
\# \mathcal{M}_{\partial^M}(p_+,p_-) \, o_{p_+} =  \sum_{\widehat{x}\in \mathcal{M}_{\partial^M}(p_+,p_-)} \epsilon(\widehat{x}) \, o_{p_-},
\]
where $\epsilon(\widehat{x})\in \{\pm1\}$ takes the value  $+1$ if and only if along the orbit $\widehat{x}\subset W^u(p_+)\cap W^s(p_-)$ the gradient of $\psi_{H^*}$ is positively oriented.
\item An orientation of $\mathcal{M}_{\partial^F}^{\#}(\gamma_+,\gamma_-)$ for every pair of 1-periodic orbits $\gamma_+,\gamma_-$ of $X_H$. In particular, if $\mathrm{CZ}(\gamma_+) - \mathrm{CZ}(\gamma_-)=1$  we have
\[
\#\mathcal{M}_{\partial^F}(\gamma_+,\gamma_-) \, o_{\gamma_+} = \sum_{\widehat{v} \in \mathcal{M}_{\partial^F}(\gamma_+,\gamma_-)} \epsilon(\widehat{v})\, o_{\gamma_-},
\]
where $\epsilon(\widehat{v})\in \{\pm1\}$ takes the value  $+1$ if and only if  the $\R$-action given by positive translations of $s$ is positively oriented on the connected component of $\mathcal{M}^{\#}_{\partial^F}(\gamma_+,\gamma_-)$ defined by $\widehat{v}$.
\item An orientation of $\mathcal{M}_{\Theta}(p,\gamma)$ for every $p\in \mathrm{crit}(\psi_{H^*})$ and $\gamma\in \mathcal{P}(H)$. In particular, if $\mathrm{CZ}(\gamma) - i_{\mathrm{M}}(p)=n$ we have
\[
\# \mathcal{M}_{\Theta}(p,\gamma) \, o_{p} = \sum_{u\in \mathcal{M}_{\Theta}(p,\gamma)} \epsilon(u) \, o_{\gamma},
\]
where $\epsilon(u)\in \{\pm1\}$ gives us the orientation of the singleton $\{u\}$ in the $0$-dimensional manifold $\mathcal{M}_{\Theta}(p,\gamma)$.
\end{enumerate}
With these choices, the homomorphisms $\partial^M$, $\partial^F$ and $\Theta$ have the form:
\[
\begin{split}
\partial^M o_p &= \sum_{\substack{q\in \mathrm{crit}(\psi_{H^*}) \\ i_\mathrm{M}(q) = i_\mathrm{M}(p) - 1}} \Bigl( \sum_{\widehat{x}\in \mathcal{M}_{\partial^M}(p,q)} \epsilon(\widehat{x}) \Bigr) o_q, \\
\partial^F o_{\gamma} &= \sum_{\substack{\beta\in \mathcal{P}(H) \\ \mathrm{CZ}(\beta) = \mathrm{CZ}(\gamma) - 1}} \Bigl( \sum_{\widehat{v}\in \mathcal{M}_{\partial^F}(\gamma,\beta)} \epsilon(\widehat{v}) \Bigr) o_{\beta}, \\
\Theta \, o_p &= \sum_{\substack{\gamma \in \mathcal{P}(H) \\ \mathrm{CZ}(\gamma) = i_{\mathrm{M}}(p)+n}} \Bigl( \sum_{u\in \mathcal{M}_{\Theta}(p,\gamma)} \epsilon(u) \Bigr) o_{\gamma}.
\end{split} 
\]
Let $p$ be a critical point of $\psi_{H^*}$ with $i_\mathrm{M}(p) = k-n$. From the above formulas, we find
\begin{eqnarray*}
\partial^F \Theta \, o_p &=& \sum_{\substack{\gamma\in \mathcal{P}(H) \\ \mathrm{CZ}(\gamma) = k-1}} \Bigl( \sum_{\substack{\beta\in \mathcal{P}(H) \\ \mathrm{CZ}(\beta) = k}} \sum_{\substack{u\in \mathcal{M}_{\Theta}(p,\beta) \\ \widehat{v} \in \mathcal{M}_{\partial^F} (\beta,\gamma)}} \epsilon(u) \epsilon(\widehat{v}) \Bigr) o_{\gamma}, \\
\Theta \partial^M \, o_p &=& \sum_{\substack{\gamma\in \mathcal{P}(H) \\ \mathrm{CZ}(\gamma) = k-1}} \Bigl( \sum_{\substack{q\in \mathrm{crit}(\psi_{H^*}) \\ i_{\mathrm{M}}(q) = k-n-1}} \sum_{\substack{\widehat{x} \in \mathcal{M}_{\partial^M}(p,q) \\ u \in \mathcal{M}_{\Theta} (q,\gamma)}} \epsilon(\widehat{x}) \epsilon(u) \Bigr) o_{\gamma},
\end{eqnarray*}
and the identity $\partial^F \Theta o_p=\Theta \partial^M o_p$ is equivalent to
\begin{equation}
\label{tobeproven}
\sum_{\substack{\beta\in \mathcal{P}(H) \\ \mathrm{CZ}(\beta) = k}} \sum_{\substack{u\in \mathcal{M}_{\Theta}(p,\beta) \\ \widehat{v} \in \mathcal{M}_{\partial^F} (\beta,\gamma)}} \epsilon(u) \epsilon(\widehat{v})  = \sum_{\substack{q\in \mathrm{crit}(\psi_{H^*}) \\ i_{\mathrm{M}}(q) = k-n-1}} \sum_{\substack{\widehat{x} \in \mathcal{M}_{\partial^M}(p,q) \\ u \in \mathcal{M}_{\Theta} (q,\gamma)}} \epsilon(\widehat{x}) \epsilon(u),
\end{equation}
for every $\gamma\in \mathcal{P}(H)$ with $\mathrm{CZ}(\gamma)=k-1$. The sums appearing in the above expression are indexed by the set $A \cup B$, where
\[
A:= \bigcup_{\substack{\beta\in \mathcal{P}(H) \\ \mathrm{CZ}(\beta) = k}} \mathcal{M}_{\Theta}(p,\beta) \times \mathcal{M}_{\partial^F}(\beta,\gamma), \qquad B:=  \bigcup_{\substack{q\in \mathrm{crit}(\psi_{H^*}) \\ i_{\mathrm{M}}(q) = k-n-1}} \mathcal{M}_{\partial^M} (p,q) \times \mathcal{M}_{\Theta}(q,\gamma).
\]
By a standard gluing argument, the finite set $A\cup B$ is precisely the set of boundary points of the compactification of the 1-dimensional manifold $\mathcal{M}_{\Theta}(p,\gamma)$: the non-compact connected components of $\mathcal{M}_{\Theta}(p,\gamma)$ are arcs connecting two distinct elements from $A\cup B$, and each element of $A\cup B$ is an end-point of precisely one arc in $\mathcal{M}_{\Theta}(p,\gamma)$. By (iii), the arcs in $\mathcal{M}_{\Theta}(p,\gamma)$ are oriented, and hence each of its end-points carries an orientation sign $\pm 1$. The identity \eqref{tobeproven} is then an easy consequence of the next two claims:
\begin{enumerate}[(a)]
\item The orientation sign of every $(u,\widehat{v})\in \mathcal{M}_{\Theta}(p,\beta) \times \mathcal{M}_{\partial^F}(\beta,\gamma)$ is $\epsilon(u)\epsilon(\widehat{v})$.
\item The orientation sign of every $(\widehat{x},u)\in \mathcal{M}_{\partial^M}(p,q) \times \mathcal{M}_{\Theta}(q,\gamma)$ is $-\epsilon(\widehat{x})\epsilon(u)$.
\end{enumerate}

\paragraph{\sc Proof of ($\mathrm{a}$).} 
We here consider an end-point of the form $(u,\widehat{v})\in \mathcal{M}_{\Theta}(p,\beta) \times \mathcal{M}_{\partial^F} (\beta,\gamma)$. Let $r\mapsto w_r$, $r\in (0,1)$, be a smooth parametrisation of a connected component of  $\mathcal{M}_{\Theta}(p,\gamma)$ such that $w_r$ converges to $(u,\widehat{v})$ for $r\rightarrow 0$. This means that $w_r$ has the following asymptotic behaviour for $r\rightarrow 0$: $w_r\rightarrow u$ in $C^{\infty}_{\mathrm{loc}}((-\infty,0]\times \T,\R^{2n})$ and there exists a smooth positive function $\sigma$ on $(0,1)$ such that $\sigma'<0$, $\sigma(r)\rightarrow +\infty$ for $r\rightarrow 0$ and $\tau_{\sigma(r)} w_r$ converges  in $C^{\infty}_{\mathrm{loc}}(\R\times \T,\R^{2n})$ to an element $v\in \mathcal{M}_{\partial_F}^{\#}(\beta,\gamma)$ in the equivalence class $\widehat{v}$. Here, $\tau_{\sigma}$ denotes the translation operator
\[
\tau_{\sigma} w (s,t) := w(s-\sigma,t).
\]
The smooth curve 
\[
x(r) := (\mathrm{id}\times Y_N) \bigl( \mathbb{P}_+ w_r(0,\cdot) \bigr) \in W^u(p), \qquad r\in (0,1),
\]
converges to $x(0):= (\mathrm{id}\times Y_N) (\mathbb{P}_+ u(0,\cdot)) \in W^u(p)$ for $r\rightarrow 0$. 

We denote by $D_u\in \mathcal{D}_-(\beta)$, $D_v\in \mathcal{D}(\beta,\gamma)$, and $D_{w_r}\in \mathcal{D}_-(\gamma)$ the linearized operators at $u$, $v$, and $w_r$ respectively. The operator $D_u$ is Fredholm of index $n-k$ and is transverse to $(0) \times \mathbb{P}_+ T_{x(0)} W^u(p)$ with
\[
D_u^{-1}  \bigl( (0) \times \mathbb{P}_+ T_{x(0)} W^u(p) \bigr) = (0).
\]
By \eqref{isoor} we obtain the isomorphism
\begin{equation}
\label{Du}
\mathfrak{o}_{D_u} \cong \mathfrak{o}_{T_{x(0)} W^u(p)}^* \cong  \mathfrak{o}_{T_p W^u(p)}^* = \mathfrak{o}_p^*, \quad o_{\beta}^* \mapsto \epsilon(u) o_p^*.
\end{equation}
where the superscript $*$ indicates dual generators. The operator $D_v$ is Fredholm of index 1, surjective and with kernel given by
\[
\ker D_v = T_v \mathcal{M}_{\partial^F}^{\#} (\beta,\gamma) =  \mathrm{Span} \{\partial_s v \}.
\]
Denoting by $[y]$ the canonical generator of the orientation line $\mathfrak{o}_{\R y}$ induced by the non-vanishing vector $y$, we see that
\begin{equation}
\label{Dv}
\mathfrak{o}_{D_v} = \mathfrak{o}_{T_v \mathcal{M}_{\partial^F}^{\#} (\beta,\gamma)} \mbox{ has the positive generator } \epsilon(\widehat{v}) [\partial_s v],
\end{equation}
with respect to the orientation of $\mathcal{M}_{\partial^F}^{\#} (\beta,\gamma)$ chosen in (ii). By a simple variant of \cite[Proposition 9]{fh93}, when $r$ is small enough the glued operator $D_u \#_{\sigma(r)} D_v\in \mathcal{D}_-(\gamma)$ is transverse to $(0) \times \mathbb{P}_+ T_{x(r)} W^u(p)$ and the subspace
\[
(D_u \#_{\sigma(r)} D_v)^{-1}((0) \times \mathbb{P}_+ T_{x(r)} W^u(p))
\]
is 1-dimensional and close to the line
\[
V_r:= \mathrm{Span}\{v_r\}, \quad \mbox{where} \quad v_r:= \tau_{-\sigma(r)} \partial_s v|_{(-\infty,0)\times \T}.
\]
Thanks to \eqref{Du}, \eqref{Dv}, and the canonical isomorphism $\mathfrak{o}_{D_u} \otimes  \mathfrak{o}_{D_v}  \cong \mathfrak{o}_{D_u \#_{\sigma(r)} D_v}$, the isomorphism \eqref{isoor} reads
\begin{equation}
\label{DuDv}
\mathfrak{o}_{D_u \#_{\sigma(r)} D_v} \cong \mathfrak{o}_{V_r} \otimes \mathfrak{o}_{T_{x(r)} W^u(p)}^* \cong  \mathfrak{o}_{V_r} \otimes \mathfrak{o}_p^*, \quad o_{\gamma}^* \mapsto \epsilon(u) \epsilon(\widehat{v}) [v_r] \otimes o_p^*.
\end{equation}
The operator $D_{w_r}$ is Fredholm of index $n-k+1$ and is transverse to $(0) \times \mathbb{P}_+ T_{x(r)} W^u(p)$ with
\[
D_{w_r}^{-1} \bigl( (0) \times \mathbb{P}_+ T_{x(r)} W^u(p) \bigr) = T_{w_r} \mathcal{M}_{\Theta}(p,\gamma) = \mathrm{Span} \{\partial_r w_r \}.
\]
Therefore, \eqref{isoor} gives us the isomorphism
\begin{equation}
\label{Dwr}
\mathfrak{o}_{D_{w_r}} \cong \mathfrak{o}_{\mathrm{Span} \{\partial_r w_r \}} \otimes \mathfrak{o}_{T_{x(r)} W^u(p)}^*\cong 
\mathfrak{o}_{\mathrm{Span} \{\partial_r w_r \}} \otimes \mathfrak{o}_p^*.
\end{equation}
Since $D_{w_r}$ is close to $D_u \#_{\sigma(r)} D_v$ for $r$ small, we deduce that the lines $\mathrm{Span} \{\partial_r w_r \}$ and $V_r$
are close. Therefore, when $r$ is small it makes sense to compare the orientations of two generators of these two lines.

When $r$ is small, the function $w_r: (-\infty,0]\times \T \rightarrow \R^{2n}$ is close to the function $u$ on $(-\frac{\sigma(r)}{2},0] \times \T$ and close to the function $\tau_{-\sigma(r)} v$ on $(-\infty,-\frac{\sigma(r)}{2})\times \T$. Actually, $w_{\sigma(r)}$ is obtained from a Newton iteration scheme starting from a function which is obtained by glueing $u|_{(-\frac{\sigma(r)}{2},0] \times \T}$ and $\tau_{-\sigma(r)} v|_{(-\infty,\frac{\sigma(r)}{2}]\times \T}$. 
The analysis of this iteration scheme shows that the derivative with respect to $r$ of $w_r$ is close to the derivative with respect to $r$ of this glued function, i.e.\ with 
\[
\partial_r \tau_{-\sigma(r)} v = \sigma'(r) \tau_{-\sigma(r)} \partial_s v.
\]
Since $\sigma'<0$, this shows that the natural generators of the lines $\mathrm{Span} \{\partial_r w_r \}$ and $V_r$ have opposite orientations:
\[
[\partial_r w_r] = - [ v_r].
\]
Together with \eqref{DuDv} and \eqref{Dwr}, the above identity implies that the orientation of the line
\[
T_{w_r} \mathcal{M}_{\Theta}(p,\gamma) = \mathrm{Span} \{\partial_r w_r\} 
\]
from (iii) is given by $-\epsilon(u) \epsilon(\widehat{v}) [\partial_r w_r]$. 

Since $[\partial_r]$ induces the orientation $-1$ on the singleton $\{0\}$ seen as boundary of $[0,1)$, we conclude that the orientation of the singleton $(u,\widehat{v})$ as end-point of an arc in the oriented 1-dimensional manifold $\mathcal{M}_{\Theta}(p,\gamma)$ is $\epsilon(u) \epsilon(\widehat{v})$, as claimed.

\paragraph{\sc Proof of ($\mathrm{b}$).} We here consider an end-point of the form $(\widehat{x},u)\in \mathcal{M}_{\partial^M} (p,q) \times \mathcal{M}_{\Theta}(q,\gamma)$. Let $r\mapsto w_r$, $r\in (0,1)$, be a smooth parametrisation of a connected component of  $\mathcal{M}_{\Theta}(p,\gamma)$ such that $w_r$ converges to $(\widehat{x},u)$ for $r\rightarrow 0$. This means the following: for $r\rightarrow 0$, $w_r \rightarrow u$ in $C^{\infty}((-\infty,0]\times \T,\R^{2n})$ and, setting 
\[
y(r):= (\mathrm{id} \times Y_N)(\mathbb{P}_+ w_r(0,\cdot)) \in W^u(p), \qquad r\in (0,1),
\]
 we have that 
 \[
 \lim_{r\rightarrow 0} y(r) = y(0):= (\mathrm{id} \times Y_N)(\mathbb{P}_+ u(0,\cdot))\in W^u(q),
 \] 
 and there exists a smooth positive function $\sigma$ on $(0,1)$ such that 
 \[
 \lim_{r\rightarrow 0} \sigma(r) = +\infty, \qquad \lim_{r\rightarrow 0} \phi^{-\sigma(r)}_{-\nabla \psi_{H^*}}(y(r)) =x, 
 \]
 where $x\in W^u(p)\cap W^s(q)$ is a point on the orbit $\widehat{x}$.

Let $\Sigma_0$ be a small embedded open $(k-n-1)$-dimensional ball in $W^u(p)$ such that $\Sigma_0 \cap W^s(q) = \{x\}$ with transverse intersection. Then $\Sigma_0$ has codimension 1 in $W^u(p)$, is transverse to $\nabla \psi_{H^*}$ at $x$ and hence everywhere, up to reducing its size. This implies that
\[
\Sigma_t := \phi^t_{-\nabla \psi_{H^*}}(\Sigma_0), \quad t\in \R,
\]
is a smooth foliation of codimension 1 of an open subset $U$ of $W^u(p)$ which is invariant under the negative gradient flow. The closure of $U$ contains $W^u(q)$ and the leaves $\Sigma_t$ $C^1$-converge to $W^u(q)$ for $t\rightarrow +\infty$ in the following sense: if $t_j\rightarrow +\infty$ and $z_j\in \Sigma_{t_j}$ converges to $z\in W^u(q)$ then $T_{z_j} \Sigma_{t_j}$ converges to $T_z W^u(q)$. Indeed, by dynamical continuation it is enough to check this for sequences $(z_j)$ which are contained in a small neighborhood of $q$ and the claim follows from the properties of the graph transform, see \cite[Proposition 2.1 (iv)]{ama23}.

By the $C^1$-convergence property, the leaves $\Sigma_t$ inherit an orientation from the orientation $o_q$ of $W^u(q)$, and we use the same symbol $o_q$ to denote this orientation of a leaf. 

Denote by $\tau: U \rightarrow \R$ the smooth function taking the value $t$ on $\Sigma_t$. The gradient of $\tau$ is transverse to the leaves and at $x$ is contained in the open half-space of $T_x W^u(p)$ defined by the hyperplane $T_x\Sigma_0$ and containing $-\nabla \psi_{H^*}(x)$. By the definition of $\epsilon(\widehat{x})$, we have, using the isomorphism \eqref{or-sum},
\[
o_p = \epsilon(\widehat{x}) \,[\nabla \psi_{H^*}(x)] \otimes o_q \quad \mbox{in the splitting} \quad T_x W^u(p) = \mathrm{Span} \{ \nabla \psi_{H^*}(x) \} \oplus T_x \Sigma_0,
\]
and hence 
\[
o_p = -\epsilon(\widehat{x})\, [\nabla \tau(x)] \otimes o_q \quad \mbox{in the splitting} \quad T_x W^u(p) = \mathrm{Span} \{ \nabla \tau(x) \} \oplus T_x \Sigma_0.
\]
By continuation, we have for every $z\in U$:
\begin{equation}
\label{fol}
o_p = -\epsilon(\widehat{x})\,[\nabla \tau(z)]  \otimes o_q \quad \mbox{in the splitting} \quad T_z W^u(p) = \mathrm{Span} \{ \nabla \tau(z) \} \oplus T_z \Sigma_{\tau(z)}.
\end{equation}

The fact that $\phi^{-\sigma(r)}_{-\nabla \psi_{H^*}}(y(r))$ converges to $x\in U$ for $r\rightarrow 0$ implies that there exists $r_0\in (0,1]$ such that $y(r)\in U$ for every $r\in (0,r_0)$. Since $\tau(y(r)) \rightarrow +\infty$ for $r\rightarrow 0$, we can find a sequence $(r_j)\subset (0,r_0)$ such that $r_j\rightarrow 0$ and
\[
\frac{d}{dr} \tau\circ y (r_j) = d\tau(y(r_j))[y'(r_j)] < 0.
\]
Therefore, the vector $y'(r_j)$ is transverse to $\Sigma_{\tau(y(r_j))}$ and from \eqref{fol} we obtain
\begin{equation}
\label{fol-y}
o_p = \epsilon(\widehat{x})\, [y'(r_j)] \otimes o_q \quad \mbox{in the splitting} \quad T_{y(r_j)} W^u(p) = \mathrm{Span} \{ y'(r_j) \} \oplus T_{y(r_j)} \Sigma_{\tau(y(r_j))}.
\end{equation}

Let $D_u$ and $D_{w_r}$ be the operators in $\mathcal{D}_-(\gamma)$ which are obtained by differentiating the map \eqref{maphybrid} at $u$ and $w_r$.

The Fredholm operator $D_u$ is injective and its image is transverse to $(0)\times \mathbb{P}_+ T_{y(0)} W^u(q)$. Then the dual of the isomorphism \eqref{isoor} reads
\begin{equation}
\label{oru}
\mathfrak{o}_{D_u}^* \cong \mathfrak{o}_{T_{y(0)} W^u(q))} \cong \mathfrak{o}_q, \qquad o_{\gamma} \mapsto \epsilon(u) o_q.
\end{equation}
The Fredholm operator $D_{w_{r_j}}$ is transverse to the subspace $(0)\times \mathbb{P}_+ T_{y(r_j)} W^u(p)$ and the isomorphism \eqref{isoor} gives us
\begin{equation}
\label{orw}
\mathfrak{o}_{D_{w_{r_j}}}^* \cong \mathfrak{o}_{T_{w_{r_j}} \mathcal{M}_{\Theta}(p,\gamma)}^* \otimes \mathfrak{o}_{T_{y(r_j)} W^u(p)}, \quad
o_{\gamma} \mapsto o_{p,\gamma}^* \otimes o_p = \epsilon(\widehat{x}) \, o_{p,\gamma}^* \otimes ( [y'(r_j)] \otimes o_q),
\end{equation}
where $o_{p,\gamma}$ is the orientation of $\mathcal{M}_{\Theta}(p,\gamma)$ from (iii) and we have used \eqref{fol-y}. Note that the
middle homomorphism in the exact sequence \eqref{exact} inducing the above isomorphism is
\[
0 \times (\mathbb{P}_+ \mathrm{tr}_0):T_{w_r} \mathcal{M}_{\Theta}(p,\gamma)  \rightarrow (0) \times \mathbb{P}_+T_{y(r)} W^u(p) \cong T_{y(r)} W^u(p), \qquad \partial_r w_r \mapsto y'(r),
\]
where $\mathrm{tr}_0$ is the trace operator at $s=0$. Since the Fredholm operators $D_u$ and $D_{w_{r_j}}$ are close to each other,  a comparison of \eqref{oru} and \eqref{orw} gives us the identity
\[
o_{p,\gamma} = \epsilon(\widehat{x}) \epsilon(u) [\partial_r w_r].
\]
Since $[\partial_r]$ induces the orientation $-1$ on the singleton $\{0\}$ seen as boundary of $[0,1)$, we conclude that the orientation of the singleton $(\widehat{x},u)$ as end-point of an arc in the oriented 1-dimensional manifold $\mathcal{M}_{\Theta}(p,\gamma)$ is $-\epsilon(\widehat{x}) \epsilon(u)$, as claimed.


\begin{thebibliography}{BEHWZ03}

\bibitem[Abb01]{abb01}
A.~Abbondandolo, \emph{Morse theory for {H}amiltonian systems}, Pitman Research
  Notes in Mathematics, vol. 425, Chapman \& Hall, London, 2001.

\bibitem[AK22]{ak22}
A.~Abbondandolo and J.~Kang, \emph{Symplectic homology of convex domains and
  {C}larke's duality}, Duke Math. J. \textbf{171} (2022), 739--830.

\bibitem[AM09]{ama09}
A.~Abbondandolo and P.~Majer, \emph{Infinite dimensional {G}rassmannians}, J.
  Operator Theory \textbf{61} (2009), 16--62.

\bibitem[AM23]{ama23}
A.~Abbondandolo and P.~Majer, \emph{Stable foliations and {CW}-structure induced by a
  {M}orse--{S}male gradient-like flow}, J. Topol. Anal. \textbf{15} (2023),
  1037--1093.

\bibitem[AS15]{as15}
A.~Abbondandolo and M.~Schwarz, \emph{The role of the {L}egendre transform in
  the study of the {F}loer complex of cotangent bundles}, Comm. Pure Appl.
  Math. \textbf{68} (2015), 1885--1945.

\bibitem[BEHWZ03]{BEHWZ03}
F.~Bourgeois, Y.~Eliashberg, H.~Hofer, K.~Wysocki and E.~Zehnder,
\emph{Compactness results in symplectic field theory},
Geom. Topol. \textbf{7} (2003), 799--888.

\bibitem[BM04]{bm04}
F.~Bourgeois and K.~Mohnke, \emph{Coherent orientations in symplectic field
  theory}, Math. Z. \textbf{248} (2004), 123--146.

\bibitem[BO09a]{bo09a}
F.~Bourgeois and A.~Oancea, \emph{An exact sequence for contact and symplectic
  homology}, Invent. Math. \textbf{175} (2009), 611--680.

\bibitem[BO09b]{bo09b}
F.~Bourgeois and A.~Oancea, \emph{{Symplectic homology, autonomous Hamiltonians, and Morse-Bott
  moduli spaces}}, Duke Math. J. \textbf{146} (2009), 71--174.

\bibitem[CE22]{ce22}
J.~Chaidez and O.~Edtmair, \emph{3{D} convex contact forms and the {R}uelle
  invariant}, Invent. Math. \textbf{229} (2022), 243--301.

\bibitem[CE22b]{ce}
J.~Chaidez and O.~Edtmair, \emph{The Ruelle invariant and convexity in higher dimensions},
{\tt arXiv:2205.00935 [math.SG]}, 2022.

\bibitem[Cla79]{cla79}
F.~H. Clarke, \emph{A classical variational principle for periodic
  {H}amiltonian trajectories}, Proc. Amer. Math. Soc. \textbf{76} (1979),
  186--188.
  
\bibitem[Cla81]{cla81}
F.~H. Clarke, \emph{Periodic solutions to {H}amiltonian inclusions}, J. Differential
  Equations \textbf{40} (1981), 1--6.  

\bibitem[CZ84]{cz84}
C.~Conley and E.~Zehnder, \emph{Morse-type index theory for flows and periodic solutions for {H}amiltonian equations},
Comm. Pure Appl. Math. \textbf{37} (1984), 207--253.

\bibitem[DGRZ23]{dgrz23}
J.~Dardennes, J.~Gutt, V.~G.~B. Ramos, and J.~Zhang, \emph{Coarse distance from
  dynamically convex to convex}, \texttt{arXiv:2308.06604 [math.SG]}, 2023.

\bibitem[DGZ24]{dgz24}
J.~Dardennes, J.~Gutt, and J.~Zhang, \emph{Symplectic non-convexity of toric
  domains}, Commun. Contemp. Math. \textbf{26} (2024), Article n. 04.

\bibitem[Edt24]{edt24}
O.~Edtmair, \emph{Disk-like surfaces of section and symplectic capacities},
  Geom. Funct. Anal. \textbf{34} (2024), 1399--1459.

\bibitem[Eli93]{eli93}
Y.~Eliashberg, \emph{Legendrian and transversal knots in tight contact $3$-manifolds},
Topological Methods in Modern Mathematics (1993), 171--193.
  
\bibitem[Eke90]{eke90}
I.~Ekeland, \emph{Convexity methods in {H}amiltonian systems}, Ergebnisse der
  Mathematik und ihrer Grenzgebiete (3), vol.~19, Springer-Verlag, Berlin,
  1990.
    
\bibitem[FCHW96]{cfhw96}
A.~Floer, K.~Cieliebak, H.~Hofer, and K.~Wysocki, \emph{Applications of
  symplectic homology {II}. {S}tability of the action spectrum}, Math. Z.
  \textbf{223} (1996), 27--45.

\bibitem[FH93]{fh93}
A.~Floer and H.~Hofer, \emph{Coherent orientations for periodic orbit problems
  in symplectic geometry}, Math. Z. \textbf{212} (1993), 13--38.

\bibitem[Flo88a]{flo88a}
A.~Floer, \emph{A relative {M}orse index for the symplectic action}, Comm. Pure
  Appl. Math. \textbf{41} (1988), 393--407.

\bibitem[Flo88b]{flo88d}
A.~Floer, \emph{The unregularized gradient flow of the symplectic action}, Comm.
  Pure Appl. Math. \textbf{41} (1988), 775--813.

\bibitem[FH21]{fh21}
A.~Florio and U.~Hryniewicz,
\emph{Quantitative conditions for right-handedness},
arXiv:2106.12512, 2021.

\bibitem[FK16]{fk16}
U.~Frauenfelder and J.~Kang, \emph{Real holomorphic curves and invariant global
  surfaces of section}, Proc. London Math. Soc. (3) \textbf{112} (2016),
  477--511.

\bibitem[Gei08]{gei08}
H.~Geiges, \emph{An introduction to contact topology}, Cambridge Studies in
  Advanced Mathematics, vol. 109, Cambridge University Press, Cambridge, 2008.

\bibitem[Gro85]{gro85}
M.~Gromov, \emph{Pseudo holomorphic curves in symplectic manifolds}, Invent.
  Math. \textbf{82} (1985), 307--347.

\bibitem[HKO24]{ho24}
P.~Haim-Kislev and Y.~Ostrover, \emph{A counterexample to Viterbo's
  conjecture}, \texttt{arXiv:2405.16513 [math.SG]}, 2024.
  
\bibitem[Hai07]{hai07}
S.~Hainz, \emph{Eine {R}iemannsche {B}etrachtung des {R}eeb-{F}lusses}, Bonner
  Mathematische Schriften [Bonn Mathematical Publications], vol. 382, 2007.

\bibitem[HH11]{hh11}
S.~Hainz and U.~Hamenst\"{a}dt, \emph{Topological properties of {R}eeb orbits
  on boundaries of star-shaped domains in {$\mathbb{R}^4$}}, Low-dimensional and
  symplectic topology, Proc. Sympos. Pure Math., vol.~82, Amer. Math. Soc.,
  Providence, RI, 2011, pp.~89--110.

\bibitem[Her98]{her98}
D.~Hermann, \emph{Non-equivalence of symplectic capacities for open sets with
  restricted contact type boundary}, Pr\'epublication d'Orsay num\'ero 32,
  1998.

\bibitem[Her04]{her04}
D.~Hermann, \emph{Inner and outer {H}amiltonian capacities}, Bull. Soc. Math.
  France \textbf{132} (2004), 509--541.

\bibitem[Hof90a]{hof90a}
H.~Hofer, \emph{On the topological properties of symplectic maps},
Proc. Roy. Soc. Edinburgh \textbf{115A} (1990), 25--38.
  
\bibitem[Hof90b]{hof90}
H.~Hofer, \emph{Symplectic capacities}, Geometry of low dimensional manifolds,
  vol.~2, Cambridge University Press, 1990, pp.~15--34.

\bibitem[Hof93a]{hof93}
H.~Hofer, \emph{Estimates for the energy of a symplectic map}, Comment. Math.
  Helv. \textbf{68} (1993), 48--72.

\bibitem[Hof93b]{hof93b}
H.~Hofer, \emph{Pseudoholomorphic curves in symplectizations with applications
  to the {W}einstein conjecture in dimension three}, Invent. Math. \textbf{114}
  (1993), 515--563.
  
\bibitem[HWZ95]{hwz95}
H.~Hofer, K.~Wysocki, and E.~Zehnder.
\emph{Properties of pseudo-holomorphic curves in symplectisations. {II}.
  {E}mbedding controls and algebraic invariants},
 Geom. Funct. Anal., \textbf{5} (1995), no.~2, 270--328.

\bibitem[HWZ96]{hwz96}
H.~Hofer, K.~Wysocki, and E.~Zehner, \emph{Properties of pseudoholomorphic
  curves in symplectisations {I}: {A}symptotics}, Ann. Inst. H. Poincar\'e
  Anal. Non Lin\'eaire \textbf{13} (1996), 337--379.
  
\bibitem[HWZ96b]{hwz96b}
H.~Hofer, K.~Wysocki, and E.~Zehnder, \emph{Unknotted periodic orbits for
  {R}eeb flows on the three-sphere}, Topol. Methods Nonlinear Anal. \textbf{7}
  (1996), 219--244. 

\bibitem[HWZ98]{hwz98}
H.~Hofer, K.~Wysocki, and E.~Zehnder, \emph{The dynamics on three-dimensional
  strictly convex energy surfaces}, Ann. of Math. \textbf{148} (1998),
  197--289.
  
\bibitem[HWZ99]{hwz99}
H.~Hofer, K.~Wysocki, and E.~Zehnder.
\emph{Properties of pseudoholomorphic curves in symplectizations. {III}.
  {F}redholm theory},
\newblock In {\em Topics in nonlinear analysis}, volume~35 of {\em Progr.
  Nonlinear Differential Equations Appl.}, pages 381--475. Birkh\"{a}user,
  Basel, 1999.

\bibitem[HWZ99b]{hwz99b}
H.~Hofer, K.~Wysocki, and E.~Zehnder.
\emph{A characterization of the tight 3-sphere. {II}.}, Comm. Pure Appl. Math. \textbf{52} (1999), no.~9, 1139--1177.

\bibitem[HWZ02]{hwz02}
H.~Hofer, K.~Wysocki and E.~Zehnder,
\emph{Finite energy cylinders of small area},
Ergodic Theory Dynam. Systems \textbf{22} (2002), 1451--1486.

\bibitem[HWZ03]{hwz03}
H.~Hofer, K.~Wysocki, and E.~Zehnder.
\emph{Finite energy foliations of tight three-spheres and {H}amiltonian
  dynamics}, Ann. of Math. (2), \textbf{157} (2003), no.~1, 125--255.

\bibitem[HZ90]{hz90}
H.~Hofer and E.~Zehnder, \emph{A new capacity for symplectic manifolds},
  Analysis, et cetera, Academic Press, Boston, MA, 1990, pp.~405--427.  

\bibitem[Hry12]{hry12}
U.~L. Hryniewicz, \emph{Fast finite-energy planes in symplectizations and
  applications}, Trans. Amer. Math. Soc. \textbf{364} (2012), 1859--1931.
  
\bibitem[Hry14]{hry14}
U.~L. Hryniewicz, \emph{Systems of global surfaces of section for dynamically convex
  {R}eeb flows on the 3-sphere}, J. Symplectic Geom. \textbf{12} (2014),
  791--862.
  
\bibitem[HHR24]{hhr24}
U.~L. Hryniewicz, M.~Hutchings, and V.~G.~B. Ramos, \emph{Hopf orbits and the
  first {ECH} capacity}, \texttt{arXiv:2312.11830 [math.SG]}, 2024.

\bibitem[HLS15]{hls15}
U.~L. Hryniewicz, J.~E. Licata, and P.~A.~S. Salom{\~a}o, \emph{A dynamical
  characterization of universally tight lens spaces}, Proc. Lond. Math. Soc.
  \textbf{110} (2015), 213--269.  

\bibitem[HS11]{hs11}
U.~L. Hryniewicz and P.~A.~S. Salom{\~a}o, \emph{On the existence of disk-like
  global sections for {R}eeb flows on the tight 3-sphere}, Duke Math. J.
  \textbf{160} (2011), 415--465.
  
\bibitem[HSS22]{hss22}
U.~L. Hryniewicz, P.~A.~S. Salom{\~a}o, and R.~Siefring, \emph{Global surfaces
  of section with positive genus for dynamically convex {R}eeb flows},
  Symplectic geometry---a {F}estschrift in honour of {C}laude {V}iterbo's 60th
  birthday, Birkh\"{a}user/Springer, Cham, 2022, pp.~699--719.

\bibitem[HSW23]{hsw23}
U.~Hryniewicz, P.~Salom\~{a}o and K.~Wysocki,
\emph{Genus zero global surfaces of section for {R}eeb flows and a result of {B}irkhoff},
J. Eur. Math. Soc. (JEMS) \textbf{25} (2023), 3365--3451.

\bibitem[Hut02]{hut02}
M.~Hutchings, \emph{An index inequality for embedded pseudoholomorphic curves in symplectizations},
J. Eur. Math. Soc. (JEMS) \textbf{4} (2002), 313--361.

\bibitem[Hut11]{hut11}
M.~Hutchings, \emph{Quantitative embedded contact homology}, J. Differential
  Geom. \textbf{88} (2011), 231--266.

\bibitem[Hut22]{hut22a}
M.~Hutchings, \emph{An elementary alternative to {ECH} capacities}, Proc. Natl.
  Acad. Sci. USA \textbf{119} (2022), Article n. 35.

\bibitem[HT07]{ht07}
M.~Hutchings and C.~Taubes,
\emph{Gluing pseudoholomorphic curves along branched covered cylinders I},
J. Symplectic Geom. \textbf{5} (2007), 43--137.

\bibitem[HT09]{ht09}
M.~Hutchings and C.~Taubes,
\emph{Gluing pseudoholomorphic curves along branched covered cylinders II},
J. Symplectic Geom. \textbf{7} (2009), 29--133.

\bibitem[Iri22]{iri22}
K.~Irie, \emph{Symplectic homology of fiberwise convex sets and homology of
  loop spaces}, J. Symplectic Geom. \textbf{20} (2022), 417--470.

\bibitem[IS99]{IS99}
S.~Ivashkovich and V.~Shevchishin,
\emph{Structure of the moduli space in a neighborhood of a casp-curve and meromorphic hulls},
Invent. Math \textbf{137} (1999), 571--602.

\bibitem[Lon02]{lon02}
Y.~Long, \emph{Index theory for symplectic paths with applications},
  Birkh\"auser, Basel, 2002.

\bibitem[MS12]{ms12}
D.~McDuff and D.~Salamon, \emph{J-holomorphic curves and symplectic topology},
second ed., American Mathematical Society colloquium publications, 2012.

\bibitem[MS17]{ms17}
D.~McDuff and D.~Salamon, \emph{Introduction to symplectic topology}, third
  ed., Oxford Mathematical Monographs, The Clarendon Press Oxford University
  Press, New York, 2017.

\bibitem[Par19]{par19}
J.~Pardon, \emph{Contact homology and virtual fundamental cycle}, J. Amer.
  Math. Soc. \textbf{32} (2019), 825--919.
  
\bibitem[Qui85]{qui85}
D.~Quillen, \emph{Determinants of {C}auchy-{R}iemann operators over a {R}iemann
  surface}, Functional Anal. Appl. \textbf{19} (1985), 31--34.  

\bibitem[Rab78]{rab78}
P.~H. Rabinowitz, \emph{Periodic solutions of {H}amiltonian systems}, Comm.
  Pure Appl. Math. \textbf{31} (1978), 157--184.

\bibitem[RS93]{rs93}
J.~Robbin and D.~Salamon, \emph{The {M}aslov index for paths}, Topology   \textbf{32} (1993), no.~4, 827--844.

\bibitem[SZ92]{sz92}
D.~Salamon and E.~Zehnder, \emph{Morse theory for periodic solutions of
  {H}amiltonian systems and the {M}aslov index}, Comm. Pure Appl. Math.
  \textbf{45} (1992), 1303--1360.

\bibitem[Sie08]{sie08}
R.~Siefring, \emph{Relative asymptotic behavior of pseudoholomorphic half-cylinders}, Comm. Pure Appl. Math. \textbf{61} (2008), no.~12, 1631--1684.

\bibitem[Sie11]{sie11}
R.~Siefring, \emph{Intersection theory of punctured pseudoholomorphic curves}, Geom. Topol. \textbf{15} (2011), no.~4, 2351--2457.
  
\bibitem[Sei08]{sei08b}
P.~Seidel, \emph{Fukaya categories and {P}icard-{L}efschetz theory}, Zurich
  Lectures in Advanced Mathematics, European Mathematical Society, 2008.  
  
\bibitem[vKo20]{vko20}
O.~van Koert, \emph{A {R}eeb flow on the three-sphere without a disk-like
  global surface of section}, Qual. Theory Dyn. Syst. \textbf{19} (2020),
  Paper No.~36.  

\bibitem[Vit89]{vit89}
C.~Viterbo, \emph{Capacit\'e symplectiques et applications}, Ast\'erisque
  \textbf{177-178} (1989), no.~714, S\'eminaire Bourbaki 41\'eme ann\'ee,
  345--362.

\bibitem[Vit92]{vit92}
C.~Viterbo, \emph{Symplectic topology as the geometry of generating functions},
  Math. Ann. \textbf{292} (1992), 685--710.

\bibitem[Vit99]{vit99}
C.~Viterbo, \emph{Functors and computations in Floer homology with applications I},
Geom. Funct. Anal. \textbf{9} (1999), 985--1033.

\bibitem[Vit00]{vit00}
C.~Viterbo, \emph{Metric and isoperimetric problems in symplectic geometry}, J.
  Amer. Math. Soc. \textbf{13} (2000), 411--431.

\bibitem[Wei78]{wei78}
A.~Weinstein, \emph{Periodic orbits for convex hamiltonian systems},
Ann. of Math. \textbf{108} (1978), 507--518.

\bibitem[Wei79]{wei79}
A.~Weinstein, \emph{On the Hypotheses of Rabinowitz' Periodic Orbit Theorems},
J. Differential Equations \textbf{33} (1979), 353--358.

\bibitem[Wen10]{wen10}
C.~Wendl, \emph{Automatic transversality and orbifolds of punctured holomorphic
  curves in dimension four}, Comm. Math. Helv. \textbf{85} (2010), 347--407.

\bibitem[Wen10b]{Wen10b}
C.~Wendl,
\emph{Compactness for embedded pseudoholomorphic curves in 3-manifolds},
J. Eur. Math. Soc. (JEMS) \textbf{12} (2010), no. 2, 313--342.

\bibitem[Wen16]{wen16}
C.~Wendl, \emph{Lectures on holomorphic curves in symplectic and contact
  geometry}, {\tt arXiv:1011.1690 [math.SG]}, 2016.

\bibitem[Wen16b]{wen16b}
C.~Wendl, \emph{Lectures on symplectic field theory},
{\tt arXiv:1612.01009 [math.SG]}, 2016.

\end{thebibliography}

\providecommand{\bysame}{\leavevmode\hbox to3em{\hrulefill}\thinspace}
\providecommand{\MR}{\relax\ifhmode\unskip\space\fi MR }
\providecommand{\MRhref}[2]{%
  \href{http://www.ams.org/mathscinet-getitem?mr=#1}{#2}
}
\providecommand{\href}[2]{#2}

\end{document}